\spnewtheorem*{remark*}{Remark}{\bf}{\rm}
\spnewtheorem*{remarks*}{Remarks}{\it}{\rm}
\spnewtheorem*{aside}{Aside}{\it}{\rm}
\spnewtheorem{vexercise}{Exercise}[section]{\it}{\rm}
\spnewtheorem*{sketchproof}{Sketch proof}{\it}{\rm}
\spnewtheorem*{proofs}{Proofs}{\it}{\rm}
\spnewtheorem*{proofext}{Proof of the Extension Theorem}{\it}{\rm}
\spnewtheorem*{prooftower}{Proof of the Tower Law}{\it}{\rm}
\spnewtheorem*{theoremA}{Theorem A (The division algorithm)}{\bf}{\it}
\spnewtheorem*{theoremB}{Theorem B (Constructing Fields)}{\bf}{\it}
\spnewtheorem*{theoremC}{Theorem C (Constructible Numbers)}{\bf}{\it}
\spnewtheorem*{theoremD}{Theorem D (Simple Extensions)}{\bf}{\it}
\spnewtheorem*{theoremE}{Theorem E}{\bf}{\it}
\spnewtheorem*{sylow}{Sylow's First Theorem}{\bf}{\it}
\spnewtheorem*{theoremF}{Theorem F (The Extension Theorem)}{\bf}{\it}
\spnewtheorem*{corollaryG}{Corollary G}{\bf}{\it}
\spnewtheorem*{isomthm}{First Isomorphism Theorem}{\bf}{\it}
\spnewtheorem*{factthm}{The Factor Theorem}{\bf}{\it}
\spnewtheorem*{fundthmalg}{Fundamental Theorem of Algebra}{\bf}{\it}
\spnewtheorem*{eisenstein}{Eisenstein Irreducibility
  Theorem}{\bf}{\it}
\spnewtheorem*{reduction}{The Reduction Test}{\bf}{\it}
\spnewtheorem*{ufd}{Unique factorisation in $F[x]$}{\bf}{\it}
\spnewtheorem*{kronecker}{Kronecker's Theorem}{\bf}{\it}
\spnewtheorem*{towerlaw}{The Tower Law}{\bf}{\it}
\spnewtheorem*{bogusdefn}{``Definition''}{\bf}{\it}
\renewcommand{\paragraph}{\@startsection
{paragraph}{3}{0mm}{\baselineskip}
{-\fontdimen2\font plus -\fontdimen3\font minus -\fontdimen4\font}
{}}%
\numberwithin{paragraph}{section}
   \let\temp\relax
   \let\temp 
 \chardef\EPSFCatAt\the\catcode`\@
 \chardef\C@tColon\the\catcode`\:
 \chardef\C@tSemicolon\the\catcode`\;
 \chardef\C@tQmark\the\catcode`\?
 \chardef\C@tEmark\the\catcode`\!
 \chardef\C@tDqt\the\catcode`\"
 \def\PunctOther@{\catcode`\:=12
   \catcode`\;=12 \catcode`\?=12 \catcode`\!=12 \catcode`\"=12}
 \let\wlog@ld\wlog 
 \def\wlog#1{\relax} 
 \newdimen\XShift@ \newdimen\YShift@ 
 \newtoks\Realtoks
 \newdimen\Wd@ \newdimen\Ht@
 \newdimen\Wd@@ \newdimen\Ht@@
 \newdimen\TT@
 \newdimen\LT@
 \newdimen\BT@
 \newdimen\RT@
 \newdimen\XSlide@ \newdimen\YSlide@ 
 \newdimen\TheScale  
 \newdimen\FigScale  
 \newdimen\ForcedDim@@
 \newtoks\EPSFDirectorytoks@
 \newtoks\EPSFNametoks@
 \newtoks\BdBoxtoks@
 \newtoks\LLXtoks@  
 \newtoks\LLYtoks@
 \newif\ifNotIn@
 \newif\ifForcedDim@
 \newif\ifForceOn@
 \newif\ifForcedHeight@
 \newif\ifPSOrigin
 \newread\EPSFile@ 
  \def\ms@g{\immediate\write16}
 \newif\ifIN@\def\IN@{\expandafter\INN@\expandafter}
  \long\def\INN@0#1@#2@{\long\def\NI@##1#1##2##3\ENDNI@
    {\ifx\m@rker##2\IN@false\else\IN@true\fi}%
     \expandafter\NI@#2@@#1\m@rker\ENDNI@}
  \def\m@rker{\m@@rker}
  \newtoks\Initialtoks@  \newtoks\Terminaltoks@
  \def\SPLIT@{\expandafter\SPLITT@\expandafter}
  \def\SPLITT@0#1@#2@{\def\TTILPS@##1#1##2@{%
     \Initialtoks@{##1}\Terminaltoks@{##2}}\expandafter\TTILPS@#2@}
  \newtoks\Trimtoks@
 \def\ForeTrim@{\expandafter\ForeTrim@@\expandafter}
 \def\ForePrim@0 #1@{\Trimtoks@{#1}}
 \def\ForeTrim@@0#1@{\IN@0\m@rker. @\m@rker.#1@%
     \ifIN@\ForePrim@0#1@%
     \else\Trimtoks@\expandafter{#1}\fi}
  \def\Trim@0#1@{%
      \ForeTrim@0#1@%
      \IN@0 @\the\Trimtoks@ @%
        \ifIN@ 
             \SPLIT@0 @\the\Trimtoks@ @\Trimtoks@\Initialtoks@
             \IN@0\the\Terminaltoks@ @ @%
                 \ifIN@
                 \else \Trimtoks@ {FigNameWithSpace}%
                 \fi
        \fi
      }
   \newtoks\pt@ks
   \def\getpt@ks 0.0#1@{\pt@ks{#1}}
  \newtoks\Realtoks
  \def\Real#1{%
    \dimen2=#1%
      \SPLIT@0\the\pt@ks @\the\dimen2@
       \Realtoks=\Initialtoks@
            }
   \newdimen\Product
   \def\Mult#1#2{%
     \dimen4=#1\relax
     \dimen6=#2%
     \Real{\dimen4}%
     \Product=\the\Realtoks\dimen6%
        }
 \newdimen\Inverse
 \newdimen\hmxdim@ \hmxdim@=8192pt
 \def\Invert#1{%
  \Inverse=\hmxdim@
  \dimen0=#1%
  \divide\Inverse \dimen0%
  \multiply\Inverse 8}
   \def\Rescale#1#2#3{
              \divide #1 by 100\relax
              \dimen2=#3\divide\dimen2 by 100 \Invert{\dimen2}%
              \Mult{#1}{#2}%
              \Mult\Product\Inverse 
              #1=\Product}
  \def\Scale#1{\dimen0=\TheScale %
      \divide #1 by  1280 
      \divide \dimen0 by 5120 %
      \multiply#1 by \dimen0 
      \divide#1 by 10   
     }
 \newbox\scrunchbox
 \def\Scrunched#1{{\setbox\scrunchbox\hbox{#1}%
   \wd\scrunchbox=0pt
   \ht\scrunchbox=0pt
   \dp\scrunchbox=0pt
   \box\scrunchbox}}
 \def\Shifted@#1{%
   \vbox {\kern-\YShift@
       \hbox {\kern\XShift@\hbox{#1}\kern-\XShift@}%
           \kern\YShift@}}
 \def\cBoxedEPSF#1{{\leavevmode 
   \ReadNameAndScale@{#1}%
   \SetEPSFSpec@
   \ReadEPSFile@ \ReadBdB@x  
     \TrimFigDims@ 
     \CalculateFigScale@  
     \ScaleFigDims@
     \SetInkShift@
   \hbox{$\mathsurround=0pt\relax
         \vcenter{\hbox{%
             \FrameSpider{\hskip-.4pt\vrule}%
             \vbox to \Ht@{\offinterlineskip\parindent=\z@%
                \FrameSpider{\vskip-.4pt\hrule}\vfil 
                \hbox to \Wd@{\hfil}%
                \vfil
                \InkShift@{\EPSFSpecial{\EPSFSpec@}{\FigSc@leReal}}%
             \FrameSpider{\hrule\vskip-.4pt}}%
         \FrameSpider{\vrule\hskip-.4pt}}}%
     $\relax}%
    \CleanRegisters@ 
    \ms@g{ *** Box composed for the %
         EPS file \the\EPSFNametoks@}%
    }}
 \def\tBoxedEPSF#1{\setbox4\hbox{\cBoxedEPSF{#1}}%
     \setbox4\hbox{\raise -\ht4 \hbox{\box4}}%
     \box4
      }
 \def\bBoxedEPSF#1{\setbox4\hbox{\cBoxedEPSF{#1}}%
     \setbox4\hbox{\raise \dp4 \hbox{\box4}}%
     \box4
      }
  \let\BoxedEPSF\cBoxedEPSF
   \let\BoxedArt\BoxedEPSF
  \def\gLinefigure[#1scaled#2]_#3{%
        \BoxedEPSF{#3 scaled #2}}
  \def\EPSFxsize{\afterassignment\ForceW@\ForcedDim@@}
      \def\ForceW@{\ForcedDim@true\ForcedHeight@false}
  \def\EPSFysize{\afterassignment\ForceH@\ForcedDim@@}
      \def\ForceH@{\ForcedDim@true\ForcedHeight@true}
  \def\EmulateRokicki{%
       \let\epsfbox\bBoxedEPSF \let\epsffile\bBoxedEPSF
       \let\epsfxsize\EPSFxsize \let\epsfysize\EPSFysize} 
 \def\ReadNameAndScale@#1{\IN@0 scaled@#1@
   \ifIN@\ReadNameAndScale@@0#1@%
   \else \ReadNameAndScale@@0#1 scaled\DefaultMilScale @%
   \fi}
 \def\ReadNameAndScale@@0#1scaled#2@{
    \let\OldBackslash@\\%
    \def\\{\OtherB@ckslash}%
    \edef\temp@{#1}%
    \Trim@0\temp@ @%
    \EPSFNametoks@\expandafter{\the\Trimtoks@ }%
    \FigScale=#2 pt%
    \let\\\OldBackslash@
    }
 \def\SetDefaultEPSFScale#1{%
      \global\def\DefaultMilScale{#1}}
 \def \SetBogusBbox@{%
     \global\BdBoxtoks@{ BoundingBox:0 0 100 100 }%
     \global\def\BdBoxLine@{ BoundingBox:0 0 100 100 }%
     \ms@g{ !!! Will use placeholder !!!}%
     }
\gdef\P@S@{

 \def\ReadEPSFile@{
     \openin\EPSFile@\EPSFSpec@
     \relax  
  \ifeof\EPSFile@
     \ms@g{}%
     \ms@g{ !!! EPS FILE \the\EPSFDirectorytoks@
       \the\EPSFNametoks@\space WAS NOT FOUND !!!}%
     \SetBogusBbox@
  \else
   \begingroup
   \catcode`\%=12\catcode`\:=12\catcode`\!=12
   \catcode"00=14 \catcode"7F=14 \catcode`\\=14 
   \global\read\EPSFile@ to \BdBoxLine@ 
   \IN@0\P@S@ @\BdBoxLine@ @%
   \ifIN@ 
     \NotIn@true
     \loop   
       \ifeof\EPSFile@\NotIn@false 
         \ms@g{}%
         \ms@g{ !!! BoundingBox NOT FOUND IN %
            \the\EPSFDirectorytoks@\the\EPSFNametoks@\space!!! }%
         \SetBogusBbox@
       \else\global\read\EPSFile@ to \BdBoxLine@
       \fi
       \global\BdBoxtoks@\expandafter{\BdBoxLine@}%
       \IN@0BoundingBox:@\the\BdBoxtoks@ @%
       \ifIN@\NotIn@false\fi%
     \ifNotIn@
     \repeat
   \else
         \ms@g{}%
         \ms@g{ !!! \the\EPSFNametoks@\space is not PostScript.}%
         \ms@g{ !!! It should begin with the "\P@S@". }%
         \ms@g{ !!! Also, all other header lines until }%
         \ms@g{ !!!  "\pct@@ EndComments"  should begin with "\pct@@". }%
         \SetBogusBbox@
   \fi
  \endgroup\relax
  \fi
  \closein\EPSFile@ 
   }

  \def\ReadBdB@x{
   \expandafter\ReadBdB@x@\the\BdBoxtoks@ @}
  
  \def\ReadBdB@x@#1BoundingBox:#2@{
    \ForeTrim@0#2@%
    \IN@0atend@\the\Trimtoks@ @%
       \ifIN@\Trimtoks@={0 0 100 100 }%
         \ms@g{}%
         \ms@g{ !!! BoundingBox not found in %
         \the\EPSFDirectorytoks@\the\EPSFNametoks@\space !!!}%
         \ms@g{ !!! It must not be at end of EPSF !!!}%
         \ms@g{ !!! Will use placeholder !!!}%
       \fi
    \expandafter\ReadBdB@x@@\the\Trimtoks@ @%
   }
    
  \def\ReadBdB@x@@#1 #2 #3 #4@{
      \Wd@=#3bp\advance\Wd@ by -#1bp%
      \Ht@=#4bp\advance\Ht@ by-#2bp%
       \Wd@@=\Wd@ \Ht@@=\Ht@ 
       \LLXtoks@={#1}\LLYtoks@={#2}
      \ifPSOrigin\XShift@=-#1bp\YShift@=-#2bp\fi 
     }

   %
   \def\G@bbl@#1{}
   \bgroup
     \global\edef\OtherB@ckslash{\expandafter\G@bbl@\string\\}
   \egroup

  \def\SetEPSFDirectory{
           \bgroup\PunctOther@\relax
           \let\\\OtherB@ckslash
           \SetEPSFDirectory@}

 \def\SetEPSFDirectory@#1{
    \edef\temp@{#1}%
    \Trim@0\temp@ @
    \global\toks1\expandafter{\the\Trimtoks@ }\relax
    \egroup
    \EPSFDirectorytoks@=\toks1
    }

 \def\SetEPSFSpec@{%
     \bgroup
     \let\\=\OtherB@ckslash
     \global\edef\EPSFSpec@{%
        \the\EPSFDirectorytoks@\the\EPSFNametoks@}%
     \global\edef\EPSFSpec@{\EPSFSpec@}%
     \egroup}

  %
 \def\TrimTop#1{\advance\TT@ by #1}
 \def\TrimLeft#1{\advance\LT@ by #1}
 \def\TrimBottom#1{\advance\BT@ by #1}
 \def\TrimRight#1{\advance\RT@ by #1}

 \def\TrimBoundingBox#1{%
   \TrimTop{#1}%
   \TrimLeft{#1}%
   \TrimBottom{#1}%
   \TrimRight{#1}%
       }

 \def\TrimFigDims@{%
    \advance\Wd@ by -\LT@ 
    \advance\Wd@ by -\RT@ \RT@=\z@
    \advance\Ht@ by -\TT@ \TT@=\z@
    \advance\Ht@ by -\BT@ 
    }

  %
  \def\ForceWidth#1{\ForcedDim@true
       \ForcedDim@@#1\ForcedHeight@false}
  
  \def\ForceHeight#1{\ForcedDim@true
       \ForcedDim@@=#1\ForcedHeight@true}

  \def\ForceOn{\ForceOn@true}
  \def\ForceOff{\ForceOn@false\ForcedDim@false}
  
  \def\CalculateFigScale@{%
     \ifForcedDim@\FigScale=1000pt
           \ifForcedHeight@
                \Rescale\FigScale\ForcedDim@@\Ht@
           \else
                \Rescale\FigScale\ForcedDim@@\Wd@
           \fi
     \fi
     \Real{\FigScale}%
     \edef\FigSc@leReal{\the\Realtoks}%
     }
   
  \def\ScaleFigDims@{\TheScale=\FigScale
      \ifForcedDim@
           \ifForcedHeight@ \Ht@=\ForcedDim@@  \Scale\Wd@
           \else \Wd@=\ForcedDim@@ \Scale\Ht@
           \fi
      \else \Scale\Wd@\Scale\Ht@        
      \fi
      \ifForceOn@\relax\else\global\ForcedDim@false\fi
      \Scale\LT@\Scale\BT@  
      \Scale\XShift@\Scale\YShift@
      }
      
 \def\HideDisplacementBoxes{\global\def\FrameSpider##1{\null}}
 \def\ShowDisplacementBoxes{\global\def\FrameSpider##1{##1}}
 \let\HideFigureFrames\HideDisplacementBoxes 
 \let\ShowFigureFrames\ShowDisplacementBoxes
 \ShowDisplacementBoxes
 
 \def\hSlide#1{\advance\XSlide@ by #1}
 \def\vSlide#1{\advance\YSlide@ by #1}
 
  \def\SetInkShift@{%
            \advance\XShift@ by -\LT@
            \advance\XShift@ by \XSlide@
            \advance\YShift@ by -\BT@
            \advance\YShift@ by -\YSlide@
             }
  \def\InkShift@#1{\Shifted@{\Scrunched{#1}}}
 
   %
  \def\CleanRegisters@{%
      \globaldefs=1\relax
        \XShift@=\z@\YShift@=\z@\XSlide@=\z@\YSlide@=\z@
        \TT@=\z@\LT@=\z@\BT@=\z@\RT@=\z@
      \globaldefs=0\relax}

 
 \def\SetTexturesEPSFSpecial{\PSOriginfalse
  \gdef\EPSFSpecial##1##2{\relax
    \edef\specialtemp{##2}%
    \SPLIT@0.@\specialtemp.@\relax
    \special{illustration ##1 scaled
                        \the\Initialtoks@}}}
 
  \def\SetUnixCoopEPSFSpecial{\PSOrigintrue 
   \gdef\EPSFSpecial##1##2{%
      \dimen4=##2pt
      \divide\dimen4 by 1000\relax
      \Real{\dimen4}
      \edef\Aux@{\the\Realtoks}%
      \includegraphics{##1\space}}}

  \def\SetBechtolsheimEPSFSpecial@{
   \PSOrigintrue
   \special{\DriverTag@ Include0 "psfig.pro"}%
   \gdef\EPSFSpecial##1##2{%
      \dimen4=##2pt 
      \divide\dimen4 by 1000\relax
      \Real{\dimen4} 
      \edef\Aux@{\the\Realtoks}
      \special{\DriverTag@ Literal "10 10 0 0 10 10 startTexFig
           \the\mag\space 1000 div 
           dup 3.25 neg mul 1 index .25 neg mul translate 
           \Aux@\space mul dup scale "}%
      \special{\DriverTag@ Include1 "##1"}%
      \special{\DriverTag@ Literal "endTexFig "}%
        }}

  \def\SetBechtolsheimDVITPSEPSFSpecial{\def\DriverTag@{dvitps: }%
      \SetBechtolsheimEPSFSpecial@}

  \def\SetBechtolsheimDVI2PSEPSFSSpecial{\def\DriverTag@{DVI2PS: }%
      \SetBechtolsheimEPSFSpecial@}

  \def\SetLisEPSFSpecial{\PSOrigintrue 
   \gdef\EPSFSpecial##1##2{%
      \dimen4=##2pt
      \divide\dimen4 by 1000\relax
      \Real{\dimen4}
      \edef\Aux@{\the\Realtoks}%
      \special{pstext="10 10 0 0 10 10 startTexFig\space
           \the\mag\space 1000 div \Aux@\space mul 
           \the\mag\space 1000 div \Aux@\space mul scale"}%
      \includegraphics{##1}%
      \special{pstext=endTexFig}%
        }}

  \def\SetRokickiEPSFSpecial{\PSOrigintrue 
   \gdef\EPSFSpecial##1##2{%
      \dimen4=##2pt
      \divide\dimen4 by 10\relax
      \Real{\dimen4}
      \edef\Aux@{\the\Realtoks}%
      \includegraphics{##1}}}

  \def\SetInlineRokickiEPSFSpecial{\PSOrigintrue 
   \gdef\EPSFSpecial##1##2{%
      \dimen4=##2pt
      \divide\dimen4 by 1000\relax
      \Real{\dimen4}
      \edef\Aux@{\the\Realtoks}%
      \special{ps::[begin] 10 10 0 0 10 10 startTexFig\space
           \the\mag\space 1000 div \Aux@\space mul 
           \the\mag\space 1000 div \Aux@\space mul scale}%
      \special{ps: plotfile ##1}%
      \special{ps::[end] endTexFig}%
        }}

 \def\SetOzTeXEPSFSpecial{\PSOrigintrue
 \gdef\EPSFSpecial##1##2{%
 \dimen4=##2pt
 \divide\dimen4 by 1000\relax
 \Real{\dimen4}
 \edef\Aux@{\the\Realtoks}
 \special{epsf=\string"##1\string"\space scale=\Aux@}%
 }} 

  \def\SetPSprintEPSFSpecial{\PSOriginFALSE 
   \gdef\EPSFSpecial##1##2{
     \special{##1\space 
       ##2 1000 div \the\mag\space 1000 div mul
       ##2 1000 div \the\mag\space 1000 div mul scale
       \the\LLXtoks@\space neg \the\LLYtoks@\space neg translate
       }}}

 \def\SetArborEPSFSpecial{\PSOriginfalse 
   \gdef\EPSFSpecial##1##2{%
     \edef\specialthis{##2}%
     \SPLIT@0.@\specialthis.@\relax 
     \special{ps: epsfile ##1\space \the\Initialtoks@}}}

 \def\SetClarkEPSFSpecial{\PSOriginfalse 
   \gdef\EPSFSpecial##1##2{%
     \Rescale {\Wd@@}{##2pt}{1000pt}%
     \Rescale {\Ht@@}{##2pt}{1000pt}%
     \special{dvitops: import 
           ##1\space\the\Wd@@\space\the\Ht@@}}}

  \let\SetDVIPSONEEPSFSpecial\SetUnixCoopEPSFSpecial
  \let\SetDVIPSoneEPSFSpecial\SetUnixCoopEPSFSpecial

  \def\SetBeebeEPSFSpecial{
   \PSOriginfalse%
   \gdef\EPSFSpecial##1##2{\relax
    \special{language "PS",
      literal "##2 1000 div ##2 1000 div scale",
      position = "bottom left",
      include "##1"}}}
  \let\SetDVIALWEPSFSpecial\SetBeebeEPSFSpecial

  \def\SetNorthlakeEPSFSpecial{\PSOrigintrue
   \gdef\EPSFSpecial##1##2{%
     \edef\specialthis{##2}%
     \SPLIT@0.@\specialthis.@\relax 
     \special{insert ##1,magnification=\the\Initialtoks@}}}

 \def\SetStandardEPSFSpecial{%
   \gdef\EPSFSpecial##1##2{%
     \ms@g{}
     \ms@g{%
       !!! Sorry! There is still no standard for \string%
       \special\space EPSF integration !!!}%
     \ms@g{%
      --- So you will have to identify your driver using a command}%
     \ms@g{%
      --- of the form \string\Set...EPSFSpecial, in order to get}%
     \ms@g{%
      --- your graphics to print.  See BoxedEPS.doc.}%
     \ms@g{}
     \gdef\EPSFSpecial####1####2{}
     }}

  \SetStandardEPSFSpecial 
 
 \let\wlog\wlog@ld 

 \catcode`\:=\C@tColon
 \catcode`\;=\C@tSemicolon
 \catcode`\?=\C@tQmark
 \catcode`\!=\C@tEmark
 \catcode`\"=\C@tDqt

 \catcode`\@=\EPSFCatAt

 %
 %
 %
 %
 %

\SetEPSFDirectory{} 
\HideDisplacementBoxes
\SetRokickiEPSFSpecial  
%

%
%
\newtheorem{theoremH}{Theorem H (Galois).}

\newtheorem{galoiscorrespondenceA}{The Galois Correspondence (part 1).}
\newtheorem{galoiscorrespondenceB}{The Galois Correspondence (part
  2).}
\renewcommand{\thetheoremH}{}

\renewcommand{\thegaloiscorrespondenceA}{}
\renewcommand{\thegaloiscorrespondenceB}{}
%
%

%
%
\DeclareMathAlphabet{\ams}{U}{msb}{m}{n}
\DeclareMathAlphabet{\goth}{U}{euf}{m}{n}
\def\sss{\scriptstyle}
\def\so{\text{SO}}
\def\pso{\text{PSO}}
\def\po{\text{PO}}
\def\sl{\text{SL}}
\def\psl{\text{PSL}}
\def\pgl{\text{PGL}}
\def\psp{\text{PSP}}
\def\psu{\text{PSU}}
\def\gl{\text{GL}}
\def\ml{\text{ML}}
\def\m{\text{M}}
\def\d{\text{D}}
\def\su{\text{SU}}
\def\sp{\text{Sp}}
\def\f{\text{F}}
\def\pu{\text{PU}}
\def\gal{\text{Gal}}
\def\homeo{\text{Homeo}}
\def\hom{\text{Hom}}
\def\id{\text{id}}
\def\rr{\cal{R}}
\def\rk{\text{rk}\,}
\def\coker{\text{coker}\,}
\def\vol{\text{vol}\,}
\def\covol{\text{covol}\,}
\def\im{\text{im}\,}
\def\ker{\text{ker}\,}
\def\aut{\text{Aut}}
\def\isom{\text{Isom}\,}
\def\endo{\text{End}}
\def\sym{\text{Sym}}
\def\ov{\overline}
\def\tl{\tilde}
\def\wtl{\widetilde}
\def\wh{\widehat}
\def\supp{\text{supp}}
\def\rank{\text{rank}\,}
\def\dom{\text{dom}}
\def\reflec{\text{Reflec}}
\def\codim{\text{codim}\,}
\def\imag{\text{i}}
\def\II{\mathscr I}
\def\NN{\mathscr N}
\def\BB{\mathscr B}
\def\FF{\mathcal F}
\def\EE{\mathcal E}
\def\BB{\mathcal B}
\def\AA{\mathcal A}
\def\CC{\mathcal C}
\def\OO{\mathcal O}
\def\JJ{\mathcal J}
\def\HH{\mathcal H}
\def\RR{\mathcal R}
\def\LL{\mathcal L}
\def\CC{\mathcal C}
\def\PP{\mathcal P}
\def\QQ{\mathcal Q}
\def\TT{\mathcal T}
\def\DD{\mathcal D}
\def\KK{\mathcal K}
\def\gS{\goth{S}}
\def\gg{\goth{g}}
\def\BBB{\goth{B}}
\def\XXX{\goth{X}}
\def\aa{\alpha}
\def\ww{\omega}
\def\bb{\beta}
\def\ss{\sigma}
\def\vphi{\varphi}
\def\wvphi{\widehat{\varphi}}
\def\ll{\lambda}
\def\ev{\varepsilon}
\def\ve{\varepsilon}
\def\Om{\Omega}
\def\wh{\widehat}
\def\ch{\check}
\def\re{\text{Re}\,}
\def\im{\text{Im}\,}
\def\Z{\ams{Z}}\def\E{\ams{E}}
\def\H{\ams{H}}\def\R{\ams{R}}
\def\C{\ams{C}}\def\Q{\ams{Q}}
\def\F{\ams{F}}\def\K{\ams{K}}
\def\P{\ams{P}}\def\B{\ams{B}}
\def\O{\ams{O}}
\def\G{\ams{G}}
\def\M{\ams{M}}
\def\e{\mathbf{e}}
\def\w{\mathbf{w}}
\def\u{\mathbf{u}}
\def\x{\mathbf{x}}
\def\y{\mathbf{y}}
\def\z{\mathbf{z}}
\def\zhat{\hat{\mathbf{z}}}
\def\0{\mathbf{0}}
\def\quo{/\kern -.45em\sim}
%
\newpsobject{showgrid}{psgrid}{subgriddiv=1,griddots=10,gridlabels=6pt,gridcolor=red}
%
\def\ds{\displaystyle}
\def\blob{\bullet}
\def\dv{\,|\,}
\def\lg{\langle}
\def\rg{\rangle}
\def\Langle{\langle\kern -2pt\langle}
\def\Rangle{\rangle\kern -1.9pt\rangle}
\def\eg{{\emph{e.g.}}\,\,}
\def\ie{{\emph{i.e.}}\,\,}
\def\cf{{\emph{c.f.}}\,\,}
%

%
%
\newcommand{\marginlabel}[1]
{\mbox{}\marginpar{\raggedleft\hspace{0pt}\small{#1}}}
\newcommand{\fillin}{{\bf FILL IN}}

%

%
%
\newgray{lightergray}{.85}
%

%
%

\setlength{\textwidth}{5.75in} 
\addtolength{\oddsidemargin}{-.75cm}
\addtolength{\evensidemargin}{-1.25cm}
\addtolength{\topmargin}{-0.5cm}
\addtolength{\textheight}{-1.5cm}

%


\title{Galois Theory -- a first course}

\author{Brent Everitt\thanks{version \today.}}

\institute{{\sc Brent Everitt:}
Department of Mathematics, University of York, York
YO10 5DD, United Kingdom. \email{brent.everitt@york.ac.uk}. 
}

\titlerunning{Galois Theory -- a first course}
\authorrunning{Brent Everitt}

\begin{document}

\maketitle

\begin{pspicture}(0,0)(14,6)
\rput(6.5,3){\BoxedEPSF{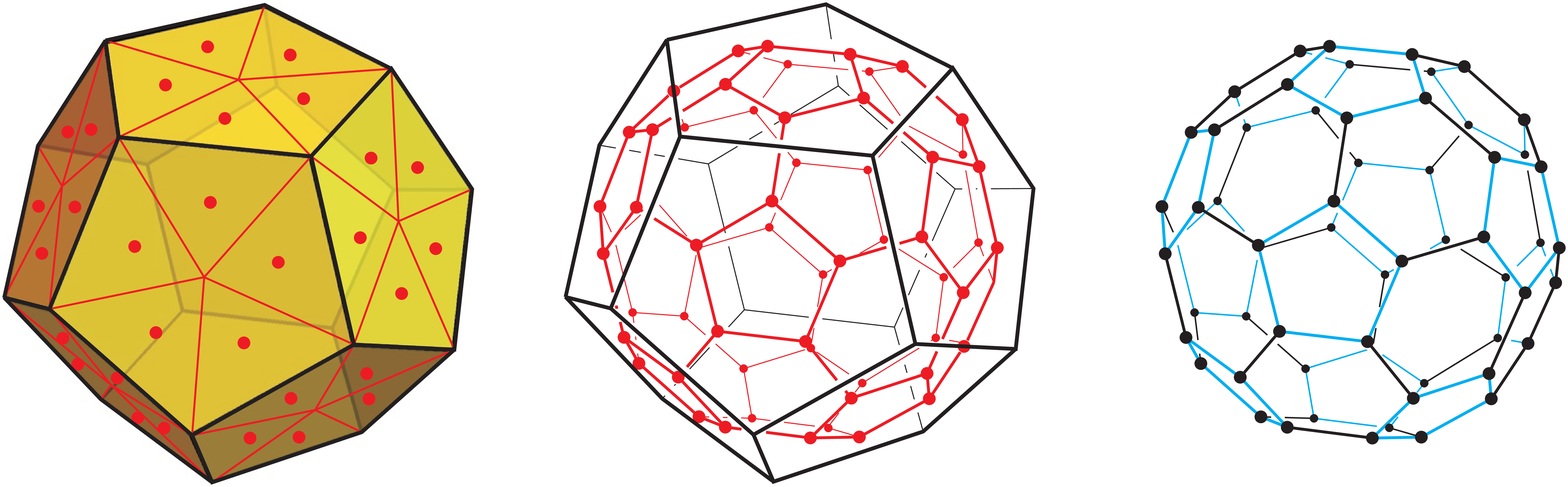 scaled 250}}
\end{pspicture}

\tableofcontents


\section*{Introductory Note}\label{preface}
\label{`preface.page'}

These notes are a self-contained introduction to Galois theory,
designed for the student who has done a first course in abstract algebra. 

To not clutter up the theorems too much, I have made some
restrictions in generality. 
For example, all rings are with 1; all ideals are 
principal; all fields are perfect -- in fact, 
extensions of $\Q$ or of finite fields; consequently all
field extensions are separable; and so on.
This won't be to everyone's taste.

The following prerequisites are assumed, although there are reminders:
the basics of linear algebra, particularly the span and independence
of a set of vectors; the idea of a basis and hence the 
dimension of a vector space. In group theory the fundamentals upto
Lagrange's theorem and the first isomorphism theorem. In ring 
and field theory the definitions and some examples, but probably not
much else.

There are many books on linear algebra and group theory for beginners. My personal favourite is:

\begin{biblist}
\bib{MR965514}{book}{
   author={Armstrong, M. A.},
   title={Groups and symmetry},
   series={Undergraduate Texts in Mathematics},
   publisher={Springer-Verlag, New York},
   date={1988},
   pages={xii+186},
   isbn={0-387-96675-7},
   review={\MR{965514}},
   doi={10.1007/978-1-4757-4034-9},
}
\end{biblist}

Most of the results and proofs are standard and can be found in any book on
Galois theory, but I am particularly indebted to the book of Joseph
Rotman:

\begin{biblist}

\bib{Rotman90}{book}{
   author={Rotman, Joseph},
   title={Galois theory},
   series={Universitext},
   publisher={Springer-Verlag, New York},
   date={1990},
   pages={xii+108},
   isbn={0-387-97305-2},
   review={\MR{1064318}},
   doi={10.1007/978-1-4684-0367-1},
}

\end{biblist}

In particular the proofs I give of 
Theorems C and E, the Fundamental Theorem of Algebra and the Theorem
of Abels-Ruffini are Rotman's
proofs with some elaboration added. 
The statements (although not the proofs) of Theorems F and G are also his.

The figure depicting the $(a,b)$-plane at the end of Section \ref{solving.equations} is
redrawn from the Mathematica poster \emph{Solving the Quintic}. 


\subsection*{The Cover}

The cover shows a Cayley graph for the smallest non-Abelian simple
group -- 
the alternating group $A_5$. 
We will see that the simplicity of this group means there
is no formula for the roots of the polynomial $x^5-4x+2$, using only
the ingredients
$$
\frac{a}{b}\in\Q, +,-,\times,\div,\sqrt[2]{},\sqrt[3]{},\sqrt[4]{},\sqrt[5]{},\ldots
$$
Therefore, there can be no formula for the solutions of a quintic equation
$$ax^5+bx^4+cx^3+dx^2+ex+f=0$$
that works for all possible $a,b,c,d,e,f\in\C$.

\begin{figure}
  \centering
\begin{pspicture}(0,0)(14,12)
\rput(0,0){
\rput(7,6){\BoxedEPSF{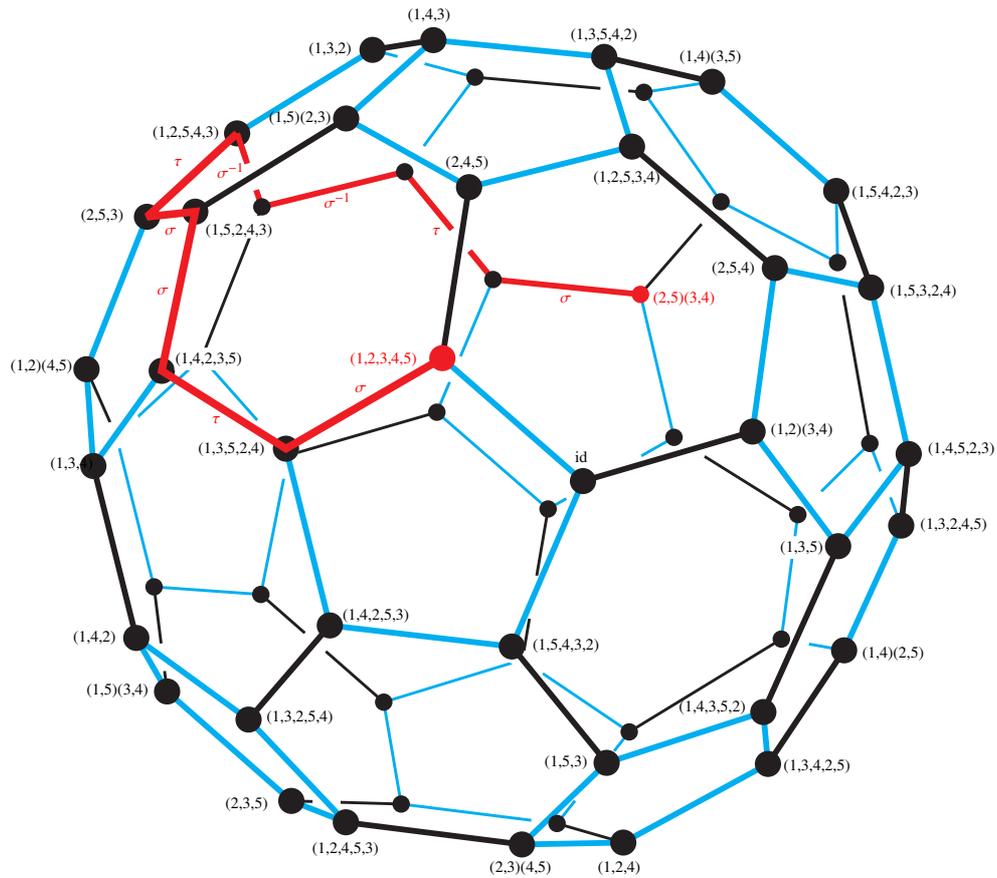 scaled 725}}
\rput(5.2,6.7){${\sss{\red\ss}}$}
\rput(3.3,6.3){${\sss{\red\tau}}$}
\rput(2.6,8){${\sss{\red\ss}}$}
\rput(2.7,8.8){${\sss{\red\ss}}$}
\rput(2.8,9.7){${\sss{\red\tau}}$}
\rput(3.5,9.6){${\sss{\red\ss^{-1}}}$}
\rput(4.9,9.2){${\sss{\red\ss^{-1}}}$}
\rput(6.2,8.8){${\sss{\red\tau}}$}
\rput(7.9,7.9){${\sss{\red\ss}}$}
\rput(8.1,5.8){${\sss\text{id}}$}
\rput(5.5,7.1){${\sss{\red (1,2,3,4,5)}}$}
\rput(3.5,5.9){${\sss (1,3,5,2,4)}$}
\rput(5.4,3.7){${\sss (1,4,2,5,3)}$}
\rput(7.9,3.3){${\sss (1,5,4,3,2)}$}
\rput(11,6.15){${\sss (1,2)(3,4)}$}
\rput(6.6,9.7){${\sss (2,4,5)}$}
\rput(4.8,11.2){${\sss (1,3,2)}$}
\rput(4.4,10.3){${\sss (1,5)(2,3)}$}
\rput(10.1,8.3){${\sss (2,5,4)}$}
\rput(10.7,4.45){\psframe*[linecolor=white](0,0)(0.6,.3)}
\rput(11,4.6){${\sss (1,3,5)}$}
\rput(9.825,2.475){${\sss (1,4,3,5,2)}$}
\rput(8.4,11.4){${\sss (1,3,5,4,2)}$}
\rput(6.1,11.65){${\sss (1,4,3)}$}
\rput(8.7,9.5){${\sss (1,2,5,3,4)}$}
\rput(7.9,1.75){${\sss (1,5,3)}$}
\rput(9.8,11.1){${\sss (1,4)(3,5)}$}
\rput(12.1,9.3){${\sss (1,5,4,2,3)}$}
\rput(2.9,10.1){${\sss (1,2,5,4,3)}$}
\rput(12.6,8){${\sss (1,5,3,2,4)}$}
\rput(13.1,5.9){${\sss (1,4,5,2,3)}$}
\rput(9.45,7.9){${\sss{\red (2,5)(3,4)}}$}
\rput(3.2,8.7){\psframe*[linecolor=white](0,0)(1.2,.3)}
\rput(3.6,8.8){${\sss (1,5,2,4,3)}$}
\rput(1.8,9){${\sss (2,5,3)}$}
\rput(2.75,6.95){\psframe*[linecolor=white](0,0)(1.2,.3)}
\rput(3.2,7.1){${\sss (1,4,2,3,5)}$}
\rput(1.4,5.7){${\sss (1,3,4)}$}
\rput(1,7){${\sss (1,2)(4,5)}$}
\rput(1.7,3.4){${\sss (1,4,2)}$}
\rput(4.4,2.35){${\sss (1,3,2,5,4)}$}
\rput(2,2.7){${\sss (1,5)(3,4)}$}
\rput(13,4.9){${\sss (1,3,2,4,5)}$}
\rput(12.2,3.2){${\sss (1,4)(2,5)}$}
\rput(11.2,1.7){${\sss (1,3,4,2,5)}$}
\rput(5,0.6){${\sss (1,2,4,5,3)}$}
\rput(3.7,1.2){${\sss (2,3,5)}$}
\rput(7.3,0.35){${\sss (2,3)(4,5)}$}
\rput(8.6,0.35){${\sss (1,2,4)}$}
}
\end{pspicture}
\caption{The Cayley graph for the smallest non-Abelian simple group,
the alternating group $A_5$, with respect to $\ss=(1,2,3,4,5)$ -- the
blue edges -- and $\tau=(1,2)(3,4)$ -- the black edges.}
  \label{fig:the_cover:Cayley_graphA5}
\end{figure}

A Cayley graph is a picture of the multiplication in the
group. Let $\ss=(1,2,3,4,$ $5)$. Each blue pentagonal
face can be oriented anti-clockwise when you look at it from the
outside of the ball. Crossing a blue edge anti-clockwise corresponds
to $\ss$ and crossing in the reverse direction (clockwise)
corresponds to $\ss^{-1}$. Crossing a black edge in either direction
corresponds to the element $\tau=(1,2)(3,4)$.

The vertices correspond to the 60 elements of $A_5$ -- the front ones
are marked, with the identity element in the center. If a path
$\gamma$ starts at the vertex corresponding to $\mu_1\in A_5$ and
finishes at $\mu_2\in A_5$, then
reading the $\ss$ and $\tau$ labels off $\gamma$ as you travel along
it gives $\mu_1\gamma=\mu_2$. For example, the red path gives 
$(1,2,3,4,5)\cdot\ss\tau\ss^{2}\tau\ss^{-2}\tau\ss=(2,5)(3,4)$. 

It is a curious coincidence that the smallest non-Abelian simple group
has Cayley graph the
the simplest known pure form of Carbon -- Buckminsterfullerine $C_{60}$.


\section{What is Galois Theory?}
\label{lect1}

A quadratic equation $ax^2+bx+c=0$ has exactly two -- possibly
repeated -- solutions in the complex numbers.
There is a formula for them,
that appears in the ninth century Persian book 
{\em Hisab al-jabr w'al-muqabala}\footnote{\emph{al-jabr\/}, hence ``algebra''.},
by Abu Abd-Allah ibn Musa al'Khwarizmi.
In modern notation it says:
$$
x=\frac{-b\pm\kern-2pt\sqrt{b^2-4ac}}{2a}.
$$
Less familiar maybe,  
$ax^3+bx^2+cx+d=0$ has three $\C$-solutions, and they too can be
expressed algebraically using 
Cardano's formula.
One solution turns out to be
\begin{equation*}
\begin{split}
-\frac{b}{3a}
&+\sqrt[3]{-\frac{1}{2}\biggl(\frac{2b^3}{27a^3}-\frac{bc}{a^2}+\frac{d}{a}\biggr)
+\sqrt{\frac{1}{4}
\biggl(\frac{2b^3}{27a^3}-\frac{bc}{a^2}+\frac{d}{a}\biggr)^2
+\frac{1}{27}\biggl(\frac{c}{a}-\frac{b^2}{3a^2}\biggr)^3}}\\
&+\sqrt[3]{-\frac{1}{2}\biggl(\frac{2b^3}{27a^3}-\frac{bc}{a^2}+\frac{d}{a}\biggr)
-\sqrt{\frac{1}{4}
\biggl(\frac{2b^3}{27a^3}-\frac{bc}{a^2}+\frac{d}{a}\biggr)^2
+\frac{1}{27}\biggl(\frac{c}{a}-\frac{b^2}{3a^2}\biggr)^3}},
\end{split}
\end{equation*}
and the other two have similar expressions.
There is an even more complicated formula, attributed to Descartes, for
the roots of a quartic polynomial equation. 

What is kind of miraculous is not that the solutions exist, but they can always
be expressed in terms of the coefficients and the basic algebraic
operations,
$$
+,-,\times,\div,\sqrt{},\sqrt[3]{},\sqrt[4]{},\sqrt[5]{},\ldots
$$
By the turn of the 19th century, no equivalent formula for the solutions to a
quintic (degree five) polynomial equation had materialised, and it was Abels who
had the crucial realisation: {\em no such formula exists}.

Such a statement can be interpreted in a number of ways. Does it mean that there are
always algebraic expressions for the roots of quintic polynomials, but their form is
too complex for one {\em single\/} formula to describe all the possibilities? It would therefore
be necessary to have a number, maybe even infinitely many, formulas. The reality turns out to
be far worse: there are specific polynomials, such as $x^5-4x+2$, whose solutions cannot be
expressed algebraically in any way whatsoever. 

A few decades later, Evarist\'{e} Galois started thinking about the
deeper problem: {\em why\/} don't these formulae exist?
Thus, Galois theory was originally motivated by the desire to understand,
in a much more precise way, the solutions to
polynomial equations.

Galois' idea was this: study the solutions
by studying their ``symmetries''. 
Nowadays, when we hear the word symmetry, we normally think of
group theory. 
To reach his conclusions, Galois kind of invented group theory
along the way.
In studying the
symmetries of the solutions to a polynomial, Galois theory establishes
a link between these two areas of mathematics.
We illustrate the idea, in a somewhat loose manner, with an example.

\subsection{The symmetries of the solutions to $x^3-2=0$.}

\paragraph{\hspace*{-0.3cm}}
\parshape=2 0pt\hsize 0pt.75\hsize 
We work in $\C$. Let $\aa$ be the real cube
root of $2$, ie: $\aa=\sqrt[3]{2}\in\R$ and,
$\ww=-\frac{1}{2}+\frac{\sqrt{3}}{2}\imag.$ 
Note that $\ww$ is
a cube root of $1$, and so $\ww^3=1$.
\vadjust{\hfill\smash{\lower 72pt
\llap{
\begin{pspicture}(0,0)(3,2)
\rput(1,0.25){
\uput[0]{270}(-.1,2){\pstriangle[fillstyle=solid,fillcolor=lightgray](1,0)(2,1.73)}
\rput(2,1){$\aa$}
\rput(-0.2,-0.2){$\aa\ww^2$}
\rput(-0.20,2.2){$\aa\ww$}
\rput(-0.7,1){${\red s}$}\rput(1.3,2.2){${\red t}$}
\psline[linecolor=red](-0.5,1)(1.75,1)
\psline[linecolor=red](0.08,0)(1.26,2)
}
\end{pspicture}
}}}\ignorespaces

\parshape=7 0pt.75\hsize 0pt.75\hsize 0pt.75\hsize 0pt.75\hsize
0pt.75\hsize 0pt.75\hsize 0pt\hsize  
The three solutions to $x^3-2=0$ (or {\em roots\/} of $x^3-2$) are the
complex numbers $\aa,\aa\ww$ and $\aa\ww^2$, forming the vertices of
the equilateral triangle shown. The triangle has what we might
call 
``geometric symmetries'': three reflections, a
counter-clockwise rotation through $\frac{1}{3}$ of a turn,
a counter-clockwise rotation through $\frac{2}{3}$ of a turn and
a counter-clockwise rotation through $\frac{3}{3}$ of a turn $=$ the
identity symmetry. Notice for now that if $s$ and $t$ are the
reflections in the lines shown, the geometrical symmetries are
$s$, $t$, $tst$, $ts$, $(ts)^2$ and $(ts)^3=\id$ (read these
expressions  from right to left).

The symmetries referred to in the preamble are not so much geometric
as ``number theoretic''.
It will take a little explaining before we see what this means. 

\begin{definition}[field -- version ${\mathbf 1}$]
\label{def_field1}
A field is a set $F$ with two operations, called, purely
  for convenience, $+$ and $\times$, 
such that for any $a,b,c\in F$,
\begin{enumerate}
\item $a+b$ and $a\times b$ ($=ab$ from now on) are uniquely defined elements of $F$,
\item $a+(b+c)=(a+b)+c$,
\item $a+b=b+a$, 
\item there is an element $0\in F$ such that $0+a=a$,
\item for any $a\in F$ there is an element $-a\in F$ with $(-a)+a=0$,
\item $a(bc)=(ab)c$,
\item $ab=ba$,
\item there is an element $1\in F\setminus\{0\}$ with $1\times a=a$,
\item for any $a\not= 0\in F$ there is an $a^{-1}\in F$ with $aa^{-1}=1$,
\item $a(b+c)=ab+ac$.
\end{enumerate}
\end{definition}

A field is just a set of things that you can add, subtract, multiply and
divide so that the ``usual'' rules of algebra are satisfied.
Familiar examples of fields are $\Q$, $\R$ and $\C$; familiar non-examples of
fields are $\Z$, polynomials and matrices (you cannot in general
divide integers, polynomials and matrices to get integers, polynomials
or matrices). 

\paragraph{\hspace*{-0.3cm}}
A {\em subfield\/}
of a field $F$ is a subset that also forms a field under the
same $+$ and $\times$. Thus, $\Q$ is a subfield of $\R$ which
is in turn a subfield of $\C$, and so on. On the other hand,
$\Q\cup\{\kern-2pt\sqrt{2}\}$ is not a subfield of $\R$: it is a subset
but axiom 1 fails, as both $1$ and $\kern-2pt\sqrt{2}$ are elements but $1+\kern-2pt\sqrt{2}$
is not.

\begin{definition}
\label{def:section0_defn10}
If $F$ is a subfield of the complex numbers $\C$ and $\bb\in\C$, then $F(\bb)$
is the
smallest subfield of $\C$ that contains both $F$ and the
number $\bb$.
\end{definition}

What do we mean by smallest? That there is no other 
field $F'$ having the same properties as $F(\bb)$ which is smaller, ie: no $F'$ with
$F\subset F'\text{ and }\bb\in F'\text{ too,}$
but $F'$ properly $\subset F(\bb)$. It is usually more useful to say it the
other way around:
\begin{equation}\label{eq1}\text{If $F'$ is a subfield }
\text{ that also contains }F\text{ and }\bb,
\text{ then $F'$ contains }F(\bb)\text{ too}\tag{*}.\end{equation}
Loosely speaking, $F(\bb)$ is all the complex
numbers we get by adding, subtracting, multiplying and dividing the
elements of $F$ and $\bb$ together in all possible ways.

The construction of Definition \ref{def:section0_defn10} can be
continued: write $F(\bb,\gamma)$ for the
smallest subfield of $\C$ containing $F$ and the
numbers $\bb$ and $\gamma$, and so on.

\paragraph{\hspace*{-0.3cm}}
To illustrate with some trivial examples, $\R(\imag)$ can be shown to be all
of $\C$: it must contain all expressions of the form $b\imag$
for $b\in\R$, and hence all expressions of the form $a+b\imag$ with $a,b\in\R$,
and this accounts for all the complex numbers; $\Q(2)$ is
equally clearly just $\Q$ back again. 

Slightly less trivially, $\Q(\kern-2pt\sqrt{2})$, the smallest subfield of $\C$
containing all the rational numbers and $\kern-2pt\sqrt{2}$, is a field
that is strictly bigger than $\Q$ (eg: it contains $\kern-2pt\sqrt{2}$) but is
much, much smaller than all of $\R$.

\begin{vexercise}
Show that $\kern-2pt\sqrt{3}\not\in\Q(\kern-2pt\sqrt{2})$.
\end{vexercise}

\paragraph{\hspace*{-0.3cm}}
Returning to the symmetries of the solutions to 
$x^3-2=0$, we look at the field $\Q(\aa,\ww)$, where 
$\aa=\sqrt[3]{2}\in\R\text{ and }
\ww=-\frac{1}{2}+\frac{\sqrt{3}}{2}\imag,$
as before. Since $\Q(\aa,\ww)$ is by definition a field, and fields are closed under
$+$ and $\times$, we have
$$
\aa\in\Q(\aa,\ww)\text{ and }\ww\in\Q(\aa,\ww)\Rightarrow
\aa\times\ww=\aa\ww,\aa\times\ww\times\ww=\aa\ww^2\in
\Q(\aa,\ww)\text{ too.}
$$
So, 
$\Q(\aa,\ww)$ contains all the solutions to the equation $x^3-2=0$. 
On the other hand:

\begin{vexercise}
Show that $\Q(\aa,\ww)$ has ``just enough'' numbers to solve the
equation $x^3-2=0$. More precisely, $\Q(\aa,\ww)$ is the {\em
smallest\/} subfield of $\C$ that contains all the solutions to this
equation.
({\em hint\/}: you may find it useful to do Exercise \ref{ex_lect1.0} first).
\end{vexercise}

\paragraph{\hspace*{-0.3cm}}
A very loose definition of a symmetry of the solutions of $x^3-2=0$ is
that it is a
``rearrangement" of $\Q(\aa,\ww)$ that does not disturb (or is compatible with) the $+$ and
$\times$.

To see an example, consider the two fields $\Q(\aa,\ww)$
and $\Q(\aa,\ww^2)$. Despite first appearances they are
actually the same: certainly 
$$
\aa,\ww\in\Q(\aa,\ww)\Rightarrow \aa,\ww^2\in\Q(\aa,\ww).
$$
But $\Q(\aa,\ww^2)$ is the smallest field containing $\Q,\aa$ and $\ww^2$, so by (*),
$$
\Q(\aa,\ww^2)\subseteq\Q(\aa,\ww).
$$
Conversely, 
$$\aa,\ww^2\times\ww^2=\ww^4=\ww\in\Q(\aa,\ww^2)\Rightarrow
\Q(\aa,\ww)\subseteq\Q(\aa,\ww^2).
$$
Remember that $\ww^3=1$ so $\ww^4=\ww$. Thus $\Q(\aa,\ww)$ and
$\Q(\aa,\ww^2)$ are indeed the same. In fact, we 
should think of $\Q(\aa,\ww)$ and $\Q(\aa,\ww^2)$ as two different ways
of looking at the same field, or more suggestively, the same field
viewed from two different angles. 

When we hear the phrase, ``the same field
viewed from two different angles'', it suggests that there is a
symmetry that moves the field from one point of
view to the other. In the case above, there should be a symmetry of the
field $\Q(\aa,\ww)$ that puts it into the form $\Q(\aa,\ww^2)$. Surely this symmetry should 
send
$$
\aa\mapsto\aa,\text{ and }\ww\mapsto\ww^2.
$$
We haven't yet defined what we mean by, ``is compatible with the $+$ and $\times$''.
It will turn out to mean that if $\aa$ and $\ww$ are sent to $\aa$ and $\ww^2$ respectively, then
$\aa\times\ww$ should go to $\aa\times\ww^2$; similarly $\aa\times\ww\times\ww$ should go to 
$\aa\times\ww^2\times\ww^2=\aa\ww^4=\aa\ww$, and so on.
The symmetry thus moves the vertices of the equilateral triangle
determined by the roots in the same way that the reflection $s$ of the
triangle does (see Figure \ref{fig:figure2}).

\begin{figure}
  \centering
\begin{pspicture}(0,0)(14,4)
\rput(3,1){
\uput[0]{270}(-.1,2){\pstriangle[fillstyle=solid,fillcolor=lightgray](1,0)(2,1.73)}
\pscurve[linecolor=red]{<->}(-0.5,0)(-1,1)(-0.5,2)
\psbezier[linecolor=red]{->}(2.1,0.8)(3.1,0)(3.1,2)(2.1,1.2)
\rput(2,1){$\aa$}
\rput(-0.2,-0.2){$\aa\ww^2$}
\rput(-0.20,2.2){$\aa\ww$}
\rput(-0.7,1){${\red s}$}
\psline[linecolor=red](-0.5,1)(1.75,1)
}
\rput(9,-0.25){
\rput(0,.5){\rput(0,1){
\rput{270}(-.1,2){\pstriangle[fillstyle=solid,fillcolor=lightgray](1,0)(2,1.73)}
\rput(-0.05,0.05){\pscurve[linecolor=red]{<->}(0.2,2.2)(1.5,2.5)(2,1.2)}
\rput{-120}(-.3,1.95){\psbezier[linecolor=red]{->}(2.1,0.8)(3.1,0)(3.1,2)(2.1,1.2)}
\rput(2,1){$\aa$}
\rput(-0.2,-0.2){$\aa\ww^2$}
\rput(-0.20,2.2){$\aa\ww$}
\rput(1.2,2.1){${\red t}$}
}
\psline[linecolor=red](-.1,1)(1,3)
}}
\end{pspicture}
\caption{The symmetry $\Q(\aa,\ww)=\Q(\aa,\ww^2)$ (\emph{left}) and
  the symmetry $\Q(\aa\ww,\ww^2)=\Q(\aa,\ww)$ (\emph{right}) of the
  equation $x^3-2=0$.}
  \label{fig:figure2}
\end{figure}

(This compatibility also means that it would have made no sense to have the symmetry
send $\aa\mapsto\ww^2$ and $\ww\mapsto\aa$. A symmetry should not
fundamentally change the algebra 
of the field, so that if an element like $\ww$ cubes to give $1$, then its image under the 
symmetry should too: but $\aa$ {\em doesn't\/} cube to give $1$.)

\paragraph{\hspace*{-0.3cm}}
In exactly the same way, we can consider the fields $\Q(\aa\ww,\ww^2)$ and $\Q(\aa,\ww)$. We have
$$
\aa,\ww\in\Q(\aa,\ww)\Rightarrow \ww^2,\aa\ww\in\Q(\aa,\ww)\Rightarrow
\Q(\aa\ww,\ww^2)\subseteq\Q(\aa,\ww);
$$
and conversely,
$\aa\ww,\ww^2\in\Q(\aa\ww,\ww^2)\Rightarrow
\aa\ww\ww^2=\aa\ww^3=\aa\in\Q(\aa\ww,\ww^2)$, and hence also
$$\aa^{-1}\aa\ww=\ww\in\Q(\aa\ww,\ww^2)
\Rightarrow\Q(\aa,\ww)\subseteq\Q(\aa\ww,\ww^2).
$$

Thus, $\Q(\aa,\ww)$ and $\Q(\aa\ww,\ww^2)$ are the same field, and
we can define another symmetry that sends
$$
\aa\mapsto\aa\ww,\text{ and }\ww\mapsto\ww^2.
$$
To be compatible with the $+$ and $\times$,
$$
\aa\times\ww\mapsto\aa\ww\times\ww^2=\aa\ww^3=\aa,\text{ and
}\aa\times\ww\times\ww\mapsto 
\aa\ww\times\ww^2\times\ww^2=\aa\ww^5=\aa\ww^2.
$$
So the symmetry is like the reflection $t$ of the triangle (see
Figure \ref{fig:figure2}).

Finally, if we have two symmetries of the solutions to some equation, we
would like their composition to be a symmetry too. So if the
symmetries $s$ and $t$ of the original triangle are to
be considered, so should $tst,st,(st)^2$ and $(st)^3=1$. 

\paragraph{\hspace*{-0.3cm}}
The symmetries of the solutions to $x^3-2=0$ include all
the geometrical symmetries of the equilateral triangle. We will see later that any symmetry
of the solutions is uniquely determined as a permutation of the solutions. Since there
are $3!=6$ of these, we have accounted for all of them. So the solutions to 
$x^3-2=0$ have symmetry precisely the geometrical symmetries of the equilateral triangle.

\paragraph{\hspace*{-0.3cm}}
If this was always the case, things would be a little disappointing: Galois theory would just be the study
of the ``shapes" formed by the roots of polynomials, and the symmetries of those shapes.
It would 
be a branch of planar geometry.


Fortunately, if we look at the solutions to $x^5-2=0$, given in Figure
\ref{fig:figure2a}, then
something quite different 
happens. Exercise \ref{ex_lect1.1a} shows you how to find these expressions for the
roots.

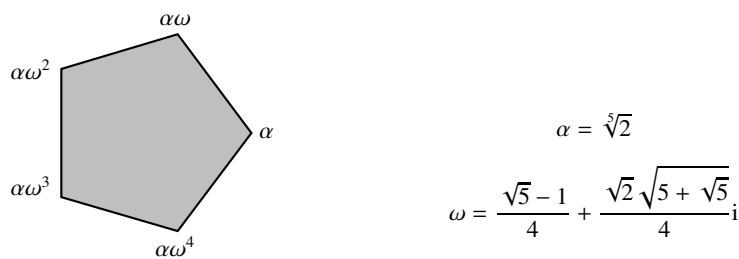
\begin{figure}
  \centering
\begin{pspicture}(14,3.5)
\rput(3,.6){
\rput(2.7,1.3){$\alpha$}\rput(1.5,2.8){$\alpha\ww$}
\rput(-.4,.6){$\alpha\ww^3$}
\rput(-.4,2.15){$\alpha\ww^2$}
\rput(1.5,-.2){$\alpha\ww^4$}
\pspolygon[fillstyle=solid,fillcolor=lightgray](0,.45)(1.53,0)(2.5,1.3)(1.53,2.61)(0,2.15)
}
\rput(10,2){$\alpha=\sqrt[5]{2}$}
\rput(10,1){${\displaystyle \ww=\frac{\sqrt{5}-1}{4}+\frac{\sqrt{2}\sqrt{5+\sqrt{5}}}{4}\imag}$}
\end{pspicture}
\caption{The solutions in $\C$ to the equation $x^5-2=0$.}
  \label{fig:figure2a}
\end{figure}

A pentagon has 10 geometric symmetries, and you can check that all arise as symmetries of the
roots of $x^5-2$ using the same reasoning as in the previous example.
But this reasoning also gives a symmetry that 
moves the vertices
of the pentagon according to:
$$
\begin{pspicture}(0,-.5)(3,3)
\pspolygon[fillstyle=solid,fillcolor=lightgray](0,.45)(1.53,0)(2.5,1.3)(1.53,2.61)(0,2.15)
\psbezier[linecolor=red]{->}(2.85,1.1)(3.85,0.3)(3.85,2.3)(2.85,1.5)
\psline[linecolor=red]{->}(1.45,2.5)(0.05,.5)
\psline[linecolor=red]{->}(1.45,.1)(.05,2.05)
\pscurve[linecolor=red]{->}(.15,.5)(1,.9)(1.5,.1)
\pscurve[linecolor=red]{->}(.15,2.1)(.95,1.73)(1.55,2.5)
\rput(2.7,1.3){$\alpha$}\rput(1.5,2.8){$\alpha\ww$}
\rput(-.4,.6){$\alpha\ww^3$}
\rput(-.4,2.15){$\alpha\ww^2$}
\rput(1.5,-.2){$\alpha\ww^4$}
\end{pspicture}
$$
This is not a geometrical symmetry -- if it was, it would be pretty
disastrous for the poor pentagon. Later we will see that for $p>2$ a prime number, the solutions
to $x^p-2=0$ have $p(p-1)$ symmetries. While agreeing with the six obtained for $x^3-2=0$, it gives 
twenty for $x^5-2=0$. In fact, it was a bit of a fluke that all the number theoretic symmetries
were also geometric ones for $x^3-2=0$. A $p$-gon has $2p$ geometrical
symmetries and $2p\leq p(p-1)$ 
with equality only when $p=3$. 

\subsection*{Further Exercises for Section \thesection}

\begin{vexercise}\label{ex1.-1}
Show that the picture on the left of Figure \ref{fig:figure3} depicts
a symmetry of the solutions to $x^3-1=0$, but the one on the right
does not.

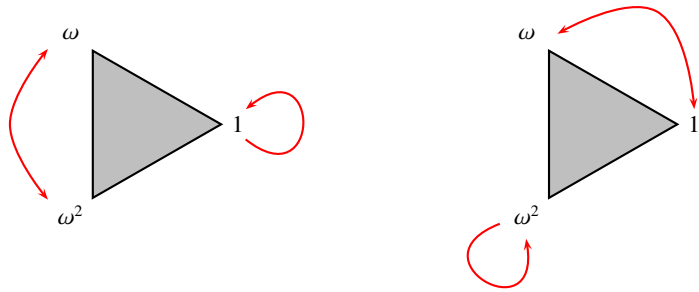
\begin{figure}
  \centering
\begin{pspicture}(14,4)
\rput(3,1.25){
\uput[0]{270}(-.1,2){\pstriangle[fillstyle=solid,fillcolor=lightgray](1,0)(2,1.73)}
\pscurve[linecolor=red]{<->}(-0.5,0)(-1,1)(-0.5,2)
\psbezier[linecolor=red]{->}(2.1,0.8)(3.1,0)(3.1,2)(2.1,1.2)
\rput(2,1){$1$}
\rput(-0.2,-0.2){$\ww^2$}
\rput(-0.20,2.2){$\ww$}
}
\rput(9,1.25){
\uput[0]{270}(-.1,2){\pstriangle[fillstyle=solid,fillcolor=lightgray](1,0)(2,1.73)}
\pscurve[linecolor=red]{<->}(0.2,2.2)(1.5,2.5)(2,1.2)
\rput{-120}(-.2,1.9){\psbezier[linecolor=red]{->}(2.1,0.8)(3.1,0)(3.1,2)(2.1,1.2)}
\rput(2,1){$1$}
\rput(-0.2,-0.2){$\ww^2$}
\rput(-0.20,2.2){$\ww$}
}
\end{pspicture}
\caption{A symmetry (\emph{left}) and
  non-symmetry (\emph{right}) of the equation $x^3-1=0$ from Exercise \ref{ex1.-1}.}
  \label{fig:figure3}
\end{figure}
\end{vexercise}

\begin{vexercise}\label{ex_lect1.1a}
You already know that the $3$-rd roots of 1 are 
$1$ and ${\displaystyle -\frac{1}{2}\pm\frac{\sqrt{3}}{2}\imag}$. What about the $p$-th roots
for higher primes?
\begin{enumerate}
\item If $\ww\not= 1$ is a $5$-th root it satisfies
$\ww^4+\ww^3+\ww^2+\ww+1=0$. Let
$u=\ww+\ww^{-1}$. Find a quadratic polynomial satisfied by $u$, and solve it to obtain $u$.
\item Find another quadratic satisfied this time by $\ww$, with {\em coefficients involving\/} $u$, and 
solve it to find explicit expressions for the four primitive $5$-th roots of 1.
\item Repeat the process with the $7$-th roots of $1$.
\end{enumerate}

\noindent\emph{factoid\/}: the $n$-th roots of 1 can be expressed 
in terms of field operations and extraction of pure roots of rationals for any $n$. The details 
-- which are a little complicated -- were completed by the work of Gauss and Galois.
\end{vexercise}

\begin{vexercise}\label{ex_lect1.0}
Let $F$ be a field such that the element
$$
\underbrace{1+1+\cdots +1}_{n\text{ times}}\not= 0,
$$
for any $n>0$. 
Arguing intuitively, show that $F$
contains a copy of the rational numbers $\Q$
(see also Section \ref{lect4}).
\end{vexercise}

\begin{vexercise}\label{ex_lect1.1b}
Let $\alpha=\sqrt[6]{5}\in\R$ and ${\ds
\ww=\frac{1}{2}+\frac{\sqrt{3}}{2}\imag}$.
Show that $\Q(\aa,\ww)$, $\Q(\aa\ww^2,\ww^5)$ and $\Q(\aa\ww^4,\ww^5)$
are all the same field.
\end{vexercise}

\begin{vexercise}\label{ex_lect1.1}
\hspace{1em}
\begin{enumerate}
\item
Show that there is a symmetry of the solutions to $x^5-2=0$ that moves
the vertices of the pentagon according to:
$$
\begin{pspicture}(0,-.5)(3,3)
\pspolygon[fillstyle=solid,fillcolor=lightgray](0,.45)(1.53,0)(2.5,1.3)(1.53,2.61)(0,2.15)
\psbezier[linecolor=red]{->}(2.85,1.1)(3.85,0.3)(3.85,2.3)(2.85,1.5)
\psline[linecolor=red]{->}(1.45,2.5)(0.05,.5)
\psline[linecolor=red]{->}(1.45,.1)(.05,2.05)
\pscurve[linecolor=red]{->}(.15,.5)(1,.9)(1.5,.1)
\pscurve[linecolor=red]{->}(.15,2.1)(.95,1.73)(1.55,2.5)
\rput(2.7,1.3){$\alpha$}\rput(1.5,2.8){$\alpha\ww$}
\rput(-.4,.6){$\alpha\ww^3$}
\rput(-.4,2.15){$\alpha\ww^2$}
\rput(1.5,-.2){$\alpha\ww^4$}
\end{pspicture}
$$
$\text{where }\alpha=\sqrt[5]{2}, \text{ and }\ww^5=1, \ww\in\C.$
\item
Show that the solutions in $\C$ to the equation $x^6-5=0$ have $12$
symmetries.
\end{enumerate}
\end{vexercise}



\section{Rings I: Polynomials}\label{lect2}

\paragraph{\hspace*{-0.3cm}}
There are a number of 
basic facts about polynomials 
that we will need. 
Suppose $F$ is a field ($\Q,\R$ or $\C$ will do for now).
A \emph{polynomial over $F$\/} is an expression of the form      
$$
f=a_0+a_1x+\cdots a_nx^n,
$$
where the $a_i\in F$ and $x$ is a ``formal symbol" (sometimes called
an indeterminate). We don't tend to think of $x$ as a variable -- it is
purely an object on which to perform algebraic manipulations.
Denote the set of all polynomials over $F$ by $F[x]$. If $a_n\not= 0$,
then $n$ is called the {\em degree\/} of $f$, written
$\deg(f)$. 
If the
leading coefficient $a_n=1$, then $f$ is {\em monic\/}.

(The degree of a non-zero constant polynomial is thus $0$, but to
streamline some statements
define $\deg(0)=-\infty$, where $-\infty<n$ for all $n\in\Z$.
The arithmetic of degrees is just the arithmetic of
non-negative integers, except we decree that $-\infty+n=-\infty$. A
polynomial $f$ is \emph{constant\/} if $\deg f\leq 0$, and
\emph{non-constant\/} otherwise). 

\paragraph{\hspace*{-0.3cm}}
We can add and multiply elements of $F[x]$ in the usual
way:   
$$
\text{if }f=\sum_{i=0}^n a_ix^i\text{ and }g=\sum_{i=0}^m b_ix^i,
$$
then,
\begin{equation}\label{polyops}
f+g=\sum_{i=0}^{\text{max}(m,n)}(a_i+b_i)x^i\text{ and }fg=
\sum_{k=0}^{m+n} c_k x^k\text{ where }c_k=\sum_{i+j=k}a_ib_j.
\end{equation}
that is, $c_k=a_0b_k+a_1b_{k-1}+\cdots +a_kb_0$. The arithmetic of the
coefficients (ie: how to work out $a_i+b_i, a_ib_j$ and so on) is just that of
the field $F$.

\begin{vexercise}
\hspace{1em}
Convince yourself that this multiplication is really just the ``expanding brackets" 
multiplication of polynomials that you know so well.
\end{vexercise}

\paragraph{\hspace*{-0.3cm}}
The polynomials $F[x]$ together with this addition form an example of
an Abelian group:

\begin{definition}[Abelian group]
An Abelian group is a set $G$ endowed with an operation $(f,g)\mapsto f+g$ 
such that for all $f,g,h\in G$:
\begin{enumerate}
\item $f+g$ is a uniquely defined element of $G$ (closure);
\item $f+(g+h)=(f+g)+h$ (associativity);
\item there is an $0\in G$ such that $0+f=f=f+0$ (identity),;
\item for any $f\in G$ there is an element $-f\in G$ with
  $f+(-f)=0=(-f)+f$ (inverses).
\item $f+g=g+f$ (commutativity).
\end{enumerate}
\end{definition}

We will see more general kinds of groups in Section
\ref{groups.stuff}, where we
will write the operation as juxtaposition.
In an Abelian group however, it is customary to write the operation as
addition, as we have done above. 
In $F[x]$ the identity $0$ is the zero polynomial, and the inverse of $f$
is
$$
-\biggl(\sum_{i=0}^n a_ix^i\biggr)=\sum_{i=0}^n (-a_i)x^i.
$$
(To see that $F[x]$ forms an abelian group, we have $f+g=g+f$ exactly when 
$a_i+b_i=b_i+a_i$ 
for all $i$. But the coefficients of our polynomials come from the field $F$, and 
addition is 
always commutative in a field.)

\paragraph{\hspace*{-0.3cm}}
If we want to include the multiplication, we need the formal
concept of a ring:

\begin{definition}[ring]
A ring is a set $R$ endowed with two operations $(a,b)\mapsto
a+b$ and $a\times b$ such that for 
all $a,b\in R$,
\begin{enumerate}
\item $R$ is an Abelian group under $+$;
\item for any $a,b\in R$, $a\times b$ is a uniquely determined element of
$R$ (closure of $\times$);
\item $a\times(b\times c)=(a\times b)\times c$ (associativity of $\times$);
\item there is an $1\in R$ such that $1\times a=a=a\times 1$ (identity
of $\times$);
\item $a\times(b+ c)=(a\times b)+ (a\times c)$ and $(b+c)\times
  a=(b\times a) +(c\times a)$ (the
distributive law).
\end{enumerate}
\end{definition}

Loosely, a ring is a set on which you can {\em add\/} ($+$), {\em subtract\/} 
(the inverse of $+$ in the 
Abelian group) and {\em multiply\/} ($\times$), but {\em not\/} necessarily divide 
(there is no inverse axiom 
for $\times$).

Here are some well known examples of rings:
$$
\Z,F[x]\text{ for $F$ a field}, \Z_n\text{ and }M_n(F),
$$
where $\Z_n$ is addition and multiplication of integers modulo $n$ and
$M_n(F)$ are the $n\times n$ matrices, with
entries from $F$, together with the usual addition and multiplication of matrices.

A ring is {\em commutative\/} if the second operation $\times$ is commutative: 
$a\times b=b\times a$ for all $a,b$.

\begin{vexercise}
\hspace{1em}
\begin{enumerate}
\item
Show that $fg=gf$ for polynomials $f,g\in F[x]$, hence $F[x]$ is a
commutative ring.
\item
Show that $\Z$ and $\Z_n$ are commutative rings, but $M_n(F)$ is not for
{\em any\/} field $F$ if $n>2$.
\end{enumerate}
\end{vexercise}

\paragraph{\hspace*{-0.3cm}}
The observation that $\Z$ and $F[x]$ are both commutative rings is not just some vacuous 
formalism. A concrete way of putting it is this: at a very fundamental level, 
integers and polynomials share the same algebraic properties. 

When we work with polynomials, we need to be able to add and multiply the coefficients of the polynomials
in a way that doesn't produce any nasty surprises--in other words, the coefficients have to satisfy
the basic rules of algebra that we all know and love. But these basic rules of algebra can be 
found among the
axioms of a ring. Thus, to work with polynomials successfully, all we need is that the 
coefficients come 
from a ring. 

This observation means that for a ring $R$, we can form the set of all polynomials with 
coefficients from $R$ and
add and multiply them together as we did above. In fact, we are just repeating what we did above, but
are replacing the field $F$ with a ring $R$. In practice, rather than
allowing our coefficients to some from an arbitrary ring, we take $R$ to
be commutative. This leads to,

\begin{definition}
Let $R[x]$ be the set of all polynomials with coefficients from some
commutative ring $R$,
together with the $+$ and $\times$ defined at (\ref{polyops}).
\end{definition}

\begin{vexercise}
\hspace{1em}
\begin{enumerate}
\item
Show that $R[x]$ forms a ring.
\item
Since $R[x]$ forms a ring, we can consider polynomials with coefficients
from $R[x]$: take a new variable, say $y$, and consider $R[x][y]$. Show
that this is just the set of polynomials in two variables $x$ and $y$
together with the `obvious' $+$ and $\times$.
\end{enumerate}
\end{vexercise}

\paragraph{\hspace*{-0.3cm}}
A commutative ring $R$ is called an {\em integral domain\/} iff for any $a,b\in R$
with $a\times b=0$, we have $a=0$, or $b=0$ or both. Clearly $\Z$ is an integral domain.

\begin{vexercise}
\hspace{1em}
\begin{enumerate}
\item
Show that any field $F$ is an integral domain.
\item
For what values of $n$ is $\Z_n$ an integral domain?
\end{enumerate}
\end{vexercise}

\begin{lemma}\label{sect2lemma1}
Let $f,g\in R[x]$ for $R$ an integral domain. Then
\begin{enumerate}
\item $\deg(fg)=\deg(f)+\deg(g)$. 
\item $R[x]$ is an integral domain.
\end{enumerate}
\end{lemma}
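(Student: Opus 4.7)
The plan is to handle part (1) directly by computing the leading coefficient of $fg$, and then derive part (2) as a quick corollary of part (1) together with the convention $\deg 0 = -\infty$.

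For part (1), first I would dispose of the degenerate case: if either $f = 0$ or $g = 0$, then by the formula for the product at (\ref{polyops}) every coefficient of $fg$ is a sum of products in which at least one factor is $0$, so $fg = 0$ and both sides equal $-\infty$ under the convention that $-\infty + n = -\infty$. Having dealt with that, assume $f,g$ are both non-zero, and write $f = \sum_{i=0}^n a_i x^i$ with $a_n \neq 0$, and $g = \sum_{j=0}^m b_j x^j$ with $b_m \neq 0$, so that $\deg f = n$ and $\deg g = m$. By the multiplication formula, the coefficient of $x^{n+m}$ in $fg$ is
\[
c_{n+m} = \sum_{i+j = n+m} a_i b_j = a_n b_m,
\]
since any other term in the sum forces $i > n$ or $j > m$, and those coefficients are zero. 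Moreover, every $c_k$ with $k > n+m$ vanishes for the same reason. The key step is then that $a_n b_m \neq 0$: this is exactly where I use that $R$ is an integral domain, since $a_n \neq 0$ and $b_m \neq 0$. Hence $\deg(fg) = n + m = \deg f + \deg g$.

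For part (2), suppose $f, g \in R[x]$ with $fg = 0$. Then $\deg(fg) = -\infty$, and by part (1) this equals $\deg f + \deg g$. Since the sum of two non-negative integers can never be $-\infty$, at least one of $\deg f$ or $\deg g$ must be $-\infty$, so $f = 0$ or $g = 0$. Thus $R[x]$ is an integral domain.

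The only potentially subtle point — and where the integral domain hypothesis does real work — is the assertion that $a_n b_m \neq 0$; everything else is bookkeeping with the convention on $-\infty$. Were $R$ merely commutative, one could only conclude $\deg(fg) \leq \deg f + \deg g$, and part (2) would fail outright, as is visible already in $R = \Z_4$ with $f = g = 2$.
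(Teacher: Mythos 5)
Your proof is correct and takes the same route as the paper's: identify the leading coefficient $c_{n+m}=a_nb_m$ of the product, invoke the integral-domain hypothesis to see it is nonzero, and deduce part (2) from part (1) via the $-\infty$ convention. The only differences are cosmetic — you spell out the degenerate case $f=0$ or $g=0$ and explain why coefficients beyond index $n+m$ vanish, where the paper simply calls the inequality $\deg(fg)\leq n+m$ "obvious."
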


The second part means that given polynomials $f$ and $g$ (with coefficients from an integral 
domain), we have
$fg=0\Rightarrow f=0$ or $g=0$. You have been implicitly using this fact
when you solve polynomial equations by factorising
them. 

\begin{proof}
We have
$$
fg= \sum_{k=0}^{m+n} c_k x^k\text{ where }c_k=\sum_{i+j=k}a_ib_j,
$$
so in particular $c_{m+n}=a_nb_m\not= 0$ as $R$ is an integral domain.
Thus $\deg(fg)\geq m+n$ and since the reverse
inequality is obvious, we have part (1) of the Lemma. Part (2) now
follows immediately since $fg=0\Rightarrow\deg(fg)=-\infty\Rightarrow
\deg f+\deg g=-\infty$, which can
only happen if at least one of $f$ or $g$ has degree $=-\infty$ (see the
footnote at the bottom of the first page). 
\qed
\end{proof}

All your life you have been happily adding the degrees of polynomials when you multiply them. But
as Lemma \ref{sect2lemma1} shows, {\em this is only possible when the
  coefficients of the polynomial come from 
an integral domain\/}. For example, $\Z_6$, the integers under addition and multiplication
modulo $6$, is a ring that is not an integral domain (as $2\times 3=0$ for example), and sure enough,
$$
(3x+1)(2x+1)=5x+1,
$$ 
where all of this is happening in $\Z_6[x]$.

\paragraph{\hspace*{-0.3cm}}
Although we cannot necessarily divide two polynomials and get another polynomial, we 
{\em can\/} divide upto a possible ``error term", or, as it is more
commonly called, a remainder.

\begin{theoremA}
Suppose  $f$ and $g$ are elements of $R[x]$ where the leading coefficient of 
$g$ has a multiplicative inverse in the ring $R$. Then there exist $q$ and $r$ in
$R[x]$ (quotient and remainder) such that
$$
f=qg+r,
$$
where 
the degree of $r$ is $<$ the degree of $g$.
\end{theoremA}

When $R$ is a field (where you may be more used to doing long division)
all the non-zero coefficients of a polynomial have multiplicative inverses (as
they lie in a field) so the condition on $g$ becomes $g\not= 0$.

\begin{proof}
For all $q\in R[x]$, consider those polynomials of the form 
$f-gq$ and 
choose one, say $r$, of smallest degree. 
Let $d=\deg r$ and $m=\deg g$. We claim that $d<m$.
This will give the result, as the $r$ chosen has he form $r=f-gq$ for some $q$,
giving $f=gq+r$.
Suppose that $d\geq m$ and consider
$$
\bar{r}=(r_d)(g_m^{-1})x^{(d-m)}g,
$$
a polynomial since $d-m\geq 0$.
Notice also that we have used
the fact that the leading coefficient of $g$ has a multiplicative inverse.
The leading term 
of $\bar{r}$ is $r_dx^d$, which is also the 
leading term of $r$. Thus, $r-\bar{r}$ has degree $<d$. But 
$r-\bar{r}=f-gq-r_{d}g_{m}^{-1}x^{d-m}g$ by definition, which equals
$f-g(q-r_{d}g_{m}^{-1}x^{d-m})=f-g\bar{q}$, say. Thus $r-\bar{r}$ has the
form $f-g\bar{q}$ too, but with smaller degree than $r$, which was of minimal degree amongst
all polynomials of this form--this is our desired contradiction.
\qed
\end{proof}

\begin{vexercise}
\hspace{1em}
\begin{enumerate}
\item If $R$ is an integral domain, show that the quotient and remainder are unique.
\item Show that the quotient and remainder are not unique when you divide polynomials
in $\Z_6[x]$. 
\end{enumerate}
\end{vexercise}

\paragraph{\hspace*{-0.3cm}}
Other familiar concepts from $\Z$ are those of divisors, common divisors and greatest 
common divisors. Since we need no more algebra to define these notions
than given by
the axioms for a ring, these concepts carry
pretty much straight over to polynomial rings. We will state these in the setting of polynomials
from $F[x]$ for $F$ a field.

\begin{definition}
For $f,g\in F[x]$, we say that $f$ divides $g$ iff $g=fh$ for some $h\in F[x]$. Write
$f\dv g$.
\end{definition}

\begin{definition}
Let $f,g\in F[x]$. Suppose that $d$ is a polynomial satisfying 
\begin{enumerate}
\item $d$ is a common divisor of $f$ and $g$, ie: $d\dv f$ and $d\dv g$;
\item 
if $c$ is a polynomial with $c\dv f$ and $c\dv g$ then $c\dv d$;
\item $d$ is monic.
\end{enumerate}
Then $d$ is called (the) greatest common divisor of $f$ and $g$.
\end{definition}


As with the division algorithm, we have tweaked the definition from
$\Z$ to make it work in $F[x]$. The reason is that we want {\em the\/}
gcd to be unique. In $\Z$ you ensure this by insisting that all gcd's
are positive; in $F[x]$ we insist they are monic. 

\paragraph{\hspace*{-0.3cm}}
$x^2-1$ and $2x^3-2x^2-4x\in\Q[x]$ have greatest common divisor $x+1$:
it is certainly a common divisor as 
$x^2-1=(x+1)(x-1)$ and $2x^3-2x^2-4x=2x(x+1)(x-2)$. From the two
factorisations, any other common divisor must have the form
$\lambda(x+1)$ for some $\lambda\in\Q$, and so divides $x+1$.

\paragraph{\hspace*{-0.3cm}} They key result on gcd's is:

\begin{theorem}\label{gcd}
Any two $f,g\in F[x]$ have a greatest common divisor $d$. Moreover,
there are $a_0,b_0\in F[x]$ such that
$$
d=a_0f+b_0g.
$$
\end{theorem}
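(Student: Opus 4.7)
The plan is to adapt the standard integer-case argument. The key object is the set
\[
S = \{af + bg : a, b \in F[x]\}
\]
of all $F[x]$-linear combinations of $f$ and $g$. Assuming $f$ and $g$ are not both zero (in which case the statement is vacuous under our definition of gcd, which requires a monic $d$), the set $S$ contains nonzero elements, for instance $f$ or $g$ themselves.

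Next, I would invoke the well-ordering of the nonnegative integers to select an element $d_0 \in S \setminus \{0\}$ of \emph{minimal degree}, and write down $d_0 = a_1 f + b_1 g$ explicitly from the definition of $S$. If the leading coefficient of $d_0$ is $\lambda \in F \setminus \{0\}$, I would rescale by setting $d = \lambda^{-1} d_0$, so that $d$ is monic and still has the form $d = a_0 f + b_0 g$ with $a_0 = \lambda^{-1} a_1$ and $b_0 = \lambda^{-1} b_1$. This $d$ is my candidate gcd.

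Now I would verify the three defining properties. For condition (1), that $d \mid f$: apply Theorem~A to divide $f$ by $d$, obtaining $f = qd + r$ with $\deg r < \deg d$. Substituting the expression for $d$ gives
\[
r = f - qd = (1 - qa_0)f + (-qb_0)g \in S.
\]
Since $d$ had minimal degree among nonzero elements of $S$, we must have $r = 0$, so $d \mid f$. The argument for $d \mid g$ is identical. Condition (3), monicity, is built in by the rescaling. For condition (2), if $c \in F[x]$ is any common divisor of $f$ and $g$, say $f = cf'$ and $g = cg'$, then
\[
d = a_0 f + b_0 g = c(a_0 f' + b_0 g'),
\]
so $c \mid d$. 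The representation $d = a_0 f + b_0 g$ is already in hand from the construction, establishing the ``moreover'' clause.

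The only real subtlety, and where I would expect the main obstacle, is the minimal-degree selection: one must be sure that $S \setminus \{0\}$ is nonempty to pick a minimum, and that the set of degrees is a subset of $\mathbb{Z}_{\geq 0}$ so that well-ordering applies. Handling the edge case $f = g = 0$ separately avoids the empty-set issue. Beyond that, the proof is a routine transcription of the integer Euclidean-algorithm argument, with Theorem~A playing the role of integer division with remainder and the convention $\deg(0) = -\infty$ ensuring $r = 0$ is correctly forced by the minimality of $\deg d$.
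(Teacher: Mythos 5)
Your proof is correct and follows essentially the same strategy as the paper: both take the minimal-degree element of the ideal $\{af+bg\}$ and use the division algorithm to show it divides $f$ and $g$. The only cosmetic difference is that you verify $d\mid f$ and $d\mid g$ directly, while the paper first proves the set equals the set of all multiples of $d$ and then reads off the divisibility; you also handle the $f=g=0$ edge case and the monic rescaling a bit more explicitly.
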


Compare this with $\Z$! You can replace $F[x]$ by $\Z$ in the
following proof to get the corresponding fact for the
integers.

\begin{proof}
Consider the set $I=\{af+bg\,|\,a,b\in F[x]\}$. Let $d\in I$ be a monic
polynomial with minimal degree. Then $d\in I$ gives that $d=a_0f+b_0g$
for some $a_0,b_0\in F[x]$. We claim that $d$ is the gcd of $f$ and $g$.
The following two basic facts are easy to verify:
\begin{enumerate}
\item The set $I$ is a subgroup of the Abelian group $F[x]$--exercise.
\item If $u\in I$ and $w\in F[x]$ then $uw\in I$, since
$wu=w(af+bg)=(wa)f+(wb)g\in I$. 
\end{enumerate}
Consider now the set
$P=\{hd\,|\,h\in F[x]\}$. Since $d\in I$ and by the second observation
above, $hd\in I$, and we have $P\subseteq I$. Conversely, if $u\in I$ then by
the division algorithm, $u=qd+r$ where $r=0$ or $\deg(r)<\deg(d)$. Now,
$r=u-qd$ and $d\in I$, so $qd\in I$ by (2). But $u\in I$ and $qd\in I$ so
$u-dq=r\in I$ by (1) above. Thus, if $\deg(r)<\deg(d)$ we would have a
contradiction to the degree of $d$ being minimal, and so we must have
$r=0$, giving $u=qd$. This means that any element of $I$ is a multiple
of $d$, so $I\subseteq P$.

Now that we know that $I$ is just the set of all multiples of $d$, and
since letting $a=1,b=0$ or $a=0,b=1$ gives that $f,g\in I$, we have that
$d$ is a common divisor of $f$ and $g$. Finally, if $d'$ is another
common divisor, then $f=u_1d'$ and $g=u_2d'$, and since $d=a_0f+b_0g$, we
have $d=a_0u_1d'+b_0u_2d'=d'(a_0u_1+b_0u_2)$ giving $d'\dv d$. Thus $d$
is indeed the greatest common divisor.
\qed
\end{proof}

\paragraph{\hspace*{-0.3cm}}
Here is another fundamental concept:

\begin{definition}[Ring homomorphism]
\label{def_hom}
Let $R$ and $S$ be rings. A mapping $\varphi:R\rightarrow S$ is called a
ring homomorphism if and only if for all $a,b\in R$,
\begin{enumerate}
\item $\varphi(a+b)=\varphi(a)+\varphi(b)$;
\item $\varphi(ab)=\varphi(a)\varphi(b)$;
\item $\varphi(1_R)=1_S$ (where $1_R$ is the multiplicative identity
  in $R$ and $1_S$ the multiplicative identity in $S$).
\end{enumerate}
\end{definition}

The reason we need the last item but not $\varphi(0)=0$ is because
$\varphi(0)=\varphi(0+0)=\varphi(0)+\varphi(0)$, and since $S$ is an
group under addition, we can cancel (using the existence of
inverses under addition!) to get $\varphi(0)=0$. 
We can't do this to get $\varphi(1)=1$ as
we don't
have inverses under multiplication.

You should think of a homomorphism as being like an ``algebraic analogy", or a way of
transferring algebraic
properties; the algebra in the image of $\varphi$ is analogous
to the algebra of $R$. 

\paragraph{\hspace*{-0.3cm}}
We will have more to say about general homomorphisms later; for
now we satisfy ourselves with an example: let
$R[x]$ be a ring of
polynomials over a commutative ring $R$, and let $c\in R$. Define a mapping
$\ev_c:R[x]\rightarrow R$ by 
$$
\ev_c(f)=f(c)\stackrel{\text{def}}{=} a_0+a_1c+\cdots+a_nc^n.
$$
ie: substitute $c$ into $f$. This is a ring homomorphism from $R[x]$
to $R$, called the {\em evaluation at $c$ homomorphism\/}:
to see this, certainly
$\ev_c(1)=1$, and I'll leave $\ev_c(f+g)=\ev_c(f)+\ev_c(g)$ to you.
Now,
$$
\ev_c(fg)=
\ev_c\biggl(\sum_{k=0}^{m+n} d_k x^k\biggr)
=\sum_{k=0}^{m+n} d_k c^k\text{ where }d_k=\sum_{i+j=k}a_ib_j.
$$
But $\sum_{k=0}^{m+n} d_k c^k=\biggl(\sum_{i=0}^n
a_ic^i\biggr)\biggl(
\sum_{j=0}^m b_jc^j\biggr)=\ev_c(f)\ev_c(g)$ and we are done.

One consequence of $\ev_c$ being a
homomorphism is that given a factorisation of a polynomial, say $f=gh$, we have
$\ev_c(f)=\ev_c(g)\ev_c(h)$, ie: if we substitute $c$ into $f$
we
get the same answer as when we substitute into $g$ and $h$ and multiply
the answers. 

\subsection*{Further Exercises for Section \thesection}

\begin{vexercise}
Let $f,g$ be polynomials over the field $F$ and $f=gh$. Show that $h$ is also a 
polynomial over $F$.
\end{vexercise}

\begin{vexercise}\label{ex2.2}
Let $\ss:R\rightarrow S$ be a homomorphism of (commutative)
rings. Define $\ss^*:R[x]\rightarrow S[x]$ by 
$$
\ss^*:\sum_i a_ix^i\mapsto \sum_i \ss(a_i)x^i.
$$
Show that $\ss^*$ is a homomorphism.
\end{vexercise}

\begin{vexercise}\label{ex2.3}
Let $R$ be a commutative ring and define $\partial:R[x]\rightarrow R[x]$ by
$$
\partial:\sum_{k=0}^n a_kx^k\mapsto \sum_{k=1}^{n} (ka_k)x^{k-1}\text{
and } \partial(a)=0,
$$
for any constant $a$.
(Ring a bell?) Show that $\partial(f+g)=\partial(f)+\partial(g)$ and 
$\partial(fg)=\partial(f)g+f\partial(g)$. The map $\partial$ is called
the {\em formal derivative\/}.
\end{vexercise}

\begin{vexercise}\label{ex2.4}
Let $p$ be a fixed polynomial in the ring $F[x]$ and consider the map
$\ev_p:F[x]\rightarrow F[x]$ given by $f(x)\mapsto f(p(x))$. Show
that $\ve _p$ is a homomorphism. (The homomorphism $\ve_p$
is a generalisation of the evaluation at $\lambda$ homomorphism $\ve_\lambda$.)
\end{vexercise}


\section{Roots and Irreducibility}\label{lect3}

\paragraph{\hspace*{-0.3cm}}
The early material in this section is familiar for polynomials with
real coefficients. The point is that these results are
still true for polynomials with coefficients coming from an arbitrary
field $F$, and quite often, for polynomials with coefficients from a
ring $R$.

Let 
$$
f=a_0+a_1x+\cdots +a_nx^n
$$
be a polynomial in $R[x]$ for $R$ a ring. We say that $c\in R$ is a {\em root\/}
of $f$ if
$$
f(c)=a_0+a_1c+\cdots+a_nc^n=0\text{ in }R.
$$
As a trivial example, the polynomial $x^2+1$ is in all three
rings $\Q[x],\R[x]$ and $\C[x]$. It has no roots in either $\Q$ or $\R$,
but two in $\C$. 

\paragraph{\hspace*{-0.3cm}} We start with a familiar result:

\begin{factthm}
An element $c\in R$ is a root of $f$ if and only if $f=(x-c)g$ for
some $g\in R[x]$. 
\end{factthm}

In English, $c$ is a root precisely when $x-c$ is a factor.

\begin{proof}
This is an illustration of the power of the division algorithm, Theorem A. Suppose
that $f$ has the form $(x-c)g$ for some $g\in R[x]$. Then 
$$
f(c)=(c-c)g(c)=0.g(c)=0,
$$
so that $c$ is indeed a root (notice we used that $\ev_c$ is a
homomorphism, ie: that $\ev_c(f)=\ev_c(x-c)\ev_c(g)$). 
On the other hand, by the division
algorithm, we can divide $f$ by the polynomial $x-c$ to get,
$$
f=(x-c)g+a,
$$
where $a\in R$ (we can use the division algorithm, as the leading
coefficient of $x-c$, being $1$, has 
an inverse in $R$). 
Since $f(c)=0$, we must also have
$(c-c)g+a=0$, hence $a=0$. Thus $f=(x-c)g$ as required.
\qed
\end{proof}

\paragraph{\hspace*{-0.3cm}}
Here is another familiar result that is
reassuringly true for polynomials over (almost) any ring.

\begin{theorem}\label{degree.number.of.roots}
Let $f\in R[x]$ be a non-zero polynomial with coefficients from the
integral domain $R$. Then $f$ has at
most $\deg(f)$ roots in $R$.
\end{theorem}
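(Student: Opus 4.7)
The plan is to proceed by strong induction on $n = \deg(f)$, using the Factor Theorem as the main engine and exploiting the integral domain hypothesis twice: once to control degrees under multiplication (via Lemma \ref{sect2lemma1}) and once to cancel a nonzero element.

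For the base case $n = 0$, $f$ is a non-zero constant polynomial, so $f(c) \ne 0$ for every $c \in R$, and $f$ has $0 \le 0$ roots. For the inductive step, assume the statement holds for all non-zero polynomials of degree strictly less than $n$, and let $f$ have degree $n \ge 1$. If $f$ has no roots in $R$, we are done since $0 \le n$. Otherwise, pick a root $c \in R$. By the Factor Theorem, $f = (x-c)g$ for some $g \in R[x]$. Since $R$ is an integral domain, Lemma \ref{sect2lemma1}(1) gives $\deg g = \deg f - \deg(x-c) = n - 1$, and in particular $g \ne 0$.

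Now I want to argue that every root of $f$ other than $c$ is a root of $g$. If $d \in R$ satisfies $f(d) = 0$, then applying the evaluation homomorphism $\ev_d$ gives $(d-c)\,g(d) = 0$. Here is where the integral domain hypothesis is essential: if $d \ne c$, then $d - c \ne 0$, so $g(d) = 0$. Thus the roots of $f$ consist of $c$ together with (a subset of) the roots of $g$. By the inductive hypothesis applied to $g$ — which is a non-zero polynomial of degree $n-1$ with coefficients in the integral domain $R$ — the polynomial $g$ has at most $n-1$ roots. Therefore $f$ has at most $1 + (n-1) = n$ roots in $R$, completing the induction.

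The main obstacle, and the reason the integral domain hypothesis cannot be dropped, is the cancellation step $(d-c)g(d) = 0 \Rightarrow g(d) = 0$; without it, extra roots could appear outside those of $g$. The degree formula in Lemma \ref{sect2lemma1}(1) is also used, and likewise depends on being in an integral domain. Both of these are genuinely needed — over a ring like $\Z_6$ one can write down quadratics with more than two roots, so the hypothesis is not cosmetic.
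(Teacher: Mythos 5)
Your proof is correct and takes essentially the same route as the paper: strong induction on the degree, the Factor Theorem to extract a linear factor after finding a root, and the integral domain hypothesis invoked both for the degree formula and to cancel $d-c\ne 0$ in $(d-c)g(d)=0$. The only difference is cosmetic: you spell out the cancellation step $(d-c)g(d)=0\Rightarrow g(d)=0$ slightly more explicitly than the paper does, which is a nice touch but not a different argument.
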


\begin{proof}
We use induction on the degree, which is $\geq 0$ since $f$ is
non-zero. If $\deg(f)=0$ then $f=\mu$ a nonzero constant in $R$, which clearly has no
roots, so the result holds. Assume $\deg(f)\geq 1$ and that
the result is true for any polynomial of degree $<\deg(f)$. If $f$ has no roots
in $R$ then we are done. Otherwise, $f$ has a root $c\in R$ and
$$
f=(x-c)g,
$$
for some $g\in R[x]$ by the Factor Theorem. Moreover, as $R$ is an
integral domain, 
$f(a)=0$ iff either $a-c=0$ or $g(a)=0$, so
the roots of $f$ are
$c$, together with the roots of $g$. Since the degree of $g$ must be
$\deg(f)-1$ (by Lemma \ref{sect2lemma1}, again using the fact that $R$ is an integral domain), 
it has at most $\deg(f)-1$ roots by the inductive hypothesis, and these combined with
$c$ give at most $\deg(f)$ roots for $f$.
\qed
\end{proof}

\paragraph{\hspace*{-0.3cm}}
A cherished fact such as Theorem \ref{degree.number.of.roots} will not hold
if the coefficients
do not come from an integral domain. For instance, if $R=\Z_6$, then the
quadratic polynomial $(x-1)(x-2)=x^2+3x+2$ has  roots $1,2,4$ and $5$ in $\Z_6$.

\begin{vexercise}\label{ex3.20}
A polynomial like $x^2+2x+1=(x+1)^2$ has $1$ as a {\em repeated root\/}.
It's derivative, in the sense of calculus, is $2(x+1)$, which also
has $1$ as a root. In general, and in light of the Factor Theorem, call 
$c\in F$ a repeated root of $f$ iff $f=(x-c)^kg$ for some $k>1$.
\begin{enumerate}
\item Using the formal derivative $\partial$ (see Exercise \ref{ex2.3}), show that
$c$ is a repeated root of $f$ if and only if $c$ is a root of $\partial(f)$.
\item Show that the roots of $f$ are distinct if and only
if $\gcd(f,\partial(f))=1$.
\end{enumerate}
\end{vexercise}

\paragraph{\hspace*{-0.3cm}}
For reasons that will become clearer later, a very important role 
is played by polynomials that cannot be ``factorised".

\begin{definition}[irreducible polynomial over ${\mathbf F}$]
Let $F$ be a field and $f\in F[x]$ a non-constant polynomial. A 
non-trivial factorisation
of $f$ is an expression of the form $f=gh$, where $g,h\in F[x]$ and
$\deg g,\deg h\geq 1$ (equivalently, $\deg g,\deg h<\deg f$).
Call $f$ reducible over $F$ iff it has a non-trivial factorisation,
and irreducible over $F$ otherwise. 
\end{definition}

Thus, a polynomial over a field $F$ is irreducible
precisely when it cannot be written as a product of non-constant
polynomials. Put another way,
$f\in F[x]$ is irreducible precisely when it is divisible only
by a constant $c\in F$, or $c f$. 

\begin{aside}
For polynomials over a ring the definition is slightly more complicated:
let $f\in R[x]$ a non-constant polynomial with coefficients from the
ring $R$. A {\em non-trivial factorisation\/}
of $f$ is an expression of the form $f=gh$, where $g,h\in R[x]$ and either,
\begin{enumerate}
\item $\deg g,\deg h\geq 1$, or
\item if either $g$ or $h$ is a constant $\lambda\in R$, then
  $\lambda$ has {\em no\/} multiplicative inverse in $R$. 
\end{enumerate}
Say $f$ is {\em reducible\/} over $R$ iff it has a non-trivial 
factorisation, and {\em irreducible\/} 
over $R$ otherwise. 
If $R=F$ a field, then the second possibility never arises, 
as every non-zero
element of $F$ has a multiplicative inverse. 
As an example, $3x+3=3(x+1)$ is a non-trivial
factorisation in $\Z[x]$ but a trivial one in $\Q[x]$.
\end{aside}

\paragraph{\hspace*{-0.3cm}}
The ``over $F$" that follows reducible or irreducible is crucial;
polynomials are never absolutely reducible or irreducible. For
example $x^2+1$ is irreducible over $\R$ but reducible over 
$\C$. 

There is one exception to the previous sentence:
a linear polynomial $f=ax+b\in F[x]$ is irreducible over any field $F$.
If $f=gh$ then since $\deg f=1$, we cannot have both $\deg(g), \deg(h)\geq
1$, for then $\deg(gh)=\deg(g)+\deg(h)\geq 1+1=2$, a contradiction. 
Thus, one of $g$ or $h$ must be a constant with $f$ thus irreducible
over $F$.

\begin{vexercise}\label{ex3.1}
\begin{enumerate}
\item Let $F$ be a field and $a\in F$. Show that $f$ is an
irreducible polynomial over $F$ if and only if $a f$ is
irreducible over $F$ for any $a\not= 0$.
\item Show that if $f(x+a)$ is irreducible over $F$ then $f(x)$ is too.
\end{enumerate}
\end{vexercise}

\paragraph{\hspace*{-0.3cm}} There is the famous:
\begin{fundthmalg}
Any non-constant $f\in\C[x]$ has a root in $\C$.
\end{fundthmalg}
So if $f\in\C[x]$ has $\deg f\geq 2$, then $f$ has a root in $\C$, hence a linear factor 
over $\C$, hence is reducible over $\C$. Thus, the only irreducible polynomials over $\C$ are
the linear ones.


\begin{vexercise}\label{ex3.21}
Show that if $f$ is irreducible over $\R$ then $f$ is either linear or quadratic.
\end{vexercise}

\paragraph{\hspace*{-0.3cm}}
A common mistake is to equate having no roots in $F$ with being
irreducible over $F$. 
But:
\begin{description}
\item[--] {\em A polynomial can be irreducible over $F$ and still have
roots in $F$\/}: we saw above that a linear polynomial
$ax+b$ is always irreducible,
and yet has a root in $F$, namely $-b/a$. It is true though that if a polynomial
$f$ has degree $\geq 2$ and had a root in $F$, then by the factor theorem it would
have a linear factor so would be reducible. Thus, if $\deg(f)\geq 2$ and $f$
is irreducible over $F$, then $f$ has no roots in $F$.
\item[--]{\em A polynomial can have no roots in $F$ but not be irreducible over 
$F$\/}: the polynomial $x^4+2x^2+1=(x^2+1)^2$ is reducible over $\Q$, but
with roots $\pm\imag\not\in\Q$.
\end{description}

\paragraph{\hspace*{-0.3cm}}
There is no general method for deciding if a polynomial over an arbitrary field $F$ 
is irreducible.
The best we can hope for is an ever expanding
list of techniques, of which the first is:

\begin{proposition}\label{irr1}
Let $F$ be a field and $f\in F[x]$ be a polynomial of degree $\leq 3$. 
If $f$ has no roots in $F$ then it is irreducible over $F$.
\end{proposition}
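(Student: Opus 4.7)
The plan is to argue by contrapositive: assume $f$ is reducible over $F$, and produce a root of $f$ in $F$. So suppose $f = gh$ is a non-trivial factorisation, meaning $g, h \in F[x]$ with $\deg g, \deg h \geq 1$. Since $F$ is a field, it is an integral domain, so Lemma \ref{sect2lemma1}(1) applies and gives $\deg f = \deg g + \deg h$.

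Next I would use the degree constraint $\deg f \leq 3$ to pin down the shape of the factorisation. Since $\deg g \geq 1$ and $\deg h \geq 1$, if both were $\geq 2$ we would get $\deg f \geq 4$, a contradiction. Hence at least one of the two factors — say $g$, without loss of generality — has degree exactly $1$. (The case $\deg f = 1$ is handled separately and quickly: the paper already notes that linear polynomials are irreducible over any field, so a linear $f$ is not reducible, and there is nothing to prove.)

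Write $g = ax + b$ with $a \neq 0$, and set $c = -b/a \in F$ (this is where we again use that $F$ is a field, so that $a^{-1}$ exists). Then $g(c) = 0$. Since $\ev_c : F[x] \to F$ is a ring homomorphism,
\[
f(c) = \ev_c(f) = \ev_c(g)\,\ev_c(h) = 0 \cdot h(c) = 0,
\]
so $c$ is a root of $f$ in $F$, contradicting the hypothesis. This yields the contrapositive, hence the proposition.

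There is no real obstacle here; the only thing to be careful about is to invoke the integral-domain hypothesis correctly so that $\deg(gh) = \deg g + \deg h$ (otherwise the degree bookkeeping that forces a linear factor collapses), and to use that $F$ is a field (not just a ring) so that the candidate root $-b/a$ actually lies in $F$. Both are automatic here since $F$ is assumed to be a field.
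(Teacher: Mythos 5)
Your proof is correct and follows essentially the same route as the paper's: argue the contrapositive, use the degree formula $\deg f = \deg g + \deg h$ (valid since $F$ is an integral domain) together with $\deg f \leq 3$ to force a linear factor, and read off the root $-b/a \in F$. The extra detail you add — explicitly invoking the evaluation homomorphism and noting the degree-$1$ case — is fine but not a different method.
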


\begin{proof}
Arguing by the contrapositive, 
if $f$ is reducible then $f=gh$ with $\deg g,\deg h\geq 1$. Since $\deg g+\deg h=\deg f\leq 3$,
we must have for $g$ say, that $\deg g=1$. Thus $f=(ax+b)h$ and $f$ has the root
$-b/a$. 
\qed
\end{proof}

\paragraph{\hspace*{-0.3cm}}
For another, possibly familiar, example of a field: let $p$ be a prime and $\F_p$ the set
$\{0,1\ldots,p-1\}$. Define addition and multiplication on this set to
be addition and multiplication of integers modulo $p$. 
You can verify that $\F_p$ is a field by directly checking the axioms. The only
tricky one is the existence of inverses under multiplication: to show
this use the
gcd theorem from Section \ref{lect2}, but for $\Z$ rather than polynomials.

\begin{vexercise}
Show that a field $F$ is an integral domain. Hence show that if $n$ is {\em not} prime,
then the addition and multiplication of integers modulo $n$ is {\em not\/} a field.
\end{vexercise}

Arithmetic modulo $n$, for the various $n$, thus gives the sequence
$$
\F_2,\F_3,\Z_4,\F_5,\Z_6,\F_7,\Z_8,\Z_9,\Z_{10},\F_{11},\ldots
$$
of fields $\F_p$ for $p$ a prime, and rings $\Z_n$ for $n$
composite. In Section \ref{fields2} we will see that there are fields
$\F_4,\F_8$ and $\F_9$ of orders $4,8$ and $9$, but these fields are
not $\Z_4,\Z_8$ or $\Z_9$. They are something quite different.

\paragraph{\hspace*{-0.3cm}}
\label{irreducible_polynomials:paragraph50}
Consider
polynomials with coefficients from $\F_2$ ie: the ring $\F_2[x]$, and
in particular, the polynomial
$$
f=x^4+x+1\in\F_2[x].
$$
Now $0^4+0+1\not= 0\not=1^4+1+1$, so $f$ has no roots in $\F_2$.
This doesn't mean that $f$ is irreducible over $\F_2$, but certainly
any factorisation of $f$ over $\F_2$, if there is one, must be
as a product of two quadratics. Moreover, these quadratics must
themselves be irreducible over $\F_2$, for if not, they would factor
into linear factors and the
factor theorem would then give roots of $f$. 

There are only four quadratics over $\F_2$:
$$
x^2,x^2+1,x^2+x\text{ and }x^2+x+1
$$
with $x^2=xx, x^2+1=(x+1)^2$ and $x^2+x=x(x+1)$. You might have to
stare at the second of these factorisations for a second.
By 
Proposition \ref{irr1} $x^2+x+1$ is irreducible. Thus, any factorisation of $f$
into irreducible quadratics must be of the form,
$$
(x^2+x+1)(x^2+x+1).
$$
But, $f$ {\em doesn't\/} factorise this way -- just expand the
brackets. Thus $f$ is irreducible over
$\F_2$.

\paragraph{\hspace*{-0.3cm}}
The most important field for the Galois theory of these notes is the
rationals $\Q$. 
Consequently,
determining the irreducibility of polynomials over $\Q$ will be of
great importance to us. 
The first useful test for irreducibility over $\Q$ has the following
main ingredient: to see if a polynomial can be factorised over $\Q$ it 
suffices to see whether it can be factorised over $\Z$. 

First we recall Exercise \ref{ex2.2}, which is used a number of times in these notes so
is worth placing in a,

\begin{lemma}\label{lemma3.20}
Let $\ss:R\rightarrow S$ be a homomorphism of rings. Define $\ss^*:R[x]\rightarrow S[x]$ by
$$
\ss^*:\sum_i a_ix^i\mapsto \sum_i \ss(a_i)x^i.
$$
Then $\ss^*$ is a homomorphism.
\end{lemma}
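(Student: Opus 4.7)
The plan is to directly verify the three axioms of Definition \ref{def_hom}, using that $\sigma: R \to S$ is already known to respect $+$, $\times$, and the identity $1$. Writing $f = \sum_i a_i x^i$ and $g = \sum_i b_i x^i$, each axiom reduces to the corresponding axiom for $\sigma$ applied coefficient-by-coefficient.

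For preservation of addition, I would use the formula in (\ref{polyops}) to write $f + g = \sum_i (a_i + b_i) x^i$, apply $\sigma^*$, and then observe that $\sigma(a_i + b_i) = \sigma(a_i) + \sigma(b_i)$ because $\sigma$ is a homomorphism. Collecting terms gives $\sigma^*(f) + \sigma^*(g)$. Preservation of the identity is even shorter: the constant polynomial $1 \in R[x]$ has sole coefficient $1_R$, so $\sigma^*(1) = \sigma(1_R) x^0 = 1_S = 1_{S[x]}$.

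The main (though still routine) step is preservation of multiplication. I would compute
\[
\sigma^*(fg) = \sigma^*\!\Bigl(\sum_k c_k x^k\Bigr) = \sum_k \sigma(c_k) x^k, \qquad c_k = \sum_{i+j=k} a_i b_j,
\]
and then use the fact that $\sigma$ preserves both sums and products to rewrite $\sigma(c_k) = \sum_{i+j=k} \sigma(a_i)\sigma(b_j)$. But by (\ref{polyops}) applied in $S[x]$, this is exactly the $k$-th coefficient of $\sigma^*(f)\,\sigma^*(g)$, so $\sigma^*(fg) = \sigma^*(f)\sigma^*(g)$.

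The only potential obstacle is notational bookkeeping with the double index in the convolution sum, but since $\sigma$ is assumed to preserve finite sums and products of elements of $R$, pushing $\sigma$ through the sum $\sum_{i+j=k} a_i b_j$ is straightforward; there is no need to worry about commutativity of $R$ or $S$ beyond what is already assumed in the statement. Hence all three axioms hold and $\sigma^*$ is a ring homomorphism.
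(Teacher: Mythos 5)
Your proof is correct and is the standard direct verification. The paper itself does not prove this lemma — it is stated as a restatement of Exercise \ref{ex2.2}, whose solution is left to the reader — but the intended argument is exactly yours, and it parallels the paper's own verification in Section \ref{lect2} that the evaluation map $\ev_c$ is a homomorphism (there too, the key step is pushing the map through the convolution $c_k=\sum_{i+j=k}a_ib_j$). All three axioms are handled correctly, and your observation that the convolution formula in $S[x]$ lines up term-by-term with $\sigma$ applied to the convolution in $R[x]$ is the heart of the matter.
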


\begin{lemma}[Gauss]
Let $f$ be a polynomial with integer coefficients. Then $f$ can be factorised
non-trivially as a product
of polynomials with integer coefficients if and only if it can be 
factorised non-trivially as a product
of polynomials with rational coefficients.
\end{lemma}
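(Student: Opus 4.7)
One direction is immediate: any factorisation over $\Z$ is \emph{a fortiori} a factorisation over $\Q$, with the degrees of the factors unchanged, so a non-trivial $\Z$-factorisation gives a non-trivial $\Q$-factorisation. The content of the proof is the other direction: starting from $f = gh$ with $g,h\in\Q[x]$ and $\deg g,\deg h\geq 1$, I want to produce $g',h'\in\Z[x]$ with $f=g'h'$ and $\deg g'=\deg g$, $\deg h'=\deg h$.

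The plan is to introduce the notion of \emph{content} and \emph{primitive polynomial}: for a non-zero $p\in\Z[x]$ define the content $c(p)\in\Z_{>0}$ to be the gcd of its coefficients, and call $p$ primitive iff $c(p)=1$. The key observations I will record are (a) any non-zero $g\in\Q[x]$ can be written uniquely up to sign as $g = \tfrac{r}{s} g^{*}$ with $\gcd(r,s)=1$ and $g^{*}\in\Z[x]$ primitive, obtained by clearing denominators and then dividing by the resulting integer content; and (b) if $f\in\Z[x]$ has content $e$, then $f = e f^{*}$ with $f^{*}\in\Z[x]$ primitive of the same degree as $f$.

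The heart of the argument is the auxiliary claim \emph{(Gauss)}: the product of two primitive polynomials in $\Z[x]$ is primitive. This is the step I expect to be the main obstacle, and I will prove it by contradiction using reduction modulo a prime. Suppose $g^{*},h^{*}\in\Z[x]$ are primitive but $g^{*}h^{*}$ is not; then some prime $p$ divides every coefficient of $g^{*}h^{*}$. The canonical surjection $\pi\colon\Z\to\F_p$ is a ring homomorphism, so by Lemma~\ref{lemma3.20} it induces a ring homomorphism $\pi^{*}\colon\Z[x]\to\F_p[x]$ coefficient-wise. The assumption says $\pi^{*}(g^{*}h^{*})=0$ in $\F_p[x]$, and since $\pi^{*}$ is a homomorphism this reads $\pi^{*}(g^{*})\,\pi^{*}(h^{*})=0$. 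But $\F_p[x]$ is an integral domain (Lemma~\ref{sect2lemma1}), so one of $\pi^{*}(g^{*})$ or $\pi^{*}(h^{*})$ is zero, meaning $p$ divides every coefficient of $g^{*}$ or of $h^{*}$, contradicting primitivity.

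With this in hand, the main statement follows by clearing denominators and matching contents. Write $g = \tfrac{a}{b}g^{*}$ and $h = \tfrac{c}{d}h^{*}$ as in (a), and $f = ef^{*}$ as in (b). Then $bdf = acg^{*}h^{*}$ gives $bde\,f^{*} = ac\,g^{*}h^{*}$, and since $f^{*}$ is primitive and $g^{*}h^{*}$ is primitive by the Gauss claim just proved, comparing contents of the two sides forces $bde = \pm ac$, hence $f^{*} = \pm g^{*}h^{*}$. Therefore
\[
f \;=\; e f^{*} \;=\; (\pm e\, g^{*})\, h^{*},
\]
which is a factorisation in $\Z[x]$. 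Since $\deg g^{*} = \deg g \geq 1$ and $\deg h^{*} = \deg h \geq 1$, it is non-trivial, completing the proof.
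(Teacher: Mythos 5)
Your proof is correct, but it takes a genuinely different route from the one in the paper. You organise the argument around \emph{content} and \emph{primitive polynomials}: you strip each rational factor $g,h$ down to a primitive integer polynomial, prove the classical Gauss claim that a product of primitive polynomials is primitive, and then conclude by matching contents. The paper skips the content machinery altogether: it clears denominators to get $mnf = g_1 h_1$ with $g_1, h_1 \in \Z[x]$, and then \emph{iteratively} peels off the prime factors of $mn$ one at a time, at each step using reduction mod $p$ and the integral-domain property of $\F_p[x]$ to show $p$ must divide all the coefficients of $g_1$ or of $h_1$, until $mn$ has been removed entirely. Notice that the engine is identical in both — the homomorphism $\Z[x]\to\F_p[x]$ plus the fact that $\F_p[x]$ has no zero divisors — but you invoke it once inside a reusable ``primitive $\times$ primitive $=$ primitive'' lemma, whereas the paper invokes it inside an induction on the prime factorisation of $mn$. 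Your packaging is the standard textbook form and yields a tool that is useful elsewhere; the paper's version is leaner in that it avoids defining content and gets the factorisation directly, at the cost of not isolating the multiplicativity-of-content fact as a statement in its own right. One small point to tidy: when you compare contents of $bde\,f^{*}$ and $ac\,g^{*}h^{*}$, you should note explicitly that for $n\in\Z$ and primitive $p\in\Z[x]$ the content of $np$ is $|n|$, so that $|bde|=|ac|$ and hence $f^{*}=\pm g^{*}h^{*}$; you gesture at this with ``comparing contents'' but it is worth a line.
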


\begin{proof}
If the polynomial can be written as a product of $\Z$-polynomials then it clearly can 
as a product of $\Q$-polynomials as integers are rational. Suppose on
the other hand that
$f=gh$ in $\Q[x]$ is a non-trivial factorisation. By multiplying through by a multiple
of the denominators of the coefficients of $g$ we get a polynomial $g_1=m g$ with 
$\Z$-coefficients. Similarly we have $h_1=nh\in\Z[x]$ and so 
\begin{equation}\label{gauss.lemmaeq}
mnf=g_1h_1\in\Z[x].
\end{equation}
Now let $p$ be a prime dividing $mn$, and consider the homomorphism $\ss:\Z\rightarrow \F_p$
given by $\ss(k)=k\mod p$. Then by the lemma above, the map $\ss^*:\Z[x]\rightarrow
\F_p[x]$ given by
$$
\ss^*:\sum_i a_ix^i\mapsto \sum_i \ss(a_i)x^i,
$$
is a homomorphism. Applying the homomorphism to (\ref{gauss.lemmaeq}) gives
$0=\ss^*(g_1)\ss^*(h_1)$ in $\F_p[x]$, as $mn\equiv 0\mod p$. 
As the ring $\F_p[x]$ is an integral domain the
only way that this can happen is if one of the polynomials is equal to the zero polynomial
in $\F_p[x]$, ie: one of the original polynomials, say $g_1$, has all of its coefficients
divisible by $p$. Thus we have $g_1=pg_2$ with $g_2\in\Z[x]$, and (\ref{gauss.lemmaeq})
becomes 
$$
\frac{mn}{p}f=g_2h_1.
$$
Working our way through all the prime factors of $mn$ in this way, we can remove the
factor of $mn$ from (\ref{gauss.lemmaeq}) and obtain a factorisation of $f$ into 
polynomials with $\Z$-coefficients.
\qed
\end{proof}

So to determine whether a polynomial with $\Z$-coefficients is
irreducible over $\Q$, you need only check that it has no non-trivial factorisations
with all the coefficients integers. 

\begin{eisenstein}
Let 
$$
f=c_nx^n+\cdots +c_1x+c_0,
$$
be a polynomial with integer coefficients. If there is a
prime $p$ that divides all the $c_i$ for $i<n$, does not divide $c_n$,
and such that
$p^2$ does not divide $c_0$, then $f$ is irreducible over $\Q$.
\end{eisenstein}

\begin{proof}
By virtue of the previous discussion, we need only show that under the conditions stated, 
there is no factorisation of $f$ using integer
coefficients. Suppose otherwise, ie: $f=gh$ with
$$
g=a_rx^r+\cdots +a_0\text{ and }h=b_sx^s+\cdots +b_0,
$$
and the $a_i,b_i\in\Z$. Expanding $gh$ and equating coefficients,
$$
\begin{tabular}{l}
$c_0=a_0b_0$\\
$c_1=a_0b_1+a_1b_0$\\
\hspace*{2em}$\vdots$\\
$c_i=a_0b_i+a_1b_{i-1}+\cdots+a_ib_0$\\
\hspace*{2em}$\vdots$\\
$c_n=a_rb_s$.\\
\end{tabular}
$$
By hypothesis, $p\dv c_0$. 
Write both $a_0$ and $b_0$
as a product of primes, so if $p\dv c_0$, ie: $p\dv a_0b_0$, then $p$ must be one of the 
primes in this factorisation, hence
divides one of $a_0$ or $b_0$.
Thus, either $p\dv a_0$ or $p\dv
b_0$,
{\em but not both\/} (for then $p^2$ would divide $c_0$). Assume that it is $p\dv
a_0$ that we have. Next, $p\dv c_1$, and this coupled with $p\dv a_0$
gives
$p\dv c_1-a_0b_1=a_1b_0$ (If we had assumed $p\dv b_0$, we would still reach this conclusion). 
Again, $p$ must divide one of the these last
two factors, and since we've already decided that it doesn't divide
$b_0$, it must be $a_1$ that it divides. Continuing in this manner, we
get that $p$ divides all the coefficients of $g$, and in particular,
$a_r$. But then $p$ divides $a_rb_s=c_n$, the contradiction we were
after.
\qed
\end{proof}

The proof above is
a good example of the way mathematics is sometimes created. You
start with as few assumptions as possible (in this case that $p$ divides
some of the coefficients of $f$) and proceed towards some sort of
conclusion, imposing extra conditions as and when you need them. In
this way the statement of the theorem writes itself. 

\paragraph{\hspace*{-0.3cm}}
For example
$$
x^5+5x^4-5x^3+10x^2+25x-35,
$$
is irreducible over $\Q$. Even less obviously
$$
x^n-p,
$$
is irreducible over $\Q$ for any prime $p$. Thus, we can find
polynomials over $\Q$ of arbitrary large degree that are irreducible,
in contrast to the situation for polynomials over $\R$ or $\C$.

\paragraph{\hspace*{-0.3cm}}
Another useful tool arises with polynomials having 
coefficients from a ring $R$ and there is a homomorphism from $R$ to some field $F$. 
If the homomorphism is applied to all the coefficients of the polynomial (turning it from a polynomial
with $R$-coefficients into a polynomial with $F$-coefficients)
then a reducible polynomial cannot turn into 
an irreducible one:

\begin{reduction}\label{reduction}
Let $R$ be an integral domain, $F$ a field and $\ss:R\rightarrow F$ a ring homomorphism. 
Let $\ss^*:R[x]\rightarrow F[x]$ be 
the homomorphism of Lemma \ref{lemma3.20}.
Moreover, let $f\in R[x]$ be such that
\begin{enumerate}
\item $\deg\ss^*(f)=\deg(f)$, and 
\item $\ss^*(f)$ is irreducible over $F$.
\end{enumerate}
Then $f$ cannot be written as a product $f=gh$ with $g,h\in R[x]$ and $\deg g,\deg h<\deg f$.
\end{reduction}

Although it is stated in some generality, the reduction test is very useful for determining 
the irreducibility of polynomials over $\Q$.
As an example, take $R=\Z$; $F=\F_5$ and $f=8x^3-6x-1\in\Z[x]$. For $\ss$, take reduction 
modulo $5$, ie: 
$\ss(n)=n\text{ mod }5$. It is not hard to show that $\ss$ is a homomorphism. Since 
$\ss(8)\equiv 3\text{ mod }5$, and so
on, we get 
$$
\ss^*(f)=3x^3+4x+4\in\F_5[x].
$$
The degree has not changed, and by substituting the five elements of $\F_5$ into $\ss^*(f)$, one
can see that it has no roots in $\F_5$. Since the polynomial is a cubic, it must therefore be irreducible
over $\F_5$. Thus, by the reduction test, $8x^3-6x-1$ cannot be written as a product of 
smaller degree polynomials with
$\Z$-coefficients. But by Gauss' lemma, this gives that this 
polynomial is
irreducible over $\Q$.

$\F_5$ was chosen because with $\F_2$ condition (i) fails; with $\F_3$
condition (ii) fails.

\begin{proof}
Suppose on the contrary that $f=gh$ with $\deg g,\deg h<\deg f$. 
Then $\ss^*(f)=\ss^*(gh)=\ss^*(g)\ss^*(h)$, the
last part because $\ss^*$ is a homomorphism. Now $\ss^*(f)$ is irreducible, so the 
only way it can factorise
like this is if one of the factors, $\ss^*(g)$ say, is a constant, hence $\deg \ss^*(g)=0$. Then
$$
\deg
f=\deg\ss^*(f)=\deg\ss^*(g)\ss^*(h)=\deg\ss^*(g)+\deg\ss^*(h)=\deg\ss^*(h)\leq
\deg h<\deg f, 
$$
a contradiction. (That $\deg\ss^*(h)\leq \deg h$ rather than equality necessarily, is because the 
homomorphism $\ss$ may
send some of the coefficients of $h$ -- including the
leading one -- to $0\in F$.) 
\qed
\end{proof}

\paragraph{\hspace*{-0.3cm}}
We've already observed the similarity between polynomials and
integers. 
One thing we know about integers is
that they can be written uniquely as products of primes. 
We might hope that something similar is true for
polynomials, and it is in certain situations.
For the next few results, we deal only with polynomials $f\in F[x]$ for $F$ a field 
(although they are true in more generality).

\begin{lemma}\label{lem_ufd}
\begin{enumerate}
\item If $\gcd(f,g)=1$ and $f\dv gh$ then $f\dv h$.
\item If $f$ is irreducible and monic, then for any $g$ monic with
$g\dv f$ we have either $g=1$ or $g=f$.
\item If $g$ is irreducible and monic and $g$ does not divide $f$, then $\gcd(g,f)=1$.
\item If $g$ is irreducible and monic and $g\dv f_1f_2\ldots f_n$ then
$g|f_i$ for some $i$.
\end{enumerate}
\end{lemma}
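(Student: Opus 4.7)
The plan is to prove the four parts in order, using each as a building block for the next, in the spirit of the classical proof that $F[x]$ is a unique factorisation domain.

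For part (1), I would invoke Theorem \ref{gcd} to write $1 = a_0 f + b_0 g$ for some $a_0, b_0 \in F[x]$, then multiply through by $h$ to obtain $h = a_0 f h + b_0 (gh)$. Since $f \mid gh$, write $gh = f k$, and then both terms on the right are visibly divisible by $f$, giving $f \mid h$. This is the standard ``coprime-and-divides'' argument and is the workhorse of the whole lemma.

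For part (2), if $g$ is monic with $g \mid f$, write $f = g h$ for some $h \in F[x]$. Since $f$ is irreducible, its only non-trivial factorisations are ruled out, so at least one of $g, h$ must be constant. If $g$ is constant, then being monic forces $g = 1$. If instead $h$ is constant, then comparing leading coefficients of $f = g h$ (both $f$ and $g$ monic) forces $h = 1$, hence $g = f$. For part (3), let $d = \gcd(g, f)$. Then $d \mid g$ and $d$ is monic by definition of gcd, so part (2) applies: either $d = 1$ or $d = g$. The second case would give $g = d \mid f$, contradicting the hypothesis, so $d = 1$.

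For part (4), I would use induction on $n$. The base case $n = 1$ is immediate. For the inductive step, suppose $g \mid f_1 f_2 \cdots f_n$. If $g \mid f_n$, we are done. Otherwise, by part (3) we have $\gcd(g, f_n) = 1$, and then part (1) applied with $f = g$ and $h = f_1 \cdots f_{n-1}$ gives $g \mid f_1 \cdots f_{n-1}$. The inductive hypothesis then produces some $i \leq n-1$ with $g \mid f_i$.

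None of the steps is genuinely hard; the only place to be careful is making sure the monicity hypothesis in (2) is used correctly to pin down $g = 1$ rather than merely ``$g$ is a constant'', since without monicity the statement would be false. The overall structure — Bezout for (1), irreducibility forcing trivial factorisations for (2), reduction to (2) via the gcd for (3), and induction plus (1) and (3) for (4) — is exactly parallel to the proof that primes in $\Z$ satisfy Euclid's lemma, consistent with the running analogy between $F[x]$ and $\Z$ emphasised throughout the section.
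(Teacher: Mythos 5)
Your proof is correct and follows essentially the same route as the paper: Bezout for (1), irreducibility forcing trivial factorisations for (2), reduction to (2) via the gcd for (3), and induction using (1) and (3) for (4). The only cosmetic difference is that you cite part (2) explicitly in proving (3), whereas the paper re-derives that step inline.
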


\begin{proof}
\begin{enumerate}
\item Since $\gcd(f,g)=1$ there are $a,b\in F[x]$ such that $1=af+bg$,
hence $h=afh+bgh$. We have that $f\dv bgh$ by assumption, and it
clearly divides $afh$, hence it divides $afh+bgh=h$ also. 
\item If $g$ divides $f$ and $f$ is irreducible, then by definition $g$ must be
either a constant or a constant multiple of $f$. But $f$ is monic, so
$g=1$ or $g=f$ are the only possibilities.
\item The $\gcd$ of $f$ and $g$ is certainly a divisor of $g$, and hence
by irreducibility must be either a constant, or a constant times $g$. As
$g$ is also monic, the gcd must in fact be either $1$ or $g$ itself, and
since $g$ does not divide $f$ it cannot be $g$, so must be $1$.
\item Proceed by induction, with the first step for $n=1$ being
immediate. Since $g\dv f_1f_2\ldots f_n=(f_1f_2\ldots f_{n-1})f_n$, we
either have $g\dv f_n$, in which case we are finished, or not, in which
case $\gcd(g,f_n)=1$ by part (3). But then part (1) gives that $g\dv
f_1f_2\ldots f_{n-1}$, and the inductive hypothesis kicks in.\qed
\end{enumerate}
\end{proof}

The best way of summarising the lemma is this: monic irreducible
polynomials are like the ``prime numbers'' of $F[x]$.

\paragraph{\hspace*{-0.3cm}}
Just as any integer can be decomposed uniquely as a product of primes,
so too can any polynomial as a product of irreducible polynomials:

\begin{ufd}\label{ufd}
Every polynomial in $F[x]$ can be written in the form
$$
c p_1p_2\ldots p_r,
$$
where $c$ is a constant and the $p_i$ are monic and irreducible $\in
F[x]$. Moreover, if $a q_1q_2\ldots q_s$ is another factorisation with
the $q_j$ monic and irreducible, then $r=s$, $c=a$ and the $q_j$ are
just a rearrangement of the $p_i$.
\end{ufd}

The last part says that the 
factorisation is unique, except for the order you
write down the factors. 

\begin{proof}
To get the factorisation just keep
factorising reducible polynomials until they become irreducible. At the
end, pull out the coefficient of the leading term in each factor, and 
place them all at the front. 

For uniqueness, suppose that 
$$
c p_1p_2\ldots p_r=a q_1q_2\ldots q_s.
$$
Then $p_r$ divides $a q_1q_2\ldots q_s$ which by Lemma
\ref{lem_ufd} part (4) means that $p_r\dv q_i$ for some $i$. Reorder the
$q$'s so that it is $p_r\dv q_s$ that in fact we have. Since both $p_r$
and $q_s$ are monic, irreducible, and hence non-constant, $p_r=q_s$,
which leaves us with
$$
c p_1p_2\ldots p_{r-1}=a q_1q_2\ldots q_{s-1}.
$$
This gives $r=s$ straight away: if say $s>r$, then repetition of the
above leads to $c=a q_1q_2\ldots q_{s-r}$, which is absurd, as
consideration of degrees gives different answers for each side. 
Similarly if $r>s$.
But then
we also have that the $p$'s are just a rearrangement of the $q$'s, and
canceling down to $c p_1=a q_1$, that $c=a$.
\qed
\end{proof}

\paragraph{\hspace*{-0.3cm}}
It is worth repeating that everything depends on the ambient field $F$,
even the uniqueness of the decomposition. For example, $x^4-4$
decomposes as,
$$\begin{tabular}{l}
$(x^2+2)(x^2-2)\text{ in }\Q[x]$,\\
$(x^2+2)(x-\kern-2pt\sqrt{2})(x+\kern-2pt\sqrt{2})\text{ in }\R[x]\text{ and }$\\
$(x-\kern-2pt\sqrt{2}\imag)(x+\kern-2pt\sqrt{2}\imag)(x-\kern-2pt\sqrt{2})(x+\kern-2pt\sqrt{2})\text{ 
    in }\C[x]$.\\ 
\end{tabular}$$

To illustrate how unique factorisation can be used to determine irreducibility,
we have in $\C[x]$ that,
$$
x^2+2=(x-\kern-2pt\sqrt{2}\imag)(x+\kern-2pt\sqrt{2}\imag).
$$
Since the factors on the right are not in $\R[x]$ 
this polynomial ought to be irreducible over $\R$. To make this more
precise, any factorisation in $\R[x]$ would be of the form
$$
x^2+2=(x-c_1)(x-c_2)
$$
with the $c_i\in\R$. But this would be a factorisation in $\C[x]$ too,
and there is only one such by unique factorisation. This forces the $c_i$ to be
$\kern-2pt\sqrt{2}\imag$ and $-\kern-2pt\sqrt{2}\imag$, contradicting $c_i\in\R$. Hence $x^2+2$ is
indeed irreducible over $\R$. Similarly, $x^2-2$ is
irreducible over $\Q$.

\begin{vexercise}
Formulate the  example above into a general Theorem.
\end{vexercise}

\subsection*{Further Exercises for Section \thesection}

\begin{vexercise}
Prove that if a polynomial equation has all its coefficients in $\C$ then it must have all its roots
in $\C$.
\end{vexercise}

\begin{vexercise}\label{ex_lect2.1}
\hspace{1em}\begin{enumerate}
\item Let $f=a_nx^n+a_{n-1}x^{n-1}+\cdots +a_1x+a_0$ be a polynomial in $\R[x]$, that is, all
the $a_i\in\R$. Show that complex roots of $f$ occur in conjugate pairs, ie:
$\zeta\in\C$ is a root of $f$ if and only if $\bar{\zeta}$ is.
\item Find an example of a polynomial in $\C[x]$ for which part (a)
is not true.
\end{enumerate}

\end{vexercise}

\begin{vexercise}\label{ex_lect2.2}
\hspace{1em}\begin{enumerate}
\item Let $m,n$ and $k$ be integers with $m$ and $n$ relatively prime (ie: $\gcd(m,n)=1$). 
Show that if 
$m$ divides $nk$ then $m$ must divide $k$ ({\em hint}: there are two
methods here. One is to use Lemma \ref{lem_ufd} but in $\Z$. The other is to use the
fact that any integer
can be written uniquely as a product of primes. Do this for $m$ and $n$, and ask yourself what 
it means for this factorisation that $m$ and $n$ are relatively prime).
\item Show that if  $m/n$ is a root of $a_0+a_1x+ ...+a_rx^r$,
$a_i\in\Z$, where
$m$ and $n$ are relatively prime integers, then $m|a_0$ and $n|a_r$.
\item Deduce that if 
$a_r=1$ then $m/n$ is in fact an integer.
\end{enumerate}

\noindent\emph{moral}: 
If a monic polynomial with integer coefficients has a rational root $m/n$, then
this rational number is in fact an integer.
\end{vexercise}

\begin{vexercise}
If $m\in\Z$ is not a perfect square, show that $x^2-m$ is irreducible over $\Q$ (note:
it is {\em not\/} enough to merely assume that under the conditions stated $\kern-2pt\sqrt{m}$
is not a rational number).
\end{vexercise}

\begin{vexercise}
Find the greatest common divisor of $f(x)=x^3-6x^2+x+4$
and $g(x)=x^5-6x+1$ ({\em hint}: look at linear factors of $f(x)$).
\end{vexercise}

\begin{vexercise}\label{ex_lect3.1}
Determine which of the following polynomials are irreducible over the stated field: 
\begin{enumerate}
\item $1+x^8$ over $\R$;
\item $1+x^2+x^4+x^6+x^8+x^{10}$ over $\Q$ (\emph{hint}: Let $y=x^2$
and factorise $y^n-1$);
\item $x^4+15x^3+7$ over $\R$ (\emph{hint}: use the intermediate value
theorem from analysis);
\item $x^{n+1}+(n+2)!\,x^n+\cdots+(i+2)!\,x^i+\cdots+3!\,x+2!$ over $\Q$.
\item $x^2+1$ over $\F_7$.
\item Let $\F$ be the field of order 8 from Section \ref{lect4}, and let $\F[X]$ be
polynomials with coefficients from $\F$ and indeterminate $X$. 
Is $X^3+(\aa^2+\aa)X+(\aa^2+\aa+1)$
irreducible over $\F$?
\item $a_4x^4+a_3x^3+a_2x^2+a_1x+a_0$ over $\Q$ where the $a_i\in\Z$;
$a_3,a_2$ are even and $a_4,a_1,a_0$ are odd.
\end{enumerate}
\end{vexercise}

\begin{vexercise}\label{ex3.3}
If $p$ is prime, show that $p$ divides 
${\displaystyle \binom{p}{i}}$
for $0<i<p$. Show that $p$ divides ${\displaystyle \binom{p^n}{i}}$
for $n\geq 1$ and $0<i<p$.
\end{vexercise}

\begin{vexercise}
Show that 
$$
x^{p-1}+px^{p-2}+\cdots+
\binom{p}{i} x^{p-i-1}+\cdots+p,
$$
is irreducible over $\Q$.
\end{vexercise}

\begin{vexercise}\label{ex_lect3.2}
A complex number $\ww$ is an $n$-th {\em root of unity\/} if $\ww^n=1$.  
It is a {\em primitive\/} $n$-th root of unity if $\ww^n=1$, but $\ww^r\not=1$ for any 
$0<r<n$.  So for example, $\pm 1,\pm \imag$  are the 4-th roots of 1, but only $\pm i$ are 
primitive 4-th roots.

Convince yourself that for any $n$, 
$$\ww=\cos\frac{2\pi}{n}+\imag\sin\frac{2\pi}{n}$$ 
is an $n$-th root of $1$.
In fact, the other $n$-th roots are $\ww^2,\ldots,\ww^n=1$.
\begin{enumerate}
\item Show that if $\ww$ is a {\em primitive\/} $n$-th root of $1$
  then $\ww$ is a root of the polynomial 
\begin{equation}\label{eq1}
x^{n-1}+x^{n-2}+\cdots+x+1.
\end{equation}
\item Show that for (\ref{eq1}) to be irreducible over $\Q$, $n$ cannot be even.
\item Show that a polynomial $f(x)$ is irreducible over a field $F$ if $f(x+1)$ is
irreducible over $F$.
\item Finally, if 
$$\Phi_p(x)=x^{p-1}+x^{p-2}+\cdots+x+1$$ 
for $p$ a prime number, show
that $\Phi_p(x+1)$
is irreducible over $\Q$, and hence $\Phi_p(x)$
is too ({\em hint\/}: consider $x^p-1$ and use
the binomial theorem, Exercise \ref{ex3.3} and Eisenstein).
\end{enumerate}
The polynomial $\Phi_p(x)$ is called the {\em $p$-th cyclotomic polynomial\/}.
\end{vexercise}


\section{Fields I: Basics, Extensions and Concrete Examples}
\label{lect4}

This course studies the solutions to polynomial equations.
Questions about these solutions can be restated as
questions about fields. It is to these that we now turn. 

\paragraph{\hspace*{-0.3cm}}
We remembered the definition of a field in Section \ref{lect1}; we can restate it
as:

\begin{definition}[field -- version ${\mathbf 2}$]
\label{def_field2}
A field is a set $F$ with two operations, $+$ and $\times$,
such that for any $a,b,c\in F$,
\begin{enumerate}
\item $F$ is an Abelian group under $+$;
\item $F\setminus\{0\}$ is an Abelian group under $\times$;
\item the two operations are linked by the distributive law.
\end{enumerate}
\end{definition}

The two groups are called the {\em additive\/} and {\em multiplicative\/} groups of
the field. In particular, we will write $F^*$ to denote the multiplicative group (ie:
$F^*$ is the group with elements $F\setminus\{0\}$ and operation the multiplication 
from the field).
Even more succinctly,

\begin{definition}[field -- version ${\mathbf 3}$]
\label{def_field3}
A field is a set $F$ with two operations, $+$ and $\times$,
such that for any $a,b,c\in F$,
\begin{enumerate}
\item $F$ is a commutative ring under $+$ and $\times$;
\item for any $a\in F\setminus\{0\}$ there is an $a^{-1}\in F$ with
  $a\times a^{-1}=1=a^{-1}\times a$, 
\end{enumerate}
\end{definition}

In particular a field is a special kind of ring. 

\paragraph{\hspace*{-0.3cm}}
More concepts from the first lecture that can now be properly defined are:

\begin{definition}[extensions of fields]
Let $F$ and $E$ be fields with $F$ a subfield of $E$. We call $E$ an
extension of $F$. 
If $\bb\in E$, we write $F(\bb)$, as in
Section \ref{lect1}, for the smallest subfield of $E$ containing both
$F$ and $\bb$ (so in particular $F(\bb)$ is an extension of $F$). 
In general, if $\bb_1,\ldots,\bb_k\in E$, define $F(\bb_1,\ldots,\bb_k)=
F(\bb_1,\ldots,\bb_{k-1})(\bb_k)$. 
\end{definition}

The standard notation for an extension is to write
$E/F$, but in these notes we will use the more concrete $F\subseteq E$, being mindful
that this means $F$ is a subfield of $E$, and not just a
subset.

We say that  $\bb$ is {\em adjoined\/} to $F$ to obtain $F(\bb)$.
The last bit of the definition says that to adjoin several elements to
a field you adjoin them one at a time. The notation seems to
adjoin them in a particular order, but the order doesn't matter.
If we have an extension $F\subseteq E$ and there is a $\bb\in E$
such that $E=F(\bb)$, then we call $E$ a {\em simple extension\/}
of $F$.

\paragraph{\hspace*{-0.3cm}}
$\R$ is an extension of $\Q$; $\C$ is an extension of
$\R$, and so on.
Any field is an extension of itself!

\paragraph{\hspace*{-0.3cm}}\label{para4.4}
Let $\F_2$ be the field of integers modulo
$2$ arithmetic. 
Let $\aa$ be an ``abstract symbol" that can be multiplied so that it has the following property:
$\aa\times\aa\times\aa=\aa^3=\aa+1$ (a bit like decreeing that the 
imaginary $i$ squares to give $-1$).
Let  
$$\F=\{a+b\aa+c\aa^2\,|\,a,b,c\in\F_2\},$$ 
Define addition on $\F$ by:
$(a_1+b_1\aa+c_1\aa^2)+(a_2+b_2\aa+c_2\aa^2)=
(a_1+a_2)+(b_1+b_2)\aa+(c_1+c_2)\aa^2$, where the addition of
coefficients happens in $\F_2$. For multiplication, ``expand"
the expression $(a_1+b_1\aa+c_1\aa^2)(a_2+b_2\aa+c_2\aa^2)$
like you would a polynomial with
$\aa$ the indeterminate, so that $\aa\aa\aa=\aa^3$, the
coefficients are dealt with using the arithmetic from $\F_2$, and so on. Replace any
$\aa^3$ that result using the rule $\aa^3=\aa+1$. 

For example,
$$
(1+\aa+\aa^2)+(\aa+\aa^2)=1\text{ and }
(1+\aa+\aa^2)(\aa+\aa^2)=\aa+\aa^4=\aa+\aa(\aa+1)=\aa^2.
$$
It turns out that $\F$ forms a field with this
addition and multiplication -- see Exercise \ref{ex_tables}. Taking
those elements of $\F$ with $b=c=0$ we 
obtain (an isomorphic) copy of $\F_2$ inside of $\F$, and so
we have an extension of $\F_2$ that contains $8$ elements.

\paragraph{\hspace*{-0.3cm}}
$\Q(\kern-2pt\sqrt{2})$ is a simple extension of $\Q$
while $\Q(\kern-2pt\sqrt{2},\kern-2pt\sqrt{3})$ would appear not to be.
But consider $\Q(\kern-2pt\sqrt{2}+\kern-2pt\sqrt{3})$: certainly
$\kern-2pt\sqrt{2}+\kern-2pt\sqrt{3}\in\Q(\kern-2pt\sqrt{2},\kern-2pt\sqrt{3})$, and so
$\Q(\kern-2pt\sqrt{2}+\kern-2pt\sqrt{3})\subset
\Q(\kern-2pt\sqrt{2},\kern-2pt\sqrt{3})$. On the other hand, 
$$
(\kern-2pt\sqrt{2}+\kern-2pt\sqrt{3})^3=11\kern-2pt\sqrt{2}+9\kern-2pt\sqrt{3},
$$
as is readily checked using the Binomial Theorem. Since
$(\kern-2pt\sqrt{2}+\kern-2pt\sqrt{3})^3\in
\Q(\kern-2pt\sqrt{2}+\kern-2pt\sqrt{3})$, we get
$$
(11\kern-2pt\sqrt{2}+9\kern-2pt\sqrt{3})-9(\kern-2pt\sqrt{2}+\kern-2pt\sqrt{3})\in\Q(\kern-2pt\sqrt{2}+\kern-2pt\sqrt{3})\Rightarrow 
2\kern-2pt\sqrt{2}\in\Q(\kern-2pt\sqrt{2}+\kern-2pt\sqrt{3}).
$$
And so $\kern-2pt\sqrt{2}\in\Q(\kern-2pt\sqrt{2}+\kern-2pt\sqrt{3})$ as ${\displaystyle
\frac{1}{2}}$ is there too. Similarly it can be shown that
$\kern-2pt\sqrt{3}\in\Q(\kern-2pt\sqrt{2}+\kern-2pt\sqrt{3})$ and hence $\Q(\kern-2pt\sqrt{2},\kern-2pt\sqrt{3})\subset\Q(\kern-2pt\sqrt{2}+\kern-2pt\sqrt{3})$.
So 
$$\Q(\kern-2pt\sqrt{2},\kern-2pt\sqrt{3})=\Q(\kern-2pt\sqrt{2}+\kern-2pt\sqrt{3})$$
{\em is\/} a simple extension! 

\paragraph{\hspace*{-0.3cm}}
What do the elements of $\Q(\kern-2pt\sqrt{2})$ actually
look like? Later we will be answer this question in general, but for
now we give an ad-hoc answer.

Firstly $\kern-2pt\sqrt{2}$ and any $b\in\Q$ are in $\Q(\kern-2pt\sqrt{2})$
by definition. Since fields are closed under $\times$, any number of the
form $b\kern-2pt\sqrt{2}\in \Q(\kern-2pt\sqrt{2})$. Similarly, fields are closed under
$+$, so any $a+b\kern-2pt\sqrt{2}\in\Q(\kern-2pt\sqrt{2})$ for $a\in\Q$. Thus, the set
$$
\F=\{a+b\kern-2pt\sqrt{2}\,|\,a,b\in\Q\}\subseteq\Q(\kern-2pt\sqrt{2}).
$$
But $\F$ is a field in its own right using the usual addition and
multiplication of complex numbers. This is easily checked from the
axioms; for instance, the inverse of $a+b\kern-2pt\sqrt{2}$ can be calculated:
$$
\frac{1}{a+b\kern-2pt\sqrt{2}}\times\frac{a-b\kern-2pt\sqrt{2}}{a-b\kern-2pt\sqrt{2}}=\frac{a-b\kern-2pt\sqrt{2}}{a^2-2b^2}=
\frac{a}{a^2-2b^2}-\frac{b}{a^2-2b^2}\kern-2pt\sqrt{2}\in\F,
$$ 
and you can check the other axioms for yourself. We also have $\Q\subset\F$ (letting
$b=0$) and $\kern-2pt\sqrt{2}\in\F$ (letting $a=0,b=1$). Since $\Q(\kern-2pt\sqrt{2})$ is
the smallest field having these two properties, we have
$\Q(\kern-2pt\sqrt{2})\subseteq\F$. Thus,
$$
\Q(\kern-2pt\sqrt{2})=\F=\{a+b\kern-2pt\sqrt{2}\,|\,a,b\in\Q\}.
$$

\begin{vexercise}
Let $\aa$ be a complex number such that $\aa^3=1$ and consider the set
$$
\F=\{a_0+a_1\aa+a_2\aa^2\,|\,a_i\in\Q\}
$$
\begin{enumerate}
\item By row reducing the matrix,
$$
\left(\begin{array}{cccc}
a_0&2a_2&2a_1&1\\
a_1&a_0&2a_2&0\\
a_2&a_1&a_0&0\\
\end{array}\right)
$$
find an element of $\F$ that is the inverse under multiplication of 
$a_0+a_1\aa+a_2\aa^2$.
\item Show that $\F$ is a field, hence $\Q(\aa)=\F$.
\end{enumerate}
\end{vexercise}

\paragraph{\hspace*{-0.3cm}}\label{para4.20}
The previous exercise shows that the following two fields
have the form,
$$
\Q(\sqrt[3]{2})=\{a+b\sqrt[3]{2}+c\sqrt[3]{2}^2\,|\,a,b,c\in\Q\}\text{ and }
\Q(\bb)=\{a+b\bb+c\bb^2\,|\,a,b,c\in\Q\},
$$
where
$$
\bb=\sqrt[3]{2}\biggl(-\frac{1}{2}+\frac{\sqrt{3}}{2}\imag\biggr)\in\C.
$$
These two fields are different: the first is 
completely contained in $\R$, but the second contains $\bb$, which is obviously 
complex but not real. Hold that thought.

\begin{definition}[ring isomorphism]
A bijective homomorphism of rings $\varphi:R\rightarrow S$ is called
an isomorphism. 
\end{definition}

\paragraph{\hspace*{-0.3cm}} 
A silly but instructive example is given by the Roman ring, whose elements are
$$
\{\ldots,-V,-IV,-III,-II,-I,0,I,II,III,IV,V,\cdots\},
$$
and with addition and multiplication $IX+IV=XIII$ and $IX\times VI
=LIV,\text{ etc}\ldots$ Obviously the ring is isomorphic to $\Z$, 
and it is this idea of a trivial relabeling that is captured by 
an isomorphism -- two rings are isomorphic if they are really the same,
just written in different languages.

But we place a huge emphasis on
the way things are labelled.
The two fields of the previous paragraph are a good example, for,
$$
\Q(\sqrt[3]{2})\text{{ and }}
\Q\biggl(\sqrt[3]{2}\biggl(-\frac{1}{2}+\frac{\sqrt{3}}{2}\imag\biggr)\biggr)\text{{ are
isomorphic}}
$$
(we will see why in Section \ref{fields2}). 
To illustrate how we might now come unstuck, suppose we were to
formulate the following,

\begin{bogusdefn}
A subfield of $\C$ is called real if and only if it is contained
in $\R$.
\end{bogusdefn}

So $\Q(\sqrt[3]{2})$ is a real field, but ${\displaystyle \Q\biggl(
\sqrt[3]{2}\biggl(-\frac{1}{2}+\frac{\sqrt{3}}{2}\imag\biggr)\biggr)}$ is not. But they
are the same field! A definition should not depend on the way the
elements are labelled. We will resolve this problem in Section \ref{fields2} by thinking about
fields in a more abstract way.

\paragraph{\hspace*{-0.3cm}}
In the remainder
of this section we introduce a few more concepts associated with fields.

It is well known that $\kern-2pt\sqrt{2}$ and $\pi$ are both irrational real numbers. Nevertheless,
from an algebraic point of view, $\kern-2pt\sqrt{2}$ is slightly more tractable than $\pi$,
as it is a root of a very simple  equation $x^2-2$, whereas there is no polynomial
with integer coefficients having $\pi$ as a root (this is not obvious). 

\begin{definition}[algebraic element]
Let $F\subseteq E$ be an extension of fields and $\aa\in E$. Call $\aa$
algebraic over $F$ if and only if 
$$
a_0+a_1\aa+a_2\aa^2+\cdots+a_n\aa^n=0,
$$
for some $a_0,a_1,\ldots,a_n\in F$.  
\end{definition}

In otherwords, $\aa$ is a root of 
the polynomial $f=a_0+a_1x+a_2x^2+\cdots+a_nx^n$ in $F[x]$. If $\aa$ is not 
algebraic, ie: not the root of any polynomial with $F$-coefficients, then
we say that it is {\em transcendental\/} over $F$.

\paragraph{\hspace*{-0.3cm}}
Some simple examples: 
$$
\kern-2pt\sqrt{2}, \frac{1+\sqrt{5}}{2}\text{ and }\sqrt[5]{\sqrt{2}+5\sqrt[3]{3}},
$$
are algebraic over $\Q$, whereas $\pi$ and $e$ are transcendental over $\Q$;
$\pi$ is algebraic over $\Q(\pi)$. 

\paragraph{\hspace*{-0.3cm}}
A field can contain many subfields: $\C$ contains 
$\Q(\kern-2pt\sqrt{2}),\R,\ldots$. It also contains $\Q$, but no subfields
that are smaller than this. Indeed, any subfield of $\C$ contains
$\Q$, so the rationals are the smallest subfield of the complex
numbers.

\begin{definition}[prime subfield]
\label{definition:prime_subfield}
The prime subfield of a field $F$ is 
the intersection of all the
subfields of $F$. 
\end{definition}

In particular the prime subfield is contained in
every subfield of $F$.


\begin{vexercise}\label{ex4.2}
Consider the field of rational numbers $\Q$ or the finite field $\F_p$ having
$p$ elements. Show that neither of these fields contain a proper subfield
(hint: for $\F_p$, consider the additive group and use Lagrange's Theorem
from Section \ref{groups.stuff}. For $\Q$, any subfield must contain $1$, and show that
it must then be all of $\Q$).
\end{vexercise}

The prime subfield must contain $1$, hence any expression of the
form $1+1+\cdots+1$ for any number of summands. If no such expression equals $0$
then we have infinitely many distinct such elements, and their inverses
under addition, hence a copy of $\Z$ in $F$. Otherwise, if $n$
is the smallest number of summands for which such an expression equals $0$, then
the elements
$$
1,1+1,1+1+1,\ldots,\underbrace{1+1+\cdots+1}_{n\text{ times}}=0,
$$
forms a copy of $\Z_n$ inside of $F$.
These comments can be made precise as in the following exercise. It looks ahead a little, 
requiring the first isomorphism theorem for
rings in Section \ref{lect5}.

\begin{vexercise}
Let $F$ be a field and define a map $\Z\rightarrow F$ by
$$
n\mapsto
\left\{\begin{array}{l}0,\text{ if }n=0,\\
1+\cdots+1, (n\text{ times), if }n>0\\
-1-\cdots-1, (n\text{ times), if }n<0.
\end{array}\right.
$$
Show that the map is a ring homomorphism. If the kernel consists of just $\{0\}$, then
show that $F$ contains $\Z$ as a subring. Otherwise, let $n$ be the smallest positive integer 
contained in the kernel, and show that $F$ contains $\Z_n$ as a subring. As $F$ is a field,
hence an integral domain, show that we must have $n=p$ a prime in this situation.
\end{vexercise}

Thus any field contains a subring isomorphic to $\Z$ or to $\Z_p$ for some 
prime $p$. But the ring $\Z_p$ is the field $\F_p$, and we saw in Exercise \ref{ex4.2}
that $\F_p$ contains no subfields. The conclusion is that in the second case
the prime subfield is $\F_p$. In the first case, $\Z$ is not 
a field, but each $m$ in this copy of $\Z$ has an inverse $1/m$ in $F$, and the
product of this with any other $n$ gives an element $m/n\in F$. 
The set of all such elements obtained is a copy of $\Q$ inside $F$.

\begin{vexercise}\label{ex4.3}
Make these loose statements precise: let $F$ be a field and $R$ a subring of
$F$ with $\varphi:\Z\rightarrow R$ an isomorphism of rings (this is what we mean
when we say that $F$ contains a copy of $\Z$). 
Show that this can be extended to an isomorphism $\widehat{\varphi}:\Q\rightarrow F'
\subseteq F$
with $\widehat{\varphi}|_{\Z}=\varphi$.
\end{vexercise}

\paragraph{\hspace*{-0.3cm}}
Putting it together: the prime subfield of a field is isomorphic
either to the rationals $\Q$ or to the finite field $\F_p$ for some prime $p$.
Define the {\em characteristic\/} of a field to be $0$ if the prime subfield is $\Q$,
or $p$ if the prime subfield is $\F_p$. 
Thus fields like $\Q,\R$ and $\C$ have characteristic zero, and indeed, any field of
characteristic zero must be infinite. Fields like 
$\F_2,\F_3\ldots$ and the field $\F$ of order $8$ given above have characteristic
$2,3$ and $2$ respectively. 

\begin{vexercise}
Show that a field $F$ has characteristic $p>0$ if and only if $p$ is the smallest 
number of summands such that the expression $1+1+\cdots+1$ is equal to $0$. Show
that $F$ has characteristic $0$ if and only if no such expression is equal to $0$.
\end{vexercise}

Thus, all fields of characteristic $0$ are infinite, and the only examples we know
of fields of characteristic $p>0$ are finite. It is not true though that a field
of characteristic $p>0$ must be finite. We give some examples of infinite
fields of characteristic $p>0$ below.

\begin{vexercise}\label{ex4.30}
Suppose that $f$ is an irreducible polynomial over a field $F$ of characteristic $0$.
Recalling Exercise \ref{ex3.20}, show that the roots of $f$ in any extension $E$ of $F$ 
are distinct.
\end{vexercise}

\paragraph{\hspace*{-0.3cm}}
It turns out that we can construct $\Q$ abstractly from 
$\Z$, without having to first position it inside another field:
consider the set
$$
\F=\{(a,b)\,|\,a,b\in\Z,b\not= 0,\mbox{ where }(a,b)=(c,d)\mbox{ iff }ad=bc\}
$$
i.e. ordered pairs of integers with two ordered
pairs $(a,b)$ and $(c,d)$ being the same if $ad=bc$.

\begin{aside}
These loose statements are made precise by defining an equivalence relation on
the set of ordered pairs $\Z\times\Z$ by $(a,b)\sim(c,d)$ if and only if $ad=bc$. The elements
of $\F$ are then the equivalence classes under this relation.
\end{aside}

Define addition and multiplication on $\F$ by:
$$
(a,b)+(c,d)=(ad+bc,bd)\mbox{ and }(a,b)(c,d)=(ac,bd). 
$$

\begin{vexercise}
\hspace{1em}
\begin{enumerate}
\item
Show that these definitions are well-defined, ie: if $(a,b)=(a',b')$ and $(c,d)=
(c',d')$, then $(a,b)+(c,d)=(a',b')+(c',d')$ and $(a,b)(c,d)=(a',b')(c',d')$.
\item
Show that $\F$ is a field. 
\item
Define a map $\varphi:\F\rightarrow \Q$ by $\varphi(a,b)=a/b$. Show that the map is 
well defined (ie: if $(a,b)=(a',b')$ then $\varphi(a,b)=\varphi(a',b')$)
and that $\varphi$ is an isomorphism.
\end{enumerate}
\end{vexercise}

This construction can be generalised as the following Exercise shows:

\begin{vexercise}\label{ex4.20}
Repeat the construction above with $\Z$ replaced by an arbitrary integral 
domain $R$. 
\end{vexercise}

The resulting field is called the {\em field of fractions of $R$}.
The field of fractions construction provides some interesting examples of fields,
possibly new in the reader's experience. Let $F[x]$ be the ring of polynomials 
with $F$-coefficients where $F$ is any field. 
The field of fractions
of this integral domain has elements of the form $f(x)/g(x)$ for $f$ and $g$ polynomials,
in other words, rational functions with $F$-coefficients.
The field is denoted $F(x)$ and is called the {\em field of rational 
functions over $F$\/}. 

\begin{description}
\item[--] {\em An infinite field of characteristic $p$:\/}
if $\F_p$ is a finite field of order $p$, then the field of rational
functions $\F_p(x)$ is infinite as it contains all the polynomials
over $\F_p$. But the rational function
$1$ still adds to itself only $p$ times to give $0$, hence the field
has characteristic $p$. 
\item[--] {\em A field properly containing the complex numbers:\/} $\C$ is
properly contained in the field of rational functions $\C(x)$.
\end{description}


\subsection*{Further Exercises for Section \thesection}

\begin{vexercise}
Let $\F$ be the set of all matrices of the form 
$\left[\begin{array}{cc}a&b\\2b&a\\\end{array}\right]$ where $a,b$ are in
the field $\F_5$. Define addition and multiplication to be the usual addition and multiplication 
of matrices (and also the addition and multiplication in $\F_5$). Show that
$\F$ is a field. How many elements does it have?
\end{vexercise}

\begin{vexercise}\label{ex_tables}
Let $\F_2$ be the field of integers modulo $2$, and $\aa$ be an
``abstract symbol" that can be multiplied so that it has the following
property: 
$\aa\times\aa\times\aa=\aa^3=\aa+1$ (a bit like decreeing that the
imaginary $i$ squares to give $-1$). 
Let  
$$\F=\{a+b\aa+c\aa^2\,|\,a,b,c\in\F_2\},$$ 
Define addition on $\F$ by:
$(a_1+b_1\aa+c_1\aa^2)+(a_2+b_2\aa+c_2\aa^2)=
(a_1+a_2)+(b_1+b_2)\aa+(c_1+c_2)\aa^2$, where the addition of
coefficients happens in $\F_2$. For multiplication, ``expand"
the expression $(a_1+b_1\aa+c_1\aa^2)(a_2+b_2\aa+c_2\aa^2)$
like you would a polynomial with
$\aa$ the indeterminate, 
the
coefficients are dealt with using the arithmetic from $\F_2$, and so on. Replace any
$\aa^3$ that result using the rule above. 
\begin{enumerate}
\item Write down all the elements of $\F$.
\item Write out the addition and multiplication tables for $\F$ (ie: the tables with rows
and columns indexed by the elements of $\F$, with the entry in the $i$-th row and $j$-th
column the sum/product of the $i$-th and $j$-th elements of the field). 
Hence show that $\F$ is a field (you can assume that the addition and
multiplication are associative as well as the distributive law, as these
are a bit tedious to verify!)
Using your tables, find the inverses (under multiplication) of the
elements $1+\aa$ and $1+\aa+\aa^2$, 
ie: find
$$
\frac{1}{1+\aa}\mbox{ and }\frac{1}{1+\aa+\aa^2}\mbox{ in }\F.
$$
\item Is the extension $\F_2\subset \F$ a simple one?
\end{enumerate}
\end{vexercise}

\begin{vexercise}
Take the set $\F$ of the previous exercise, and define addition/multiplication in 
the same way except that the rule for simplification is now $\aa^3=\aa^2+\aa+1$. Show that in
this case you {\em don't\/} get a field.
\end{vexercise}

\begin{vexercise}
Verify the claim in lectures that the set $\F=\{a+b\kern-2pt\sqrt{2}\,|\,a,b\in\Q\}$ is a subfield of 
$\C$.
\end{vexercise}

\begin{vexercise}
Verify the claim in lectures that $\Q(\sqrt[3]{2})=\{a+b(\sqrt[3]{2})+
c(\sqrt[3]{2})^2\,|\,a,b,c\in\Q\}$.
\end{vexercise}

\begin{vexercise}
Find a complex number $\alpha$ such that $\Q(\kern-2pt\sqrt{2},i)=\Q(\alpha)$.

\end{vexercise}

\begin{vexercise}
Is ${\Q}(\kern-2pt\sqrt{2}, \kern-2pt\sqrt{3}, \kern-2pt\sqrt{7})$ a
simple extension of ${\Q}(\kern-2pt\sqrt{2}, \kern-2pt\sqrt{3})$,  
${\Q}(\kern-2pt\sqrt{2})$ or even of $\Q$? 
\end{vexercise}

\begin{vexercise}
Let $\nabla$ be an ``abstract symbol" that has the following property:
$\nabla^2=-\nabla-1$ (a bit like $i$ squaring to give $-1$). Let
$$
\F=\{a+b\nabla\,|\,a,b\in\R\},
$$
and define an addition on $\F$ by:
$(a_1+b_1\nabla)+(a_2+b_2\nabla)=(a_1+a_2)+(b_1+b_2)\nabla$. For multiplication, expand
the expression $(a_1+b_1\nabla)(a_2+b_2\nabla)$ normally (treating
$\nabla$ like an indeterminate, so that $\nabla\nabla=\nabla^2$, and so on), and replace the
resulting $\nabla^2$ using the rule above. 
Show that $\F$ is a field, and is just the complex
numbers $\C$.
Do exactly the same thing, but with symbol $\triangle$ 
satisfying $\triangle^2=\kern-2pt\sqrt{2}\triangle-\sqrt[3]{5}$. Show that you
{\em still\/} get the complex numbers.
\end{vexercise}

\section{Rings II: Quotients}
\label{lect5}

In the last section we saw the need to think about fields more
abstractly. This section introduces the machinery we need to do this.

\paragraph{\hspace*{-0.3cm}}
A subset $I$ of a ring $R$ is an \emph{ideal\/} if and only if $I$ is a subgroup
of the abelian group $(R,+)$ and for any $s\in R$ we have
$sI=\{sa\,|\,a\in I\}=Is\subseteq I$.

In the rings that most interest us, ideals turn out to have a
very simple form:

\begin{proposition}
\label{fields2:all_ideals_are_principle}
Let $I$ be an ideal in $F[x]$ for $F$ a field. Then there is a
polynomial $f\in F[x]$ such that $I=\{fg\,|\,g\in F[x]\}$.
\end{proposition}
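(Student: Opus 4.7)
The plan is to mimic the proof of Theorem \ref{gcd} on greatest common divisors, using the division algorithm as the main tool. The key observation is that in $F[x]$ we have a degree function together with the division algorithm (Theorem A), and these are precisely what lets us run a ``minimal element'' argument.

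First I would dispose of the trivial case: if $I = \{0\}$, then take $f = 0$ and we are done. Otherwise $I$ contains a nonzero polynomial, so the set of degrees $\{\deg h : h \in I, h \neq 0\}$ is a nonempty subset of $\{0, 1, 2, \ldots\}$ and therefore has a least element. Pick any $f \in I$ of this minimal degree.

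Next I would prove the two inclusions. For $\{fg : g \in F[x]\} \subseteq I$: since $I$ is an ideal and $f \in I$, the defining property $s I \subseteq I$ for every $s \in R$ gives $gf \in I$ for every $g \in F[x]$. For the reverse inclusion, take any $h \in I$. Because $F$ is a field, the leading coefficient of $f$ is a nonzero element of $F$ and hence invertible, so Theorem A applies: there exist $q, r \in F[x]$ with $h = qf + r$ and $\deg r < \deg f$. Now $qf \in I$ by what we just proved, and $h \in I$, so $r = h - qf$ lies in $I$ because $I$ is a subgroup of $(F[x], +)$. If $r \neq 0$, then $r$ would be a nonzero element of $I$ with $\deg r < \deg f$, contradicting the minimality of $\deg f$. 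Hence $r = 0$ and $h = qf$, as required.

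There is no real obstacle; the only subtle point is making sure the division algorithm is actually available, which requires that the leading coefficient of $f$ be invertible in $F$. Because we work over a field rather than a general commutative ring, this is automatic, and that is precisely why ``all ideals are principal'' holds in $F[x]$ but fails in, e.g., $\Z[x]$.
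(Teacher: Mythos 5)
Your proof is correct and follows essentially the same route as the paper's: handle $I=\{0\}$ separately, pick a nonzero $f\in I$ of minimal degree, use the ideal property for one inclusion, and use the division algorithm together with minimality of $\deg f$ for the other. The only cosmetic difference is that you explicitly flag where the field hypothesis enters (invertibility of the leading coefficient so that Theorem A applies), which the paper leaves implicit.
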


An ideal in a ring of polynomials over a field thus consists of all the multiples of
some fixed polynomial. For $f$ the polynomial given in the
Proposition, write $\langle f\rangle$ for the ideal that it gives,
i.e. $\langle f\rangle=\{fg\,|\,g\in F[x]\}$, and call $f$ a
\emph{generator\/} of the ideal.

\begin{proof}
If $I=\{0\}$ (which is an ideal!) then we have $I=\langle
0\rangle$, and so the result holds. Otherwise, $I$ contains non-zero
polynomials. Choose $f$ to be one of minimal degree $\geq 0$. Then
$Ig\subseteq I$ for all $g$ gives $\langle f\rangle\subseteq
I$. Conversely, if $h\in I$ then dividing $h$ by $f$ gives $h=qf+r$.
As $qI\subseteq I$ we have $qf\in I$, hence
$h-qf\in I$, as $I$ is a subgroup under $+$. Thus $r\in I$, and as
$\deg\,r<\deg\,f$ we are only saved from a contradiction if
$\deg\,r<0$; that is, if $r=0$. Thus $h=qf\in\langle f\rangle$ and so
$I\subseteq \langle f\rangle$.
\qed
\end{proof}

To emphasise that from now on, all our ideals will have this special form, we 
restate the definition:

\begin{definition}[ideals of polynomial rings over a field]
An ideal in $F[x]$ is a set of the form 
$$\langle f\rangle=\{fg\,|\,g\in F[x]\}$$
for $f$ some fixed polynomial.
\end{definition}

\begin{vexercise}
\label{fields2:Exercise20}
\begin{enumerate}
\item Show that $\lg f\rg=\lg h\rg$ if and only if $h=c f$ for some constant 
$c\in F$. Similarly, $\lg h\rg=F[x]$ if and only if $h=c$ some
constant. \emph{Moral}: generators are not unique.
\item Let $I\subset\Z[x]$ consist of those polynomials having even
  constant term. Show that $I$ is an ideal but $I\not=\langle
  f\rangle$ for any $f\in\Z[x]$. \emph{Moral}: ideals in $R[x]$ for
  $R$ a commutative ring need not have the special form of
  Proposition \ref{fields2:all_ideals_are_principle}.
\end{enumerate}
\end{vexercise}


\paragraph{\hspace*{-0.3cm}}
In any ring there are the trivial ideals $\langle 0\rangle=\{0\}$
(which we have met already in the proof of Proposition
\ref{fields2:all_ideals_are_principle}) and $\langle 1\rangle=R$. 

\begin{vexercise}\label{ex5.1}
\hspace{1em}
\begin{enumerate}
\item Show that the only ideals in a field $F$ are the two trivial ones (hint: use the property
of ideals mentioned at the end of the last paragraph).
\item If $R$ is a commutative ring whose only ideals are $\{0\}$ and $R$, then show that
$R$ is a field.
\item Show that in the non-commutative ring $M_n(F)$ of $n\times n$ matrices with entries from
the field $F$ there are only the two trivial ideals, but that $M_n(F)$ is not a field. 
\end{enumerate}
\end{vexercise}

\paragraph{\hspace*{-0.3cm}}
For another example of an ideal, consider the ring $\Q[x]$, the number $\kern-2pt\sqrt{2}\in\R$,
and the evaluation homomorphism
$\ev_{\sqrt{2}}:\Q[x]\rightarrow\R$ given by
$$
\ev_{\sqrt{2}}(a_nx^n+\cdots+a_0)=a_n(\kern-2pt\sqrt{2})^n+\cdots+a_0.
$$
(see Section \ref{lect2}). Let $I$ be the set of all polynomials in $\Q[x]$ that
are sent to $0\in\R$ by this map. Certainly $x^2-2\in I$ (as
$\kern-2pt\sqrt{2}^2-2=0$). If $f=(x^2-2)g\in\Q[x]$, then 
$$
\ev_{\sqrt{2}}(f)=\ev_{\sqrt{2}}(x^2-2)\ev_{\sqrt{2}}(g)=0\times\ev_{\sqrt{2}}(g)=0,
$$
using the fact that $\ev_{\sqrt{2}}$ is a homomorphism.
Thus, $f\in I$, and so the ideal $\langle
x^2-2\rangle$ is $\subseteq I$. 

Conversely, if $h$ is sent to $0$ by $\ev_{\sqrt{2}}$, ie: $h\in I$, we
can divide it by $x^2-2$ using the division algorithm, 
$$
h=(x^2-2)q+r,
$$
where $\deg r<2$, so that $r=ax+b$ for some $a,b\in \Q$. But since
$\ev_{\sqrt{2}}(h)=0$ we have
$$
(\kern-2pt\sqrt{2}^2-2)q(\kern-2pt\sqrt{2})+r(\kern-2pt\sqrt{2})=0\Rightarrow
r(\kern-2pt\sqrt{2})=0\Rightarrow
a\kern-2pt\sqrt{2}+b=0.
$$
If $a\not= 0$, then $\kern-2pt\sqrt{2}\in\Q$ as $a,b\in\Q$, which is plainly
nonsense. Thus $a=0$, hence $b=0$ 
too, so that $r=0$, and
hence $h=(x^2-2)q\in\langle x^2-2\rangle$, and we get that
$I\subseteq \langle x^2-2\rangle$.

The conclusion is that the set of polynomials in $\Q[x]$ sent to zero by the
evaluation homomorphism $\ev_{\sqrt{2}}$ is an ideal. 

\paragraph{\hspace*{-0.3cm}}
This always happens: if
$R,S$ are rings and $\varphi:R\rightarrow S$ a ring homomorphism, then
the {\em kernel\/} of $\varphi$, denoted $\ker\varphi$, is the set of all
elements of $R$ sent to $0\in S$ by $\varphi$, ie:
$$
\ker\varphi=\{r\in R\,|\,\varphi(r)=0\in S\}.
$$

\begin{proposition}
If $F$ is a field and $S$ a ring then the kernel of a homomorphism 
$\varphi:F[x]\rightarrow S$ is an ideal.
\end{proposition}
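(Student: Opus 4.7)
The plan is to verify directly that $\ker\varphi$ satisfies the two abstract ideal conditions (subgroup of $(F[x],+)$ and absorption under multiplication by any element of $F[x]$), and then invoke Proposition \ref{fields2:all_ideals_are_principle} to conclude that $\ker\varphi$ has the special principal form $\langle f\rangle$ required by the paper's restricted definition of ideal.

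First I would check subgroup-hood. Since $\varphi$ is a ring homomorphism, the remark following Definition \ref{def_hom} gives $\varphi(0)=0$, so $0\in\ker\varphi$. If $a,b\in\ker\varphi$ then $\varphi(a+b)=\varphi(a)+\varphi(b)=0+0=0$, and $\varphi(-a)=-\varphi(a)=0$ (using $\varphi(a)+\varphi(-a)=\varphi(a+(-a))=\varphi(0)=0$). Hence $\ker\varphi$ is a subgroup of $(F[x],+)$.

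Next I would check absorption. For any $s\in F[x]$ and any $a\in\ker\varphi$, the multiplicativity of $\varphi$ gives $\varphi(sa)=\varphi(s)\varphi(a)=\varphi(s)\cdot 0=0$, so $sa\in\ker\varphi$; the case $as$ is identical (or follows from commutativity of $F[x]$). So $s\cdot\ker\varphi\subseteq\ker\varphi$.

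Finally, having established that $\ker\varphi$ is a subgroup of $(F[x],+)$ closed under multiplication by arbitrary elements of $F[x]$, I would apply Proposition \ref{fields2:all_ideals_are_principle}, whose proof used exactly these two properties (plus the division algorithm), to conclude that $\ker\varphi=\langle f\rangle$ for some $f\in F[x]$. The main obstacle is essentially presentational rather than mathematical: the paper defines ``ideal'' narrowly as a principal ideal, so the proof needs to route through the earlier structural result rather than simply appealing to the standard definition. Note that the hypothesis that $F$ is a field enters only via this appeal (cf.\ Exercise \ref{fields2:Exercise20}(b), which shows the principal form can fail when coefficients come from a general commutative ring); the subgroup-plus-absorption part uses nothing about $F$ beyond it being a ring.
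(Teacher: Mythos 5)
Your proof is correct, but it is organized differently from the paper's. The paper proves the proposition directly by re-running the division-algorithm argument from scratch: it picks $g\in\ker\varphi$ non-zero of minimal degree, shows $\langle g\rangle\subseteq\ker\varphi$ using multiplicativity of $\varphi$, and shows $\ker\varphi\subseteq\langle g\rangle$ by dividing any $f\in\ker\varphi$ by $g$ and observing the remainder is again in the kernel, hence (by minimality of $\deg g$) zero. You instead verify the abstract two-condition definition of ideal (additive subgroup and absorption under multiplication) and then invoke Proposition \ref{fields2:all_ideals_are_principle} as a black box to get the principal form $\langle f\rangle$. Both routes ultimately rest on the same division-algorithm lemma, but yours factors the argument through the general structure theorem rather than reproving it, which is more modular and also handles the degenerate case $\ker\varphi=\{0\}$ without fuss (the paper's direct proof tacitly assumes a non-zero element exists). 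The paper's version has the pedagogical advantage of concretely echoing the preceding worked example with $\varepsilon_{\sqrt{2}}$, which is what the ``Is very similar to the previous example'' framing is going for; your version has the advantage of not repeating work already done.
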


\begin{proof}
Is very similar to the previous example. To get a polynomial that
plays the role of $x^2-2$, 
choose $g\in\ker\varphi$, non-zero, of smallest degree. We
claim that $\ker\varphi=\langle g\rangle$, for which we need to show that
these two sets are mutually contained within each other. On the one
hand, if $pg\in\langle g\rangle$ then 
$$
\varphi(pg)=\varphi(p)\varphi(g)=\varphi(p)\times 0=0,
$$
since $g\in\ker\varphi$. Thus, $\langle g\rangle\subseteq\ker\varphi$.
On the other hand, let $f\in\ker\varphi$ and use the division algorithm to
divide it by $g$,
$$
f=qg+r,
$$
where $\deg r<\deg g$. Now, $r=f-qg\Rightarrow
\varphi(r)=\varphi(f-qg)=\varphi(f)-\varphi(q)\varphi(g)=0-\varphi(q).0=0$, since both
$f,g\in\ker\varphi$. Thus, $r$ is also in the kernel of $\varphi$. If $r$ was a
non-zero polynomial, then we would have a contradiction because $\deg
r<\deg g$, but $g$ was chosen from $\ker\varphi$ to have smallest degree.
Thus we must have that $r=0$, hence $f=qg\in\langle g\rangle$, ie:
$\ker\varphi\subseteq\langle g\rangle$. 
\qed
\end{proof}

\paragraph{\hspace*{-0.3cm}}
Let $\langle f\rangle\subset F[x]$ be an ideal and $g\in F[x]$ any
polynomial. The set
$$
g+\langle f\rangle=\{g+h\,|\,h\in\langle f\rangle\},
$$
is called the {\em coset of $\langle f\rangle$ with representative
$g$\/}
(or the coset of $\langle f\rangle$ {\em determined\/} by $g$).

\paragraph{\hspace*{-0.3cm}}
As an example, consider the ideal $\langle x\rangle$ in $\F_2[x]$.
Thus $\langle x\rangle$ is the set of all multiples of $x$, which is the same
as the polynomials in $\F_2[x]$ that have
no constant term.
What are the cosets of $\langle x\rangle$? Let $g$ be any polynomial and consider the
coset $g+\langle x\rangle$. The only possibilities are that $g$
has no constant term, or it does, in which case this term is $1$
(we are in $\F_2[x]$). 

If $g$ has no constant term, then
$$
g+\langle x\rangle=\langle x\rangle.
$$
For, $g$ added to a polynomial with no constant term is another polynomial with
no constant term, ie: $g+\langle x\rangle\subseteq \langle x\rangle$. On
the other hand, if $f\in\langle x\rangle$ is any polynomial with no constant term, then
$f-g\in\langle x\rangle$ so $f=g+(f-g)\in g+\langle x\rangle$, ie:
$\langle x\rangle\subseteq g+\langle x\rangle$. 

If $g$ does have a constant term, you can convince yourself in exactly
the same way that,
$$
g+\langle x\rangle=1+\langle x\rangle.
$$
Thus, there are only two cosets of $\langle x\rangle$ in $\F_2[x]$,
namely
the ideal $\langle x\rangle$ itself and $1+\langle x\rangle$. 

Notice that these two cosets are completely disjoint, but every
polynomial is in one of them. 

\paragraph{\hspace*{-0.3cm}}
Here are some basic properties of cosets:

\begin{figure}
  \centering
\begin{pspicture}(14,4)
\rput(3,1){
\rput(1.2,1){\BoxedEPSF{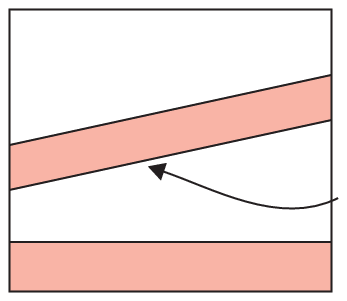 scaled 1000}}
\rput(1,-0.2){$\langle f\rangle$}
\rput(.5,1){$g_1$}
\rput(1.5,1.25){$g_2$}
\rput(0,-.6){\rput(3.5,.75){$g_2+\langle f\rangle$}
\rput(3.5,1.1){$=$}
\rput(3.55,1.5){$g_1+\langle f\rangle$}}
}
\rput(8.75,1){
\rput(1.2,1){\BoxedEPSF{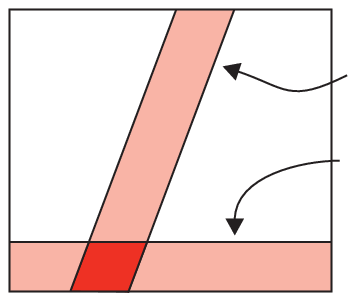 scaled 1000}}
\rput(1,1){$g_1$}
\rput(1.6,-0.2){$g_2$}
\rput(3.5,.9){$g_2+\langle f\rangle$}
\rput(3.55,1.8){$g_1+\langle f\rangle$}
\rput(.55,-0.2){$h$}
}
\end{pspicture}
\caption{Two different names for the same coset (\emph{left}) and a
  prohibited situation (\emph{right}).}
  \label{fig:figure4}
\end{figure}

\begin{description}
\item[--] \emph{Every polynomial $g$ is in some coset of $\langle f\rangle$}: for 
$g=g+0\times f\in g+\langle f\rangle$.
\item[--] \emph{For any $q$, we have $qf+\langle f\rangle=\langle f\rangle$}: so multiples of 
$f$ get ``absorbed'' into
the ideal $\langle f\rangle$.
\item[--]
\emph{The following three things are equivalent: (i). $g_1$ and $g_2$ lie in the same coset of
$\langle f\rangle$; (ii). $g_1+\langle f\rangle=g_2+\langle f\rangle$;
(iii). $g_1$ and $g_2$ differ by a multiple of $f$}:
to see this: (iii) $\Rightarrow$ (ii) If $g_1-g_2=pf$ then $g_1=g_2+pf$ so that
$g_1+\langle f\rangle=g_2+pf+\langle f\rangle=g_2+\langle f\rangle$; (ii) $\Rightarrow$ (i)  
Since $g_1\in g_1+\langle f\rangle$ and $g_2\in g_2+\langle f\rangle$, and these cosets are equal 
we have that $g_1,g_2$ lie in the same coset; (i) $\Rightarrow$ (iii) If $g_1$ and $g_2$ 
lie in the same coset, ie:
$g_1,g_2\in h+\langle f\rangle$, then each $g_i=h+p_if\Rightarrow g_1-g_2=(p_1-p_2)f$.

It can be summarised by saying that $g_1$ and $g_2$ lie in the same coset if and only if 
this coset has the two different names, $g_1+\langle f\rangle$ and $g_2+\langle f\rangle$, 
as in the left of Figure \ref{fig:figure4}.

\item[--] \emph{The situation on the right of Figure \ref{fig:figure4} opposite
  never happens, where distinct cosets have non-empty intersection}: if the two cosets pictured are  
called respectively
$g_1+\langle f\rangle$ and $g_2+\langle f\rangle$, then $h$ is in both and so differs 
from $g_1$ and $g_2$ by 
multiples of $f$, ie: $g_1-h=p_1f$ and $h-g_2=p_2f$, so that $g_1-g_2=(p_1+p_2)f$. Since $g_1$ 
and $g_2$ differ by a multiple of $f$, we have $g_1+\langle f\rangle=
g_2+\langle f\rangle$. 
\end{description}

Thus, the cosets of an ideal partition the ring.

\paragraph{\hspace*{-0.3cm}}
As an example let $x^2-2\in\Q[x]$ and consider the ideal 
$$
\langle x^2-2\rangle=\{p(x^2-2)\,|\,p\in\Q[x]\}.
$$
$(x^3-2x+15)+\langle x^2-2\rangle$ is then a coset, but it is not written in the nicest 
possible form. If 
we divide $x^3-2x+15$ by $x^2-2$:
$$
x^3-2x+15=x(x^2-2)+15,
$$
we have $x^3-2x+15 $ and $15$ differ by a multiple of $x^2-2$, so that
$$
(x^3-2x+15)+\langle x^2-2\rangle=15+\langle x^2-2\rangle.
$$

\paragraph{\hspace*{-0.3cm}}
If we look again at the ideal $\langle x\rangle$ in
$\F_2[x]$, there were only two cosets, 
$$ 
\langle x\rangle=0+\langle x\rangle \text{ and } 1+\langle x\rangle,
$$
that corresponded to the polynomials with constant term $0$ and the
polynomials with constant  
term $1$. We
could try ``adding'' and ``multiplying'' these two 
cosets together according to,
$$
(0+\langle x\rangle)+(0+\langle x\rangle)= 0+\langle
x\rangle,(1+\langle x\rangle)+(0+\langle x\rangle) 
=1+\langle x\rangle,(1+\langle x\rangle)+(1+\langle x\rangle)=0+\langle x\rangle,
$$
and so on, where all we have done is to add the representatives of the cosets together 
using the addition from $
\F_2$. Similarly for multiplying the cosets. 
This looks like $\F_2$, but with $0+\langle x\rangle$ and $1+\langle x\rangle$ 
replacing $0$ and $1$.

\paragraph{\hspace*{-0.3cm}}
Again this always happens. Let $\langle f\rangle$ be an ideal in $F[x]$, and define an 
addition and multiplication
of cosets of $\langle f\rangle$ by,
$$
(g_1+\langle f\rangle)+(g_2+\langle f\rangle)=(g_1+g_2)+\langle f\rangle\text{ and }
(g_1+\langle f\rangle)(g_2+\langle f\rangle)=(g_1g_2)+\langle f\rangle,
$$
where the addition and multiplication of the $g_i$'s is happening in $F[x]$.

\begin{theorem}
The set of cosets $F[x]/\langle f\rangle$ together with the $+$ and $\times$ above is a ring.
\end{theorem}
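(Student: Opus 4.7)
The plan is to show that the coset operations inherit the ring structure of $F[x]$. There are really two things to do: first, verify that the operations on cosets are well defined (i.e.\ independent of the chosen representatives), and second, observe that each ring axiom for $F[x]/\lg f\rg$ reduces, after peeling off ``$+\lg f\rg$'' from both sides, to the corresponding axiom in $F[x]$.

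First I would establish well-definedness, which is where the real content lies. Suppose $g_1+\lg f\rg=g_1'+\lg f\rg$ and $g_2+\lg f\rg=g_2'+\lg f\rg$. By the third bulleted property of cosets established above, this means $g_i-g_i'=p_i f$ for some $p_i\in F[x]$. For addition,
$$(g_1+g_2)-(g_1'+g_2')=(g_1-g_1')+(g_2-g_2')=(p_1+p_2)f,$$
so $(g_1+g_2)$ and $(g_1'+g_2')$ differ by a multiple of $f$ and therefore determine the same coset. For multiplication, the standard ``add and subtract'' trick gives
$$g_1g_2-g_1'g_2'=(g_1-g_1')g_2+g_1'(g_2-g_2')=(p_1g_2+g_1'p_2)f,$$
again a multiple of $f$, so $g_1g_2$ and $g_1'g_2'$ lie in the same coset. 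This is the main obstacle, and it is really the only place where the fact that $\lg f\rg$ is an ideal (closed under multiplication by arbitrary elements of $F[x]$) is used in a serious way.

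Once well-definedness is in hand, each ring axiom is a one-line calculation. For instance, associativity of addition reads
$$\bigl((g_1+\lg f\rg)+(g_2+\lg f\rg)\bigr)+(g_3+\lg f\rg)=\bigl((g_1+g_2)+g_3\bigr)+\lg f\rg,$$
and since $(g_1+g_2)+g_3=g_1+(g_2+g_3)$ in $F[x]$, the two ways of associating give the same coset. Commutativity of $+$, associativity of $\times$, and the distributive law all go through in exactly the same way. The additive identity is $0+\lg f\rg=\lg f\rg$, the additive inverse of $g+\lg f\rg$ is $(-g)+\lg f\rg$, and the multiplicative identity is $1+\lg f\rg$, each verified by pulling ``$+\lg f\rg$'' outside and applying the corresponding axiom in $F[x]$.

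In summary: the operations are well defined because $\lg f\rg$ absorbs multiplication by arbitrary polynomials, and every ring axiom for $F[x]/\lg f\rg$ is inherited term-by-term from $F[x]$. I would write the well-definedness step in full and then remark that the ring axioms follow immediately from those of $F[x]$, leaving the routine verifications to the reader.
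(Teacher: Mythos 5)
Your proposal is correct and follows exactly the route the paper lays out (in the exercise immediately following the theorem statement): first establish well-definedness of $+$ and $\times$ on cosets, then note that the ring axioms descend from $F[x]$. Your well-definedness computations — in particular the add-and-subtract trick $g_1g_2-g_1'g_2'=(g_1-g_1')g_2+g_1'(g_2-g_2')$ and the observation that this is where the ideal property of $\langle f\rangle$ is genuinely used — are precisely the content the paper leaves to the reader.
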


Call this the {\em quotient ring\/} of $F[x]$ by the ideal $\langle f\rangle$. 
All our rings have a ``zero'', 
a ``one'', and so on, and for the quotient ring these are,
$$
\begin{tabular}{cc}\hline
element of a ring & corresponding element in $F[x]/\langle f\rangle$\\\hline
$a$&$g+\langle f\rangle$\\
$-a$&$(-g)+\langle f\rangle$\\
$0$&$0+\langle f\rangle=\langle f\rangle$\\
$1$&$1+\langle f\rangle$\\\hline
\end{tabular}
$$

\begin{vexercise}
To prove this theorem:
\begin{enumerate}
\item Show that the addition of cosets is {\em well defined\/}, ie: if $g_i'+\langle f\rangle
=g_i+\langle f\rangle$, then
$$
(g_1'+g_2')+\langle f\rangle=(g_1+g_2)+\langle f\rangle.
$$
\item Similarly, show that the multiplication is well defined. 
\item Now verify the axioms for a ring.
\end{enumerate}
\end{vexercise}

\paragraph{\hspace*{-0.3cm}}
Let $x^2+1\in\R[x]$,
and consider the ideal $\langle x^2+1\rangle$. We want to see what the quotient 
$\R[x]/\langle x^2+1\rangle$
looks like. First, any coset can be put into a nice form: for example,
$$
x^4+x^2+x+1+\langle x^2+1\rangle=x^2(x^2+1)+(x+1)+\langle x^2+1\rangle,
$$
where we have divided $x^4+x^2+x+1$ by $x^2+1$ using the division algorithm. But
$$
x^2(x^2+1)+(x+1)+\langle x^2+1\rangle=x+1+\langle x^2+1\rangle,
$$
as the multiple of $x^2+1$ gets absorbed into the ideal. In fact, for any $g\in\R[x]$ 
we can make this argument:
$$
g+\langle x^2+1\rangle=q(x^2+1)+(ax+b)+\langle x^2+1\rangle=ax+b+\langle x^2+1\rangle,
$$
for some $a,b\in\R$,
so the set of cosets can be written as
$$
\R[x]/\langle x^2+1\rangle=\{ax+b+\langle x^2+1\rangle\,|\,a,b\in\R\}.
$$
Now take two elements of the quotient, say $(x+1)+\langle x^2+1\rangle$ and 
$(2x-3)+\langle x^2+1\rangle$, and add/multiply them together:
$$
\biggl\{(x+1)+\langle x^2+1\rangle\biggr\}+\biggl\{(2x-3)+\langle x^2+1\rangle\biggr\}
=3x-2+\langle x^2+1\rangle,
$$
and 
\begin{equation*}
\begin{split}
\biggl\{(x+1)+\langle x^2+1\rangle\biggr\}\times
\biggl\{(2x-3)+\langle x^2+1\rangle\biggr\}&=(2x^2-x-3)+ 
\langle x^2+1\rangle\\
&=2(x^2+1)+(-x-5)+\langle x^2+1\rangle\\
&=-x-5+\langle x^2+1\rangle.
\end{split}
\end{equation*}

Now ``squint your eyes'', so that 
$ax+b+\langle x^2+1\rangle$ becomes the complex number $a\imag+b\in\C$. Then
$$
(\imag+1)+(2\imag-3)=3\imag-2\text{ and }(\imag+1)(2\imag-3)=-\imag-5.
$$
The addition and multiplication of cosets in $\R[x]/\langle x^2+1\rangle$ looks exactly 
like the addition and
multiplication of complex numbers!

\paragraph{\hspace*{-0.3cm}}
To see what quotient rings look like we use:

\begin{isomthm}
Let $\varphi:F[x]\rightarrow S$ be a ring homomorphism with kernel
$\langle f\rangle$. Then the map
$g+\langle f \rangle\mapsto\varphi(g)$ is an isomorphism 
$$
F[x]/\langle f\rangle\rightarrow\im\,\varphi\subset S.
$$
\end{isomthm}

\paragraph{\hspace*{-0.3cm}}
In the example above let $R=\R[x]$ and $S=\C$. 
Let the homomorphism $\varphi$ be the evaluation at $i$ homomorphism,
$$
\ev_i:\biggl(\sum a_kx^k\biggr)\mapsto \sum a_k(i)^k.
$$
In exactly the same way as an earlier example, you can
show that 
$$
\ker\ev_i=\langle x^2+1\rangle.
$$
On the other hand, if $ai+b\in\C$, then $ai+b=\ev_i(ax+b)$, so the
image of the homomorphism  
$\ev_i$ is all of $\C$. Feeding this into the first homomorphism theorem gives,
$$
\R[x]/\langle x^2+1\rangle\cong \C.
$$

\subsection*{Further Exercises for Section \thesection}

\begin{vexercise}
Let $\phi=(1+\sqrt5)/2$ (in fact the {\it Golden Number\/}).
\begin{enumerate}
\item Show that the kernel of the evaluation map
$\epsilon_{\phi}:\Q[x]\rightarrow\C$ (given by $\epsilon_{\phi}(f)=f(\phi)$) is the
ideal $\langle x^2-x-1\rangle$. 
\item Show that $\Q(\phi)=\{a+b\phi\mid a,b\in\Q\}$. 
\item Show that $\Q(\phi)$ is the image in $\C$ of the map $\epsilon_{\phi}$.
\end{enumerate}
\end{vexercise}

\begin{vexercise}\label{ex5.3}
Going back to the general case of an ideal $I$ in a ring $R$, consider the map 
$\eta: R\rightarrow R/I$ given by,
$$
\eta(r)=r+I,
$$
sending an element of $R$ to the coset of $I$ determined by it.
\begin{enumerate}
\item Show that $\eta$ is a homomorphism.
\item Show that if $J$ is an ideal in $R$ containing $I$ then $\eta(J)$ is an ideal of $R/I$.
\item Show that if $J'$ is an ideal of $R/I$ then there is an ideal $J$ of $R$, containing $I$,
such that $\eta(J)=J'$.
\item Show that in this way, $\eta$ is a bijection between the ideals of $R$ containing $I$ and
the ideals of $R/I$.
\end{enumerate}
\end{vexercise}

\section{Fields II: Constructions and More Examples}\label{fields2}

\paragraph{\hspace*{-0.3cm}}
A proper ideal $\lg f\rg$ of $F[x]$ is {\em maximal\/} if and only if 
the only ideals of 
$F[x]$ containing $\lg f\rg$ are $\lg f\rg$ itself and the whole ring $F[x]$, ie:
$$
\lg f\rg\subseteq I\subseteq F[x],
$$
with $I$ an ideal implies that $I=\lg f\rg$ or $I=F[x]$.

\paragraph{\hspace*{-0.3cm}}
The main result of this section is,

\begin{theoremB}
The quotient ring $F[x]/\lg f\rg$ is a field if and only if $\lg f\rg$ 
is a maximal ideal.
\end{theoremB}

\begin{proof}
By Exercise \ref{ex5.1}, a commutative ring $R$ is a field if and only if the
only ideals of $R$ are the trivial one $\{0\}$ and the whole ring $R$.  
Thus the quotient $F[x]/\lg f\rg$ is a field if and only if its only ideals
are the trivial one $\lg f\rg$ and the whole ring $F[x]/\lg f\rg$.
By Exercise \ref{ex5.3}, there is a one to one correspondence between the
ideals of the quotient $F[x]/\lg f\rg$ and the ideals of $F[x]$ that 
contain $\lg f\rg$. Thus $F[x]/\lg f\rg$ has only the two trivial ideals
precisely when there are only two ideals of $F[x]$ containing $\lg
f\rg$, namely $\lg f\rg$ and $F[x]=\lg 1\rg$, 
which is the same as saying that $\lg f\rg$ is maximal.
\qed
\end{proof}

\paragraph{\hspace*{-0.3cm}}
Suppose now that $f$ is an irreducible polynomial over $F$, and let $\langle f\rangle
\subseteq I\subseteq F[x]$ with $I$ an ideal. Then $I=\langle h\rangle$ hence 
$\lg f \rg\subseteq \lg h\rg$, and so $h$ divides $f$. Since $f$ is irreducible this
means that $h$ must be either a constant $c\in F$ or $c f$, so that
the ideal $I$ is either $\lg c\rg$ or $\lg c f\rg$. But $\lg c f\rg$ is just the 
same as the ideal $\lg f\rg$. On the other hand, any polynomial $g$ can be written
as a multiple of $c$, just by setting $g=c(c^{-1}g)$, and so $\lg c\rg=F[x]$.
Thus if $f$ is an irreducible polynomial then the ideal $\lg f\rg$
is maximal.

Conversely, if $\lg f\rg$ is maximal and $h$ divides $f$, then $\lg f\rg\subseteq 
\lg h\rg$, so that by maximality $\lg h\rg=\lg f\rg$ or $\lg
h\rg=F[x]$. By Exercise \ref{fields2:Exercise20} we have $h=c$ a
constant, or $h=c f$, and so $f$ is irreducible over $F$.

Thus, the ideal $\lg f\rg$ is maximal precisely when $f$ is irreducible.

\begin{corollary} 
$F[x]/\lg f\rg$ is a field if and only if $f$ is an irreducible polynomial over $F$.
\end{corollary}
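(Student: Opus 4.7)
The plan is very short: this corollary is simply the concatenation of two facts that have already been established just above, so the proof is a single combine-them step rather than any fresh argument.

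First I would cite Theorem B, which says that $F[x]/\langle f\rangle$ is a field if and only if $\langle f\rangle$ is a maximal ideal. Next I would cite the discussion immediately preceding the corollary, where it was shown that $\langle f\rangle$ is a maximal ideal of $F[x]$ precisely when $f$ is irreducible over $F$ (one direction using that any divisor $h$ of an irreducible $f$ forces $\langle h\rangle$ to be either $\langle f\rangle$ or all of $F[x]$; the other direction using Exercise \ref{fields2:Exercise20} to translate maximality of $\langle f\rangle$ into the statement that $f$ has no non-trivial divisors up to constants). Chaining the two equivalences gives the corollary.

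There is no real obstacle here — the corollary is genuinely a corollary, and the only thing to be careful about is to make sure the biconditional is preserved at both stages (both Theorem B and the irreducibility/maximality equivalence are ``if and only if'' statements, so the composition is again ``if and only if''). I would write the proof as a single sentence: by Theorem B, $F[x]/\langle f\rangle$ is a field iff $\langle f\rangle$ is maximal, and by the preceding paragraph this happens iff $f$ is irreducible over $F$.
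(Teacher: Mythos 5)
Your proposal is correct and is exactly the paper's intended argument: the paper gives no separate proof of the corollary precisely because it is the chaining of Theorem B with the paragraph immediately before it, which establishes that $\lg f\rg$ is maximal if and only if $f$ is irreducible over $F$. Nothing more is needed, and your note about both stages being genuine biconditionals is the right thing to check.
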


\paragraph{\hspace*{-0.3cm}}
The polynomial $x^2+1$ is irreducible over the reals $\R$, so 
the quotient ring $\R[x]/\lg x^2+1\rg$ is a field. 

\paragraph{\hspace*{-0.3cm}}
The polynomial $x^2-2x+2$ has roots $1\pm \imag$, hence is irreducible over $\R$, giving
the field,
$$
\R[x]/\lg x^2-2x+2\rg.
$$
Consider the evaluation map $\ve_{1+\imag}:\R[x]\rightarrow \C$ given as usual by 
$\ve_{1+\imag}(f)=f(1+\imag)$. In exactly the same way as the example for $\ve_{\sqrt{2}}$
in Section \ref{lect5}, one can show that $\ker\ve_{1+\imag}=\lg x^2-2x+2\rg$. Moreover,
$a+b\imag=\ve_{1+\imag}(a-b+bx)$ so that the evaluation map is onto $\C$. Thus, by the
first isomorphism theorem we get that,
$$
\R[x]/\lg x^2-2x+2\rg\cong\C.
$$
What this means is that we can construct the complex numbers in the following (slightly
non-standard) way: start with the reals $\R$, and define a new symbol, $\nabla$ say, which
satisfies the algebraic property,
$$
\nabla^2=2\nabla-2.
$$
Now consider all expressions of the form $c+d\nabla$ for $c,d\in\R$. Add and multiply
two such expressions together as follows:
\begin{equation*}
\begin{split}
(c_1+d_1\nabla)+(c_2+d_2\nabla)&=(c_1+c_2)+(d_1+d_2)\nabla\\
(c_1+d_1\nabla)(c_2+d_2\nabla)&=c_1c_2+(c_1d_2+d_1c_2)\nabla +d_1d_2\nabla^2\\
&=c_1c_2+(c_1d_2+d_1c_2)\nabla +d_1d_2(2\nabla-2)\\
&=(c_1c_2-2d_1d_2)+(c_1d_2+d_1c_2+2d_1d_2)\nabla.\\
\end{split}
\end{equation*}

\begin{vexercise}
By solving the equations $cx-2dy=1$ and $cy+dx+2dy=0$ for $x$ and $y$
in terms of $c$ and $d$, find the inverse of the element 
$c+d\nabla$. 
\end{vexercise}

\begin{vexercise}
According to Exercise \ref{ex3.21}, if $f$ is irreducible over $\R$ then $f$ must be either
quadratic or linear. Suppose that $f=ax^2+bx+c$ is an irreducible quadratic over $\R$.
Show that the field $\R[x]/\lg ax^2+bx+c\rg\cong\C$. 
\end{vexercise}

\paragraph{\hspace*{-0.3cm}}
The next few paragraphs illustrate the construction for finite fields,
using a field of order four as a running example.

In the process of doing the example in
\ref{irreducible_polynomials:paragraph50}
we saw that the only irreducible quadratic over the field $\F_2$ is $x^2+x+1$.
Thus the quotient
$$
\F_2[x]/\lg x^2+x+1\rg,
$$
is a field. Each of its elements is a coset of the form $g+\lg x^2+x+1\rg$. Use the 
division algorithm, dividing $g$ by $x^2+x+1$, to get 
$$
g+\lg x^2+x+1\rg=q(x^2+x+1)+r+\lg x^2+x+1\rg=r+\lg x^2+x+1\rg,
$$
where the remainder $r$ is of the form $ax+b$, for
$a,b\in\F_2$. Thus every element of the field has the form
$ax+b+\lg x^2+x+1\rg$, of which there are at most $4$ possibilities 
($2$ choices for
$a$ and $2$ choices for $b$). 

Indeed these $4$ are distinct, for if
$$
a_1x+b_1+\lg x^2+x+1\rg=a_2x+b_2+\lg x^2+x+1\rg
$$
then,
\begin{equation*}
\begin{split}
(a_1-a_2)x&+(b_1-b_2)+\lg x^2+x+1\rg\\
&=\lg x^2+x+1\rg
\Leftrightarrow (a_1-a_2)x+(b_1-b_2)\in\lg x^2+x+1\rg.
\end{split}
\end{equation*}
Since the non-zero elements of the ideal are multiples of a degree two polynomial, they 
have degrees that are  at least two. Thus the only way the linear polynomial can be an element
is if it is the zero polynomial. In particular, $a_1-a_2=b_1-b_2
=0$, so the two cosets are the same.
The quotient ring is thus a field having the four elements:
$$
\F_4=\{ax+b+\lg x^2+x+1\rg\,|\,a,b\in\F_2\}
$$

\paragraph{\hspace*{-0.3cm}}
Generalising the example of the field of order $4$ above, 
if $\F_p$ is the finite field with $p$
elements and $f\in\F_p[x]$ is an irreducible polynomial of degree $d$, then
the quotient $\F_p[x]/\lg f \rg$ is a field containing elements
of the 
form,
$$
a_{d-1} x^{d-1}+\cdots a_0+\lg f\rg,
$$
where 
the $a_i\in\F_p$. 
Any two such are distinct by exactly the
same argument as above, so we have a field $\F_q$ with exactly $q=p^d$
elements. 

\paragraph{\hspace*{-0.3cm}}
Returning to the general situation of a quotient 
$F[x]/\lg f\rg$ by an irreducible polynomial $f$, the resulting field contains 
a copy of the original field $F$, obtained by considering the cosets
$a+\lg f\rg$ for $a\in F$.

\begin{vexercise}
Show that the map $a\mapsto a+\lg f\rg$ is an injective homomorphism
$F\rightarrow F[x]/\lg f\rg$, and so $F$ is isomorphic to its image in $F[x]/\lg f\rg$.
\end{vexercise}

Blurring the distinction between the original $F$ and this copy inside 
$F[x]/\lg f\rg$, we get that $F\subset F[x]/\lg f\rg$ is an extension of fields.

\paragraph{\hspace*{-0.3cm}}
Back to the field $\F_4$ of order $4$ and a more convenient notation. Let 
$$
\aa=x+\lg x^2+x+1\rg
$$
and write $a\in\F_2$ for the coset $a+\lg x^2+x+1\rg$ as in the previous
paragraph. Addition and multiplication of cosets gives:
$$
ax+b+\lg x^2+x+1\rg
=(a+\lg x^2+x+1\rg)(x+\lg x^2+x+1\rg)+(b+\lg x^2+x+1\rg)
=a\aa+b.
$$
So we now have that
$\F_4=\{a\aa+b\,|\,a,b\in\F_2\}=\{0,1,\aa,\aa+1\}$. But we also have
the coset property $f+\lg f\rg=\lg f\rg$, which for $f=x^2+x+1$
translates into 
$$
(x+\lg x^2+x+1\rg)^2+(x+\lg x^2+x+1\rg) + (1+\lg x^2+x+1\rg)=\lg x^2+x+1\rg,
$$
or, $\aa^2+\aa+1=0$. Our field is now $\F_4=\{0,1,\aa,\aa+1\}$,
together with the ``rule'' $\aa^2=\aa+1$.

At the risk of labouring the point, here are the multiplication tables
for the field $\F_4$ and the ring $\Z_4$:
$$
\begin{tabular}{c|cccc}
  $\F_4$&$0$&$1$&$\aa$&$\aa+1$\\
   \hline
  $0$&$0$&$0$&$0$&$0$\\
  $1$&$0$&$1$&$\aa$&$\aa+1$\\
  $\aa$&$0$&$\aa$&$\aa+1$&$1$\\
  $\aa+1$&$0$&$\aa+1$&$1$&$\aa$\\
\end{tabular}
\quad \quad
\begin{tabular}{c|cccc}
  $\Z_4$&$0$&$1$&$2$&$3$\\
   \hline
  $0$&$0$&$0$&$0$&$0$\\
  $1$&$0$&$1$&$2$&$3$\\
  $2$&$0$&$2$&$0$&$2$\\
  $3$&$0$&$3$&$2$&$1$\\
\end{tabular}
$$
$1$ appears in every non-zero row of the $\F_4$ table -- so every
non-zero element has an inverse -- but does not appear in every
non-zero row of $\Z_4$.

\paragraph{\hspace*{-0.3cm}}
In general, when $f\in\F_p[x]$ is irreducible of degree $d$, we 
let $\aa=x+\lg f\rg$ and replace $\F_p$ by its copy in 
$\F_p[x]/\lg f\rg$ (ie: identify $a\in\F_p$ with $a+\lg f\rg\in\F_p[x]/\lg f\rg$).
This gives,
$$
\F_p[x]/\lg f \rg=\{a_{d-1} \aa^{d+1}+\cdots a_0\,|\,a_i\in\F_p\},
$$
where two such expressions are added and multiplied like ``polynomials'' in $\aa$. 
If $f=b_{d}x^{d}+\cdots+b_1x+b_0$, and 
since $f+\lg f\rg=\lg f \rg$, we have the 
``rule''\/ $b_{d}\aa^{d}+\cdots+b_1\aa+b_0=0$, which allows us to remove
any powers of $\aa$ bigger than $d$ that occur in such expressions.
The element $\aa$ is called a {\em generator\/} for the field.

\paragraph{\hspace*{-0.3cm}}
The polynomial $x^3+x+1$ is irreducible over the
field $\F_2$ (it is a cubic and has no roots) so that
$$
\F_2[x]/\lg x^3+x+1\rg,
$$
is a field with $2^3=8$ elements of the form $\F=\{a+b\aa+c\aa^2\,|\,a,b,c\in\F_2\}$
subject to the rule $\aa^3+\aa+1=0$, ie: $\aa^3=\aa+1$. This is the field $\F$ of 
order $8$ from Section \ref{lect4}.

\begin{vexercise}
Explicitly construct fields with exactly:
$$
1. \,\, 125\text{ elements}\qquad
2. \,\, 49\text{ elements}\qquad
3. \,\, 81\text{ elements}\qquad
4. \,\, 243\text{ elements}\qquad
$$
(By explicity I mean give a general description of the elements and
any algebraic rules that are needed for adding and multiplying them together.)
\end{vexercise}

\paragraph{\hspace*{-0.3cm}}
To explicitly construct a field of order $p^d$ with $d>3$ is
harder -- finding irreducible polynomials of degree bigger than a
cubic is not straightforward, as the example in
\ref{irreducible_polynomials:paragraph50} shows. One solution is to
create the field in a series of steps (or extensions), each of which only involves
quadratics or cubics. 

We do this for a field of order $729=3^6$. As $3^6=(3^2)^3$, we first
create a field of order $3^2$, and then extend this using a cubic.

Consider the polynomial $f=x^2+x+2\in\F_3[x]$. Substituting the three elements of
$\F_3$ into $f$ gives
$$
0^2+0+2=2, 1^2+1+2=1\text{ and }2^2+2+2=2,
$$
so that $f$ has no roots in $\F_3$. As $f$ is quadratic it is 
irreducible over the field $\F_3$, and so $\F_9=\F_3[x]/\lg x^2+x+2\rg$ is a field of
order $3^2$. 
Let $\aa=x+\langle x^2+x+2\rangle$ in $\F_9$ be a generator
so that the elements have the form $a+b\aa$ with $a,b\in\F_3$
and multiplication satisfying the rule $\aa^2+\aa+2=0$, or 
equivalently 
$\aa^2=2\aa+1$ ($-1=2$ and $-2=1$ in $\F_3$).

Now let $X$ be a new variable, and consider the polynomials $\F_9[X]$ over $\F_9$
in this new variable. In particular the polynomial:
\begin{equation}
  \label{eq:8}
g=X^3+(2\aa+1)X+1.  
\end{equation}
As $g$ is a cubic, it will be irreducible over $\F_9$ precisely when it has no roots 
in this field, which can be verified as usual by straight substitution
of the nine elements of $\F_9$. For example:
\begin{equation*}
\begin{split}
g(2\aa+1)&=(2\aa+1)^3+(2\aa+1)(2\aa+1)+1=2\aa^3+1+\aa^2+\aa+1+1\\
&=2\aa(2\aa+1)+\aa^2+\aa\\
&=\aa^2+2\aa+\aa^2+\aa=\aa+2
\end{split}
\end{equation*}
and the others are similar. We have a used an energy saving device in these computations:

\begin{vexercise}
\label{ex9.1}
If $a,b\in F$, a field of characteristic $p>0$, then $(a+b)^p=a^p+b^p$ (hint:
Exercise \ref{ex3.3}).
\end{vexercise}

Thus the polynomial $g$ in (\ref{eq:8}) is irreducible over $\F_9$,
and we have a field:
$$
\F_9[X]/\lg X^3+(2\aa+1)X+1\rg
$$
of order $9^3=3^6=729$, called $\F_{729}$. 
The elements have the form,
$$
A_0+A_1\bb+A_2\bb^2,
$$
where the $A_i\in\F_9$ and $\bb=X+\lg g\rg$ is a generator. Multiplication is given by the rule 
$\bb^3=(\aa+2)\bb+2$. 
Replacing the $A_i$ by the earlier description of $\F_9$ in terms of
the generator $\aa$ gives elements:
$$
a_0+a_1\bb+a_2\bb^2+a_3\aa+a_4\aa\bb+a_5\aa\bb^2,
$$
with the $a_i\in\F_3$, subject to the rules $\aa^2=2\aa+1$ and $\bb^3=(\aa+2)\bb+\aa$.

\begin{vexercise}
\hspace{1em}\begin{enumerate}
\item Construct a field $\F_8$ with 8 elements by showing that $x^3+x+1$ is
irreducible over $\F_2$.
\item Find a cubic polynomial that is irreducible in $\F_8[x]$ (hint:
refer to Exercise \ref{ex_lect3.1}).
\item Hence, or otherwise, construct a field with $2^9=512$ elements.
\end{enumerate}
\end{vexercise}

\begin{vexercise}
Explicitly construct fields with exactly:
$$
1. \,\, 64\text{ elements}\qquad
2. \,\, \text{\emph{challenge:}}\,\,4096\text{ elements}\qquad
$$
\end{vexercise}

\paragraph{\hspace*{-0.3cm}}
Theorem B and its Corollary solves the problem that we encountered in
Section \ref{lect4} where the fields
$$
\Q(\sqrt[3]{2})\text{ and }
\Q\biggl(\frac{-\sqrt[3]{2}+\sqrt[3]{2}\kern-2pt\sqrt{3}\imag}{2}\biggr)=\Q(\beta)
$$
were different but isomorphic.
The polynomial $x^3-2$ is irreducible over $\Q$, either by Eisenstein, or by observing
that its roots do not lie in $\Q$. Thus
$$
\Q[x]/\lg x^3-2\rg,
$$
is an extension field of $\Q$. 
Consider the two evaluation homomorphisms $\ve_{\sqrt[3]{2}}:\Q[x]\rightarrow \C$
and $\ve_{\bb}:\Q[x]\rightarrow \C$. Since, and this is the key bit,
$$
\sqrt[3]{2}\text{ and }
\beta=\frac{-\sqrt[3]{2}+\sqrt[3]{2}\kern-2pt\sqrt{3}\imag}{2}
$$
are both roots of the polynomial $x^3-2$, we can show in a similar manner to 
examples at the end of Section \ref{lect5} that $\ker\ve_{\sqrt[3]{2}}\cong\lg x^3-2\rg
\cong \ker\ve_{\bb}$. Thus,
$$
\begin{pspicture}(0,0)(12,3)
\rput(0,.5){
\rput(6,2){$\Q[x]/\lg x^3-2\rg$}
\rput(2,2){$\Q[x]/\ker\ve_{\sqrt[3]{2}}$}
\rput(10,2){$\Q[x]/\ker\ve_{\bb}$}
\psline[linewidth=.1mm]{->}(4.8,2)(3.2,2)
\psline[linewidth=.1mm]{->}(7.2,2)(8.8,2)
\rput(4,2.2){$=$}
\rput(8,2.2){$=$}
}
\rput(2.2,1.5){$\cong$}
\rput(9.8,1.5){$\cong$}
\rput(0,.5){
\rput(6,1){{\red $1^{\text{st}}$ Isomorphism Theorem}}
\psline[linewidth=.1mm,linecolor=red]{->}(4,1)(2.6,1)
\psline[linewidth=.1mm,linecolor=red]{->}(8,1)(9.4,1)
}
\rput(2,.5){$\im\ve_{\sqrt[3]{2}}$}
\rput(10,.5){$\im\ve_{\bb}$}
\psline[linewidth=.1mm]{->}(2,2.2)(2,.8)
\psline[linewidth=.1mm]{->}(10,2.2)(10,.8)
\rput(13,1.5){(*)}
\end{pspicture}
$$
To find the image of $\ve_{\sqrt[3]{2}}$ write a $g\in\Q[x]$ as
$g=q(x^3-2)+(a+bx+cx^2)$ so that
\begin{equation*}
\begin{split}
\ve_{\sqrt[3]{2}}(g)&=
\ve_{\sqrt[3]{2}}(q(x^3-2)+(a+bx+cx^2))\\
&=
\ve_{\sqrt[3]{2}}(q)\ve_{\sqrt[3]{2}}(x^3-2)+\ve_{\sqrt[3]{2}}(a+bx+cx^2)\\
&=
\ve_{\sqrt[3]{2}}(q).0+\ve_{\sqrt[3]{2}}(a+bx+cx^2)
=a+b\sqrt[3]{2}+c(\sqrt[3]{2})^2.
\end{split}
\end{equation*}
Hence
$\im\ve_{\sqrt[3]{2}}\subseteq\{a+b\sqrt[3]{2}+c(\sqrt[3]{2})^2\in\C\,|\,a,b,c\in\Q\}
=\Q(\sqrt[3]{2})$.

On the other hand $a+b\sqrt[3]{2}+c(\sqrt[3]{2})^2$
is the image of $a+bx+cx^2$ and so $\im\varepsilon_{\sqrt[3]{2}}=\Q(\sqrt[3]{2})$. 
Similarly 
$\im\ve_\bb=\Q(\bb)$. 
Filling this information into the diagram (*) above gives the claimed
isomorphism between
$\Q(\sqrt[3]{2})$ and $\Q(\bb)$:
$$
\begin{pspicture}(0,0)(12,3)
\rput(0,.5){
\rput(6,2){$\Q[x]/\lg x^3-2\rg$}
\rput(2,2){$\Q[x]/\ker\ve_{\sqrt[3]{2}}$}
\rput(10,2){$\Q[x]/\ker\ve_{\bb}$}
\psline[linewidth=.1mm]{->}(4.8,2)(3.2,2)
\psline[linewidth=.1mm]{->}(7.2,2)(8.8,2)
\rput(4,2.2){$=$}
\rput(8,2.2){$=$}
}
\rput(2.2,1.5){$\cong$}
\rput(9.8,1.5){$\cong$}
\rput(0,.5){
}
\psline[linewidth=.1mm,linecolor=red]{->}(6,1.75)(6,2.1)
\rput(6,1.5){{\red abstract field}}
\rput(2,.5){$\Q(\sqrt[3]{2})$}
\rput(10,.5){$\Q(\bb)$}
\psline[linewidth=.1mm]{->}(2,2.2)(2,.8)
\psline[linewidth=.1mm]{->}(10,2.2)(10,.8)
\rput(6,.5){{\red concrete versions in $\C$}}
\psline[linewidth=.1mm,linecolor=red]{<-}(2.75,.5)(4.2,.5)
\psline[linewidth=.1mm,linecolor=red]{->}(7.8,.5)(9.5,.5)
\end{pspicture}
$$

\paragraph{\hspace*{-0.3cm}}
In algebraic number theory 
a field $\Q[x]/\lg f\rg$, for $f$ an irreducible
polynomial over $\Q$, is called a \emph{number field}.
If $\{\bb_1,\ldots,\bb_n\}$ are the roots of $f$, then we have $n$ mutually
isomorphic fields $\Q(\bb_1),\ldots, \Q(\bb_n)$ inside $\C$. The isomorphisms
from $\Q[x]/\lg f\rg$ to each of these are called the {\em Galois monomorphisms\/}
of the number field. 

\paragraph{\hspace*{-0.3cm}}
Returning to a general field:

\begin{kronecker}
\label{kronecker}
Let $f$ be a polynomial in $F[x]$. Then there is an extension field of $F$ 
containing a root of $f$.
\end{kronecker}

\begin{proof}
If $f$ is not irreducible over $F$, then factorise as $f=gh$ with $g$ irreducible
over $F$ and proceed as below but with $g$ instead of $f$. The result will be an 
extension field containing a root of $g$, and hence of $f$.
Thus we may suppose that $f$ is irreducible over $F$ and
$f=a_nx^n+a_{n-1}x^{n-1}+\cdots a_1x+a_0$ with the $a_i\in F$. Replace 
$F$ by its isomorphic copy in the quotient $F[x]/\lg f\rg$, so that instead of
$a_i$, we write $a_i+\lg f\rg$, ie,
$$
f=(a_n+\lg f\rg)x^n+(a_{n-1}+\lg f\rg)x^{n-1}+\cdots +(a_1+\lg f\rg)x+(a_0+\lg f\rg).
$$
Consider the field $E=F[x]/\lg f\rg$ which is an extension of 
$F$ and the element $\mu=x+\lg f\rg\in E$. If we substitute $\mu$ into the
polynomial then we perform all our arithmetic in $E$, ie: we perform the arithmetic
of cosets, and the zero of this field is the coset $\lg f\rg$:
\begin{equation*}
\begin{split}
f(\mu) &=f(x+\lg f\rg)\\
&=
(a_n+\lg f\rg)(x+\lg f\rg)^n+(a_{n-1}+\lg f\rg)(x+\lg f\rg)^{n-1}
+\cdots +(a_1+\lg f\rg)(x+\lg f\rg)+(a_0+\lg f\rg)\\
&=
(a_nx^n+\lg f\rg)+(a_{n-1}x^{n-1}+\lg f\rg)+\cdots +(a_1x+\lg f\rg)+(a_0+\lg f\rg)\\
&=
(a_nx^n+a_{n-1}x^{n-1}+\cdots +a_1x+a_0)+\lg f\rg=f+\lg f\rg=\lg f\rg=0.
\end{split}
\end{equation*}
i.e. for $\mu=x+\lg f\rg\in E$ we have $f(\mu)=0$. 
\qed
\end{proof}

\begin{corollary}
\label{kronecker:corollary}
Let $f$ be a polynomial in $F[x]$. Then there is an extension field of $F$ 
that contains all the roots of $f$.
\end{corollary}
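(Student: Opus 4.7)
The plan is to iterate Kronecker's Theorem, using the Factor Theorem to peel off one root at a time, and to organise the iteration by induction on $\deg f$.

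First I would set up induction on $n=\deg f$. The base case $n\leq 1$ is trivial: if $f$ is a nonzero constant, it has no roots and $F$ itself works; if $f$ is linear, then $f=ax+b$ has the root $-b/a\in F$, and again $F$ itself works.

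For the inductive step, assume the result for all polynomials of degree less than $n$ over any field. Given $f\in F[x]$ of degree $n$, apply Kronecker's Theorem to obtain an extension $E_1$ of $F$ and a root $\mu_1\in E_1$ of $f$. Viewing $f$ as an element of $E_1[x]$, the Factor Theorem (which applies because $E_1$ is a field, hence $x-\mu_1\in E_1[x]$ has leading coefficient with a multiplicative inverse) gives
$$
f=(x-\mu_1)g,
$$
with $g\in E_1[x]$ and $\deg g=n-1$ by Lemma \ref{sect2lemma1}. Apply the inductive hypothesis to $g$ over the field $E_1$ to obtain an extension $E$ of $E_1$ that contains all the roots of $g$. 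Since $F\subseteq E_1\subseteq E$, the field $E$ is also an extension of $F$, and it contains $\mu_1$ together with every root of $g$.

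It remains to argue that these are \emph{all} the roots of $f$. For any $\mu\in E$, since $E$ is a field and hence an integral domain, $f(\mu)=(\mu-\mu_1)g(\mu)=0$ forces $\mu=\mu_1$ or $g(\mu)=0$; so every root of $f$ in $E$ is either $\mu_1$ or a root of $g$, both of which lie in $E$. By Theorem \ref{degree.number.of.roots}, $f$ has at most $n$ roots in $E$, and we have now exhibited all of them inside $E$. The main subtlety is keeping track of the ambient field at each stage: the polynomial $g$ is not a priori in $F[x]$, and this is why the inductive hypothesis must be stated over an arbitrary field rather than over $F$ specifically. Once that is set up, the argument goes through cleanly.
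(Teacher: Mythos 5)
Your proof is correct and takes the same approach as the paper's one-line argument, which simply says to repeat the Kronecker construction at most $\deg f$ times; you have made that sketch rigorous by setting up the induction properly (in particular, by strengthening the hypothesis to hold over an arbitrary field, which is exactly the care needed).
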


\begin{proof}
Repeat the process described in the proof of Kronecker's Theorem at most
$\deg f$ number of times, until the desired field is obtained. 
\qed
\end{proof}

\subsection*{Further Exercises for Section \thesection}

\begin{vexercise}
Show that $x^4+x^3+x^2+x+1$ is irreducible over $\F_3$.
How many elements does the resulting extension of $\F_3$ have?
\end{vexercise}

\begin{vexercise}
As linear polynomials are always irreducible, show that the field
$F[x]/\lg ax+b\rg$ is isomorphic to $F$.
\end{vexercise}

\begin{vexercise}\label{ex6.1}
\hspace{1em}\begin{enumerate}
\item Show that $1+2x+x^3\in\F_3[x]$ is irreducible and hence that 
$\F=\F_3[x]/\langle
1+2x+x^3\rangle$ is a field.
\item Show that every coset can be written uniquely in 
the form $(a+bx+cx^2)+\langle
1+2x+x^3\rangle$ with $a,b,c\in\F_3$.
\item Deduce that the field $\F$ has exactly 27 elements.
\end{enumerate}
\end{vexercise}

\begin{vexercise}
Find an irreducible polynomial $f(x)$ in $\F_5[x]$ of degree $2$. Show
that $\F_5[x]/\langle f(x)\rangle$ is a field with $25$ elements.
\end{vexercise}

\begin{vexercise}\label{ex8.3}
\hspace{1em}\begin{enumerate}
\item Show that the polynomial $x^3-3x+6$ is irreducible over $\Q$.
\item Hence, or otherwise, if
$$
\alpha=\sqrt[3]{2\kern-2pt\sqrt{2}-3},\beta=-\sqrt[3]{2\kern-2pt\sqrt{2}+3}\mbox{ and
}\ww=-\frac{1}{2}+
\frac{\sqrt{3}}{2}\imag,
$$
prove that 
\begin{enumerate}
\item the fields $\Q(\alpha+\beta)$ and
$\Q(\omega\alpha+\overline{\omega}\beta)$ are 
distinct (that is, their elements are different), but,
\item $\Q(\alpha+\beta)$ and
$\Q(\omega\alpha+\overline{\omega}\beta)$ are isomorphic (You can
assume that $\omega\alpha+\overline{\omega}\beta$ is not a 
real number.)
\end{enumerate}
\end{enumerate}
\end{vexercise}


\section{Ruler and Compass Constructions I}
\label{ruler.compass}

If you are a farmer in Babylon around 2500 BC, how do you subdivide your
land into plots? You survey it of course.
The most basic surveying instruments are wooden pegs and rope, with which
you can do two very basic things: two pegs can be set a distance apart and the
rope stretched taut between them; also, one of the pegs can be kept stationary
and you can take the path traced by the other as you walk around keeping the rope
stretched tight. In other words, you can draw a line through two
points or you can draw a
circle centered at one point and passing through another.

\paragraph{\hspace*{-0.3cm}}
Instead of the Euphrates river valley, we work in the
complex plane $\C$. We are thus able, given two numbers $z,w\in\C$, to draw
a line through them using a straight edge, or to place one end of a
compass at $z$, and draw 
the circle passing through $w$:
$$
\begin{pspicture}(0,0)(10,3.25)
\rput(5,1.5){\BoxedEPSF{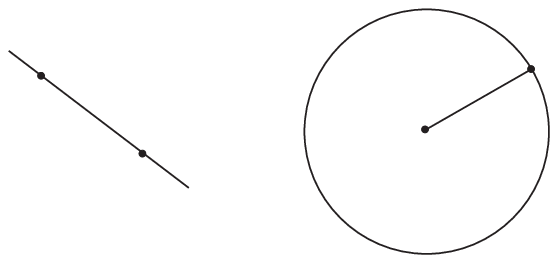 scaled 1500}}
\rput(1.35,2.15){$z$}
\rput(2.95,1.45){$w$}
\rput(7,1.5){$z$}
\rput(9.1,2.5){$w$}
\end{pspicture}
$$
Neither of these operations involves any ``measuring''. There are no
units on the ruler and we don't know the radius of the circle. 

\paragraph{\hspace*{-0.3cm}}
With these two constructions we call a complex number $z$ 
{\em constructible\/}
iff there is a sequence of numbers 
$$
0,1,\imag=z_1,z_2,\ldots,z_n=z,
$$
with $z_j$ obtained from earlier numbers in the sequence in one of the 
following three ways:
$$
\begin{pspicture}(0,0)(14,3)
\rput(1,0){
\rput(2,1.34){\BoxedEPSF{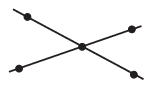 scaled 1500}}
\rput(2.1,1.6){$z_j$}
\rput(1.1,.7){$z_p$}\rput(2.8,1.9){$z_q$}
\rput(1.2,2){$z_r$}\rput(2.9,.6){$z_s$}
\rput(2,.2){(i)}
}
\rput(1,0){
\rput(6,1.5){\BoxedEPSF{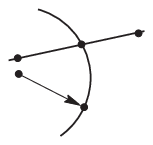 scaled 1500}}
\rput(6.1,2.3){$z_j$}
\rput(5,2){$z_p$}\rput(6.9,2.4){$z_q$}
\rput(6.4,.8){$z_r$}\rput(5.1,1.2){$z_s$}
\rput(6.2,.2){(ii)}
}
\rput(1,0){
\rput(10,1.58){\BoxedEPSF{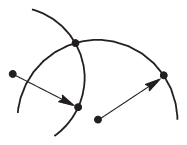 scaled 1500}}
\rput(9.8,2.3){$z_j$}
\rput(10.4,.8){$z_p$}\rput(11.3,1.7){$z_q$}
\rput(9.5,1){$z_r$}\rput(8.7,1.8){$z_s$}
\rput(10,.2){(iii)}
}
\end{pspicture}
$$
In these pictures, $p,q,r$ and $s$ are all $<j$. We are given 
$0,1,\imag$ ``for free'', so they are indisputably constructible.
The reasoning is this: if you stand in a plane (without
coordinates) then your position can be taken as $0$; declare a direction to be the
real axis and a distance along it to be length $1$; construct the perpendicular 
bisector of the segment from $-1$ to $1$ (as in the next paragraph) and 
measure a unit distance along this new axis (in either direction) to get $\imag$.

\paragraph{\hspace*{-0.3cm}}
In addition to the two basic moves there are others
that follow immediately from them.
For example, we can construct the perpendicular bisector of a segment $AB$
as in Figure \ref{fig:constructions:figure50}.

\begin{figure}
  \centering
  \begin{pspicture}(0,0)(14,4.5)
\rput(0,0.25){
\rput(0,0.5){
\rput(2,1.5){\BoxedEPSF{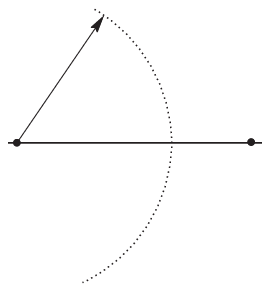 scaled 1500}}
\rput(.7,1.3){$A$}\rput(3.1,1.3){$B$}
\rput(1.1,2.2){$r$}
\pscircle[linecolor=white,fillstyle=solid,fillcolor=red](.5,2.5){.25}
\rput(.5,2.5){{\white{\bf 1}}}
}
\rput(1,0.5){
\rput(6,1.5){\BoxedEPSF{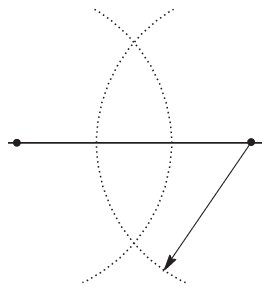 scaled 1500}}
\rput(6.9,.8){$r$}
\pscircle[linecolor=white,fillstyle=solid,fillcolor=red](5,2.5){.25}
\rput(5,2.5){{\white{\bf 2}}}
}
\rput(2,0.5){
\rput(10,1.5){\BoxedEPSF{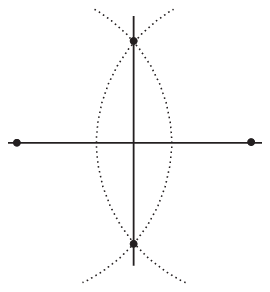 scaled 1500}}
\pscircle[linecolor=white,fillstyle=solid,fillcolor=red](9,2.5){.25}
\rput(9,2.5){{\white{\bf 3}}}
}}
\end{pspicture}
  \caption{Constructing the perpendicular bisector of a segment.}
  \label{fig:constructions:figure50}
\end{figure}

\begin{figure}
  \centering
\begin{pspicture}(0,0)(14,3.5)
\rput(0,0.25){
\rput(0,0){
\rput(2,1.34){\BoxedEPSF{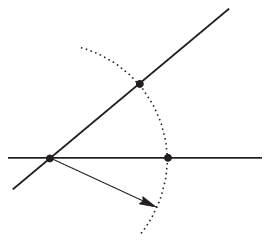 scaled 1500}}
\pscircle[linecolor=white,fillstyle=solid,fillcolor=red](.5,2.5){.25}
\rput(.5,2.5){{\white{\bf 1}}}
}
\rput(1,0){
\rput(6,1.5){\BoxedEPSF{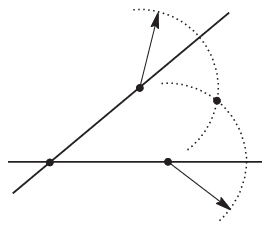 scaled 1500}}
\pscircle[linecolor=white,fillstyle=solid,fillcolor=red](5,2.5){.25}
\rput(5,2.5){{\white{\bf 2}}}
\rput(6,2.5){$r$}\rput(6.7,.4){$r$}
}
\rput(2,0){
\rput(10,1.58){\BoxedEPSF{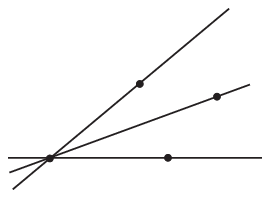 scaled 1500}}
\pscircle[linecolor=white,fillstyle=solid,fillcolor=red](9,2.5){.25}
\rput(9,2.5){{\white{\bf 3}}}
}}
\end{pspicture}  
  \caption{Bisecting an angle.}
  \label{fig:constructions:figure60}
\end{figure}

To explain these pictures (and the rest): a
ray, centered at some point and tracing out a dotted circle is the
compass. If the ray is marked $r$ -- as in the first two  
pictures above -- this means that in passing from the
first picture to the second, the setting on the compass is kept the
same. It does not mean that we know the setting. 

The
construction works for the following reason: let $S$ be the set of
points in $\C$ that 
are an equal distance from both $A$ and $B$. 
After a moments thought, you see that this must be
the perpendicular bisector of the line segment $\overline{AB}$ that we
are constructing. Lines are determined by any two of their points, so
if we can find two points equidistant from $A$ and $B$, and we draw
a line through them, this must be the set $S$ that we want (and hence
the perpendicular bisector). But the intersections of the two circular
arcs are clearly equidistant from $A$ and $B$, so we are done.

\paragraph{\hspace*{-0.3cm}}
As well as bisecting segments, we can bisect angles, ie: if two lines meet in
some angle we can construct a third line meeting these in angles that are each
half the original one -- see Figure \ref{fig:constructions:figure60}.
Remember: none of the angles in this picture can be measured. 
Nevertheless, the two new angles are half the old one.

\paragraph{\hspace*{-0.3cm}}
Given a line and a point $P$ not on it, we can construct a new line passing through
$P$ and perpendicular to the line, as in Figure
\ref{fig:constructions:figure70}. This is called ``dropping a
perpendicular from 
a point to a line''.

\begin{figure}
  \centering
\begin{pspicture}(0,0)(14,3.5)
\rput(0,0.25){
\rput(1,-0.2){
\rput(2,1.5){\BoxedEPSF{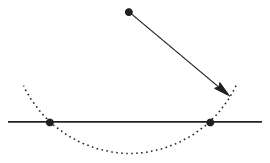 scaled 1500}}
\rput(.6,.65){$A$}\rput(3.2,.65){$B$}
\rput(1.6,2.5){$P$}
\pscircle[linecolor=white,fillstyle=solid,fillcolor=red](.5,2.5){.25}
\rput(.5,2.5){{\white{\bf 1}}}
}
\rput(3,-0.2){
\rput(6,1.6){\BoxedEPSF{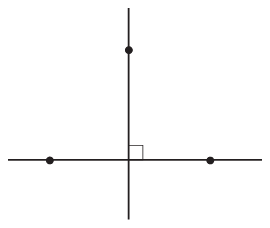 scaled 1500}}
\psline[linecolor=red,linewidth=.1mm]{->}(6.4,1.6)(5.95,1.6)
\rput(8.5,1.6){{\red $\text{ perpendicular bisector of }AB$}}
\rput(4.6,.65){$A$}\rput(7.1,.65){$B$}
\pscircle[linecolor=white,fillstyle=solid,fillcolor=red](5,2.5){.25}
\rput(5,2.5){{\white{\bf 2}}}
}}
\end{pspicture}  
  \caption{Dropping a perpendicular from a point to a line.}
  \label{fig:constructions:figure70}
\end{figure}

\paragraph{\hspace*{-0.3cm}}
Given a line $\ell$ and a point $P$ not on it we can construct a new line through $P$
parallel to $\ell$ -- see Figure \ref{fig:figure5}. Some explanation
for this one: the first step is to drop a perpendicular 
from $P$ to the line $\ell$, meeting it at the new point $Q$. Next, set your compass to
the distance from $P$ to $Q$, and transfer this circular distance along the line to 
some point, drawing a semicircle that meets $\ell$ at the points $A$ and $B$. 
Construct the perpendicular bisector of the segment from $A$ to $B$, which meets
the semicircle at the new point $R$. Finally, draw a line through the points $P$
and $R$. 

\begin{figure}
  \centering
\begin{pspicture}(0,0)(11,6)
\rput(.5,0){
\rput(1,4.55){\BoxedEPSF{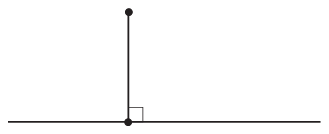 scaled 1500}}
\rput(.7,5.3){$P$}\rput(.7,3.5){$Q$}
\rput(2.8,3.5){{\red $\text{line }\ell$}}
\psline[linecolor=red,linewidth=.1mm]{<-}(.5,4.5)(.9,4.5)
\rput(2.8,4.5){{\red $\text{perpendicular from }P\text{ to }\ell$}}
}
\rput(-.5,0){
\rput(9.5,4.5){\BoxedEPSF{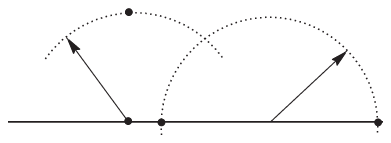 scaled 1500}}
\rput(8.5,3.4){$Q$}\rput(8.5,5.1){$P$}
\rput(9,3.4){$A$}\rput(12.25,3.4){$B$}
}
\rput(7.65,4.4){$r$}\rput(10.4,4.2){$r$}
\rput(.5,0){
\rput(1,1.68){\BoxedEPSF{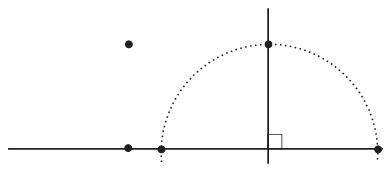 scaled 1500}}
\rput(0,.4){$Q$}\rput(.2,2.3){$P$}
\rput*(2.4,2.3){$R$}
\rput(.5,.4){$A$}\rput(3.7,.4){$B$}
\psline[linecolor=red,linewidth=.1mm]{->}(1.65,1.5)(2.05,1.5)
\rput*(-.45,1.5){{\red $\text{perpendicular bisector of }AB$}}
}
\rput(-.5,0){
\rput(9.5,1.5){\BoxedEPSF{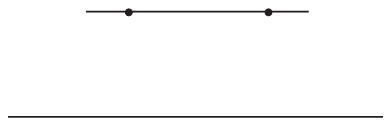 scaled 1500}}
\rput(8.5,2){$P$}
\rput(10.65,2){$R$}
}
\pscircle[linecolor=white,fillstyle=solid,fillcolor=red](-.5,5.5){.25}
\rput(-.5,5.5){{\white{\bf 1}}}
\pscircle[linecolor=white,fillstyle=solid,fillcolor=red](6,5.5){.25}
\rput(6,5.5){{\white{\bf 2}}}
\pscircle[linecolor=white,fillstyle=solid,fillcolor=red](-.5,2.5){.25}
\rput(-.5,2.5){{\white{\bf 3}}}
\pscircle[linecolor=white,fillstyle=solid,fillcolor=red](6,2.5){.25}
\rput(6,2.5){{\white{\bf 4}}}
\end{pspicture}
\caption{Constructing a line through a point $P$ and parallel to
  another line $\ell$.}
  \label{fig:figure5}
\end{figure}

\begin{figure}
  \centering
\begin{pspicture}(0,0)(14,3.5)
\rput(0,0.25){
\rput(2,1.34){\BoxedEPSF{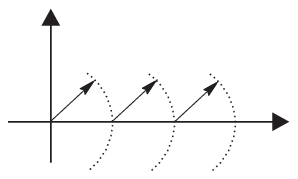 scaled 1500}}
\rput(7,1.3){\BoxedEPSF{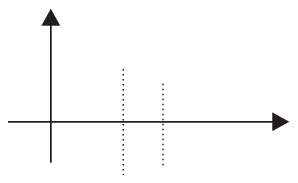 scaled 1500}}
\rput(12,1.58){\BoxedEPSF{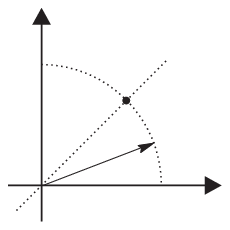 scaled 1500}}
\rput(1.4,.6){$1$}\rput(2.4,.6){$2$}\rput(3.4,.6){${\red 3}$}
\rput(12.7,.2){$1$}
\rput(13.5,1.8){${\displaystyle{\red \frac{1}{\sqrt{2}}+\frac{1}{\sqrt{2}}\imag}}$}
\psline[linewidth=.1mm,linecolor=red]{<-}(12.25,1.8)(12.6,1.8)
\rput(6.4,.4){${\displaystyle \frac{1}{2}}$}
\rput(7,.4){${\displaystyle{\red \frac{3}{4}}}$}
\rput(7.7,.6){$1$}
\rput*(12.5,2.7){{\red bisector of the right angle}}
\rput*(7,2.5){{\red bisect $[0,1]\ldots$}}
\rput*(7.5,2){{\red $\ldots$then bisect $[\frac{1}{2},1]$}}
\psline[linecolor=red,linewidth=.1mm]{->}(12,2.6)(12.5,2.2)
}
\end{pspicture}  
  \caption{Constructing $3,\frac{3}{4}$ and $\frac{1}{\sqrt{2}}+\frac{1}{\sqrt{2}}\imag$.}
  \label{fig:constructions:figure80}
\end{figure}

\paragraph{\hspace*{-0.3cm}}
Figure \ref{fig:constructions:figure80} shows some basic examples of constructible numbers.
It is less clear how to construct $\frac{27}{129}$, or the golden ratio:
$$
\phi=\frac{1+\sqrt{5}}{2}.
$$
But these numbers {\em are\/} constructible, and the reason 
is the first non-trivial fact about constructible numbers: they can be added,
subtracted, multiplied and 
divided.
Defining $\CC$ to be the set of constructible numbers in $\C$, we have,

\begin{theoremC}
$\CC$ is a subfield\footnote{In principle you can now throw away your calculator and instead use
ruler and compass! To compute $\cos x$ of a constructible number $x$
for example,construct as many terms of the Taylor series,
$$
\cos x=1-\frac{x^2}{2!}+\frac{x^4}{4!}-\cdots
$$
as you need (your calculator only ever gives you approximations anyway).} 
of 
$\C$.
\end{theoremC}

\begin{proof}
We show first that the \emph{real\/} constructible numbers form a subfield of
the reals, i.e. that $\CC\cap\R$ is a subfield of $\R$, 
for which we need to show that if $a,b\in \CC\cap \R$ then so too are
$a+b, -a, ab$
and $1/a$. 
\begin{enumerate}
\item {\em $\CC\cap \R$ is closed under $+$ and $-$}:
The picture on the left of Figure \ref{fig:figure20} shows that if
$a\in\CC\cap \R$  then so is 
$-a$.
Similarly, the two on the right of Figure \ref{fig:figure20} give $a,b\in
\CC\cap\R\Rightarrow a+b\in 
\CC\cap\R$. (In these pictures $a$ and $b$ are $>0$. You can draw the
other cases yourself).

\begin{figure}
  \centering
\begin{pspicture}(0,0)(14,4)
\rput(0,0.5){
\rput(2,1.5){\BoxedEPSF{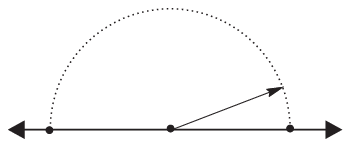 scaled 1500}}
\rput(3.4,.5){$a$}\rput(.4,.5){${\red -a}$}
}
\rput(0.25,0.5){
\rput(7,1.5){\BoxedEPSF{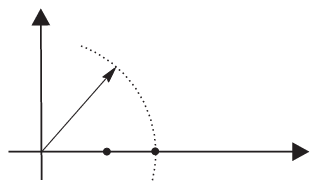 scaled 1500}}
\rput(6.3,1){$a$}\rput(7.1,1){$b$}
\rput(0,-0.5){
\pscircle[linecolor=white,fillstyle=solid,fillcolor=red](7,3){.25}
\rput(7,3){{\white{\bf 1}}}
}
\rput(0.75,0){
\rput(11,1.5){\BoxedEPSF{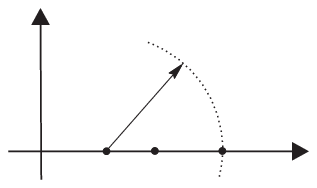 scaled 1500}}
\rput(10.3,1){$a$}\rput(11,1){$b$}\rput(12.4,1){${\red a+b}$}
\rput(0,-0.5){
\pscircle[linecolor=white,fillstyle=solid,fillcolor=red](10.3,3){.25}
\rput(10.3,3){{\white{\bf 2}}}
}
}
}
\end{pspicture}  
  \caption{$\CC\cap \R$ is closed under $+$ \emph{(right)} and $-$ \emph{(left)}.}
  \label{fig:figure20}
\end{figure}

\item {\em $\CC\cap \R$ is closed under $\times$}: as can be seen by following
through the steps in Figure \ref{fig:figure6}. Seeing that the
construction works involves studying the pair of similar  
triangles shown in red.

\begin{figure}
  \centering
\begin{pspicture}(0,0)(14,9.5)
\rput(-1.5,0){
\rput(3,4.5){
\rput(1.5,2.5){\BoxedEPSF{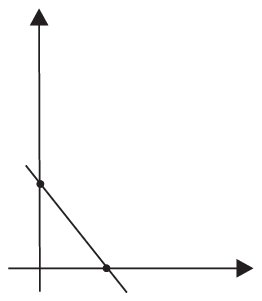 scaled 1500}}
\pscircle[linecolor=white,fillstyle=solid,fillcolor=red](2,4){.25}
\rput(2,4){{\white{\bf 1}}}
\rput(1.1,.5){$1$}\rput(-.1,2){$a\imag$}
}
\rput(5.25,5){
\rput(5,1.975){\BoxedEPSF{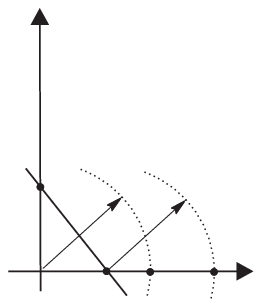 scaled 1500}}
\rput(0.9,-0.5){
\pscircle[linecolor=white,fillstyle=solid,fillcolor=red](5,4){.25}
\rput(5,4){{\white{\bf 2}}}
}
\rput(4.6,.5){$1$}\rput(3.4,2){$a\imag$}\rput(5.4,.5){$b$}
\rput(4.35,1.5){$r$}\rput(5.4,1.5){$r$}	
}
\rput(-4,0){
\rput(8,2.445){\BoxedEPSF{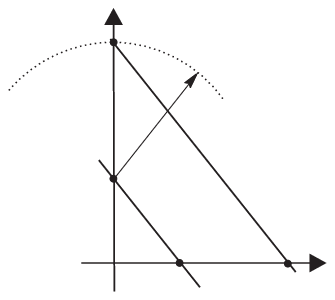 scaled 1500}}
\pscircle[linecolor=white,fillstyle=solid,fillcolor=red](9,4){.25}
\rput(9,4){{\white{\bf 3}}}
\rput(8.1,.5){$1$}\rput(6.9,2){$a\imag$}
\rput(7.65,2.75){$s$}
\rput(8.4,1.5){{\red parallel}}
\psline[linecolor=red,linewidth=.1mm]{->}(8.2,1.3)(7.95,1.05)
\psline[linecolor=red,linewidth=.1mm]{->}(8.4,1.65)(8.8,2.05)
}
\rput(-1.9,0){
\rput(12.1,2.15){\BoxedEPSF{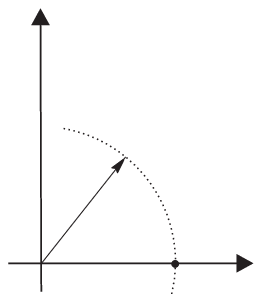 scaled 1500}}
\pscircle[linecolor=white,fillstyle=solid,fillcolor=red](13,4){.25}
\rput(13,4){{\white{\bf 4}}}
\rput(11.3,1.6){$s$}
\rput(13,.65){${\red ab}$}
}
}
\rput(0.5,0){
\rput(12.5,5){\BoxedEPSF{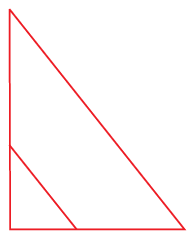 scaled 1500}}
\rput(11.7,3){${\red 1}$}\rput(10.9,4){${\red a}$}
\rput(10.9,5.6){${\red x}$}\rput(13,3){${\red b}$}
\rput(12.5,2){${\red {\displaystyle\frac{x+a}{a}=b+1}}$}
\rput(12.5,1.4){${\red \Rightarrow x=ab}$}
}
\end{pspicture}
\caption{$\CC\cap \R$ is closed under $\times$.}
  \label{fig:figure6}
\end{figure}

\item {\em $\CC\cap \R$ is closed under $\div$}: is just the
previous construction backwards -- see Figure \ref{fig:figure30}.

\begin{figure}
  \centering
\begin{pspicture}(0,0)(14,5)
\rput(0,0){
\rput(2,2.4){\BoxedEPSF{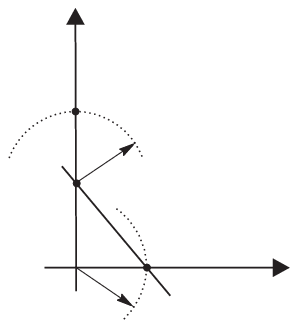 scaled 1500}}
\pscircle[linecolor=white,fillstyle=solid,fillcolor=red](2.5,4){.25}
\rput(2.5,4){{\white{\bf 1}}}
\rput(.6,2.1){$a\imag$}\rput(2.1,1){$1$}\rput(1.3,.4){$r$}\rput(1.3,2.5){$r$}
}
\rput(1,0){
\rput(6,2.4){\BoxedEPSF{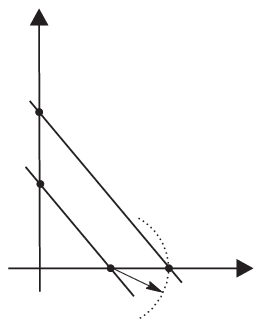 scaled 1500}}
\pscircle[linecolor=white,fillstyle=solid,fillcolor=red](6.5,4){.25}
\rput(6.5,4){{\white{\bf 2}}}
\rput(6.2,.725){$s$}
}
\rput(2,0){
\rput(9.6,1.525){\BoxedEPSF{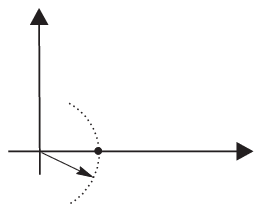 scaled 1500}}
\pscircle[linecolor=white,fillstyle=solid,fillcolor=red](10,4){.25}
\rput(10,4){{\white{\bf 3}}}
\rput(8.6,.5){$s$}\rput(9.25,1.3){${\displaystyle{\red \frac{1}{a}}}$}
}
\end{pspicture}  
  \caption{$\CC\cap \R$ is closed under $\div$}
  \label{fig:figure30}
\end{figure}

\end{enumerate}

Now to the complex constructible numbers. Observe that $z\in
\CC$ precisely when $\re z$ and $\im z$ are in $\CC\cap\R$.  
For, if $z\in \CC$ then dropping perpendiculars to the real and imaginary axes
give the numbers $\re z$ and $\im z\cdot\imag$, the second of which can be transferred to
the real axis by drawing the circle centered at $0$ passing through $\im z \cdot\imag$.
On the other hand, if we have $\re z$ and $\im z$ on the real axis,
then we have $\im z \cdot\imag$ too,
and constructing a line through $\re z$ parallel to the imaginary axis and a line
through $\im z \cdot\imag$ parallel to the real axis gives $z$. 

Suppose then that $z,w\in \CC$ are constructible complex numbers: we show
that $z+w,-z,zw$ and $1/z$ are also constructible. 
We have:
\begin{equation*}
\begin{split}
z+w&=(\re z+\re w)+(\im z+ \im w)\imag\\
-z&=-\re z-\im z\cdot\imag\\
zw&=(\re z\,\re w-\im z\, \im w)+(\re z\,\im w+\im w\,\re z)\imag\\
\frac{1}{z}&=\frac{\re z}{\re z^2+\im z^2}-\frac{\im z}{\re z^2+\im z^2}\imag,
\end{split}
\end{equation*}
so that for example, $z,w\in \CC\Rightarrow \re z, \im z, \re w, \im
w\in \CC\cap\R\Rightarrow \re z+\re w, \im z+\im w\in \CC\cap\R\Rightarrow
\re (z+w), \im (z+w)\in \CC\cap\R\Rightarrow
z+w\in \CC$, and the others are similar. 
\qed
\end{proof}


\begin{corollary}
Any rational number is constructible.
\end{corollary}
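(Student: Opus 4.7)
The plan is to deduce the corollary immediately from Theorem C, using nothing more than the closure properties of a subfield.

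First I would note that by construction $0,1\in\CC$, since these are the starting numbers in any constructibility sequence. Then I would apply Theorem C: because $\CC$ is a subfield of $\C$, it is closed under the field operations. Closure under $+$ gives $1,1+1,1+1+1,\ldots\in\CC$, so every positive integer is constructible. Closure under additive inverses then puts every negative integer in $\CC$, so $\Z\subseteq\CC$.

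Next, given any $p/q\in\Q$ with $p,q\in\Z$ and $q\neq 0$, closure of $\CC$ under multiplicative inverses gives $q^{-1}\in\CC$, and closure under multiplication gives $p\cdot q^{-1}=p/q\in\CC$. Hence $\Q\subseteq\CC$.

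There is no main obstacle here — everything follows formally from Theorem C, since what we are really observing is that the prime subfield of a characteristic-zero subfield of $\C$ is $\Q$, and $\CC$ is such a subfield. One could alternatively give a direct geometric argument (build $n$ by repeated unit translations along the real axis, build $1/q$ and $p/q$ by the similar-triangle constructions of Figures \ref{fig:figure6} and \ref{fig:figure30}), but this would just be recovering the subfield closure proofs that were already carried out in the proof of Theorem C. The cleanest presentation is therefore the purely algebraic one above.
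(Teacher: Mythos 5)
Your proof is correct and matches the paper's own ``slightly slicker'' route: the paper simply cites Exercise \ref{ex_lect1.0} (any subfield of $\C$ contains $\Q$), whereas you have unpacked that exercise's argument explicitly, building $\Z$ from $0,1$ by additive closure and then $\Q$ by multiplicative closure. The paper also offers a ``brute force'' alternative (directly constructing integers as in the construction of $3$, then closing under $\times$ and $\div$), which you correctly identify as redundant once Theorem C is in hand.
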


\begin{proofs}
  \begin{description}
  \item[\emph{Brute force}:] use the example of the construction of
    $3$ to show that $\Z\subset\CC$; that $\CC\cap\R$ is closed under
    $\times$ and $\div$ then gives $\Q\subset\CC$. 
  \item[\emph{Slightly slicker}:] by Exercise \ref{ex_lect1.0}, any
    subfield of $\C$ contains $\Q$.
  \end{description}
\qed
\end{proofs}

\paragraph{\hspace*{-0.3cm}}
Not only can we perform the four basic arithmetic operations with constructible
numbers, but we can construct square roots too:

\begin{figure}
  \centering
\begin{pspicture}(0,0)(14,8)
\rput(-2,0){
\rput(2.5,4.25){
\rput(2,1.5){\BoxedEPSF{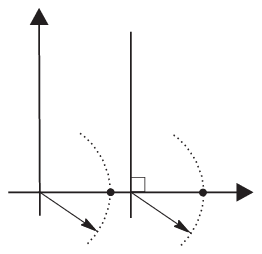 scaled 1500}}
\pscircle[linecolor=white,fillstyle=solid,fillcolor=red](3,2.5){.25}
\rput(3,2.5){{\white{\bf 1}}}
\rput(3.25,.7){$P$}
\rput(1.55,.65){$a$}\rput(1.9,.35){$1$}\rput(1,.1){$r$}\rput(2.3,.1){$r$}
}
\rput(4,4.25){
\rput(5.5,1.625){\BoxedEPSF{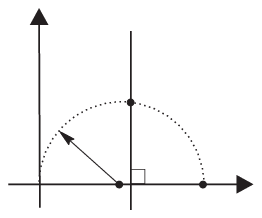 scaled 1500}}
\pscircle[linecolor=white,fillstyle=solid,fillcolor=red](6.5,2.5){.25}
\rput(6.5,2.5){{\white{\bf 2}}}
\psline[linecolor=red,linewidth=.1mm]{->}(5.325,0)(5.325,.45)
\rput(5.4,-.2){{\red midpoint of $0P$}}
}
\rput(-4.4,0.75){
\rput(8.9,1.615){\BoxedEPSF{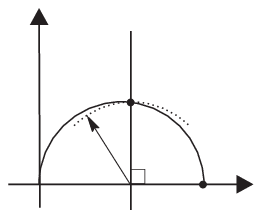 scaled 1500}}
\rput(-0.2,0){
\pscircle[linecolor=white,fillstyle=solid,fillcolor=red](10,2.5){.25}
\rput(10,2.5){{\white{\bf 3}}}
}
\rput(8.5,.8){$s$}
}
\rput(-2.7,0.75){
\rput(12.2,1.15){\BoxedEPSF{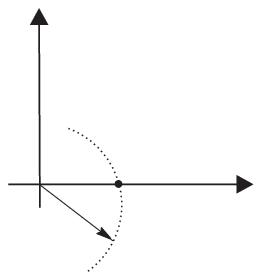 scaled 1500}}
\rput(0.2,0){
\pscircle[linecolor=white,fillstyle=solid,fillcolor=red](13,2.5){.25}
\rput(13,2.5){{\white{\bf 4}}}
}
\rput(11.4,.2){$s$}\rput(12.2,.7){{\red $\kern-2pt\sqrt{a}$}}
}
}
\rput(10.5,2.75){
\rput(1.5,1.2){\BoxedEPSF{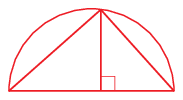 scaled 2000}}
\rput(.8,.1){${\red 1}$}\rput(2.4,.1){${\red a}$}\rput(1.5,1){${\red
    x}$}
\rput(1.5,-0.5){${\red x=\kern-2pt\sqrt{a}}$}
}
\end{pspicture}
\caption{Constructing $\kern-2pt\sqrt{a}$ for $a\in\R$.}
  \label{fig:figure7}
\end{figure}

\begin{theorem}
If $z\in \CC$ then $\kern-2pt\sqrt{z}\in \CC$.
\end{theorem}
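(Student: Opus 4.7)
The plan is to reduce the general complex case to the real case, and then handle the real case with the classical semicircle construction already foreshadowed in Figure \ref{fig:figure7}.

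First I would deal with positive real $a \in \CC\cap\R$. On the real axis, starting at $0$, we already have $1$, and since $\CC\cap\R$ is a subfield of $\R$ we have $1+a$ as a constructible point. Bisect the segment from $0$ to $1+a$ (the perpendicular-bisector construction of Figure \ref{fig:constructions:figure50}) to find its midpoint; draw the circle through $0$ centered at this midpoint to get the semicircle of diameter $1+a$. Drop a perpendicular at the point $1$ (drop a perpendicular from $1$ to the real axis in the sense of Figure \ref{fig:constructions:figure70}, though here the ``point'' already lies on the line, so one instead erects a perpendicular via the same two-circle trick). The perpendicular meets the semicircle at a point $P$, and inspection of the two similar right triangles formed by $P$ and the endpoints of the diameter gives $|P-1|^2 = 1\cdot a$, i.e. the height is $\kern-2pt\sqrt{a}$. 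Transfer this height to the real axis with a circle centered at $1$.

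Next I would reduce the complex case to the real case using polar form. Given $z \in \CC$ with $z\neq 0$, we have $z = r e^{\imag\theta}$ with $r=|z|>0$. The number $r$ is constructible: draw the circle centered at $0$ through $z$; this circle meets the positive real axis at $r$. By the previous paragraph, $\kern-2pt\sqrt{r}\in\CC\cap\R$. The angle $\theta$ is the angle at $0$ between the positive real axis and the ray through $z$; applying the angle-bisection construction of Figure \ref{fig:constructions:figure60} produces a ray from $0$ making angle $\theta/2$ with the real axis. Draw the circle centered at $0$ of radius $\kern-2pt\sqrt{r}$ (which we have on the real axis); its intersection with the bisecting ray is a point of modulus $\kern-2pt\sqrt{r}$ and argument $\theta/2$, i.e. $\kern-2pt\sqrt{z}$. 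The case $z=0$ is trivial since $0\in\CC$.

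The hard part is not any single geometric step, since each appeal is to constructions already in hand; rather, the subtle point is fitting everything into the formal definition of constructibility, which only allows line-line, line-circle, and circle-circle intersections starting from $\{0,1,\imag\}$. I would therefore remark that each ingredient used above (midpoints, perpendiculars, angle bisectors, transferring lengths by a circle, reading off intersections of a line with a circle) has been shown to realise exactly one of the three permitted moves a finite number of times, so that the resulting $\kern-2pt\sqrt{z}$ genuinely sits at the end of a legal construction sequence $0,1,\imag,z_1,\ldots,z_n=\kern-2pt\sqrt{z}$.
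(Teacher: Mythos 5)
Your proof is correct and follows essentially the same route as the paper: first the semicircle/geometric-mean construction for $\kern-2pt\sqrt{a}$ with $a$ a positive real (the content of Figure \ref{fig:figure7}), then the reduction of the complex case to the real case via $z = re^{\imag\theta}$ with $|z|$ obtained by a circle through $z$ centered at $0$, $\kern-2pt\sqrt{r}$ by the real construction, and the argument halved by angle bisection — which is exactly what the three steps of Figure \ref{fig:figure40} implement. Your closing remark about checking that each auxiliary construction unwinds into a legal finite sequence of the three permitted intersection moves is a sound observation (and one the paper leaves implicit).
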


\begin{proof}
We can construct the square root of any positive real
number $a\in\R$ as in Figure \ref{fig:figure7}. As an Exercise, show
that in the red picture in Figure \ref{fig:figure7}, 
the length $x=\kern-2pt\sqrt{a}$. Next, the square root of any complex number can be
constructed as in Figure \ref{fig:figure40},
where we have used the construction of real square roots in the second
step. 
\qed
\end{proof}

\begin{figure}
  \centering
\begin{pspicture}(0,0)(14,3)
\rput(0,0){
\rput(2,1.5){\BoxedEPSF{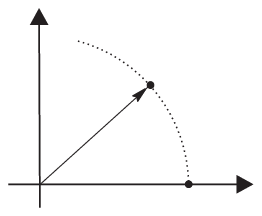 scaled 1500}}
\rput(2.5,2){$z$}\rput(2.9,.1){$a$}
\pscircle[linecolor=white,fillstyle=solid,fillcolor=red](3,2.5){.25}
\rput(3,2.5){{\white{\bf 1}}}
}
\rput(1.5,0){
\rput(5.5,1.5){\BoxedEPSF{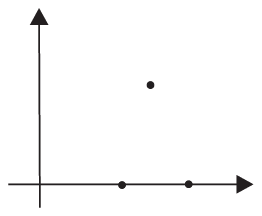 scaled 1500}}
\rput(6,2){$z$}\rput(6.4,.1){$a$}
\rput(5.4,.1){$\kern-2pt\sqrt{a}$}
\pscircle[linecolor=white,fillstyle=solid,fillcolor=red](6.5,2.5){.25}
\rput(6.5,2.5){{\white{\bf 2}}}
}
\rput(3,0){
\rput(9,1.5){\BoxedEPSF{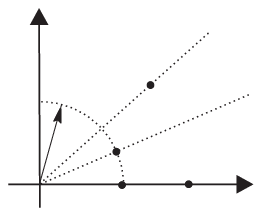 scaled 1500}}
\rput(9.5,2){$z$}\rput(9.9,.1){$a$}\rput(8.9,.1){$\kern-2pt\sqrt{a}$}
\rput(10,1){{\red $\kern-2pt\sqrt{z}$}}
\psline[linewidth=.1mm,linecolor=red]{->}(9.7,1)(8.9,.85)
\rput(12,1.2){{\red bisector}}
\psline[linecolor=red,linewidth=.1mm]{->}(11.4,1.2)(10.5,1.5)
\pscircle[linecolor=white,fillstyle=solid,fillcolor=red](11,2.5){.25}
\rput(11,2.5){{\white{\bf 3}}}
}
\end{pspicture}  
  \caption{Constructing $\kern-2pt\sqrt{z}$ for $z\in\C$.}
  \label{fig:figure40}
\end{figure}


\subsection{Constructing angles and polygons}

\paragraph{\hspace*{-0.3cm}}
We say that an angle can be constructed when we can construct two lines intersecting in 
that angle.

\begin{vexercise}\label{ex7.20}
\hspace{1em}
\begin{enumerate}
\item Show that we can always assume that one of the lines giving an angle is the positive
real axis.
\item Show that an {\em angle\/} $\theta$ can be constructed if and only if the {\em number\/}
$\cos\theta$ can be constructed. Do the same for $\sin\theta$ and $\tan\theta$.
\end{enumerate}
\end{vexercise}

\begin{vexercise}\label{ex7.30}
Show that if $\varphi,\theta$ are constructible angles then so are
$\varphi+\theta$ and $\varphi-\theta$.
\end{vexercise}

\paragraph{\hspace*{-0.3cm}}
A {\em regular $n$-sided polygon\/} or {\em regular $n$-gon\/} is a polygon in $\C$ with 
$n$ sides of equal length and $n$ interior angles of equal size. 

\begin{vexercise}\label{ex7.40}
Show that a regular $n$-gon can be constructed centered at $0\in\C$ if and only if
the angle $\frac{2\pi}{n}$ 
can be constructed. 
Show that a regular $n$-gon can be constructed centered at $0\in\C$ if and only if
the complex number 
$$
z=\cos\frac{2\pi}{n}+\imag\sin\frac{2\pi}{n},
$$
can be constructed.
\end{vexercise}

\begin{vexercise}\label{ex7.50}
Show that if an $n$-gon and an $m$-gon can be constructed for $n$ and $m$
relatively prime, then so can a $mn$-gon (hint: use the $\Z$-version of Theorem 
\ref{gcd}).
\end{vexercise}

\paragraph{\hspace*{-0.3cm}}
For what $n$ can you construct a regular $n$-gon? It makes sense to consider first 
the $p$-gons for $p$ a prime. The complete answer even to this question will
not be revealed until Section \ref{galois.corresapps}. It turns out that the $p$-gons that
can be constructed
are {\em extremely rare\/}. Nevertheless, the first two (odd) primes do work:

\begin{vexercise}\label{ex7.60}
Show that a regular $3$-gon, ie: an equilateral triangle, can be constructed with 
any side length. Using Exercises \ref{ex_lect1.1a} and \ref{ex7.40}, 
show that a regular $5$-gon can also be
constructed.
\end{vexercise}

\paragraph{\hspace*{-0.3cm}}
Here is a proof that a regular $17$-gon is constructible. Gauss proved the
remarkable identity of Figure \ref{fig:gauss}, which is still found in trigonometric tables.
Thus the number $\cos\pi/17$ can be constructed as this expression involves only integers,
the four field operations and square roots, all of which are operations we can perform with
a ruler and compass. Hence, by Exercise \ref{ex7.20}(2) 
the angle $\pi/17$ can be constructed and so 
adding it to itself (Exercise \ref{ex7.30}) gives the angle
$2\pi/17$. Now apply Exercise 
\ref{ex7.40} to get the 
$17$-gon.

\begin{figure}
  \centering
\begin{pspicture}(0,0)(15,2)
\rput(7.25,1){
${\displaystyle
\cos\frac{\pi}{17}=
\frac{1}{8}
\sqrt{
2\biggl(
2\sqrt{
\sqrt{\frac{17(17-\sqrt{17})}{2}}
-
\sqrt{\frac{17-\sqrt{17}}{2}}
-4\sqrt{34+2\sqrt{17}}
+3\sqrt{17}
+17
}
+\sqrt{34+2\sqrt{17}}
+\sqrt{17}
+15
\biggr)
}
}
$
}
\end{pspicture}  
  \caption{A proof that the $17$-gon is constructible.}
  \label{fig:gauss}
\end{figure}

\subsection*{Further Exercises for Section \thesection}

\begin{vexercise}\label{ex7.70}
Using the fact that the constructible numbers include $\Q$, show that
any given line segment can be trisected in length.
\end{vexercise}

\begin{vexercise}\label{ex7.80}
Show that if you can construct a regular $n$-sided polygon, then you can
also construct a regular $2^kn$-sided polygon for any $k\geq1$.
\end{vexercise}

\begin{vexercise}\label{ex7.90}
Show that $\cos\theta$ is constructible if and only if $\sin\theta$ is.
\end{vexercise}

\begin{vexercise}\label{ex7.100}
If $a,b$ and $c$ are constructible numbers (ie: in $\CC$), show that the
roots of the quadratic equation $ax^2+bx+c$ are also constructible.
\end{vexercise}


\section{Vector Spaces I: Dimensions}
\label{lect7}

Having met rings and fields we introduce
our third algebraic object: vector spaces.

\begin{definition}[vector space]
  A vector space over a field $F$ is a set $V$, whose elements
  are called vectors,
together with two operations: addition $u,v\mapsto u+v$ of vectors  and 
scalar multiplication $\ll,v\mapsto \ll v$ of a vector by an element
(or scalar) $\ll$ of the
field $F$, such that:
\begin{enumerate}
\item $(u+v)+w = u+(v+w)$, for all $u,v,w\in V$.
\item There exists a zero vector $0\in V$ such that $v+0=v=0+v$ for all $v\in V$,
\item Every $v\in V$ has a negative $-v$ such that $v+(-v)=0=-v+v$, for all $v\in V$.
\item $u+v=v+u$, for all $u,v\in V$.
\item $\ll(u+v) = \ll u + \ll v$, for all $u,v$ and $\ll\in F$.
\item $(\ll+\mu)v = \ll v+\mu v$, for all $\ll\mu\in F$ and $v\in V$.
\item $\ll(\mu v) = (\ll\mu)v$, for all $\ll\mu\in F$ and $v\in V$.
\item $1 v = v$ for all $v\in V$.
\end{enumerate}
\end{definition}

\begin{aside}
Alternatively, $V$ forms an Abelian group
under $+$ (these are the first four axioms) together with a scalar multiplication
that satisfies the last four axioms.
\end{aside}

\paragraph{\hspace*{-0.3cm}}
A {\em homomorphism\/} of vector spaces is a map 
$\varphi: V_1\rightarrow V_2$ such that 
$\varphi(u+v)=\varphi(u)+\varphi(v)$ and $\varphi(\ll v)=\ll\varphi(v)$
for all $u,v\in V$ and $\ll\in F$. 
(Homomorphisms of vector 
are more commonly called linear maps.) A bijective homomorphism is an
\emph{isomorphism\/}. 

\paragraph{\hspace*{-0.3cm}}
The set $\R^2$ of $2\times 1$ column vectors is the motivating example
of a vector space over $\R$
under the normal addition and scalar multiplication of vectors. Alternatively, 
the complex numbers $\C$ form a vector space over $\R$, and 
these two spaces are isomorphic via the map:
$$
\varphi:\left[\begin{array}{c}
a\\
b\\
\end{array}\right]
\mapsto
a+bi.
$$

\paragraph{\hspace*{-0.3cm}}
\parshape=3 0pt\hsize 0pt.7\hsize 0pt.7\hsize 
The complex numbers are a vector space {\em over themselves\/}: addition of 
complex numbers gives an Abelian group and now we can scalar multiply a complex  number
by another one, using the usual multiplication of complex numbers. 
\vadjust{\hfill\smash{\lower 65pt
\llap{
\begin{pspicture}(0,0)(4,2.5)
\rput(0,.1){
\rput(2,1.25){\BoxedEPSF{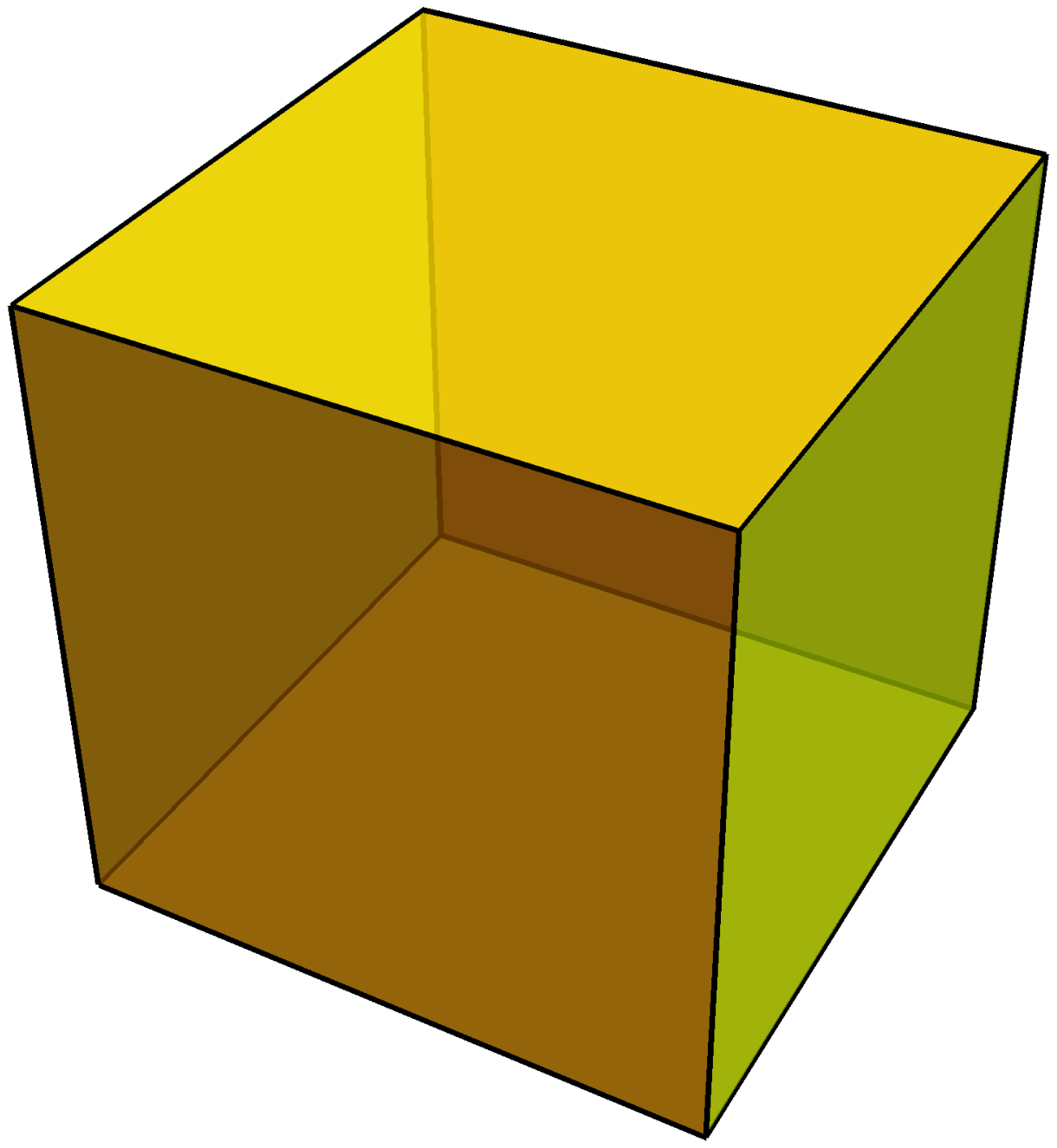 scaled 250}}
\rput(2.5,-0.5){${\sss 000}$}
\rput(0.5,0.4){${\sss 100}$}
\rput(3.65,1){${\sss 001}$}
\rput(2.5,1.7){${\sss 010}$}
\rput(0.25,2.1){${\sss 110}$}
\rput(3.85,2.5){${\sss 011}$}
\rput(1.75,3.1){${\sss 111}$}
}
\end{pspicture}
}}}\ignorespaces

\paragraph{\hspace*{-0.3cm}}
\parshape=6 0pt.7\hsize 0pt.7\hsize 0pt.7\hsize 0pt.7\hsize 0pt.7\hsize
0pt\hsize  
A vector spaces over a finite field:
consider the set of $3$-tuples with coordinates 
from the field $\F_2$ (so are either $0$ or $1$) and add two such
coordinate-wise, using the addition from $\F_2$. Scalar multiply 
a tuple coordinate-wise using the multiplication from $\F_2$. 
As there are only two possibilities for each coordinate and three 
coordinates in total, we get a total of $2^3=8$ vectors in this space.
They can be arranged around the vertices of a cube as shown,
where $abc$ is the vector 
with the three coordinates $a,b,c\in\F_2$. 

\paragraph{\hspace*{-0.3cm}}
We saw in Section \ref{lect4} that the field $\Q(\kern-2pt\sqrt{2})$ has
elements the $a+b\kern-2pt\sqrt{2}$ with $a,b\in\Q$. The identification,
$$
\begin{pspicture}(12,1)
\rput(-1.5,-.5){
\rput(-1,0){
\rput(6.1,1){$\leftrightarrow$}
\rput(5,1){$a+b\kern-2pt\sqrt{2}$}
\rput(7,1){$\left[\begin{array}{c}
a\\
b\\
\end{array}\right]$}
\psline[linewidth=.1mm]{->}(8.5,1.2)(7.5,1.2)
\psline[linewidth=.1mm]{->}(8.5,.8)(7.5,.8)
}
\rput(0,.05){
\rput(9.65,1.2){coordinate in ``$1$ direction''}
\rput(9.8,.8){coordinate in ``$\kern-2pt\sqrt{2}$ direction''}
}
}
\end{pspicture}
$$
is an isomorphism with the vector space $\Q^2$ of $2\times 1$
$\Q$-column vectors with the addition
$(a+b\kern-2pt\sqrt{2})+(c+d\kern-2pt\sqrt{2})=(a+c)+(b+d)\kern-2pt\sqrt{2}$
corresponding to,
$$
\left[\begin{array}{c}
a\\
b\\
\end{array}\right]
+
\left[\begin{array}{c}
c\\
d\\
\end{array}\right]
=
\left[\begin{array}{c}
a+c\\
b+d\\
\end{array}\right],
$$
and scalar multiplication $c(a+b\kern-2pt\sqrt{2})=ac+bc\kern-2pt\sqrt{2}$ corresponding to:
$$
c\left[\begin{array}{c}
a\\
b\\
\end{array}\right]
=
\left[\begin{array}{c}
ac\\
bc\\
\end{array}\right].
$$

\paragraph{\hspace*{-0.3cm}}
The polynomial $x^3-2$ is irreducible over $\Q$ so the quotient ring 
$\Q[x]/\lg x^3-2\rg$ is a field with elements the 
$(a+bx+cx^2)+\lg x^3-2\rg$ for $a,b\in\Q$. It is a $\Q$-vector space,
isomorphic to $\Q^3$ via
$$
\begin{pspicture}(12,1.5)
\rput(2.75,.75){$(a+bx+cx^2)+\lg x^3-2\rg \leftrightarrow
\left[\begin{array}{c}
a\\
b\\
c\\
\end{array}\right]$
}
\rput(-1.3,0.45){
\rput(-.8,-.5){
\psline[linewidth=.1mm]{->}(8.5,1.2)(7.5,1.2)
\psline[linewidth=.1mm]{->}(8.5,.8)(7.5,.8)
\psline[linewidth=.1mm]{->}(8.5,.4)(7.5,.4)
}
\rput(1,-.475){
\rput(9.7,1.2){coordinate in ``$1+\lg x^3-2\rg$ direction''}
\rput(9.8,.8){coordinate in ``$x+\lg x^3-2\rg$ direction''}
\rput(9.8,.4){coordinate in ``$x^2+\lg x^3-2\rg$ direction''}
}}
\end{pspicture}
$$
(Check for yourself that the addition and scalar multiplications match up).

\paragraph{\hspace*{-0.3cm}}
The previous two examples are special cases of the following:
if $F\subseteq E$ is an extension of fields then $E$ 
is a vector space over $F$. The ``vectors'' are the
elements of $E$ and the ``scalars'' are the elements of $F$. Addition of vectors
is just the addition of elements in $E$, and to scalar multiply a $v\in E$ by a $\ll\in F$,
multiply $\ll v$ using the multiplication of the field $E$. The first four
axioms for a vector space hold because of the addition of the field $E$, and the second
four from the multiplication. 

\begin{definition}[span and independence]
If $v_1,\ldots,v_n\in V$ are vectors in a vector space $V$, then a vector of the form 
$$
\alpha_1v_1+\ldots +\alpha_nv_n,
$$ 
for $\alpha_1,\ldots,\alpha_n\in F$,
is called a linear combination of the $v_1,\ldots,v_n$.
The linear span of $\{v_j:j\in J\}$, where $J$ is not
necessarily finite, is the set of all
linear combinations of vectors from the set:
$$
\text{span}
\{v_j:j\in J\} 
= 
\{\aa_1v_{j_1}+\cdots+\aa_kv_{j_k}: \alpha_j\in F\}.
$$
Say $\{v_j:j\in J\}$ {\em span\/} $V$ when $V=\text{span}\{v_j:j\in J\}$.

A set of vectors $v_1,\ldots,v_n\in V$ is linearly dependent
if and only if there exist scalars $\alpha_1,\ldots,\alpha_n$, 
not all zero, such that
$$
\alpha_1v_1+\ldots +\alpha_nv_n = 0,
$$
and linearly independent otherwise, ie:
$\alpha_1v_1+\ldots +\alpha_nv_n = 0$ implies that the $\aa_i$ are all $0$.  
\end{definition}

\paragraph{\hspace*{-0.3cm}}
In the examples above, the complex numbers $\C$ are spanned, as a vector space
over $\R$, by $\{1,\imag\}$, and indeed by any two non-zero complex numbers 
that are not scalar multiples of each other. As a vector space over $\C$, the 
complex numbers are spanned by {\em one\/} element: any
$\zeta\in\C$ can be written as $\zeta\times 1$ for example, so every element
is a \emph{complex\/} scalar multiple of $1$. Indeed, $\C$ is spanned as a complex vector
space by any single one of its non-zero elements. 

\begin{definition}[basis]
A basis for $V$ is a set of vectors $\{v_j:j\in J\}$, with $J$ a not necessarily finite index set,
that span $V$, and such that every finite set of $v_j$'s are linearly
independent. 
\end{definition}
 
It can be proved that there is a 1-1 correspondence between the elements of
any two bases for a vector space $V$. 
When $V$ has a finite basis the \emph{dimension\/} of $V$ is defined to be 
the number of elements in a basis; otherwise $V$ is \emph{infinite dimensional}.

\paragraph{\hspace*{-0.3cm}}
Thus $\C$ is $2$-dimensional as a vector space over $\R$ but 
$1$-dimensional as a vector space over $\C$. We will see later in
this section that $\C$ is {\em infinite\/} dimensional as a vector space over $\Q$.

With the other examples above, $\Q(\kern-2pt\sqrt{2})$ is $2$-dimensional over $\Q$ with basis
$\{1,\kern-2pt\sqrt{2}\}$ and $\Q[x]/\lg x^3-2\rg$ is $3$-dimensional over $\Q$ with basis the
cosets
$$
1+\lg x^3-2\rg,x+\lg x^3-2\rg\text{ and }x^2+\lg x^3-2\rg.
$$
In Exercise \ref{exam00_4} in Section \ref{galois.correspondence}, we will see that if
$\aa=\sqrt[4]{2}$, then $\Q(\aa,\imag)$ is a $2$-dimensional space over $\Q(\aa)$ or $\Q(\aa\imag)$
or even $\Q((1+\imag)\aa)$; a $4$-dimensional space over $\Q(\imag)$ or $\Q(\imag\aa^2)$, and an
$8$-dimensional space over $\Q$ (and these are almost, but not quite, all the possibilities;
see the exercise for the full story).

\begin{definition}[degree of an extension]
Let $F\subseteq E$ be an extension of fields. Consider $E$ as a vector space over $F$,
and define the degree of the extension to be the dimension of this vector
space, denoted $[E:F]$. Call $F\subseteq E$ a finite extension if the
degree is finite. 
\end{definition}

\paragraph{\hspace*{-0.3cm}}
The extensions $\Q\subset\Q(\sqrt{2})$ and $\Q\subset\Q[x]/\lg x^3-2\rg$
have degrees $2$ and $3$.

\paragraph{\hspace*{-0.3cm}}
It is no coincidence that the degree of extensions of the form $F\subseteq F[x]/\lg f\rg$
turn out to be the same as the degree of the polynomial $f$:

\begin{theorem}\label{degree_extension}
Let $f$ be an irreducible polynomial in $F[x]$ of degree $d$. Then the extension,
$$
F\subseteq F[x]/\lg f\rg,
$$
has degree $d$.
\end{theorem}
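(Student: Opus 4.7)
The plan is to exhibit an explicit basis for $F[x]/\langle f\rangle$ as a vector space over $F$, namely the $d$ cosets
$$
\{1+\langle f\rangle,\ x+\langle f\rangle,\ x^2+\langle f\rangle,\ \ldots,\ x^{d-1}+\langle f\rangle\},
$$
where we use the standard identification of $F$ with its isomorphic copy $\{a+\langle f\rangle : a\in F\}$ inside $F[x]/\langle f\rangle$. Once this set is shown to be a basis, the claim $[F[x]/\langle f\rangle : F] = d$ is immediate from the definition of degree.

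To show these cosets \emph{span}, I would take an arbitrary coset $g+\langle f\rangle$ and apply the division algorithm (Theorem A) to write $g = qf + r$ with $\deg r < d$. Since $qf \in \langle f\rangle$, we have $g+\langle f\rangle = r+\langle f\rangle$, and writing $r = a_0 + a_1 x + \cdots + a_{d-1}x^{d-1}$ expresses this coset as the $F$-linear combination $a_0(1+\langle f\rangle) + a_1(x+\langle f\rangle) + \cdots + a_{d-1}(x^{d-1}+\langle f\rangle)$. This is essentially the same manoeuvre used throughout Section \ref{fields2} when describing the elements of $\F_p[x]/\langle f\rangle$ or $\R[x]/\langle x^2+1\rangle$ concretely.

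For \emph{linear independence}, suppose $a_0(1+\langle f\rangle) + a_1(x+\langle f\rangle) + \cdots + a_{d-1}(x^{d-1}+\langle f\rangle) = 0+\langle f\rangle$ with $a_i \in F$. Combining the cosets on the left gives $(a_0 + a_1 x + \cdots + a_{d-1}x^{d-1}) + \langle f\rangle = \langle f\rangle$, so the polynomial $h := a_0 + a_1 x + \cdots + a_{d-1}x^{d-1}$ lies in $\langle f\rangle$, meaning $h = pf$ for some $p \in F[x]$. But $\deg h \le d-1 < d = \deg f$, so by Lemma \ref{sect2lemma1} (since $F$ is a field, hence an integral domain, so degrees add) the only possibility is $h = 0$, forcing all $a_i = 0$.

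There is no serious obstacle here; the whole argument hinges on two standard tools that have already been developed, the division algorithm and the additivity of degree in an integral domain. The only subtle point worth flagging is that irreducibility of $f$ is not strictly needed for this calculation of the dimension (just that $f$ is non-zero of degree $d$); irreducibility is the separate ingredient, via Theorem B, which guarantees that the quotient is a \emph{field} and not merely a vector space, so that talking about ``an extension of degree $d$'' makes sense.
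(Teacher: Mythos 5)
Your proof is correct and is essentially identical to the paper's: same candidate basis $\{x^i+\langle f\rangle : 0\le i\le d-1\}$, same use of the division algorithm for spanning, and the same degree comparison for independence. The closing remark that irreducibility is needed only to make the quotient a field (so that ``degree of the extension'' is well-posed) rather than for the dimension count itself is a nice observation, consistent with the paper's logic.
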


Hence the name degree!

\begin{proof}
Replace, as usual, the field $F$ by its copy in $F[x]/\lg f\rg$, so that $\ll\in F$ 
becomes $\ll+\lg f\rg\in F[x]/\lg f\rg$. Consider the set of cosets,
$$
B=\{1+\lg f\rg,x+\lg f\rg,x^2+\lg f\rg,\ldots, x^{d-1}+\lg f\rg\}.
$$
Then we claim that $B$ is a basis for $F[x]/\lg f\rg$ over $F$, for which we have to show that it
spans the vector space and is linearly independent. To see that it spans, consider a typical
element, which has the form,
$$
g+\lg f\rg=(qf+r)+\lg f\rg=r+\lg f\rg=(a_0+a_1x+\cdots +a_{d-1}x^{d-1})+\lg f\rg.
$$
using the division algorithm and basic properties of cosets. This is turn gives,
$$
(a_0+a_1x+\cdots +a_{d-1}x^{d-1})+\lg f\rg=
(a_0+\lg f\rg)(1+\lg f\rg)+(a_1+\lg f\rg)(x+\lg f\rg)+\cdots +
(a_{d-1}+\lg f\rg)(x^{d-1}+\lg f\rg),
$$
where the last is an $F$-linear combination of the elements of $B$. Thus this sets spans
the space.

For linear independence, suppose we have an $F$-linear combination of the elements of $B$
giving zero, ie:
$$
(b_0+\lg f\rg)(1+\lg f\rg)+(b_1+\lg f\rg)(x+\lg f\rg)+\cdots +
(b_{d-1}+\lg f\rg)(x^{d-1}+\lg f\rg)=\lg f\rg,
$$
remembering that the zero of the field $F[x]/\lg f\rg$ is the coset $0+\lg f\rg=\lg f\rg$.
Multiplying and adding all the cosets on the left hand side gives,
$$
(b_0+b_1x+\cdots+b_{d-1}x^{d-1})+\lg f\rg=\lg f\rg,
$$
so that $b_0+b_1x+\cdots+b_{d-1}x^{d-1}\in\lg f\rg$ (using another basic property of
cosets). The elements of $\lg f\rg$, being multiples of $f$, must have degree at least
$d$, except for the zero polynomial. On the other hand $b_0+b_1x+\cdots+b_{d-1}x^{d-1}$
has degree $\leq d-1$. Thus it must be the zero polynomial, giving that all the $b_i$
are zero, hence all the $b_i+\lg f\rg$ are $0$, and that the set $B$
is linearly independent over $F$ as claimed. 
\qed
\end{proof}

\paragraph{\hspace*{-0.3cm}}
What is the degree of the extension $\Q\subset\Q(\pi)$? If it was finite,
say $[\Q(\pi):\Q]=d$, then any collection of more than $d$ elements would be linearly
dependent. In particular, the $d+1$ elements,
$$
1,\pi,\pi^2,\ldots,\pi^d,
$$
would be dependent, so that $a_0+a_1\pi+a_2\pi^2+\ldots+a_d\pi^d=0$
for some $a_0,a_1,\ldots,a_d\in\Q$, not all zero,
hence $\pi$ would be a root of the polynomial $a_0+a_1x+a_2x^2+\ldots+a_dx^d$.
But this
contradicts the fact that $\pi$ is transcendental over $\Q$. Thus, the degree of the
extension is infinite.

\paragraph{\hspace*{-0.3cm}}
In fact this is always true:

\begin{proposition}\label{finite.givesalgebraic}
Let $F\subseteq E$ and $\aa\in E$. If the degree of the extension $F\subseteq F(\aa)$ is finite,
then $\aa$ is algebraic over $F$.
\end{proposition}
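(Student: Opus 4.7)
The plan is essentially to read the $\pi$-example that immediately precedes the Proposition backwards: there, the fact that $\pi$ is transcendental was used to deduce that $[\Q(\pi):\Q]$ is infinite; here, I want the contrapositive, that a finite degree forces $\alpha$ to satisfy a polynomial. So the overall strategy is to exhibit, from the finiteness of the degree alone, an explicit nonzero polynomial in $F[x]$ having $\alpha$ as a root.

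First I would set $d = [F(\alpha):F]$, which by hypothesis is a finite non-negative integer. Viewing $F(\alpha)$ as an $F$-vector space, this means every basis has exactly $d$ elements, and the standard linear-algebra fact (any set of more than $\dim V$ vectors is linearly dependent) is the only piece of vector-space theory I need. I would then consider the $d+1$ vectors
\[
1,\ \alpha,\ \alpha^2,\ \ldots,\ \alpha^d,
\]
which all lie in $F(\alpha)$ because $F(\alpha)$ is a field containing $\alpha$ and is therefore closed under multiplication.

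Since there are $d+1 > d$ of these, they must be linearly dependent over $F$: there exist $a_0, a_1, \ldots, a_d \in F$, not all zero, with
\[
a_0 \cdot 1 + a_1 \alpha + a_2 \alpha^2 + \cdots + a_d \alpha^d = 0.
\]
The polynomial $f = a_0 + a_1 x + \cdots + a_d x^d \in F[x]$ is then nonzero (some $a_i \neq 0$) and has $\alpha$ as a root, which is exactly the definition of $\alpha$ being algebraic over~$F$.

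There is no real obstacle here; the only thing that needs a moment's care is to make sure the $\alpha^i$ are genuinely elements of $F(\alpha)$ (immediate from the field axioms) and that "not all $a_i$ zero" translates to "nonzero polynomial," so that the resulting relation witnesses algebraicity rather than being the trivial $0 = 0$. In other words, the whole proof is essentially one appeal to the pigeonhole-style principle that $d+1$ vectors in a $d$-dimensional space must be dependent.
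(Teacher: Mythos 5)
Your proof is correct and is exactly the argument the paper gives: set $d=[F(\alpha):F]$, observe that $1,\alpha,\ldots,\alpha^d$ are $d+1$ vectors in a $d$-dimensional $F$-vector space and hence linearly dependent, and read off a nonzero polynomial over $F$ vanishing at $\alpha$. The only (minor) difference is that you explicitly flag that "not all $a_i$ zero" means the witnessing polynomial is nonzero, a point the paper leaves implicit.
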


\begin{proof}
The proof is very similar to the example above. Suppose
that the extension $F\subseteq F(\aa)$ has degree $n$, so that any collection of $n+1$ elements
of $F(\aa)$ must be linearly dependent. In particular the $n+1$ elements
$$
1,\aa,\aa^2,\ldots,\aa^n
$$
are dependent over $F$, so that there are $a_0,a_1,\ldots, a_n$ in $F$  with
$$
a_0+a_1\aa+\cdots+a_n\aa^n=0,
$$
and hence $\aa$ is algebraic over $F$ as claimed.
\qed
\end{proof}

Thus, any field $E$ that contains transcendentals over $F$ will be infinite dimensional
as vector spaces over $F$. In particular, $\R$ and $\C$ are infinite dimensional over $\Q$.

\paragraph{\hspace*{-0.3cm}}
The converse to Proposition \ref{finite.givesalgebraic} is partly true, as we 
summarise now in an important result:

\begin{theoremD}\label{thmD}
Let $F\subseteq E$ and $\aa\in E$ be algebraic over $F$. Then,
\begin{enumerate}
\item There is a unique polynomial $f\in F[x]$ that is monic, irreducible over $F$, and has
$\aa$ as a root.
\item The field $F(\aa)$ is isomorphic to the quotient $F[x]/\lg f\rg$.
\item If $\deg f=d$, then the extension $F\subseteq F(\aa)$ has degree $d$ 
with basis $\{1,\aa,\aa^2,\ldots,\aa^{d-1}\}$,
and so,
$$
F(\aa)=\{a_0+a_1\aa+a_2\aa^2+\cdots+a_{d-1}\aa^{d-1}\,|\,a_0,\ldots,a_{d-1}\in F\}.
$$
\end{enumerate}
\end{theoremD}

\begin{proof}
Hopefully most of the proof will be recognisable from the specific examples we have discussed
already.
As $\aa$ is algebraic over $F$ there is at least one $F$-polynomial having $\aa$
as a root. Choose $f'$ to be a non-zero one having smallest degree. 
This polynomial must then be
irreducible over $F$, for if not, we have $f'=gh$ with $\deg(g),\deg(h)<\deg(f')$, and
$\aa$ must be a root of one of $g$ or $h$, contradicting the original choice of $f'$. 
Divide through by the leading coefficient of $f'$, to get $f$, a monic, 
irreducible (by Exercise \ref{ex3.1})
$F$-polynomial,
having $\aa$ as a root. If $f_1,f_2$ are polynomials with these properties then
$f_1-f_2$ has degree strictly less than either $f_1$ or $f_2$ and still has $\aa$ as
a root, so the only possibility is that $f_1-f_2$ is zero, hence $f$ is unique.

Consider the evaluation homomorphism $\ve_\aa:F[x]\rightarrow E$ defined as
usual by $\ve_\aa(g)=g(\aa)$. To show that the kernel of this homomorphism is the
ideal $\lg f\rg$ is completely analogous to the example at the beginning
of Section \ref{lect5}: clearly $\lg f\rg$ is contained in the kernel, as any multiple of
$f$ must evaluate to zero when $\aa$ is substituted into it. On the
other hand, if
$h$ is in the kernel of $\ve_\aa$, then by division algorithm,
$$
h=qf+r,
$$
with $\deg(r)<\deg(f)$. Taking the $\ve_\aa$ image of both sides gives
$0=\ve_\aa(h)=\ve_\aa(qf)+\ve_\aa(r)=\ve_\aa(r)$, so that $r$ has $\aa$ as a root.
As $f$ is minimal with this property, we must have that $r=0$, so that $h=qf$, ie:
$h$ is in the ideal $\lg f\rg$, and so the kernel is contained in this ideal.
Thus, $\ker\ve_\aa=\lg f\rg$.

In particular we have an isomorphism $\widehat{\ve_\aa}:F[x]/\lg f\rg\rightarrow
\im\ve_\aa\subset E$,  given by,
$$
\widehat{\ve_\aa}(g+\lg f \rg)=\varepsilon_{\aa}(g)=g(\aa),
$$
with $F[x]/\lg f\rg$ a field as $f$ is irreducible over $F$. Thus,
$\im\ve_\aa$ is a subfield of $E$. Clearly, both the element $\aa$ 
($\ve_\aa(x)=\aa$) and the field
$F$ ($\ve_\aa(c)=c$) are contained in $\im\ve_\aa$, hence $F(\aa)$ is too
as $\im\ve_\aa$ is subfield of $E$, and $F(\aa)$ is the smallest one enjoying these
two properties. Conversely, if $g=\sum a_i x^i\in F[x]$ then $\ve_\aa(g)=\sum a_i\aa^i$,
which is an element of $F(\aa)$ as fields are closed under sums and products. Hence
$\im\ve_\aa\subseteq F(\aa)$ and so these two are the same. 
Thus $\widehat{\ve_\aa}$ is an isomorphism between $F[x]/\lg f\rg$ and $F(\aa)$.

The final part follows immediately from Theorem \ref{degree_extension}, where
we showed that the set of cosets 
$$
\{1+\lg f\rg,x+\lg f\rg,x^2+\lg f\rg,\ldots, x^{d-1}+\lg f\rg\},
$$
formed a basis for $F[x]/\lg f\rg$ over $F$. Their images under $\widehat{\ve_\aa}$,
namely $\{1,\aa,\aa^2,\ldots,\aa^{d-1}\}$, must then form a basis for $F(\aa)$ over 
$F$.\qed
\end{proof}

The proof of Theorem D shows that the polynomial $f$ has
the smallest degree of any polynomial having $\aa$ as a root. 

\begin{definition}[minimum polynomial]
 The polynomial $f$ of Theorem D is called the minimum
   polynomial of $\aa$ over $F$. 
\end{definition}

\paragraph{\hspace*{-0.3cm}}
An important property of the minimum polynomial is that it divides {\em any\/} other 
$F$-polynomial that has $\aa$ as a root: for suppose that $g$ is such an $F$-polynomial.
By unique factorisation in $F[x]$, we can decompose $g$ as
$$
g=\ll f_1f_2\ldots f_k,
$$
where the $f_i$ are monic and irreducible over $F$. Being a root of $g$, 
the element $\aa$ must be a root of one of the $f_i$. By uniqueness, 
this $f_i$ must be the minimum polynomial of $\aa$ over $F$. 

\paragraph{\hspace*{-0.3cm}}
The last part of Theorem D tells us that to find the degree of a simple extension 
$F\subseteq F(\aa)$, you find the degree of the minimum polynomial over
$F$ of $\aa$. 

How do you find this polynomial? Its simple: guess! A sensible first
guess is a monic polynomial
with $F$-coefficients that has $\aa$ as root. If your guess is also irreducible,
then you have guessed right (uniqueness).

The only thing that can go wrong is if your guess
is not irreducible. Your next guess should then be a factor of 
your first
guess. In this way, the search for minimum polynomials is ``no harder'' than 
determining irreducibility.

\paragraph{\hspace*{-0.3cm}}
As an example consider the minimum polynomial over $\Q$
of the $p$-th root of $1$,
$$
\cos\frac{2\pi}{p}+\imag\sin\frac{2\pi}{p},
$$
for $p$ a prime. Your first guess is $x^p-1$ which satisfies all the criteria bar
irreducibility as $x-1$ is a factor. Factorising gives:
$$
x^p-1=(x-1)\Phi_p(x),
$$
for $\Phi_p$ the $p$-th cyclotomic polynomial, and this was shown to be irreducible over $\Q$ 
in Exercise \ref{ex_lect3.2}.

\paragraph{\hspace*{-0.3cm}}
How does one find the degree of extensions $F\subseteq F(\aa_1,\ldots,\aa_k)$ that are not 
necessarily simple? Such extensions are a sequence of simple 
extensions. If we can find the degrees of each of these simple 
extensions, all we need is a way to patch the answers together:

\begin{towerlaw}
Let $F\subseteq E\subseteq L$ be a sequence or ``tower'' of extensions. If both of the intermediate
extensions $F\subseteq E$ and $E\subseteq L$ are of finite degree, then $F\subseteq L$ is too, with
$$
[L:F]=[L:E][E:F].
$$
\end{towerlaw}


\paragraph{\hspace*{-0.3cm}}
Before the proof we consider the example 
$\Q\subset \Q(\sqrt[3]{2},\imag)$, which a sequence of two simple extensions:
$$
\Q\subset \Q(\sqrt[3]{2})\subset \Q(\sqrt[3]{2},\imag).
$$
We can use Theorem D to find the degrees of each individual simple
extension. Firstly, the minimum polynomial over $\Q$ of $\sqrt[3]{2}$ is $x^3-2$, for
this polynomial is monic in $\Q[x]$ with $\sqrt[3]{2}$ as a root and irreducible over
$\Q$ by Eisenstein (using $p=2$). Thus the extension $\Q\subset\Q(\sqrt[3]{2})$ has degree 
$\deg(x^3-2)=3$ and $\{1,\sqrt[3]{2},(\sqrt[3]{2})^2\}$ 
is a basis for $\Q(\sqrt[3]{2})$ over $\Q$.

Now let $\F=\Q(\sqrt[3]{2})$ so that the second extension is $\F\subset \F(\imag)$ and where
the minimum polynomial of $\imag$ over  $\F$ is $x^2+1$: it is monic in $\F[x]$ with $\imag$
as a root, and irreducible over $\F$ as its two roots $\pm\imag$ are not in $\F$ (as 
$\F\subset \R$). Thus Theorem D again gives that $\F\subset \F(\imag)$ has degree
$2$ with $\{1,\imag\}$ a basis for $\F(\imag)$ over $\F$.

Now consider the elements,
$$
\{1,\sqrt[3]{2},(\sqrt[3]{2})^2,\imag,\sqrt[3]{2}\imag,(\sqrt[3]{2})^2\imag\},
$$
obtained by multiplying the two bases together.
The claim is that they form a basis for $\Q(\sqrt[3]{2},\imag)=\F(\imag)$ over $\Q$:
we need to show that the $\Q$-span of these six gives every element of
$\Q(\sqrt[3]{2},\imag)$ and that 
they are linearly independent over $\Q$. For the first, let $x$ be an arbitrary element
of $\Q(\sqrt[3]{2},\imag)=\F(\imag)$. As $\{1,\imag\}$ is a basis for $\F(\imag)$ over
$\F$, we can express $x$ as an $\F$-linear combination,
$$
x=a+b\imag, a,b\in\F.
$$
As $\{1,\sqrt[3]{2},(\sqrt[3]{2})^2\}$ is a basis for $\F$ 
over $\Q$, both $a$ and $b$
can be expressed as $\Q$-linear combinations,
$$
a=a_0+a_1\sqrt[3]{2}+a_2(\sqrt[3]{2})^2, b=b_0+b_1\sqrt[3]{2}+b_2(\sqrt[3]{2})^2,
$$
with the $a_i,b_i\in\Q$. This gives,
$$
x=a_0+a_1\sqrt[3]{2}+a_2(\sqrt[3]{2})^2+b_0\imag+b_1\sqrt[3]{2}\imag+b_2(\sqrt[3]{2})^2\imag,
$$
a $\Q$-linear combination for $x$ as required.

Suppose now:
$$
a_0+a_1\sqrt[3]{2}+a_2(\sqrt[3]{2})^2+
b_0\imag+b_1a_3\sqrt[3]{2}\imag+b_2(\sqrt[3]{2})^2\imag=0,
$$
with the $a_i,b_i\in\Q$. Gathering together real and imaginary parts:
$$
(a_0+a_1\sqrt[3]{2}+a_2(\sqrt[3]{2})^2)+(b_0+b_1\sqrt[3]{2}+b_2(\sqrt[3]{2})^2)\imag=
a+b\imag=0,
$$
for $a$ and $b$ now elements of $\F$. As $\{1,\imag\}$ are independent over $\F$ 
the coefficients in this last expression are zero, ie: $a=b=0$. This gives:
$$
a_0+a_1\sqrt[3]{2}+a_2(\sqrt[3]{2})^2=0=b_0+b_1\sqrt[3]{2}+b_2(\sqrt[3]{2})^2,
$$
and as $\{1,\sqrt[3]{2},(\sqrt[3]{2})^2\}$ are independent 
over $\Q$ 
the coefficients in these two expressions are also zero, ie: $a_0=a_1=a_2=b_0=
b_1=b_2=0$.
The six elements are thus independent and form a basis as claimed.

\paragraph{\hspace*{-0.3cm}}
The proof of the tower law is completely analogous to the example above:

\begin{prooftower}
Let $\{\aa_1,\aa_2,\ldots,\aa_n\}$ be a basis for $E$ as an $F$-vector space and
$\{\bb_1,\bb_2,\ldots,\bb_m\}$ a basis for $L$ as an $E$-vector space, both containing a
finite  number of elements as these extensions are finite by assumption. We 
show that the $mn=[L:E][E:F]$ elements
$$
\{\aa_i\,\bb_j\}, 1\leq i\leq n, 1\leq j\leq m,
$$
form a basis for the $F$-vector space $L$, thus giving the
result. Working ``backwards'' as in the example above, 
if $x$ is an element of $L$ we can express it as an $E$-linear combination of
the $\{\bb_1,\ldots,\bb_m\}$:
$$
x=\sum_{i=1}^m a_i\,\bb_i,
$$
where, as they are elements of $E$, each of the $a_i$ can be expressed as $F$-linear
combinations of the $\{\aa_1,\aa_2,\ldots,\aa_n\}$:
$$
a_i=\sum_{j=1}^n b_{ij}\aa_j\Rightarrow x=\sum_{i=1}^m\sum_{j=1}^n b_{ij}\aa_j\,\bb_i.
$$
Thus the elements $\{\aa_i\,\bb_j\}$ span the field $L$. If we have
$$
\sum_{i=1}^m\sum_{j=1}^n b_{ij}\aa_j\,\bb_i=0,
$$
with the $b_{ij}\in F$, we can collect together all the $\bb_1$ terms, all the 
$\bb_2$ terms, and so on (much as we took real and imaginary parts in the example),
to obtain an $E$-linear combination
$$
\biggl(\sum_{j=1}^n b_{1j}\aa_j\biggr)\,\bb_1+
\biggl(\sum_{j=1}^n b_{2j}\aa_j\biggr)\,\bb_2+\cdots+
\biggl(\sum_{j=1}^n b_{mj}\aa_j\biggr)\,\bb_m=0.
$$
The independence of the $\bb_i$ over $E$ forces all the coefficients to be zero:
$$
\biggl(\sum_{j=1}^n b_{1j}\aa_j\biggr)=\cdots=\biggl(\sum_{j=1}^n b_{mj}\aa_j\biggr)=0,
$$
and the independence of the $\aa_j$ over $F$ forces all the coefficients in each of these
to be zero too, ie: $b_{ij}=0$ for all $i,j$. The $\{\aa_i\,\bb_j\}$
are thus independent.
\qed
\end{prooftower}

\paragraph{\hspace*{-0.3cm}}
\label{vector:spacesI:minimum:polynomial}
We find the minimum polynomial over $\Q$ of $\aa+\ww$, where
$\aa=\sqrt[3]{2}$ and $\ww=\frac{1}{2}+\frac{\sqrt{3}}{2}\imag$. Following
the recipe in the proof of Theorem 
\ref{finite.simple} (or just brute force) gives
$\Q(\aa,\ww)=\Q(\aa+\ww)$ with $[\Q(\aa+\ww):\Q]=[\Q(\aa,\ww):\Q]=6$
by the Tower law. So we are after a degree $6$ polynomial. Indeed, it
suffices to find a monic degree $6$ polynomial $g$ over $\Q$ having $\aa+\ww$ as a root,
since the minimum polynomial must then divide $g$, hence be $g$.

Writing $\bb=\aa+\ww$ we thus require $a,b,c,d,e,f\in\Q$ such that
\begin{equation}
  \label{eq:1}
\bb^6+a\bb^5+b\bb^4+c\bb^3+d\bb^2+e\bb+f=0  
\end{equation}
Now compute the powers of $\bb$ and write the answers in terms on the
basis $\{1,\aa,\aa^2,\ww,\aa\ww,\aa^2\ww\}$ for $\Q(\aa,\ww)$ over $\Q$ given
by the tower law. For example,
$$
\bb^3=\aa^3+3\aa^2\ww+3\aa\ww^2+\ww^3=3\aa^2\ww-3\aa\ww-3\aa+3,
$$
and the others are similar using the facts $\aa^3=2,\ww^3=1$ and
$\ww^2=-\ww-1$. Substituting the results into (\ref{eq:1}) and
collecting terms gives a linear combination of the basis vectors
$\{1,\aa,\aa^2,\ww,\aa\ww,\aa^2\ww\}$  equal
to $0$. Independence means the coefficients must be zero, so we get a
linear system of equations in the variables 
$a,\ldots,f$. Solving these gives $a=3,b=6,c=3,d=0,e=f=9$ and hence
the minimum polynomial
$$
x^6+3x^5+6x^4+3x^3+9x+9.
$$

\subsection*{Further Exercises for Section \thesection}


\begin{vexercise}\label{ex8.5}
\hspace{1em}\begin{enumerate}
\item Show that if $F\subseteq L$ are fields with $[L:F]=1$ then $L=F$.
\item Let $F\subseteq L\subseteq E$ be fields with $[E:F]=[L:F]$. Show that 
$E=L$.
\end{enumerate}
\end{vexercise}

\begin{vexercise}\label{ex8.6}
Let $\F={\Q}(a)$, where $a^3=2$. Express $(1+a)^{-1}$ and 
$(a^4+1)(a^2+1)^{-1}$ in the form $ba^2+ca+d$, where
$b,d,c $ are in ${\Q}$.
\end{vexercise}

\begin{vexercise}\label{ex8.7}
Let $\aa = \root{3}\of{5}$. Express the following elements of ${\Q}(\aa)$
as polynomials of degree at most 2 in $\aa$ (with coefficients in
${\Q}$):
$$
1. \,\, 1/\aa \qquad
2. \,\, \aa^5 - \aa^6 \qquad
3. \,\, \aa/(\aa^2+1) \qquad
$$
\end{vexercise}

\begin{vexercise}\label{ex8.8}
Find the minimum polynomial over ${\Q}$ of 
$\alpha = \kern-2pt\sqrt{2}+ \kern-2pt\sqrt{-2}$. 
Show that the following are elements of the field $\Q(\aa)$ and 
express them as 
polynomials in $\alpha$ (with coefficients in ${\Q}$) of degree 
at most 3:
$$
1. \,\, \kern-2pt\sqrt{2}\qquad
2. \,\, \kern-2pt\sqrt{-2}\qquad
3. \,\, i\qquad
4. \,\, \alpha^5 + 4\alpha + 3\qquad
5. \,\, 1/\alpha\qquad
6. \,\, (2\alpha + 3)/(\alpha^2 + 2\alpha + 2)\qquad
$$
\end{vexercise}

\begin{vexercise}\label{ex8.9}
Find the minimum polynomials over ${\Q}$ of the following numbers:
$$
1. \,\, 1+\imag\qquad
2. \,\, \root{3}\of{7}\qquad
3. \,\, \root{4}\of{5}\qquad
4. \,\, \kern-2pt\sqrt{2}+\imag\qquad
5. \,\, \kern-2pt\sqrt{2} + \root{3}\of {3}\qquad
$$
\end{vexercise}

\begin{vexercise}\label{ex8.10}
Find the minimum polynomial over $\Q$ of the following:
$$
1. \,\, \kern-2pt\sqrt{7}\qquad
2. \,\, (\kern-2pt\sqrt{11}+3)/2\qquad
3. \,\, (\imag\kern-2pt\sqrt{3}-1)/2\qquad
$$
\end{vexercise}

\begin{vexercise}\label{ex8.11}
For each of the following fields $L$ and $F$, find $[L:F]$ and compute
a basis for $L$ over $F$.
\begin{enumerate}
\item
$L = {\Q}(\kern-2pt\sqrt{2},\root 3\of{2})$, $F = {\Q}$;
\item
$L = {\Q}(\root 4\of{2},\imag)$, $F = {\Q}(\imag)$;
\item
$L = {\Q}(\xi)$, $F = {\Q}$, where $\xi$ is a primitive complex
7th root of unity;
\item
$L = {\Q}(\imag,\kern-2pt\sqrt{3},\omega)$, $F = {\Q}$, where $\omega$ is a
primitive complex cube root of unity.
\end{enumerate}
\end{vexercise}

\begin{vexercise}\label{ex8.12}
Let $a=e^{\pi i/4}$. Find  $[F(a):F]$ when $F={\R}$ and 
when $F={\Q}$. 
\end{vexercise}


\section{Fields III: Splitting Fields and Finite Fields}
\label{fields3}

\subsection{Splitting Fields}

\paragraph{\hspace*{-0.3cm}}
In Section \ref{lect1} we encountered
fields containing ``just enough'' numbers to solve some polynomial
equation. We now make this more precise. 

Let $f$ be a polynomial with $F$-coefficients. We say that $f$ {\em splits\/}
in an extension $F\subseteq E$ when we can factorise
$$
f=\prod_{i=1}^{\deg f} (x-\aa_i),
$$
in the polynomial ring $E[x]$. Thus $f$ splits in $E$ precisely when $E$ contains all
the roots $\{\aa_1,\aa_2,\ldots,$ $\aa_{\deg f}\}$ of $f$. 

There will in general be many such extension fields -- we are after the
smallest one. By Kronecker's theorem (more accurately, Corollary \ref{kronecker:corollary})
there is an extension $F\subseteq
K$ such that $K$ contains all the roots of $f$. If these roots are
$\aa_1,\aa_2,\ldots,\aa_d\in K$, then let
$E=F(\aa_1,\aa_2,\ldots,\aa_d)$.

\begin{definition}[splitting field of a polynomial]
The field extension $F\subseteq E$ constructed in this way is called 
a splitting field of $f$ over $F$.
\end{definition}

\begin{vexercise}
Show that $E$ is a splitting field of the polynomial $f$ over $F$ if and only 
if $f$ splits in $E$ but not in any subfield of $E$ containing $F$ (so in this sense,
$E$ is the {\em smallest\/} field containing $F$ and all the roots).
\end{vexercise}

\paragraph{\hspace*{-0.3cm}}
The splitting field of $x^2+1$ over $\Q$ is $\Q(\imag)$. The splitting
field of $x^2+1$ over $\R$ is $\C$.

\paragraph{\hspace*{-0.3cm}}
Our example from Section \ref{lect1} again: the polynomial $x^3-2$ has roots
$\aa,\aa\ww,\aa\ww^2$ where $\aa=\sqrt[3]{2}\in\R$ and
$$
\ww=-\frac{1}{2}+\frac{\sqrt{3}}{2}\imag.
$$
Thus a splitting field for $f$ over $\Q$ is given by $\Q(\aa,\aa\ww,\aa\ww^2)$,
which is the same thing as $\Q(\aa,\ww)$. 


\begin{aside}
In Section \ref{galois.groups} we will prove (Theorem G) that 
an isomorphism of a field to itself $\ss:F\rightarrow F$ can always be extended to 
an isomorphism $\widehat{\ss}:E_1\rightarrow E_2$ where $E_1$ is a splitting field 
of some polynomial $f$ over $F$ and $E_2$ is another splitting field of this
polynomial. Thus, any two splitting fields of a polynomial over $F$ are isomorphic. 
\end{aside}

\begin{vexercise}
  \begin{enumerate}
  \item Let $f=ax^2+bx+c\in\Q[x]$ and $\Delta=b^2-4ac$. Show that the
    splitting field of $f$ over $\Q$ is $\Q(\kern-2pt\sqrt{\Delta})$.
  \item Let $f=(x-\aa)(x-\bb)\in\Q[x]$ and $D=(\aa-\bb)^2$. Show that the
    splitting field of $f$ over $\Q$ is $\Q(\kern-2pt\sqrt{D})$. Show that the
    splitting is $F(\aa)=F(\bb)$.  
  \end{enumerate}
\end{vexercise}

\subsection{Finite Fields}

The construction of Section \ref{fields2} produced explicit examples
of fields having order $p^d$ for $p$ a prime. We now show that any
finite field must have order $p^d$ for some prime $p$ and $d>0$, and
there exists a unique such field. 

\paragraph{\hspace*{-0.3cm}}
Recall from Definition \ref{definition:prime_subfield}
that the prime subfield of a field $F$ is the intersection of all the
subfields of $F$. 
It is isomorphic to $\F_p$ for some $p$ or to $\Q$. In particular,
the prime subfield of a finite field $F$ must be isomorphic to $\F_p$.

Using the ideas from Section \ref{lect7}, we have an extension of fields $\F_p\subseteq F$
and hence the finite field $F$ forms a vector space over the field $\F_p$. This
space must be finite dimensional (for $F$ to be finite), so each element of $F$
can be written uniquely as a linear combination,
$$
a_1\aa_1+a_2\aa_2+\cdots+a_d\aa_d,
$$
of some basis vectors $\aa_1,\aa_2,\ldots,\aa_d$ with the $a_i\in\F_p$. In 
particular there are $p$ choices for each $a_i$, and the choices are independent,
giving $p^d$ elements of $F$ in total.

Thus a finite field has $p^d$ elements for some prime $p$.

\paragraph{\hspace*{-0.3cm}}
Here is an extended example that 
shows the converse, ie:
constructs a field 
with $q=p^d$ elements for any prime $p$
and positive integer $d>0$. 

Consider the polynomial $x^q-x$ over the field $\F_p$
of $p$ elements. 
Let $L$ be an extension of the field $\F_p$ containing all the roots of the polynomial,
as guaranteed us by the Corollary to Kronecker's Theorem. 
In Exercise \ref{ex3.20} we used the formal derivative to see whether a polynomial has distinct roots.
We have $\partial(x^q-x)
=qx^{q-1}-1=p^n x^{p^n-1}-1=-1$ as $p^n=0$ in $\F_p$. The constant polynomial $-1$ 
has no roots in $L$,  and so the original polynomial 
$x^q-x$ has no repeated roots
in $L$ by Exercise \ref{ex3.20}.

In fact, the $p^d$ distinct roots of $x^q-x$ 
form a subfield of $L$, and this is the field of order $p^d$ that we seek.
To show this,
let $a,c$ be roots (so that $a^q=a$ and
$c^q=c$). We show that $-a,a+c,
ac$ and $a^{-1}$ are also roots.

Firstly, $(-a)^q-(-a)=(-1)^qa^q+a$. If $p=2$, then $-1=1$ in $\F_2$,
so that $(-1)^qa^q+a=a^q+a=a+a$ 
$=2a=0$. Otherwise $p$ is odd so that $(-1)^q=-1$
and $(-1)^qa^q+a=-a^q+a=-a+a=0$. In either case, $-a$ is a root
of the polynomial $x^q-x$.

Next, 
$$
(a+c)^q=\sum_{i=0}^{q} 
\binom{q}{i}
a^ic^{q-i}=a^q+c^q+p(\text{other terms}),
$$
as $p$ divides the binomial coefficient when $0<i<q$ by Exercise \ref{ex3.3}.
Thus
$(a+c)^q=a^q+a^q$. (Compare this with Exercise \ref{ex9.1}.)
Substituting $a+c$ into $x^q-x$ gives
$$
(a+c)^q-(a+c)=a^q+c^q-a-c=0,
$$
using $a^q=a$ and $c^q=c$. Thus $a+c$ is 
also a root of the polynomial.

The product $(ac)^q-ac=a^qc^q-ac=ac-ac=0$. Finally, 
$(a^{-1})^q-(a^{-1})=(a^q)^{-1}-(a^{-1})=a^{-1}-a^{-1}=0$. In both cases we
have used $a^q=a$.

Thus the $q=p^d$ roots of the polynomial form a subfield of $L$ as claimed, and we have
constructed a field with this many elements.

\paragraph{\hspace*{-0.3cm}}
Looking back at this example, $L$ was an extension of $\F_p$ containing the
roots of the polynomial $x^q-x$. In particular, if these roots are
$\{a_1,\ldots,a_q\}$, then $\F_p(a_1,\ldots,a_q)$ is the
splitting field over $\F_p$ of the polynomial. 
In the example we constructed the subfield $\F$ of $L$ consisting of the roots of
$x^q-x$. As any subfield contains $\F_p$, we have $\F_p(a_1,\ldots,a_q)
\subseteq \F$, whereas $\F=\{a_1,\ldots,a_q\}$ so that 
$\F\subseteq \F_p(a_1,\ldots,a_q)$. Hence the field we constructed in the 
example was the splitting field over $\F_p$ of the polynomial $x^q-q$. 

If $F$ is now an arbitrary field with $q$ elements, then it has prime subfield 
$\F_p$. Moreover, as the multiplicative group of $F$ has order $q-1$,
by Lagrange's Theorem (see Section \ref{groups.stuff}), every element of $F$ satisfies
$x^{q-1}=1$, hence is a root of the $\F_p$-polynomial $x^q-x=0$. Thus,
a finite field of order $q$ is the splitting field over $\F_p$ of 
the polynomial $x^q-x$, and by the uniqueness of such, 
any two fields of order $q$ are isomorphic.

\paragraph{\hspace*{-0.3cm}}
We finish with a fact about finite fields that will prove useful later on.
Remember that a field is, among other things, two groups spliced together in a 
compatible way: the elements form a group under addition (the {\em additive group\/})
and the non-zero 
elements form a group under multiplication (the {\em multiplicative group\/}) .

Looking at the complex numbers as an example, we can find a number of finite
subgroups of the multiplicative group $\C^*$ of $\C$ by considering roots of $1$. 
For any $n$,
the powers of the $n$-th root of $1$,
$$
\ww=\cos\frac{2\pi}{n}+\imag\sin\frac{2\pi}{n},
$$
form a subgroup of $\C^*$ of order $n$. Moreover, this subgroup is
cyclic.

\begin{proposition}
Let $F$ be any field and $G$ a finite subgroup of the multiplicative group $F^*$ of $F$.
Then $G$ is a cyclic group.
\end{proposition}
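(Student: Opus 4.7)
The plan is to show that $G$ must contain an element of order $n=|G|$, which forces $G=\langle g\rangle$ to be cyclic. The key tool is Theorem \ref{degree.number.of.roots}: a polynomial of degree $d$ over a field (more generally, an integral domain) has at most $d$ roots. This is what prevents $G$ from being, say, the Klein four-group inside $F^*$.

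First I would introduce the \emph{exponent} of the finite abelian group $G$, defined as $m=\operatorname{lcm}\{|g|:g\in G\}$. By definition, every $g\in G$ satisfies $g^m=1$, so every element of $G$ is a root of the polynomial $x^m-1\in F[x]$. Since $F$ is a field, hence an integral domain, Theorem \ref{degree.number.of.roots} gives $|G|\leq m$. On the other hand, by Lagrange's theorem every $|g|$ divides $n=|G|$, so their least common multiple $m$ divides $n$, giving $m\leq n$. Therefore $m=n$.

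The heart of the argument is now to upgrade the equality $m=n$ into the existence of a single element of order $m$. The step I expect to be the main obstacle is this abelian group lemma: \emph{in a finite abelian group, there exists an element whose order equals the exponent.} I would prove it by the standard ``merging'' trick: if $a,b$ have orders $r,s$, then by splitting the prime-power factorisations of $r$ and $s$ into the parts where each prime dominates, one can build from suitable powers $a^i, b^j$ an element of order exactly $\operatorname{lcm}(r,s)$ (commutativity is crucial here, since the order of a product of commuting elements of coprime orders is the product of their orders). Iterating over a generating list of $G$ produces an element $g\in G$ whose order is $\operatorname{lcm}\{|h|:h\in G\}=m$.

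With such $g$ in hand, $|g|=m=n=|G|$, so the cyclic subgroup $\langle g\rangle\subseteq G$ already has all $n$ elements, giving $G=\langle g\rangle$. The beautiful point is that commutativity plus the polynomial-root bound is doing all the work: the same argument fails for a finite subgroup of, say, the multiplicative group of the quaternions, precisely because there the polynomial $x^m-1$ can have too many roots.
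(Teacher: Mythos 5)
Your proof is correct and follows essentially the same route as the paper: both introduce the exponent $m$ (the lcm of all element orders), use the root-counting bound on $x^m-1$ to get $|G|\leq m$, and invoke the finite-abelian-group lemma (the paper's Exercise \ref{ex11.2a}) to produce a single element of order $m$, which then generates $G$. The only cosmetic difference is that you pin down $m=|G|$ via Lagrange before producing the generator, whereas the paper produces the generator first and then observes its $m$ distinct powers already fill $G$.
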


In particular, if $F$ is a finite field, then the multiplicative group $F^*$ of $F$ is
finite, hence cyclic.

\begin{proof}
By Exercise \ref{ex11.2a} 
there is an element $g\in G$ whose order $m$ is the least common multiple of 
all the orders of elements of $G$.
Thus,
any element $h\in G$ satisfies $h^m=1$. Hence every element of the group is a root
of $x^m-1$, and since this polynomial has at most $m$ roots in $F$, the order of
$G$ must be $\leq m$. As $g\in G$ has order $m$ 
its powers must exhaust the whole group, hence $G$ is cyclic.
\qed
\end{proof}

\subsection{Algebraically closed fields}

\paragraph{\hspace*{-0.3cm}}
In the first part of this section we dealt with fields in which a particular
polynomial of interest split into linear factors. There are
fields like the complex numbers in which {\em any\/} polynomial splits. 

A field $F$ is said to be {\em algebraically closed\/} if and only
if every (non-constant) polynomial over $F$ splits in $F$. 

\paragraph{\hspace*{-0.3cm}}
If $F$ is algebraically closed and $\aa$ is algebraic over $F$ then there is a
polynomial with $F$-coefficients having $\aa$ as a root. As $F$ is algebraically closed,
this polynomial splits in $F$, so that in particular $\aa$ is in $F$. This
explains the terminology: an algebraically closed field is {\em closed\/} with respect to the
taking of algebraic elements. Contrast this with fields like $\Q$, over which there are algebraic
elements like $\kern-2pt\sqrt{2}$ that are not contained in $\Q$. 

\begin{vexercise}
Show that the following are equivalent:
\begin{enumerate}
\item $F$ is algebraically closed;
\item every non-constant polynomial over $F$ has a root in $F$;
\item the irreducible polynomials over $F$ are precisely the linear ones;
\item if $F\subseteq E$ is a finite extension then $E=F$.
\end{enumerate}
\end{vexercise}

\begin{theorem}
Every field $F$ is contained in an algebraically closed one.
\end{theorem}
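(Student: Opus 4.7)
The plan is to construct $E$ as the union of an ascending chain of fields
\[
F = F_0 \subseteq F_1 \subseteq F_2 \subseteq \cdots,
\]
where each step $F_n \subseteq F_{n+1}$ is an extension in which every non-constant polynomial in $F_n[x]$ has at least one root. The union $E = \bigcup_n F_n$ is then algebraically closed: any non-constant $g \in E[x]$ involves only finitely many coefficients, so $g \in F_n[x]$ for some $n$, and a root of $g$ lives in $F_{n+1} \subseteq E$. By the equivalent characterisation in the exercise just before the theorem (every non-constant polynomial has a root $\Rightarrow$ $E$ is algebraically closed), this suffices.

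The heart of the matter is the single step: given a field $K$, produce an extension $K^+$ in which \emph{every} non-constant polynomial in $K[x]$ has a root. Artin's elegant idea is to do them all at once. Introduce one indeterminate $x_f$ for each non-constant $f \in K[x]$ and form the polynomial ring $R = K[\{x_f\}]$ in this (possibly huge) collection of indeterminates, which is still a commutative ring. Let $I \subset R$ be the ideal generated by the elements $\{f(x_f)\}$. Assuming $I \neq R$, Zorn's lemma produces a maximal ideal $M \supseteq I$, and the quotient $K^+ = R/M$ is a field (by the general analogue of Theorem B). The composition $K \hookrightarrow R \twoheadrightarrow K^+$ is injective because its kernel is an ideal of the field $K$ and cannot contain $1$, so $K^+$ is an extension of $K$. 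Finally each non-constant $f \in K[x]$ has the coset $x_f + M$ as a root, since $f(x_f) \in I \subseteq M$.

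The main obstacle is showing that $I$ is a proper ideal. Suppose for contradiction that $1 \in I$. Then there is a finite expression
\[
1 = \sum_{i=1}^{n} g_i \, f_i(x_{f_i}),
\]
with $g_i \in R$ and $f_1,\ldots,f_n$ non-constant in $K[x]$. The $g_i$ involve only finitely many further indeterminates $x_{h_1},\ldots,x_{h_m}$, so the identity really takes place in the subring $K[x_{f_1},\ldots,x_{f_n},x_{h_1},\ldots,x_{h_m}]$. By iterating the Corollary to Kronecker's theorem, build a single extension $K \subseteq L$ containing roots $\aa_i \in L$ of each $f_i$. Evaluate the identity by sending $x_{f_i} \mapsto \aa_i$ and $x_{h_j} \mapsto 0$: the right-hand side becomes $0$ while the left remains $1$, a contradiction. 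Hence $I$ is proper and the construction of $K^+$ goes through.

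With the single-step construction in hand, iterate it to obtain the chain $F_0 \subseteq F_1 \subseteq \cdots$, define $E = \bigcup_n F_n$, and note that the field operations and axioms transfer to $E$ because any finite computation takes place in some $F_n$. Thus $F \subseteq E$ and $E$ is algebraically closed, as required.
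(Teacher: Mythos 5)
Your proof is correct and, in contrast to the paper, is a complete argument: the paper gives only a sketch (``take the union of all the splitting fields of polynomials over a countable $F$'') and explicitly defers the real difficulties, which it identifies as set-theoretic. That sketch as stated has two gaps: the various splitting fields must first be coherently embedded in a common ambient field before a union makes sense, and even then one only gets a field in which every polynomial \emph{over $F$} splits, not an algebraically closed one, so an iteration is still needed. Your proof supplies both. Artin's construction produces the single-step extension $K\subseteq K^+$ as a quotient of the polynomial ring $K[\{x_f\}]$ by a maximal ideal containing $I=\langle f(x_f)\rangle$, with Zorn's Lemma (the set-theoretic ingredient the paper alludes to) furnishing the maximal ideal and the Corollary to Kronecker's Theorem supplying the finite extension $L$ on which the evaluation argument lives, showing $I$ proper. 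The ascending union $E=\bigcup F_n$ then cures the second gap, with algebraic closure of $E$ following because any $g\in E[x]$ has its finitely many coefficients in some $F_n$ and hence a root in $F_{n+1}$. One small point worth flagging: the paper's Theorem B is stated only for $F[x]$ modulo a principal ideal, whereas you need the general fact that any commutative ring modulo a maximal ideal is a field; this is true and can be extracted from the arguments of Exercises 5.1 and 5.3, but since it goes beyond the literal statement in the text you should say so explicitly rather than citing ``the general analogue'' without comment.
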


\begin{sketchproof}
The full proof is beyond the scope of these notes, although the technical
difficulties are not algebraic (or even number theoretical) but set theoretical.
If the field $F$ is finite or countably infinite, the proof sort of goes as follows: there are countably
many polynomials over a countable field, so take
the union of all the splitting fields of these polynomials. Note that for 
a finite field, this is an infinite union, so an algebraically closed field
containing even a
finite field is very large.
\qed
\end{sketchproof}

\subsection{Simple extensions}

\paragraph{\hspace*{-0.3cm}}
We saw in Section \ref{lect4} that the extension $\Q\subset\Q(\kern-2pt\sqrt{2},\kern-2pt\sqrt{3})$
is, despite appearances, simple. The fact that the extension is finite
turns out to be enough to see that it is simple:

\begin{theorem}\label{finite.simple}
Let $F\subset E$ be a finite extension such that the roots of any irreducible
polynomial over $E$ are distinct. Then $E$ is a simple extension of $F$, ie: $E=F(\aa)$
for some $\aa\in E$.
\end{theorem}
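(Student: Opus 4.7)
The plan is to prove this via the classical primitive element theorem. First I would reduce the general problem to the two-generator case: since $F\subseteq E$ is finite, we have $E = F(\beta_1,\ldots,\beta_k)$ for some finite collection of elements (indeed, any basis will do). By induction on $k$, it suffices to prove the key case $F(\alpha,\beta) = F(\gamma)$ for some $\gamma$, and then iterate: if $F(\beta_1,\ldots,\beta_{k-1}) = F(\delta)$, then $F(\beta_1,\ldots,\beta_k) = F(\delta,\beta_k) = F(\gamma)$ for an appropriate $\gamma$.

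The proof of the key case splits according to whether $F$ is finite or infinite. If $F$ is finite, then since $F\subseteq E$ is a finite extension, $E$ itself is finite. But the proposition proved earlier in this section says that the multiplicative group of a finite field is cyclic; so if $\gamma$ is any generator of $E^*$, then every non-zero element of $E$ is a power of $\gamma$ and hence $E = F(\gamma)$. This case is essentially a free gift from the finite-field material already in place.

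The infinite case is where the separability hypothesis does the real work. Let $f$ and $g$ be the minimum polynomials of $\alpha$ and $\beta$ over $F$, and work in a splitting field containing all their roots $\alpha=\alpha_1,\ldots,\alpha_m$ and $\beta=\beta_1,\ldots,\beta_n$. By hypothesis the roots of any irreducible polynomial over $E$ are distinct, and the same therefore holds for $f$ and $g$ (over the larger splitting field): so the $\alpha_i$ are distinct, and likewise the $\beta_j$. I would propose $\gamma = \alpha + c\beta$, and choose $c \in F$ so that $c \neq (\alpha_i-\alpha)/(\beta-\beta_j)$ for every $i$ and every $j \neq 1$; such a $c$ exists because $F$ is infinite and the forbidden set is finite.

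The crux is then to verify that $\beta \in F(\gamma)$ (whence $\alpha = \gamma - c\beta \in F(\gamma)$, and hence $F(\gamma) = F(\alpha,\beta)$). Consider the polynomial $h(x) = f(\gamma - cx) \in F(\gamma)[x]$. Then $h(\beta) = f(\alpha) = 0$, so $h$ and $g$ share the root $\beta$; and if $\beta_j$ with $j \neq 1$ were another common root, then $\gamma - c\beta_j$ would equal some $\alpha_i$, contradicting the choice of $c$. Combined with the distinctness of the $\beta_j$, this forces the greatest common divisor of $h$ and $g$ in $F(\gamma)[x]$ to be exactly $x - \beta$. Since the gcd lies in $F(\gamma)[x]$, we conclude $\beta \in F(\gamma)$. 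I expect the main obstacle to be book-keeping around this gcd step — in particular justifying that the monic gcd computed in the splitting field agrees with the gcd computed in $F(\gamma)[x]$ (which follows from Theorem \ref{gcd} applied inside $F(\gamma)[x]$, together with unique factorisation), and making sure the distinct-roots hypothesis has been invoked in exactly the right places.
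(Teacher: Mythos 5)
Your proposal follows essentially the same route as the paper: reduce to the two-generator case by induction, choose $c\in F$ avoiding the finite set of "bad" ratios $(\alpha_i-\alpha)/(\beta-\beta_j)$, set $\gamma=\alpha+c\beta$, and deduce $\beta\in F(\gamma)$ from the fact that $f(\gamma-cx)$ and $g(x)$ have $\beta$ as their sole common root. The only real difference is in the last step: the paper observes that the minimum polynomial of $\beta$ over $F(\gamma)$ must divide both $f(\gamma-cx)$ and $g$, and hence has degree one, whereas you compute the monic gcd in $F(\gamma)[x]$ and identify it as $x-\beta$. These are interchangeable, and your caution about the field-independence of the gcd is well placed; it follows from the Bezout identity of Theorem \ref{gcd}, as you say. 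One thing your proposal adds is a clean treatment of the case $F$ finite via the cyclicity of $E^*$ — the paper explicitly declares its proof to be for $F$ infinite and omits the finite case, so that part is a genuine (and correct) complement to the printed argument.
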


The following proof is for the case that $F$ is infinite.

\begin{proof}
Let $\{\aa_1,\aa_2,\ldots,\aa_k\}$ be a basis for $E$ over $F$ and consider the field
$F_1=F(\aa_3,\ldots,\aa_k)$, so that $E=F_1(\aa_1,\aa_2)$.
We will show that $F_1(\aa_1,\aa_2)$ is a simple extension of $F_1$,  ie:
that $F_1(\aa_1,\aa_2)=F_1(\theta)$ for some $\theta\in E$. 
Thus $E=F(\aa_1,\aa_2,\ldots,\aa_k)
=F(\theta,\aa_3\ldots,\aa_k)$, and so by repeatedly applying this procedure,
$E$ is a simple extension of $F$.

Let $f_1,f_2$ be the minimum polynomials over $F_1$ of $\aa_1$ and $\aa_2$, and 
let $L$ be an algebraically closed field containing of the field $F$. 
As the $\aa_i$ are algebraic
over $F$, we have that the fields $F_1$ and $E$ are contained in $L$ too.
In particular the polynomials $f_1$ and $f_2$ split in $L$,
$$
f_1=\prod_{i=1}^{\deg f_1} (x-\bb_i),
f_2=\prod_{i=1}^{\deg f_2} (x-\delta_i),
$$
with $\bb_1=\aa_1$ and $\delta_1=\aa_2$. As the roots of these polynomials are
distinct we have that $\bb_i\not=\bb_j$ and $\delta_i\not=\delta_j$ for all $i\not= j$.
For any $i$ and any $j\not= 1$, the equation.
$\bb_i+x\delta_j=\bb_1+x\delta_1$
has precisely one solution in $F_1$, namely 
$$
x=\frac{\bb_i-\bb_1}{\delta_1-\delta_j}.
$$
As there only finitely many such
equations and infinitely many elements of $F_1$, there must be an $c\in F_1$
which is a solution to {\em none\/} of them, ie: such that,
$$
\bb_i+c\delta_j\not=\bb_1+c\delta_1
$$
for any $i$ and any $j\not= 1$. Let $\theta=\bb_1+c\delta_1=\aa_1+c\aa_2$.
We show that $F_1(\aa_1,\aa_2)=F_1(\theta)=F_1(\aa_1+c\aa_2)$. 

Clearly $\aa_1+c\aa_2\in F_1(\aa_1,\aa_2)$ so that $F_1(\aa_1+c\aa_2)\subseteq
F_1(\aa_1,\aa_2)$. We will show that $\aa_2\in F_1(\aa_1+c\aa_2)=F_1(\theta)$, 
for then
$\aa_1+c\aa_2-c\aa_2=\aa_1\in F_1(\aa_1+c\aa_2)$, and so $F_1(\aa_1,\aa_2)
\subseteq F_1(\aa_1+c\aa_2)$. 

We have $0=f_1(\aa_1)=f_1(\theta-c\aa_2)$, so if we let $r(t)\in F_1(\theta)[t]$ 
be given by $r(t)=f_1(\theta-ct)$, then we have that $\aa_2$ is a root of both
$r(t)$ and $f_2(x)$. If $\gamma$ is another common root of $r$ and $f_2$, then
$\gamma$ is one of the $\delta_j$, and $\theta-c\gamma$ (being a root of $f_1$)
is one of the $\bb_i$, so that,
$$
\gamma=\delta_j\text{ and }\theta-c\gamma=\bb_i
\Rightarrow 
\bb_i+c\delta_j=\bb_1+c\delta_1,
$$
a contradiction. Thus $r$ and $f_2$ have just the single common root $\aa_2$.
Let $h$ be the minimum polynomial of $\aa_2$ over $F_1(\theta)$, so that 
$h$ divides both $r$ and $f_2$ (recall that the minimum polynomial divides any 
other polynomial having $\aa_2$ as a root). This means that $h$ must have degree
one, for a higher degree would give more than one common root for $r$ and $f_2$.
Thus $h=t+b$ for some $b\in F_1(\theta)$. As $h(\aa_2)=0$ we thus get that
$\aa_2=-b$ and so $\aa_2\in F_1(\theta)$ as required.
\qed
\end{proof}

The theorem is true for finite extensions of finite fields -- even without the
condition on the roots of the polynomials -- but we omit the proof here. We saw in
Exercise \ref{ex4.30} that irreducible polynomials over fields of characteristic $0$
have distinct roots. Thus any finite extension of a field of characteristic
zero $0$ is simple. For example, if $\aa_1,\ldots,\aa_k$ are algebraic over $\Q$, then
$\Q(\aa_1,\ldots,\aa_k)=\Q(\theta)$ for some $\theta\in\C$.


\section{Ruler and Compass Constructions II}
\label{ruler.compass2}

\paragraph{\hspace*{-0.3cm}}
We can completely
describe the complex numbers that are constructible:

\begin{theoremE}\label{thmE}
The number $z\in\C$ is constructible if and only if
there exists a sequence of field extensions,
$$
\Q=K_0\subseteq K_1\subseteq K_2\subseteq\cdots\subseteq K_n,
$$
such that $\Q(z)$ is a subfield of $K_n$, and each $K_i$ is 
an extension of $K_{i-1}$ of degree at most $2$.
\end{theoremE}

The idea, which can be a little obscured by the details, is that
points on a line have a linear relationship with the two points
determining the
line, and points on a circle have a quadratic relationship
with the two points determining the circle. 

\begin{proof}
We prove the ``only if'' part first. 
Recall that $z$ is constructible if and only
if there is a sequence of numbers 
$$
0,1,\text{i}=z_1,z_2,\ldots,z_n=z,
$$
with $z_i$ obtained from earlier numbers in the sequence in one of the three
forms shown in Figure \ref{fig:constructions2:fig1}, where $p,q,r,s\in\{1,2,\ldots,i-1\}$.

Let $K_i$ be the field $\Q(z_1,\ldots,z_{i})$, so we have a tower of 
extensions:
$$
\Q\subseteq K_1\subseteq K_2\subseteq\cdots\subseteq K_n.
$$
We will simultaneously show the following two things by induction:
\begin{itemize}
\item Each of the fields $K_i$ is closed
under conjugation, ie: if $z\in K_i$ then $\bar{z}\in K_i$, and 
\item the degree
of each extension $K_{i-1}\subseteq K_i$ is at most two.
\end{itemize}
The first of these is a 
technical convenience, the main point of which is illustrated by
Exercise \ref{ex10.1}
following the proof. 

\begin{figure}
  \centering
\begin{pspicture}(0,0)(12,3)
\rput(2,1.34){\BoxedEPSF{galois7.6a.eps scaled 2000}}
\rput(6,1.5){\BoxedEPSF{galois7.6b.eps scaled 2000}}
\rput(10,1.58){\BoxedEPSF{galois7.6c2.eps scaled 2000}}
\rput(2.1,1.6){$z_i$}
\rput(.8,.55){$z_p$}\rput(3.1,2){$z_q$}
\rput(1,2.2){$z_r$}\rput(4.8,1.2){$z_s$}
\rput(6.1,2.4){$z_i$}
\rput(4.8,2.1){$z_p$}\rput(7.2,2.6){$z_q$}
\rput(6.4,.8){$z_r$}\rput(3.2,.45){$z_s$}
\rput(9.7,2.5){$z_i$}
\rput(10.45,.55){$z_p$}\rput(11.5,1.8){$z_q$}
\rput(9.8,.65){$z_r$}\rput(8.3,1.85){$z_s$}
\rput(2,.2){(i)}\rput(6.2,.2){(ii)}\rput(10,.2){(iii)}
\end{pspicture}
\caption{}
  \label{fig:constructions2:fig1}
\end{figure}

Firstly, $K_1=\Q(\imag)=\{a+b\imag\,:\,a,b\in\Q\}$ is certainly closed
under conjugation and $[K_1:\Q]=[\Q(\imag):\Q]=2$ as the minimum
polynomial of $\imag$ over $\Q$ is $x^2+1$. 
Now fix $i$ and 
suppose that $K_{i-1}$ is closed under conjugation with $K_i=K_{i-1}(z_i)$.

\vspace*{1em}
\noindent (i). Suppose that $z_i$ is obtained as in case
(i) of Figure \ref{fig:constructions2:fig1}. The Cartesian equation for one of the lines 
is $y=m_1x+c_1$, passing through the points $z_p,z_q$, with $z_p,z_q\in K_{i-1}$.
As $K_{i-1}$ is closed under conjugation, Exercise \ref{ex10.1} gives 
the real and imaginary parts of $z_p$ and $z_q$ are in $K_{i-1}$.
Thus,
$$
\begin{pspicture}(0,0)(14,1)
\rput(-.75,0){
\rput(0.75,0){
\rput(3,.2){$\im z_p=m_1\re z_p+c_1$}
\rput(3,.8){$\im z_q=m_1\re z_q+c_1$}
\rput(4.75,.5){$\left.\begin{array}{c}
\vrule width 0 mm height 10 mm depth 0 pt\end{array}\right\}$}
\rput(5.2,.5){$\Rightarrow$}
}
\rput(9.8,.5){$m_1={\ds \frac{\im z_p-\im z_q}{\re z_p-\re z_q}}
\text{ and }
c_1=\im z_p-m_1\re z_p$}
}
\end{pspicture}
$$
so that $m_1,c_1\in K_{i-1}$.
(If the line is vertical with equation $x=c_1$ we get
$c_1=\re z_p\in K_{i-1}$). If the equation of the other line is
$y=m_2x+c_2$, we similarly get $m_2,c_2\in K_{i-1}$. As $z_i$ lies on both these lines we
have
$$
\begin{pspicture}(0,0)(14,1)
\rput(2.25,0){
\rput(-1,0){
\rput(3,.2){$\im z_i=m_1\re z_i+c_1$}
\rput(3,.8){$\im z_i=m_2\re z_i+c_2$}
\rput(4.75,.5){$\left.\begin{array}{c}
\vrule width 0 mm height 10 mm depth 0 pt\end{array}\right\}$}
}
\rput(6.25,.5){with $m_1,m_2,c_1,c_2\in K_{i-1}$}
}
\end{pspicture}
$$
hence
$$
\begin{pspicture}(0,0)(14,1)
\rput(-3,0){
\rput(10,.5){$\re z_i={\ds \frac{c_2-c_1}{m_1-m_2}}
\text{ and }
\im z_i={\ds \frac{m_1(c_2-c_1)}{m_1-m_2}}+c_1$}
}
\end{pspicture}
$$
must lie in $K_{i-1}$ too. 
As $K_{i-1}$ is closed under
conjugation we get $z_i\in K_{i-1}$ too, so in fact 
$K_i=K_{i-1}(z_i)=K_{i-1}$. Thus the degree of the extension $K_{i-1}\subseteq K_i$
(being $1$) is certainly $\leq 2$. Moreover, $K_i=K_{i-1}$ is closed under conjugation as 
$K_{i-1}$ is. 

\vspace*{1em}
\noindent (ii).
Suppose $z_i$ arises as in case (ii) with the line having equation $y=mx+c$ and the circle
having equation $(x-\re z_s)^2+(y-\im z_s)^2=r^2$, where 
$r^2=(\re z_r-\re z_s)^2+(\im z_r-\im z_s)^2$. As before, $m,c\in
K_{i-1}$; moreover, $z_r,z_s\in K_{i-1}$, hence $r^2\in K_{i-1}$. As $z_i$ lies on the line we have 
$\im z_i=m\re z_i+c$, and as it lies on the circle we have
$$
(\re z_i-\re z_s)^2+(m\re z_i+c-\im z_s)^2=r^2.
$$
Thus the polynomial $(x-\re z_s)^2+(mx+c-\im z_s)^2=r^2$ is a quadratic 
with $K_{i-1}$ coefficients and having $\re z_i$ as a root. 
The minimum polynomial of $\re z_i$ over $K_{i-1}$ thus has degree at
most $2$,
giving
$$
[K_{i-1}(\re z_i):K_{i-1}]\leq 2
$$
by Theorem D. 
In fact, $\im z_i\in K_{i-1}(\re z_i)$ as well, since $\im z_i=m\re
z_i+c$. Thus 
$z_i$ itself is in $K_{i-1}(\re z_i)$, as $i$ also is,
and we have the sequence,
$$
K_{i-1}\subseteq K_i=K_{i-1}(z_i)\subseteq K_{i-1}(\re z_i),
$$
giving that the degree of the extension $K_{i-1}\subseteq K_i$ is also
$\leq 2$ by the Tower Law. 

Finally, we show that the field $K_i$ is closed under 
conjugation, for which we can assume that $[K_i:K_{i-1}]=2$ -- it is trivially
the case if the degree is one. Now, $K_i=K_{i-1}(z_i)=K_{i-1}(\re z_i)$, 
so in particular
$z_i$ and $\re z_i$ are in $K_i$, hence 
$$
\im z_i=\frac{z_i-\re z_i}{\imag}
$$
is too. The result is that $\re z_i-\im z_i\cdot\imag=\bar{z_i}$ is in $K_i$
too. A general element of $K_i$ has the form  $a+bz_i$ with $a,b\in
K_{i-1}$, whose conjugate $\bar{a}+\bar{b}\bar{z_i}$ is thus also in $K_i$.

\vspace*{1em}
\noindent (iii). If $z$ arises as in case (iii), then as
$z_i$ lies on both circles we have
$$
(\re z_i-\re z_s)^2+(\im z_i-\im z_s)^2=r^2\text{ and }
(\re z_i-\re z_p)^2+(\im z_i-\im z_p)^2=s^2,
$$
with both $r^2$ and $s^2$ in $K_{i-1}$  for the same reason as in case (ii).
Expanding both expressions gives terms of the form $\re z_i^2+\im z_i^2$,
and equating leads to,
\begin{equation*}
\begin{split}
\im z_i=\frac{\bb_1}{\aa}\re z_i+\frac{\bb_2}{\aa},&\text{ where }
\aa=2(\im z_s-\im z_p), \bb_1=2(\re z_p-\re z_s)\\
&\text{ and }
\bb_2=\re z_s^2+\im z_s^2-(\re z_p^2+\im z_p^2)+s^2-r^2.
\end{split}
\end{equation*}
Combining this $K_{i-1}$-expression for $\im z_i$ with the first of the two circle 
equations above puts us into a similar situation as case (ii), from which the result
follows in the same way.

\vspace*{1em}
Now for the ``if'' part, which is mercifully shorter. Suppose we have a tower of fields
$\Q=K_0\subseteq K_1\subseteq K_2\subseteq\cdots\subseteq K_n,$
with $\Q(z)$ in $K_n$, hence $z\in K_n$. 
We can assume that $z\not\in K_{n-1}$ (otherwise stop one step earlier!) and so 
we have
$$
K_{n-1}\subseteq K_{n-1}(z)\subseteq K_n
$$
where $z\not\in K_{n-1}$ gives $[K_{n-1}(z):K_{n-1}]\geq 2$. On the other
hand $[K_n:K_{n-1}]\leq 2$ so by the tower law we have
$[K_{n-1}(z):K_{n-1}]=[K_n:K_{n-1}]$ and hence $K_n=K_{n-1}(z)$ with $[K_{n-1}(z):K_{n-1}]=2$. 
The minimum polynomial of $z$ over $K_{n-1}$ thus
has the form $x^2+bx+c$, with $b,c\in K_{n-1}$, so that $z$ is one of,
$$
\frac{-1\pm\sqrt{b^2-4c}}{2}
$$
either of which can be constructed from $1,2,4,b,c\in K_{n-1}$, using
the arithmetical and square root 
constructions of Section \ref{ruler.compass}. But in the same way $b,c$ can be
constructed from elements of $K_{n-2}$, and so on, giving that $z$ is
indeed constructible.
\qed
\end{proof}

\begin{vexercise}\label{ex10.1}
Let $K$ be a field such that 
$\Q(\imag)\subseteq K\subseteq \C,$
and suppose that $K$ is closed under conjugation. Show that $z\in K$ if and only if
the real and imaginary parts of $z$ are in $K$. 
\end{vexercise}

\paragraph{\hspace*{-0.3cm}}
It is much easier to use the ``only if'' part of the Theorem, which shows when
numbers {\em cannot\/} be constructed, so we restate this part as a separate,

\begin{corollary}
\label{corollary:construcitble_necessary}
If $z\in\C$ is constructible then the degree of the extension $\Q\subseteq \Q(z)$
must be a power of two.
\end{corollary}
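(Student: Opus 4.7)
The plan is to derive the corollary directly from Theorem E together with repeated applications of the Tower Law.

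By Theorem E, since $z$ is constructible, there exists a tower
$$
\Q = K_0 \subseteq K_1 \subseteq K_2 \subseteq \cdots \subseteq K_n
$$
with $\Q(z) \subseteq K_n$ and $[K_i : K_{i-1}] \leq 2$ for each $i = 1, \ldots, n$. First I would apply the Tower Law $(n-1)$ times to this chain to conclude
$$
[K_n : \Q] = [K_n : K_{n-1}][K_{n-1} : K_{n-2}] \cdots [K_1 : K_0].
$$
Each factor on the right is either $1$ or $2$, so $[K_n : \Q] = 2^k$ for some $k \leq n$.

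Next, since $\Q \subseteq \Q(z) \subseteq K_n$, another application of the Tower Law gives
$$
2^k = [K_n : \Q] = [K_n : \Q(z)] \cdot [\Q(z) : \Q].
$$
Thus $[\Q(z) : \Q]$ is a positive integer dividing $2^k$, and any such divisor is itself a power of $2$. This completes the argument.

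There is essentially no obstacle here; the result is a short corollary whose only content is combining Theorem E with the multiplicativity of degrees in a tower. The one minor point worth checking is that $[\Q(z) : \Q]$ is finite, but this is automatic from $\Q(z) \subseteq K_n$ and $[K_n : \Q] = 2^k < \infty$.
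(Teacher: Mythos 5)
Your proof is correct and follows exactly the paper's argument: apply the Tower Law to the chain from Theorem~E to see $[K_n:\Q]$ is a power of $2$, then use $\Q\subseteq\Q(z)\subseteq K_n$ with the Tower Law once more to deduce $[\Q(z):\Q]$ divides a power of $2$. The only difference is that you spell out the first step in detail, whereas the paper takes $[K_n:\Q]$ being a power of two as read.
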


\begin{proof}
If $z$ is constructible then we have the tower of extensions
as given in Theorem E, with $z\in K_n$. Thus we have the sequence of extensions
$\Q\subseteq \Q(z)\subseteq K_n$, which by the tower law gives,
$$
[K_n:\Q]=[K_n:\Q(z)][\Q(z):\Q].
$$
Thus $[\Q(z):\Q]$ divides $[K_n:\Q]$, which is a power of two, so $[\Q(z):\Q]$ must
also be a power of two.
\qed
\end{proof}

To use the ``if'' part to show that numbers {\em can\/}
be constructed by finding a tower of fields as in Theorem E, is a little harder. 
We will need to know more about the fields sandwiched between $\Q$ and
$\Q(z)$ before we can do this. The Galois Correspondence in Section \ref{galois.correspondence}
will give us the control we need.

\paragraph{\hspace*{-0.3cm}}
The Corollary is only stated in one direction. The
converse is
{\em not true\/}. 

\paragraph{\hspace*{-0.3cm}}
\label{constructions2:pgons}
A regular $p$-gon, for $p$ a prime, can be constructed, by Exercise \ref{ex7.40}, 
precisely when the complex
number $z=\cos(2\pi/p)+\imag\sin(2\pi/p)$ can be constructed. By Exercise \ref{ex_lect3.2}, the
minimum polynomial of $z$ over $\Q$ is the $p$-th cyclotomic polynomial,
$$
\Phi_p(x)=x^{p-1}+x^{p-2}+\cdots+x+1.
$$ 
The degree of the extension $\Q\subseteq \Q(z)$ is thus $p-1$, so
$p-1$ must be a power
of two if the $p$-gon is to be constructed, i.e. 
$$
p=2^n+1.
$$

Actually, even more can be said. If $m$ is odd, the polynomial $x^m+1$ has $-1$ as
a root, and so can be factorised as 
$x^m+1=(x+1)(x^{m-1}-x^{m-2}+x^{m-3}-\cdots-x+1).$
Thus if $n=mk$ for $m$ odd, we have
$$
2^n+1=(2^k)^m+1=(2^k+1)((2^k)^{m-1}-(2^k)^{m-2}+(2^k)^{m-3}-\cdots-(2^k)+1),
$$
giving that $2^n+1$ cannot be prime unless $n$ has no \emph{odd\/}
divisors; i.e. $2^n+1$ can only be prime if 
$n$ itself is a power of two. 

Thus for a $p$-gon to be constructible, we must have that $p$ is a prime number
of the form
$$
p=2^{2^t}+1,
$$
a so-called {\em Fermat prime\/}. Such primes are extremely rare: the only ones 
$<10^{900}$ are 
$$
3,5,17,257\text{ and }65537.
$$
We will see in Section \ref{galois.corresapps} that the converse is true: if $p$ is a
Fermat prime, then a regular $p$-gon \emph{can\/} be constructed.

\paragraph{\hspace*{-0.3cm}}
A square plot of land can always be doubled in area using a ruler and compass:
$$
\begin{pspicture}(0,0)(14,2.5)
\rput(0,-0.25){
\rput(7,1.5){\BoxedEPSF{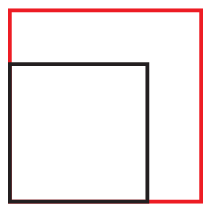 scaled 1000}}
\rput(7.4,0.3){$(t,0)$}\rput(5.6,2){$(0,t)$}
\rput(8.9,2.5){$(\kern-2pt\sqrt{2}t,\kern-2pt\sqrt{2}t)$}
}
\end{pspicture}
$$
Set
the compass to the side length $t$ of the plot. As $\kern-2pt\sqrt{2}$
is a constructible number, we can 
construct the point with coordinates $(\kern-2pt\sqrt{2}t,
\kern-2pt\sqrt{2}t)$, hence doubling the area. 

\paragraph{\hspace*{-0.3cm}}
Is there a similar procedure for a cube? Suppose the original cube
has side length $1$, so that the task is to produce a new cube of {\em volume\/} $2$. 
If this could be accomplished via a ruler and compass construction, then by setting the
compass to the side length of the new cube, we would have constructed $\sqrt[3]{2}$. 
But the minimum polynomial over $\Q$ of $\sqrt[3]{2}$ is clearly $x^3-2$, with the
extension $\Q\subset\Q(\sqrt[3]{2})$ thus having degree three. Such a 
construction cannot therefore be possible.

\paragraph{\hspace*{-0.3cm}}
The subset $\Box^n$ of $\R^n$ given by
$$
\Box^n=\{x\in \R^n\,|\,|x_i|\leq \frac{t}{2}\text{ for all }i\}
$$
is an $n$-dimensional cube of side length $t$ having volume $t^n$. 
In particular, in $4$-dimensions we have the hypercube:
$$
\begin{pspicture}(0,0)(14,6)
\rput(7,3){\BoxedEPSF{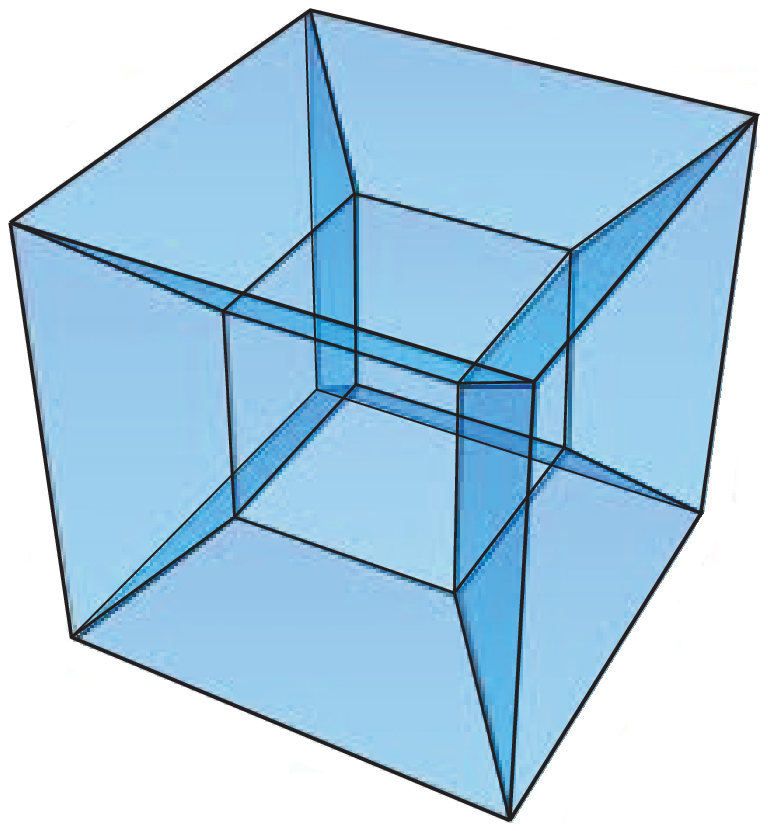 scaled 700}}
\end{pspicture}
$$
The vertices
can be placed on the $3$-sphere $S^3$ in $\R^4$. Stereographically projecting
$S^3$ to $\R^3$ gives the picture above. This object
can be doubled in volume with ruler and compass because the point with coordinates
$(\sqrt[4]{2}t,\sqrt[4]{2}t,\sqrt[4]{2}t,\sqrt[4]{2}t)$ can be
constructed.

\paragraph{\hspace*{-0.3cm}}
One of our fundamental constructions was the bisection of an angle. It is 
natural to ask if there is a construction that {\em trisects\/} an
angle. Certainly there are particular angles that can be trisected:
if the angle $\phi$ is constructible for example, then the angle $3\phi$ can
be trisected.

The angle $\pi/3$ however cannot be trisected. We will see this by
showing that
the angle $\pi/9$ cannot be constructed. 

\begin{vexercise}\label{ex10.2}
Evaluate the complex number $(\cos\phi+\imag\sin\phi)^3$ in two different ways: using
the binomial theorem and De Moivre's theorem. By equating real parts, deduce that
$$
\cos3\phi=4\cos^3\phi-3\cos\phi.
$$ 
Derive similar expressions for $\cos5\phi$ and $\cos7\phi$. 
\end{vexercise}

Exercise \ref{ex7.40} gives that the angle $\pi/9$ is
constructible precisely when the complex  
number $\cos\pi/9$ can be constructed, for which it is necessary in turn that the
degree of the extension $\Q\subseteq \Q(\cos\pi/9)$ be a power of two. Exercise
\ref{ex10.2} with $\phi=\pi/9$ gives
$$
\cos\frac{\pi}{3}=4\cos^3\frac{\pi}{9}-3\cos\frac{\pi}{9},
\text{ hence, }
1=8\cos^3\frac{\pi}{9}-6\cos\frac{\pi}{9}.
$$
Thus, if  $u=2\cos(\pi/9)$, then $u^3-3u-1=0$. This polynomial is irreducible over $\Q$
by the reduction test (with $p=2$) so it is the minimum polynomial over $\Q$ of
$2\cos(\pi/9)$. The extension $\Q\subset
\Q(2\cos(\pi/9))=\Q(\cos(\pi/9))$ thus has 
degree three, and so the angle $\pi/9$ cannot be constructed.

We will be able to say more about which angles of the form $\pi/n$ can be constructed 
in Section \ref{galois.corresapps}.

\begin{vexercise}\label{ex10.50}
\hspace{1em}\begin{enumerate}
\item Can an angle of $40^\circ$ be constructed?
\item Assuming $72^\circ$ is constructible,
what about $24^\circ$ and $8^\circ$?
\item Can $72^\circ$ be constructed? (\emph{hint}: Section \ref{lect1})
\end{enumerate}
\end{vexercise}

\subsection*{Further Exercises for Section \thesection}

\begin{vexercise}\label{platonic_volume}
The octahedron, dodecahedron and icosahedron are
three of the five Platonic solids (the other two are the tetrahedron
and the cube). See Figure \ref{fig:constructions2:fig30}.
The volume of each is given by the formula, where $x$ is the length of
any edge.
Show that in each case, there is no general method, using a ruler and compass, to
construct a new solid
from a given one, and having {\em twice\/} the volume. 
\end{vexercise}

\begin{figure}
  \centering
\begin{pspicture}(0,0)(14,7)
\rput(0,-0.5){
\rput(2,4.5){\BoxedEPSF{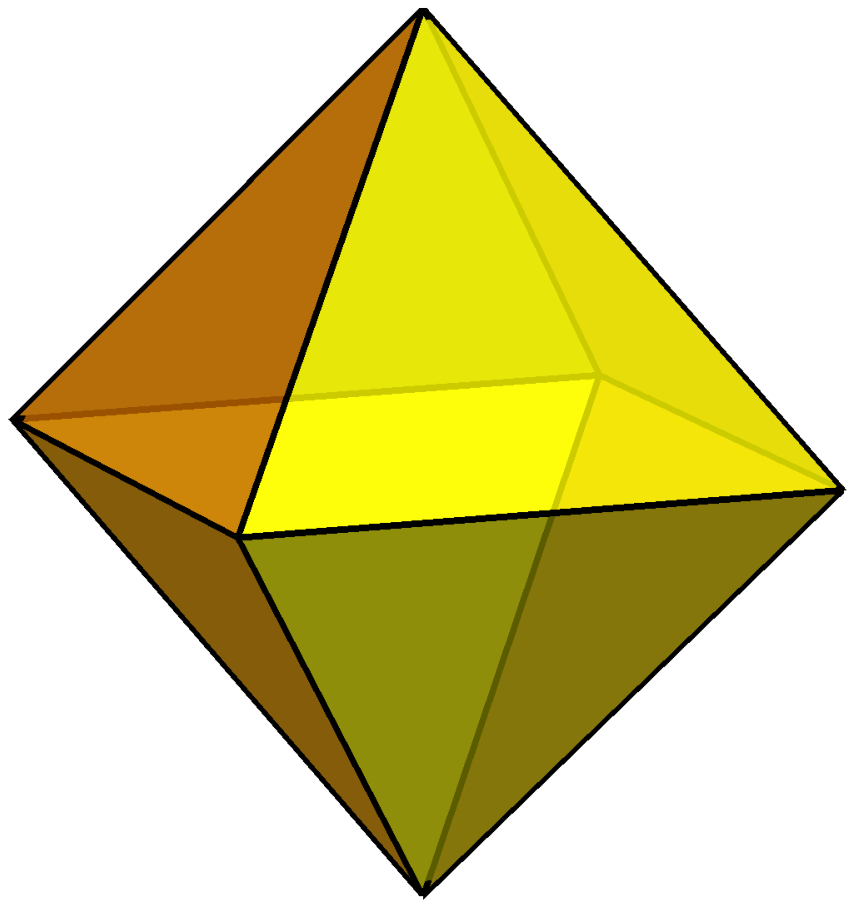 scaled 425}}
\rput(2,1.5){${\ds V_O=\frac{x^3\sqrt{2}}{3}}$}
\rput(7,4.5){\BoxedEPSF{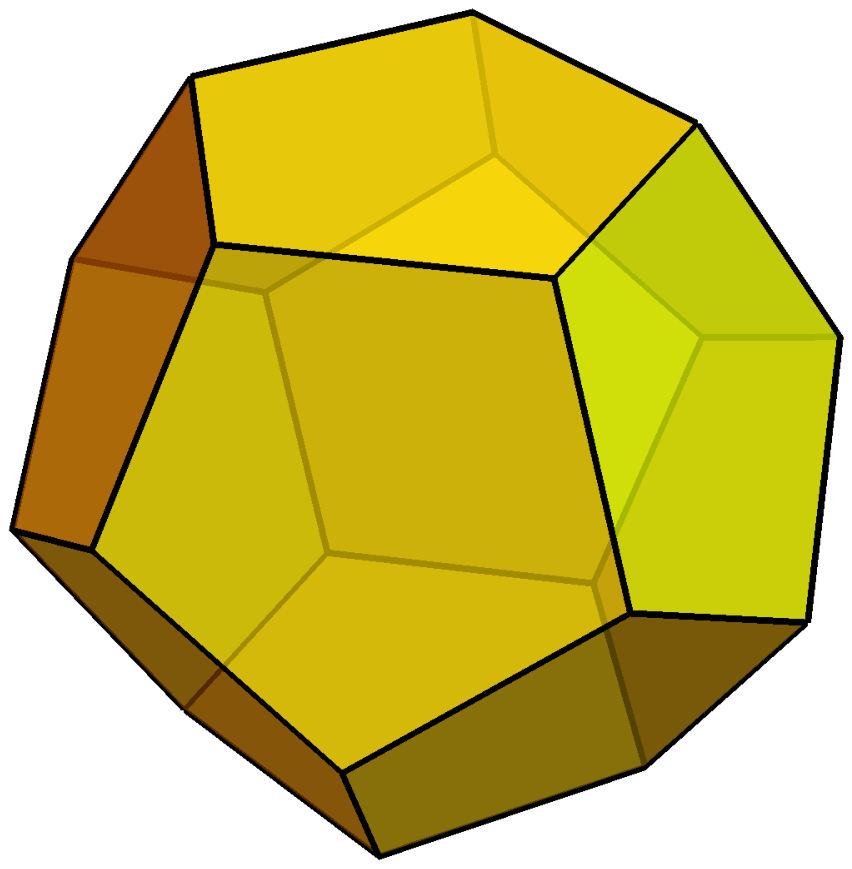 scaled 425}}
\rput(7,1.5){${\ds V_D=\frac{x^3(15+7\sqrt{5})}{4}}$}
\rput(12,4.5){\BoxedEPSF{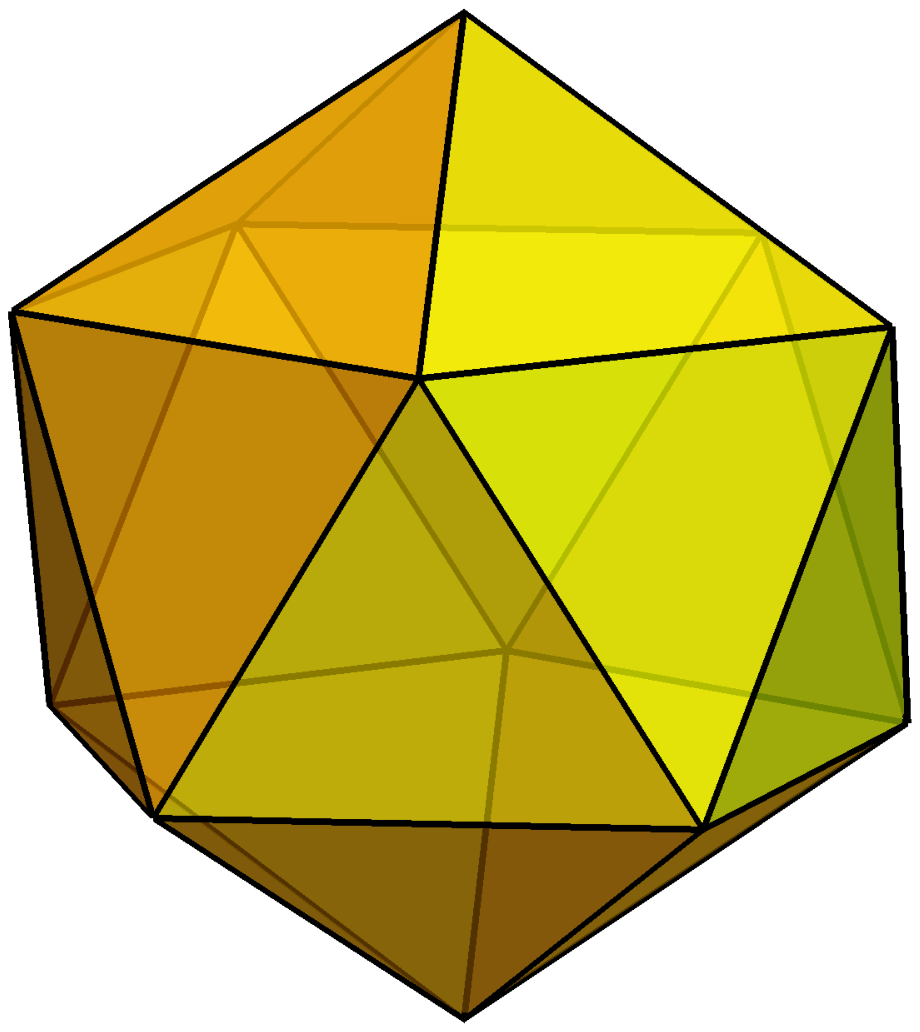 scaled 375}}
\rput(12,1.5){${\ds V_I=\frac{5x^3(3+\sqrt{5})}{12}}$}
}
\end{pspicture}
\caption{The octahedron, dodecahedron and icosahedron, and their volumes.}
  \label{fig:constructions2:fig30}
\end{figure}

\begin{vexercise}
Let $S_O,S_D$ and $S_I$ be the surface areas of the three Platonic
solids of Exercise \ref{platonic_volume}. If,
$$
S_O=2x^2\kern-2pt\sqrt{3},S_D=3x^2\kern-2pt\sqrt{5(5+2\kern-2pt\sqrt{5})}\text{ and }
S_I=5x^2\kern-2pt\sqrt{3},
$$
determine whether or not a solid can be constructed from a given one
with twice the surface area.
\end{vexercise}

\begin{vexercise}\label{angle_quinsect}
  \begin{enumerate}
  \item Using the
identity
$\cos 5\theta=16\cos^5\theta-20\cos^3\theta+5\cos\theta.$
Show that is is impossible, using a ruler and compass, to {\em
quinsect\/} (that is, divide into $5$ equal parts) any angle $\psi$
that satisfies,
$$ 
\cos\psi=\frac{5}{6}
$$
  \item Using the identity,
$\cos7\theta=64\cos^7\theta-112\cos^5\theta+56\cos^3\theta-7\cos\theta$
show that it is impossible, using ruler and compass, to {\em septsect\/}
(that is, divide into {\em seven\/} equal parts) any angle $\varphi$ such
that
$$
\cos\varphi=\frac{7}{8}
$$
  \end{enumerate}
\end{vexercise}


\section{Groups I: Soluble Groups and Simple Groups}
\label{groups.stuff}

This section contains miscellaneous but important reminders from group
theory. Not all our groups will be Abelian, so we return to writing
the group operation as juxtaposition and writing ``$\id$'' for the group
identity. 

\paragraph{\hspace*{-0.3cm}}
A {\em permutation\/} of a set $X$ is a bijection $X\rightarrow X$. Usually
we are interested in the case where $X$ is finite, say $X=\{1,2,\ldots,n\}$,
so a permutation is just a rearrangement of these numbers. Permutations
are most compactly written using cycle notation
$$
(a_{11},a_{12},\ldots,a_{1n_1})(a_{21},a_{22},\ldots,a_{2n_2})\ldots(a_{k1},a_{k2},
\ldots,a_{kn_k})
$$
where the $a_{ij}$ are elements of $\{1,2,\ldots,n\}$. 
Each $(b_1,b_2,\ldots,b_k)$ means that the $b_i$ are permuted in a cycle:
$$
\begin{pspicture}(0,0)(4,4)
\rput(2,2){\BoxedEPSF{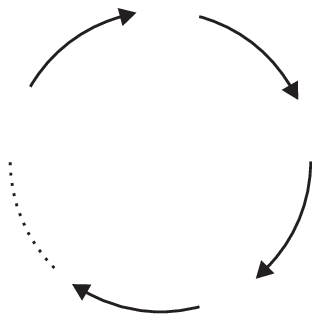 scaled 1000}}
\rput(2.1,3.5){$b_1$}
\rput(3.5,2.35){$b_2$}
\rput(2.75,0.7){$b_3$}
\rput(0.5,2.4){$b_k$}
\end{pspicture}
$$
Cycles are composed from right to left,
eg:
$(1,2)(1,2,4,3)(1,3)(2,4)=(1,2,3)$. In this way a permutation can be 
written as a product of disjoint cycles. 
The set of all permutations of $X$ forms a group under composition of bijections called
the {\em symmetric group\/} $S_{\kern-.3mm X}$, or $S_{\kern-.3mm n}$ if $X=\{1,2,\ldots,n\}$.

\paragraph{\hspace*{-0.3cm}}
A permutation where just two things are 
interchanged, and everything else is left fixed, is called a {\em
  transposition\/} or \emph{swap\/} $(a,b)$. Any permutation can be written as a
composition of transpositions, for example:
$$
(1,2,3)=(1,3)(1,2)=(1,2)(2,3)
\text{ and }
(a_{1},a_{2},\ldots,a_{k})=(a_1,a_k)(a_1,a_{k-1})\ldots (a_1,a_3)(a_1,a_2).
$$	
There will be many such expressions, but they all involve an even
number of transpositions or all involve an odd number of them. 

We can thus call a permutation {\em even\/} if it can be decomposed
into an even number of transpositions, and {\em odd\/} otherwise. The
even permutations in $S_{\kern-.3mm n}$ form a subgroup called the  {\em Alternating
group\/} $A_n$.

\begin{vexercise}\label{ex11.1}
Show that $A_n$ is indeed a group comprising exactly half of the elements of $S_{\kern-.3mm n}$. Show
that the odd elements in $S_{\kern-.3mm n}$ {\em do not\/} form a subgroup.
\end{vexercise}

\begin{vexercise}\label{ex11.2}
Recall that the {\em order\/} of an element $g$ of a group $G$ is the least $n$ such that
$g^n=\id$. 
Show that if $g,h$ are elements such that $gh=hg$ 
then $(gh)^n=g^nh^n$. 
If in addition the order of $g$ is $n$ and the order of $h$ is $m$ with
$\gcd(n,m)=1$, then the order of $gh$ is the lowest common multiple of $n$ and $m$.
\end{vexercise}

\begin{vexercise}\label{ex11.2a}
Let $G$ be a finite Abelian group, and let $1=m_1,m_2,\ldots,m_\ell$ be a list 
of all the possible orders of elements of $G$. Show that there exists an element
whose order is the lowest common multiple of the $m_i$ [\emph{hint}: let $g_i$ be
an element of order $m_i$ and use Exercise \ref{ex11.2} to show that there
are $k_1,\ldots,k_\ell$ with $g_1^{k_1}\cdots g_\ell^{k_\ell}$ the element we seek].
\end{vexercise}

\paragraph{\hspace*{-0.3cm}}
If $G$ is a group and $\{g_1,g_2,\ldots,g_n\}$ are elements of $G$, then we say that the $g_i$
{\em generate\/} $G$ when every element $g\in G$ can be obtained as a product 
$$
g=g_{i_1}^{\pm 1}g_{i_2}^{\pm 1}\ldots g_{i_k}^{\pm 1},
$$
of the $g_i$ and their inverses. Write $G=\lg g_1,g_2,\ldots,g_n\rg$.

\paragraph{\hspace*{-0.3cm}}
We find generators for the symmetric and alternating groups. We have already seen that
the transpositions $(a,b)$ generate $S_{\kern-.3mm n}$, for any permutation can be written as a 
product 
$$
(a_{1},a_{2},\ldots,a_{k})=(a_1,a_k)
(a_1,a_{k-1})\ldots (a_1,a_3)(a_1,a_2).
$$	
The transpositions $(a,b)$ can in turn be expressed in
terms of just some of them: when $a<b$ we have
$$
(a,b)=(a,a+1)(a+1,a+2)\ldots(b-2,b-1)(b-1,b)\ldots(a+1,a+2)(a,a+1)
$$
as can be seen by considering the picture:
$$
\begin{pspicture}(0,0)(14,3)
\rput(7,1.5){\BoxedEPSF{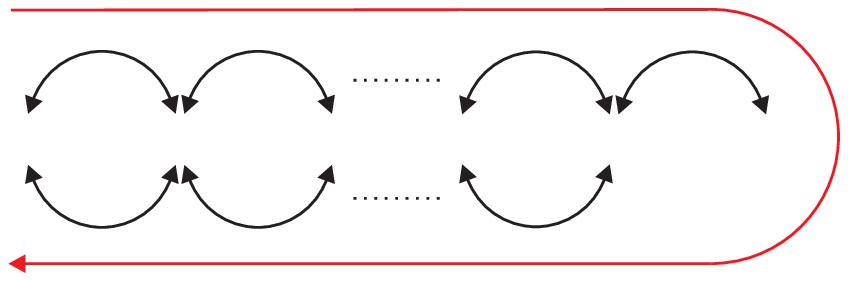 scaled 1000}}
\rput(3,1.5){$a$}
\rput(4.5,1.5){$a+1$}
\rput(6,1.5){$a+2$}
\rput(7.5,1.5){$b-2$}
\rput(9,1.5){$b-1$}
\rput(10.5,1.5){$b$}
\end{pspicture}
$$
and doing the swaps in the order indicated. 
Any number strictly in between $a$ and $b$ moves one place to the right and then one place
to the left, with net effect that it remains stationary. The number $a$ is moved 
to $b$ by the top swaps, but then stays there. Similarly $b$ stays put
for all but the last of the top swaps
and then is moved to $a$ by the bottom swaps.

Any permutation can thus be written as a product of swaps of the form
$(a,a+1)$. 
Even these 
transpositions can be further reduced, by transferring $a$ and $a+1$ to the points
$1$ and $2$, swapping $1$ and $2$ and transferring the answer back to
$a$ and $a+1$. Indeed, if $\tau=(1,2,\ldots,n)$ then doing the
permutations in the order indicated in the picture:
$$
\begin{pspicture}(0,0)(14,3)
\rput(7,1.5){\BoxedEPSF{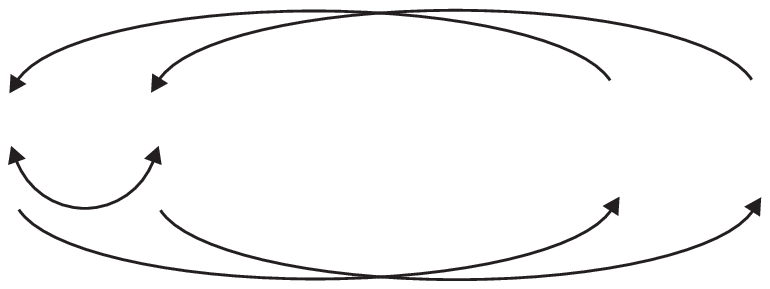 scaled 1000}}
\rput(3.2,1.75){$1$}\rput(4.65,1.75){$2$}
\rput(9.35,1.75){$a$}\rput(10.8,1.75){$a+1$}
\rput(0,-0.5){
\pscircle[linecolor=white,fillstyle=solid,fillcolor=red](7,3){.25}
\rput(7,3){{\white{\bf 1}}}
}
\rput(-3,-1.5){
\pscircle[linecolor=white,fillstyle=solid,fillcolor=red](7,3){.25}
\rput(7,3){{\white{\bf 2}}}
}
\rput(0,-2.5){
\pscircle[linecolor=white,fillstyle=solid,fillcolor=red](7,3){.25}
\rput(7,3){{\white{\bf 3}}}
}
\end{pspicture}
$$
shows that $(a,a+1)=\tau^{a-1}(1,2)\tau^{1-a}$.
The conclusion is that $S_{\kern-.3mm n}$ is generated by just two permutations, namely
$(1,2)$ and $(1,2,\ldots,n)$.

\begin{vexercise}
Show that the Alternating group is generated by the permutations of the form $(a,b,c)$.
Show that just the $3$-cycles of the form $(1,2,a)$ will suffice.
\end{vexercise}

\paragraph{\hspace*{-0.3cm}}
Lagrange's theorem says that if $G$ is a finite group and $H$ a subgroup of $G$, then
the order $|H|$ of $H$ divides the order $|G|$ of $G$. The converse, that if a subset of 
a group has size dividing the order of the  group then it is a subgroup, is false. 

\begin{vexercise}
By considering the Alternating group $A_4$, justify this statement.
\end{vexercise}

\begin{vexercise}\label{cyclicgroup.subgroups}
Show that if $G$ is a cyclic group, then the converse to Lagrange's theorem {\em is\/}
true, ie: if $G$ has order $n$ and $k$ divides $n$ then $G$ has a subgroup of order $k$.
\end{vexercise}

\begin{vexercise}
Use Lagrange's Theorem to show that if a group $G$ has order a prime number $p$, then $G$
is isomorphic to a cyclic group. Thus any two groups of order $p$ are
isomorphic.
\end{vexercise}

There are partial converses to Lagrange's Theorem:

\begin{theorem}[Cauchy]
Let $G$ be a finite group and $p$ a prime dividing the order of $G$.
Then $G$ has a subgroup of order $p$. 
\end{theorem}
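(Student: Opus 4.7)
The plan is to use McKay's elegant orbit-counting argument. First I would introduce the set
\[
X = \{(g_1, g_2, \ldots, g_p) \in G^p : g_1 g_2 \cdots g_p = \id\}.
\]
The size of $X$ can be computed directly: the first $p-1$ coordinates may be chosen freely from $G$, and the last is then forced to be $(g_1 \cdots g_{p-1})^{-1}$. Hence $|X| = |G|^{p-1}$, which is divisible by $p$ since $p$ divides $|G|$ and $p-1 \geq 1$.

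Next I would let the cyclic group $C_p = \langle \sigma \rangle$ of order $p$ act on $X$ by cyclic shift, $\sigma \cdot (g_1, g_2, \ldots, g_p) = (g_2, g_3, \ldots, g_p, g_1)$. This is a well-defined action on $X$ because if $g_1 g_2 \cdots g_p = \id$, then $g_2 \cdots g_p g_1 = g_1^{-1}(g_1 g_2 \cdots g_p)g_1 = \id$ as well. By Lagrange's theorem applied to the stabiliser of a point in $C_p$, each orbit has size dividing $p$; since $p$ is prime, each orbit has size $1$ or $p$.

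Now comes the key counting step. The orbits of size $1$ are precisely the constant tuples $(g, g, \ldots, g)$ with $g^p = \id$. Writing $N$ for the number of such tuples, the orbit decomposition gives
\[
|G|^{p-1} = |X| = N + p \cdot (\text{number of orbits of size } p).
\]
Reducing modulo $p$, and using that $p \mid |G|^{p-1}$, we conclude $p \mid N$. Since $(\id, \id, \ldots, \id)$ contributes $1$ to $N$, we have $N \geq 1$, and hence $N \geq p$. In particular there exists some $g \neq \id$ in $G$ with $g^p = \id$.

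The final step is routine: the order of such a $g$ divides $p$, so (as $p$ is prime and $g \neq \id$) the order is exactly $p$, and $\langle g \rangle$ is the required subgroup. The main obstacle is really psychological rather than technical: one must think to introduce the auxiliary set $X$ and the cyclic action, which is not at all suggested by the statement. Once the setup is in place, everything else is bookkeeping.
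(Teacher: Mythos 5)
Your proof is correct: this is McKay's orbit-counting argument, and each step checks out. The set $X$ has $|G|^{p-1}$ elements, the cyclic shift action is well-defined (your conjugation computation is the right verification), orbits have size $1$ or $p$ by the orbit--stabiliser theorem, fixed points are exactly the constant tuples with $g^p=\id$, and the count modulo $p$ forces a non-identity solution. The paper itself states Cauchy's theorem without proof (it only remarks that one can show $G$ contains an element of order $p$), so there is no proof in the text to compare against. Your argument is in fact the cleanest elementary route: it avoids the older induction-on-$|G|$ approach, which splits into cases according to whether $G$ is abelian and requires the class equation in the non-abelian case.

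One small stylistic remark: when you write ``each orbit has size dividing $p$'' by Lagrange applied to the stabiliser, you are implicitly using the orbit--stabiliser theorem to identify the orbit size with the index of the stabiliser in $C_p$; it is worth naming that theorem explicitly, since Lagrange alone gives you the size of the stabiliser, not of the orbit.
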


Indeed, one can show that $G$ contains an element $g$ of order $p$,
with the subgroup being the elements $\{g,g^2,\ldots,g^p=\id\}$. 

\begin{theorem}[Sylow's 1st]
Let $G$ be a finite group of order $p^k m$, where $p$ does not
divide $m$. Then $G$ has a subgroup of 
order $p^k$. 
\end{theorem}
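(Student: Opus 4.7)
The plan is to prove Sylow's First Theorem by strong induction on $|G|$, using Cauchy's theorem (already available above). The base case $|G|=1$, with $k=0$, is trivial; for the inductive step I would split into two cases according to whether $G$ has a proper subgroup of $p$-power index.

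In the first case, suppose $G$ admits a proper subgroup $H$ with $[G:H]$ coprime to $p$. Then by Lagrange's theorem $|H|=|G|/[G:H]$, so $p^k$ still divides $|H|$. Since $|H|<|G|$, the inductive hypothesis applied to $H$ delivers a subgroup of order $p^k$, and this subgroup also sits inside $G$.

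In the complementary case, every proper subgroup of $G$ has index divisible by $p$. Here I would invoke the class equation
$$|G| \;=\; |Z(G)| \,+\, \sum_i [G:C_G(x_i)],$$
where $x_i$ runs over representatives of the non-central conjugacy classes. Each centralizer $C_G(x_i)$ is a proper subgroup of $G$, so $p$ divides every summand in the right-hand sum. Since $p\mid|G|$, it follows that $p\mid |Z(G)|$. Cauchy's theorem then produces an element of order $p$ in the abelian group $Z(G)$, generating a subgroup $N$ of order $p$. Because $N\subseteq Z(G)$, the subgroup $N$ is normal in $G$, so the quotient $G/N$ makes sense and has order $p^{k-1}m<|G|$. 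Induction yields a subgroup $\bar K\leq G/N$ of order $p^{k-1}$, and pulling back through the quotient map $\pi\colon G\to G/N$ gives $K=\pi^{-1}(\bar K)$ of order $|\bar K|\cdot|N|=p^k$.

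The main obstacle is purely expository rather than mathematical: the excerpt develops next to no general group theory beyond Lagrange's theorem and the symmetric/alternating groups, so the class equation, the normality of central subgroups, and the subgroup correspondence for $G/N$ all have to be set up first. A self-contained alternative, which sidesteps quotients altogether, is Wielandt's counting proof: let $G$ act on the collection $\Omega$ of all $p^k$-element subsets of $G$ by left multiplication, observe via a direct computation modulo $p$ that $|\Omega|=\binom{p^km}{p^k}$ is \emph{not} divisible by $p$, and conclude that some orbit has size coprime to $p$. The orbit–stabilizer relation then forces the stabilizer to have order divisible by $p^k$, while the fact that the stabilizer acts freely on any of its subsets $S\in\Omega$ (via $g\cdot S=S$) bounds its order above by $|S|=p^k$; equality gives the desired subgroup. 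Given the elementary flavour of the notes I would likely present this second approach as the primary proof.
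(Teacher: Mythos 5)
The paper states Sylow's First Theorem (alongside Cauchy's theorem) as background without proof, so there is no proof in the text to compare yours against. Both arguments you sketch are correct and standard. The inductive approach is sound: in the case where every proper subgroup has index divisible by $p$, the class equation forces $p\mid|Z(G)|$, Cauchy supplies a central subgroup $N$ of order $p$, and the preimage in $G$ of a subgroup of order $p^{k-1}$ in $G/N$ has order $p^k$. Wielandt's counting argument is also correct: the fact that $p\nmid\binom{p^k m}{p^k}$ forces some orbit of the left-multiplication action on $p^k$-element subsets to have size coprime to $p$, and the stabilizer $H$ of such a subset $S$ is squeezed to exactly $p^k$, since orbit--stabilizer gives $p^k\mid|H|$ while the inclusion $Hs\subseteq S$ for any fixed $s\in S$ gives $|H|\leq|S|=p^k$. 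Given the paper's minimal group-theoretic machinery (Lagrange plus permutation groups), your judgement that Wielandt's proof is the better fit is defensible, since it avoids normal subgroups, the class equation, and quotients---though it still needs the orbit--stabilizer relation and the binomial-coefficient divisibility lemma, neither of which the paper develops either, so some setup is unavoidable on either route.
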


\paragraph{\hspace*{-0.3cm}}
It will be useful to consider all the subgroups of a group at once, rather than just
one at a time. 

\begin{definition}[lattice of subgroups]
The subgroup lattice is a 
diagram depicting all the subgroups of $G$ and the inclusions between them. If
$H_1,H_2$ are subgroups of $G$ with $H_1\subseteq H_2$ they appear in
the diagram like so:
$$
\begin{pspicture}(0,0)(2,2)
\rput(1,.3){$H_1$}
\rput(1,1.7){$H_2$}
\psline(1,.6)(1,1.4)
\end{pspicture}
$$
At the very base of the diagram is the trivial subgroup $\{\id\}$ and at the apex is the other
trivial subgroup, namely $G$ itself. Denote the lattice by $\LL(G)$.
\end{definition}

\parshape=4 0pt\hsize 0pt\hsize 0pt.75\hsize 0pt.75\hsize  
For example, the group of symmetries of an equilateral triangle has
elements
$$
\{\id,r,r^2,s,rs,r^2s\}
$$
where $r$ is a rotation counter-clockwise through $\frac{1}{3}$ of a
turn (we called it $ts$ in Section \ref{lect1}) and $s$ is the reflection in
the horizontal axis.
\vadjust{\hfill\smash{\lower 30pt
\llap{
\begin{pspicture}(0,0)(3,2)
\rput(1,0.25){
\uput[0]{270}(-.1,2){\pstriangle[fillstyle=solid,fillcolor=lightgray](1,0)(2,1.73)}
\rput(-0.7,1){${\red s}$}\rput(1.3,2.3){${\red r}$}
\psline[linecolor=red](-0.5,1)(2,1)
\rput{-120}(-0.1,1.75){\pscurve[linecolor=red]{<-}(-0.5,0)(-1,1)(-0.5,2)}
}
\end{pspicture}
}}}\ignorespaces

\parshape=6 0pt.75\hsize 0pt.75\hsize  0pt.75\hsize  
0pt\hsize 0pt\hsize 0pt\hsize
The subgroup lattice $\LL(G)$ is on the left in Figure
\ref{fig:groups1:subgroup_lattices}. 
I'll leave you to see that they are all subgroups, so it remains to
see that we have all of them.
Suppose first that $H$ is a subgroup containing $r$. Then it must
contain all the powers
$\{\id,r,r^2\}$
of $r$, and so  $3\leq |H|\leq 6$. By Lagrange's Theorem $|H|$ divides
$6$, so we have $|H|=3$ or $6$, giving that $H$ must be $\{\id,r,r^2\}$ or
all of $G$. This describes all the subgroups that contain 
$r$, and the same argument -- and conclusion -- applies to the subgroups
containing $r^2$. 

This leaves the subgroups containing one
of the reflections $s,rs,r^2s$ but not $r$ or
$r^2$. If $H$ is a subgroup 
containing $s$, then as it also contains $\id$,
and by Lagrange, it must have order $2,3$ or $6$. The first
possibility gives $H=\{\id,s\}$ and the last gives $H=G$. 
On the other hand, 
to have order $3$, the subgroup $H$ must also contain one
of $rs$ or $r^2s$. In the first case it also contains $rss=
r$, a contradiction.  Similarly $H$ cannot contain $r^2s$, so there is
no subgroup $H$ containing $s$ apart from $\{\id,s\}$ and $G$ itself. 
Similarly for subgroups containing $rs$ or $r^2s$. Thus the lattice
$\LL(G)$ is indeed as shown in Figure \ref{fig:groups1:subgroup_lattices}.

The right part of Figure \ref{fig:groups1:subgroup_lattices} gives the
subgroup lattice of the symmetry group of a square. I'll leave the
details to you. 

\begin{figure}
  \centering
\begin{pspicture}(0,0)(14,6)
\rput(3.5,3){\BoxedEPSF{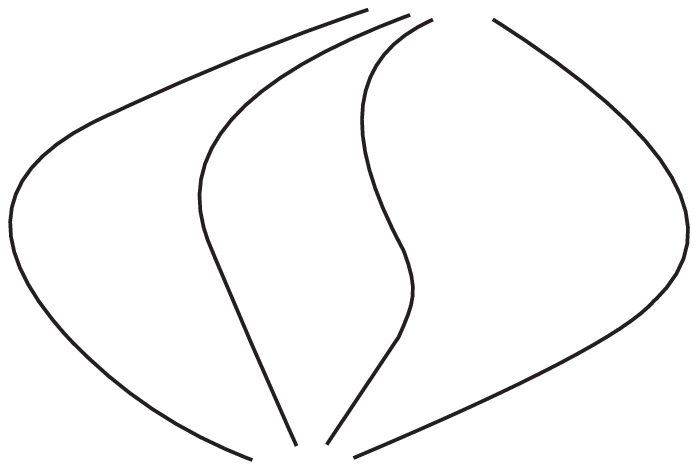 scaled 900}}
\rput(3.1,0.75){$\{\id\}$}
\rput*(0.5,2.5){$\{\id,s\}$}
\rput*(2.25,2.5){$\{\id,rs\}$}
\rput*(4,2.5){$\{\id,r^2s\}$}
\rput*(6,3.7){$\{\id,r,r^2\}$}
\rput(4.3,5.2){$G$}
%
%
\rput(7,0){
\rput(4,3){\BoxedEPSF{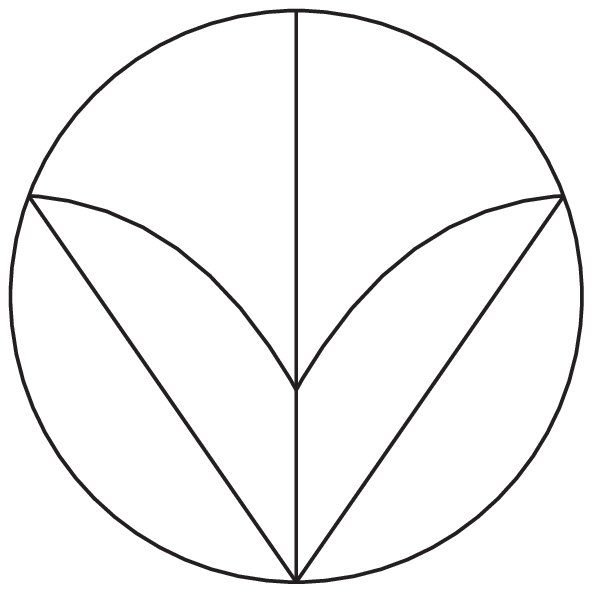 scaled 900}}
\rput*(4,5.6){$G$}
\rput*(1.5,3.8){$\{\id,r^2,s,r^2s\}$}
\rput*(4,3.8){$\{\id,r,r^2,r^3\}$}
\rput*(6.45,3.8){$\{\id,r^2,rs,r^3s\}$}
\rput*(1.5,2.1){$\{\id,r^2s\}$}
\rput*(2.7,2.1){$\{\id,s\}$}
\rput*(4,2.1){$\{\id,r^2\}$}
\rput*(5.2,2.1){$\{\id,r^3s\}$}
\rput*(6.5,2.1){$\{\id,rs\}$}
\rput*(4,0.4){$\{\id\}$}
}
\end{pspicture}
\caption{Subgroup lattices of the group of symmetries of a triangle
  \emph{(left)} and square \emph{(right)}.}
  \label{fig:groups1:subgroup_lattices}
\end{figure}

\paragraph{\hspace*{-0.3cm}}
If $G$ is a finite group and 
$$
\{\id\}=H_0\lhd H_1\lhd \cdots \lhd H_{n-1}\lhd H_n=G,
$$
is a nested sequence of subgroups with each $H_i$ normal in $H_{i+1}$ and
the quotients
$$
H_1/H_0, H_2/H_1,\ldots,H_n/H_{n-1}
$$
Abelian, then $G$ is said to be {\em soluble\/}.

\paragraph{\hspace*{-0.3cm}}
\label{groups1:abelian_are_soluble}
If $G$ is an Abelian group, then we have the sequence
$$
\{\id\}\lhd G,
$$
with the single quotient $G/\{\id\}\cong G$, an Abelian group. Thus Abelian groups
are soluble.

\paragraph{\hspace*{-0.3cm}}
\label{groups1:dihedral_are_soluble}
For another example let $G$ be the symmetries, both rotations and reflections, of a regular 
$n$-gon in the plane. In the sequence:
$$
\{\id\}\lhd \{\text{rotations}\}\lhd G
$$
the normality of the subgroup of rotations in $G$ follows from the fact that the rotations
comprise half of all the symmetries and Exercise \ref{ex11.1}.
Moreover, the rotations are isomorphic to the cyclic group $\Z_n$, and so
the quotients in this sequence are 
$$
\{\text{rotations}\}/\{\id\}\cong
\{\text{rotations}\}\cong\Z_n\text{ and } 
G/\{\text{rotations}\}\cong\Z_2,
$$
both Abelian groups. 

\begin{vexercise}\label{subgroups.solublegroups1}
It turns out, although for slightly technical reasons, that a subgroup
of a soluble group is also soluble. This exercise and the next
demonstrate why. 
Let $G$ be a group, 
$H$ a subgroup and $N$ a normal subgroup. Let
$$
NH=\{nh\,|\,n\in N,h\in H\}.
$$
\begin{enumerate}
\item Define a map $\varphi: H\rightarrow NH/N$ by $\varphi(h)=Nh$. Show that 
$\varphi$ is an onto  homomorphism with kernel $N\cap H$.
\item Use the first isomorphism theorem for groups to deduce that $H/H\cap N$
is isomorphic to $NH/H$.
\end{enumerate}
(This is called the {\em second isomorphism\/} or {\em diamond isomorphism\/} theorem.
Why diamond? Draw a picture of all the subgroups--the theorem says that two ``sides'' of a
diamond are isomorphic).
\end{vexercise}

\begin{vexercise}\label{subgroups.solublegroups2}
Let $G$ be a soluble group via the series,
$$
\{\id\}=H_0\lhd H_1\lhd \cdots \lhd H_{n-1}\lhd H_n=G,
$$
and let $K$ be a subgroup of $G$. 
Show that
$$
\{\id\}=H_0\cap K\lhd H_1\cap K\lhd \cdots \lhd H_{n-1}\cap K\lhd H_n\cap K=K,
$$
is a series with Abelian quotients for $K$, and hence $K$ is also a soluble group.
\end{vexercise}

\paragraph{\hspace*{-0.3cm}}
The antithesis of the soluble groups are the {\em simple\/} ones: 
groups $G$ whose only normal subgroups are the trivial subgroup $\{\id\}$ and the whole group
$G$. 

Whenever we have a normal subgroup we can form a quotient.
A group is thus simple when its only quotients are itself $G/\{\id\}\cong G$
and the trivial group $G/G\cong \{\id\}$. Thus simple groups are analogous to 
prime numbers:
integers whose only 
quotients
are themselves $p/1=p$ and $p/p=1$.

If $G$ is non-Abelian and simple, then $G$ {\em cannot\/} be soluble. For, the only sequence
of normal subgroups that $G$ can have is 
$$
\{\id\}\lhd G,
$$
and as $G$ is non-Abelian the quotients of this sequence are non-Abelian. Thus, non-Abelian
simple groups provide a ready source of non-soluble groups.

\begin{table}
\begin{center}
\begin{tabular}{ll}
\hline
Symbol & Name\\
\hline
&\\
$\Z_p$ & cyclic\\
$A_n$  & alternating\\
&\\
\hline
notes: $p$ is a prime;\\ 
$n\not= 1,2,4$\\
\end{tabular}
\caption{The first two families of simple groups}\label{simple_groups1}
\end{center}
\end{table}

\paragraph{\hspace*{-0.3cm}}
Amazingly, there is a complete list of the finite simple groups, compiled over 
approximately 150 years. The list is contained
in Tables \ref{simple_groups1}-\ref{simple_groups3}.

\begin{vexercise}
Show that if $p$ is a prime number then the cyclic group $\Z_p$ has no non-trivial subgroups
whatsoever, and so is a simple group.
\end{vexercise}

\paragraph{\hspace*{-0.3cm}}
In Table \ref{simple_groups1} we see that the Alternating groups $A_n$ are 
simple for $n\not= 1,2$ or $4$.
In particular these Alternating groups are not soluble, and as any
subgroup of a soluble group is soluble, 
any group containing the Alternating group will also not be soluble. Thus,
the symmetric groups $S_{\kern-.3mm n}$ are not soluble if $n\not=
1,2$ or $4$. 

\begin{table}
\begin{center}
\begin{tabular}{lll}
\hline
Symbol & Name & Discovered \\
\hline
&&\\
$\psl_n\F_q$ & projective  & 1870\\
$\psp_{2n}\F_q$ & simplectic & 1870\\
$\text{P}\Omega^+_{2n}$ & orthogonal & 1870\\
$\text{P}\Omega_{2n+1}$ & orthogonal & 1870\\
$E_6(q)$ & Chevalley  & 1955\\
$E_7(q)$ & Chevalley  & 1955 \\
$E_8(q)$ & Chevalley  & 1955 \\
$F_4(q)$ & Chevalley  & 1955 \\
$G_2(q)$ & Chevalley  & 1955 \\
$^2 A_n(q^2)=\psu_n\F_{q^2}$ & unitary or twisted Chevalley  & 1870 \\
$^2D_n(q^2)=\text{P}\Omega^-_{2n}$ & orthogonal or twisted Chevalley  & 1870 \\
$^2E_6(q^2)$ & twisted Chevalley  & c. 1960 \\
$^3D_4(q^3)$ & twisted Chevalley  & c. 1960 \\
$^2B_2(2^{2e+1})$ & Suzuki  & 1960 \\
$^2G_2(2^{2e+1})$ & Ree  & 1961 \\
$^2F_4(2^{2e+1})$ & Ree  & 1961 \\
&&\\
\hline
notes: $n$ and $e$ are $\in\Z$&There are some restrictions on $n$&\\
$q$ is a prime power;&and $q$, left off here for clarity.&\\
\end{tabular}
\caption{The simple groups of Lie type}\label{simple_groups2}
\end{center}
\end{table}

\paragraph{\hspace*{-0.3cm}}
Tables \ref{simple_groups2} and \ref{simple_groups3} list the really interesting simple groups.
The groups of Lie type are roughly speaking groups of matrices whose entries come from finite fields.
We have already seen that if $q=p^n$ a prime power, then there is a
field $\F_q$ with $q=p^n$ elements. 
The group $\sl_n\F_q$ consists of the $n\times n$ matrices having
determinant $1$ and with entries from this field and the
usual matrix multiplication. This group is not simple as 
$$
N=\{\lambda I_n\,|\,\lambda\in\F_q\},
$$
is a normal subgroup. But it turns out
that the quotient group,
$$
\sl_n\F_q/N,
$$
is a simple group. It is denoted $\psl_n\F_q$, and 
called the $n$-dimensional projective special linear group over $\F_q$.
The remaining groups in Table \ref{simple_groups2} come from more complicated constructions.

Table \ref{simple_groups3} lists groups that don't fall into any of
the other categories. 
For this reason they are called the ``sporadic'' simple groups. They arise from
various -- often quite complicated -- constructions that are  beyond the reach of these
notes. The most interesting of them is the largest one -- the Monster simple group (which
actually contains quite a few of the others as subgroups). 

In any case, the simple groups in Tables
\ref{simple_groups2} and \ref{simple_groups3} 
are all non-Abelian, hence provide more examples of non-soluble
groups.

\begin{table}
\begin{center}
\begin{tabular}{llll}
\hline
Symbol & Name & Discovered & Order\\
\hline
&&&\\
1. \makebox[0pt][l]{{\em First generation of the Happy Family\/}}.\\
$M_{11}$ & Mathieu & 1861 & $2^4\,3^2\,5\,11$\\
$M_{12}$ & Mathieu & 1861 & $2^4\,3^3\,5\,11$\\
$M_{22}$ & Mathieu & 1873 & $2^7\,3^2\,5\,7\,11$\\
$M_{23}$ & Mathieu & 1873 & $2^7\,3^2\,5\,7\,11\,23$\\
$M_{24}$ & Mathieu & 1873 & $2^{10}\,3^3\,5\,7\,11\,23$\\
2. \makebox[0pt][l]{{\em Second generation of the Happy Family\/}}.\\
HJ & Hall-Janko & 1968 & $2^7\,3^3\,5^2\,7$\\
HiS & Higman-Sims & 1968 & $2^9\,3^2\,5^3\,7\,11$\\
McL & McLaughlin & 1969 & $2^7\,3^6\,5^3\,7\,11$\\
Suz & Suzuki & 1969 & $2^{13}3^7\,5^2\,7\,11\,13$\\
$Co_1$ & Conway & 1969 & $2^{21}\,3^9\,5^4\,7^2\,11\,13\,23$\\
$Co_2$ & Conway & 1969? & $2^{18}\,3^6\,5^3\,7\,11\,23$\\
$Co_3$ & Conway & 1969? & $2^{10}\,3^7\,5^3\,7\,11\,23$\\
3. \makebox[0pt][l]{{\em Third generation of the Happy Family\/}}.\\
He & Held & 1968 & $2^{10}\,3^2\,5^2\,7^3\,17$\\
$Fi_{22}$ & Fischer & 1968 & $2^{17}\,3^9\,5^2\,7\,11\,13$\\
$Fi_{23}$ & Fischer & 1968 & $2^{18}\,3^{13}\,5^2\,7\,11\,13\,17\,23$\\
$Fi_{24}$ & Fischer & 1968 & $2^{21}\,3^{16}\,5^2\,7^3\,11\,13\,17\,23\,29$\\
$F_5$  & Harada-Norton & 1973 & $2^{14}\,3^6\,5^6\,7\,11\,19$\\
$F_3$ & Thompson & 1973 & $2^{15}\,3^{10}\,5^3\,7^2\,13\,19\,31$\\
$F_2$ & Fischer or ``Baby Monster'' & 1973 & $2^{41}\,3^{13}\,5^6\,7^2\,11\,13\,17\,19\,23\,47$\\
$\M$ & Fischer-Griess or ``Friendly Giant'' or ``Monster''  & 1973 & $\approx 10^{55}
$\\
4. {\em The Pariahs\/}.\\
$J_1$ & Janko & 1965 & $2^3\,5\,7\,11\,19$\\
$J_3$ & Janko & 1968 & $2^7\,3^5\,5\,17\,19$\\
$J_4$ & Janko & 1975 & $2^{21}\,3^3\,5\,7\,11^3\,23\,29\,31\,37\,43$\\
Ly & Lyons & 1969 & $2^8\,3^7\,5^6\,7\,11\,31\,37\,67$\\
Ru & Rudvalis & 1972 & $2^{14}\,3^3\,5^3\,7\,13\,29$\\
O'N & O'Nan & 1973 & $2^9\,3^4\,5\,7^3\,11\,19\,31$\\
\hline
\end{tabular}
\caption{The sporadic simple groups}\label{simple_groups3}
\end{center}
\end{table}

\subsection*{Further Exercises for Section \thesection}

\begin{vexercise}
Show that any subgroup of an abelian group is normal.
\end{vexercise}

\begin{vexercise}
Let $n$ be a positive integer that is not prime.
Show that the cyclic group $\Z_n$ is not simple.
\end{vexercise}

\begin{vexercise}
Show that $A_2$ and $A_4$ are not simple groups, but $A_3$ is.
\end{vexercise}

\begin{vexercise}\label{ex11.1}
Let $G$ be a group and $H$ a subgroup such that $H$ has exactly two cosets in $G$. 
Let $C_2$ be the group with elements $\{-1,1\}$ and operation the usual multiplication.
Define a map $f:G\rightarrow C_2$ by 
$$
f(g)=\left\{
\begin{array}{ll}
1&g\in H\\
-1&g\not\in H
\end{array}\right.
$$
Show that $f$ is a homomorphism.
Deduce that $H$ is a normal subgroup.
\end{vexercise}

\begin{vexercise}
Consider the group of symmetries (rotations and reflections) of a regular $n$-sided polygon 
for $n\geq 3$. Show that this is
not a simple group.
\end{vexercise}

\begin{vexercise}
Show that $S_{\kern-.3mm 2}$ is simple but $S_{\kern-.3mm n}$ is not
for $n\geq 3$. Show that $A_n$ has no subgroups of 
index $2$ for $n\geq 5$.
\end{vexercise}

\begin{vexercise}
Show that if $G$ is abelian and simple then it is cyclic. Deduce that if $G$ is simple and not
isomorphic to $\Z_p$ then $G$ is non-Abelian.
\end{vexercise}

\begin{vexercise}
For each of the following groups $G$, draw the subgroup lattice
$\LL(G)$:
\begin{enumerate}
\item $G=$ the group of symmetries of a pentagon or hexagon.
\item $G=$ the cyclic group $\{1,g,g^2,\ldots,g^{n-1}\}$ where $g^n=1$.  
\end{enumerate}
\end{vexercise}


\section{Groups II: Symmetries of Fields}
\label{galois.groups}

We are finally able to bring symmetry into the 
solutions of polynomial equations.

\begin{definition}[automorphism or symmetry of a field]
An automorphism of a field $F$ is an isomorphism $\ss:F\rightarrow F$, ie:
a bijective map from $F$ to $F$ such that $\ss(a+b)=\ss(a)+\ss(b)$ and $\ss(ab)=\ss(a)\ss(b)$
for all $a,b\in F$. 
\end{definition}

We remarked in Section \ref{lect4} that an automorphism is a
relabeling of the elements using different symbols
but keeping the algebra the same.
So it is a way of picking the 
field up and
placing it back down without changing the way it essentially looks. 

\begin{vexercise}
Show that if $\ss$ is an automorphism of the field $F$ then $\ss(0)=0$ and $\ss(1)=1$.
\end{vexercise}

\paragraph{\hspace*{-0.3cm}}
A familiar example is complex conjugation: 
$\ss:z\mapsto \ov{z}$ is an automorphism of $\C$, 
since
$$
\ov{z+w}=\ov{z}+\ov{w}\text{ and }\ov{zw}=\ov{z}\,\ov{w},
$$
with conjugation a bijection $\C\rightarrow \C$. This symmetry captures the idea that
from an algebraic point of view, we could have just as easily adjoined $-\imag$ to $\R$,
rather than $\imag$, to obtain the complex numbers -- they look the same upside down as right 
side up!

We will see at the end of this section that if a non-trivial  automorphism of $\C$ fixes pointwise
the real numbers, then it must be complex conjugation. If we drop the requirement that $\R$
be fixed then there may be more possibilities: if we only insist
that $\ss$ fix $\Q$ pointwise then 
there are infinitely many possibilities.

\begin{vexercise}
\label{exercise:groups2:conjugation}
Let $f\in\Q[x]$ with roots $\{\aa_1,\ldots,\aa_d\}\in\C$. Show that
complex conjugation $z\mapsto\ov{z}$ is an automorphism of the
splitting field $\Q(\aa_1,\ldots,\aa_d)$. Is it always non-trivial?
\end{vexercise}

\begin{vexercise}
Show that $a+b\imag
\mapsto -a+b\imag$ is \emph{not\/} an automorphism of $\C$. Show that if
$\ell$ is a line through $0$ in $\C$, then reflecting in $\ell$ is an
automorphism only when $\ell$ is the real axis.
\end{vexercise}

\paragraph{\hspace*{-0.3cm}}
We saw in Section \ref{lect4} that every field $F$ has a prime subfield 
isomorphic to either $\F_p$ or $\Q$. 
The elements have the form:
$$
\frac{\overbrace{1+1+\cdots +1}^{m\text{ times}}}
{\underbrace{1+1+\cdots +1}_{n\text{ times}}}.
$$
If $\ss:F\rightarrow F$ is an automorphism of $F$ then
\begin{equation*}
\begin{split}
\ss\biggl(\frac{\overbrace{1+1+\cdots +1}^{m\text{ times}}}
{\underbrace{1+1+\cdots +1}_{n\text{ times}}}\biggr)
&=
\ss(\overbrace{1+1+\cdots +1}^{m})
\ss\biggl(\frac{1}
{\underbrace{1+1+\cdots +1}_{n}}\biggr)\\
&=
(\overbrace{\ss(1)+\ss(1)+\cdots +\ss(1)}^{m})
\biggl(\frac{1}
{\underbrace{\ss(1)+\ss(1)+\cdots +\ss(1)}_{n}}\biggr)
=
\frac{\overbrace{1+1+\cdots +1}^{m\text{ times}}}
{\underbrace{1+1+\cdots +1}_{n\text{ times}}}.
\end{split}
\end{equation*}
The elements of the prime subfield are thus fixed pointwise by the
automorphism $\ss$.

\paragraph{\hspace*{-0.3cm}}
This example suggests that we should think about symmetries in a
relative way. As symmetries normally arrange themselves into groups we define:

\begin{definition}[Galois group of an extension]
Let $F\subseteq E$ be an extension of fields. The automorphisms of the field $E$ that
fix pointwise the elements of $F$ form a group under composition, called the 
Galois group of $E$ over $F$, and denoted $\gal(E/F)$.
\end{definition}

An element $\ss$ of $\gal(E/F)$ thus has the property that $\ss(a)=a$
for all $a\in F$.

\begin{vexercise}
For $F\subset E$ fields, show that the set of automorphisms $\gal(E/F)$
of $E$ that fix $F$ pointwise do indeed form a group under composition. 
\end{vexercise}

\paragraph{\hspace*{-0.3cm}}
Consider the field $\Q(\kern-2pt\sqrt{2},\imag)$. The tower law gives
basis $\{1,\kern-2pt\sqrt{2},\imag,\kern-2pt\sqrt{2}\imag\}$ over $\Q$, so 
the elements are
$$
\Q(\kern-2pt\sqrt{2},\imag)=\{a+b\kern-2pt\sqrt{2}+c\imag+d\kern-2pt\sqrt{2}\imag\,|\,a,b,c,d\in\Q\}. 
$$
If $\ss\in\gal(\Q(\kern-2pt\sqrt{2},\imag)/\Q)$ then
\begin{align*}
  \label{eq:2}
\ss(a+b\kern-2pt\sqrt{2}+c\imag+d\kern-2pt\sqrt{2}\imag)
&=\ss(a)+\ss(b)\ss(\kern-2pt\sqrt{2})+\ss(c)\ss(\imag) 
+\ss(d)\ss(\kern-2pt\sqrt{2}\imag)\\ 
&=a+b\ss(\kern-2pt\sqrt{2})+c\ss(\imag)+d\ss(\kern-2pt\sqrt{2}\imag)  
\end{align*}
as an element of $\gal(\Q(\kern-2pt\sqrt{2},\imag)/\Q)$ fixes rational numbers by definition.
Thus $\ss$ is completely determined by its effect on the basis
$\{1,\kern-2pt\sqrt{2},\imag,\kern-2pt\sqrt{2}\imag\}$: once their images are known, then 
$\ss$ is known.

(This is no surprise. 
If $F\subseteq E$ is an extension then, among other things,
$E$ is a vector space over $F$ and $\ss\in\gal(E/F)$ is,
among other things, a linear map of vector spaces $E\rightarrow E$, 
hence completely determined by its effect on a basis.)

We can say more: we have $\ss(1)=1$ and
$\ss(\kern-2pt\sqrt{2}\imag)=\ss(\kern-2pt\sqrt{2})\ss(\imag)$. Thus 
$\ss$ is completely determined by its effect on 
$\kern-2pt\sqrt{2}$ and $\imag$, the elements adjoined to obtain $\Q(\kern-2pt\sqrt{2},\imag)$.

\paragraph{\hspace*{-0.3cm}}
This is a general fact: if $F\subseteq F(\aa_1,\aa_2,\ldots,\aa_k)=E$
and $\ss\in\gal(E/F)$, then $\ss$ is completely determined by its effect on
$\aa_1,\ldots,\aa_k$. For, if $\{\bb_1,\ldots,\bb_n\}$ is a basis for
$E$ over $F$, then $\ss$ is completely determined by its effect on the
$\bb_i$. The proof of the tower law gives
$$
\bb_i=\aa_1^{i_1}\aa_2^{i_2}\ldots \aa_k^{i_k},
$$
a product of the $\aa_j$'s, so that $\ss(\bb_i)=\ss(\aa_1)^{i_1}\ss(\aa_2)^{i_2}\ldots \ss(\aa_k)^{i_k}$
is in turn determined by the $\ss(\aa_j)$'s.

\paragraph{\hspace*{-0.3cm}}
The structure of Galois groups can sometimes be determined via ad-hoc arguments,
at least in very simple cases. For example, let $\ww$ be the primitive cube root of $1$,
$$
\ww=-\frac{1}{2}+\frac{\sqrt{3}}{2}\imag,
$$ 
and consider the extension $\Q\subset\Q(\ww)$. 
\vadjust{\hfill\smash{\lower 72pt
\llap{
\begin{pspicture}(0,0)(2.5,2)
\rput(-0.25,1.2){
\uput[0]{270}(-.1,2){\pstriangle[fillstyle=solid,fillcolor=lightgray](1,0)(2,1.73)}
\psbezier[linecolor=red]{->}(2.1,0.8)(3.1,0)(3.1,2)(2.1,1.2)
\rput(-.1,0){\psbezier[linecolor=red]{<->}(-.4,0.1)(-1,.5)(-1,1.5)(-.4,1.9)}
\rput(2,1){$1$}
\rput(-.2,0.1){$\ww^2$}
\rput(-.2,1.9){$\ww$}
\rput(0.75,-0.5){\red $\ss(\ww)=\ww^2=\overline{\ww}$}
}
\end{pspicture}
}}}\ignorespaces

\parshape=6 0pt.7\hsize 0pt.7\hsize 0pt.7\hsize 0pt.7\hsize
0pt.7\hsize 0pt\hsize 
Although $\ww$ is a root of $x^3-1$, this is reducible over $\Q$ ($1$ is
also a root) and the minimum polynomial  of $\ww$ over $\Q$ is in
fact $x^2+x+1$
by Exercise \ref{ex_lect3.2}. By Theorem D, the field 
$\Q(\ww)=\{a+b\ww\,|\,a,b\in\Q\}$, so that $\Q(\ww)$ is $2$-dimensional over $\Q$ with
basis $\{1,\ww\}$. Let $\ss\in\gal(\Q(\ww)/\Q)$, whose effect is
completely determined by where it sends $\ww$. Suppose $\ss(\ww)=a+b\ww$ for some $a,b\in\Q$
to be determined. We have $\ss(\ww^3)=\ss(1)=1$, but also
$$
\ss(\ww^3)=\ss(\ww)^3=(a+b\ww)^3=(a^3+b^3-3ab^2)+(3a^2b-3ab^2)\ww
$$
with the last bit using $\ww^2=-\ww-1$.

As
$\{1,\ww\}$ are independent over $\Q$, the elements of $\Q(\ww)$ have unique
expressions as linear combinations of these two basis elements. We can therefore
``equate the $1$ and $\ww$ 
parts'' in these two expressions for $\ss(\ww^3)$:
$$
1=\ss(\ww^3)=(a^3+b^3-3ab^2)+(3a^2b-3ab^2)\ww,\text{ so that }
a^3+b^3-3ab^2=1\text{ and }3a^2b-3ab^2=0.
$$
Solving these equations (in $\Q$!) gives three solutions $a=0,b=1$ and
$a=1,b=0$ and
$a=-1,b=-1$, corresponding to $\ss(\ww)=\ww$ and $\ss(\ww)=1$ and
$\ss(\ww)=-1-\ww=\ww^2$.
The second one is impossible as $\ss$ is a bijection and we already
have $\ss(1)=1$. The first one is the identity map and the third
$\ss(\ww)=\ww^2=\overline{\ww}$ is complex conjugation (and shown in the
figure above), giving
$\gal(\Q(\ww)/\Q)=\{\id,\ss:z\mapsto\overline{z}\}$ a group of order
two. (Now revisit Exercise \ref{ex1.-1}).

\begin{vexercise}
\label{galois_groups_exercise50}
$\Q(\ww)$ is also spanned, as a vector space, by $\{1,\ww,\ww^2\}$, so
that every element has an expression of the form $a+b\ww+c\ww^2$ for
some $a,b,c\in\Q$. In
particular $\overline{\ww}$ can be written as both $\ww^2$ and as
$-1-\ww$. ``Equating the $1$ and the $\ww$ and the $\ww^2$ parts''
gives $0=-1$ and $1=0$. What has gone wrong?
\end{vexercise}

\paragraph{\hspace*{-0.3cm}}
Our first tool for unpicking the structure of Galois groups is:

\begin{theoremF}
Let $F,K$ be fields, $\tau:F\rightarrow K$ an isomorphism and
$\tau^*:F[x]\rightarrow K[x]$ the ring homomorphism given by
$\tau^*:\sum a_ix^i\mapsto\sum\tau(a_i)x^i$. If $\aa$ is algebraic over $F$, then 
$\tau$ extends to an isomorphism $\ss:F(\aa)\rightarrow K(\bb)$
with $\ss(\aa)=\bb$
if and only 
if $\bb$ is a root of $\tau^*f$, where $f$ is the minimum polynomial of $\aa$ over $F$.
\end{theoremF}

The elements $\aa$ and $\bb$ are assumed to lie in some extensions
$F\subseteq E_1, K\subseteq E_2$;
when we say that $\tau$ extends to $\ss$ we mean that the restriction
of $\ss$ to $F$ is $\tau$. 

The theorem seems technical, but has an intuitive meaning. 
Suppose we have $F=K$ and $\tau$ is the identity isomorphism,
hence $\tau^*$ is also the identity.
Then we have an extension $\ss:F(\aa)\rightarrow F(\bb)$ precisely when $\bb$
is a root of the minimum polynomial $f$ of $\aa$ over $F$.

We can say even more: if $\bb$ is an element of $F(\aa)$, then
$F(\bb)\subseteq F(\aa)$;
as an $F$-vector space
$F(\bb)$ is $(\deg f)$-dimensional over $F$ as $\aa$ and $\bb$ have the
same minimum polynomial over $F$. As $F(\aa)$ has the same dimension
we get $F(\bb)=F(\aa)$.
Thus $\ss$ is an isomorphism of $F(\aa)\rightarrow F(\aa)$ fixing $F$ pointwise, and so an element
of the Galois group $\gal(F(\aa)/F)$.

Here is everything we know about Galois groups so far:

\begin{corollary}
\label{galois_groups:Extension_corollary}
Let $\aa$ be algebraic over $F$ with minimum polynomial $f$ over $F$.
Then $\ss:F(\aa)\rightarrow F(\aa)$ is an
element of the Galois group $\gal(F(\aa)/F)$ if and only if
$\ss(\aa)=\beta$ where $\bb$ is a root of $f$ that is contained in
$F(\aa)$. 
\end{corollary}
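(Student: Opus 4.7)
The plan is to prove the two directions separately, with the forward implication being a direct computation and the reverse implication being essentially an application of Theorem F combined with a dimension argument.

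For the forward direction ($\Rightarrow$), suppose $\ss \in \gal(F(\aa)/F)$ and set $\bb = \ss(\aa)$. Since $\ss$ is a map $F(\aa) \to F(\aa)$ we have $\bb \in F(\aa)$ automatically, so only the claim that $\bb$ is a root of $f$ requires work. Writing $f = a_0 + a_1 x + \cdots + a_n x^n$ with $a_i \in F$, I apply $\ss$ to the equation $f(\aa) = 0$; because $\ss$ is a ring homomorphism that fixes $F$ pointwise, it commutes with the substitution, giving $f(\bb) = \ss(f(\aa)) = \ss(0) = 0$. This step is routine.

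For the reverse direction ($\Leftarrow$), suppose $\bb \in F(\aa)$ is a root of $f$. The idea is to take Theorem F with $K = F$ and $\tau = \id_F$, so that $\tau^* = \id_{F[x]}$ and $\tau^* f = f$. Since $\bb$ is by hypothesis a root of $f$, Theorem F produces an isomorphism $\ss : F(\aa) \to F(\bb)$ extending $\id_F$ with $\ss(\aa) = \bb$. What remains, and this is the only real obstacle, is to upgrade $\ss$ into an automorphism of $F(\aa)$ by proving that $F(\bb) = F(\aa)$.

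The main step is the identification $F(\bb) = F(\aa)$. Since $\bb \in F(\aa)$ one containment $F(\bb) \subseteq F(\aa)$ is immediate. For equality, I observe that $f$ is monic and irreducible over $F$ and has $\bb$ as a root, so by the uniqueness clause of Theorem D, $f$ is also the minimum polynomial of $\bb$ over $F$. Therefore $[F(\bb):F] = \deg f = [F(\aa):F]$ by the degree part of Theorem D, and an inclusion of fields of the same finite $F$-dimension must be an equality (Exercise \ref{ex8.5}). Thus $\ss$ is an isomorphism $F(\aa) \to F(\aa)$ fixing $F$ pointwise, i.e. an element of $\gal(F(\aa)/F)$ sending $\aa$ to $\bb$, which completes the proof.
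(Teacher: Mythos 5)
Your proposal is correct and follows essentially the same route as the paper's own (informal) argument: apply Theorem F with $K=F$ and $\tau=\id_F$, use $\bb\in F(\aa)$ to get the inclusion $F(\bb)\subseteq F(\aa)$, and observe that $\aa$ and $\bb$ share the minimum polynomial $f$ so the two fields have equal $F$-dimension and must coincide. The only cosmetic difference is that you separate the two directions explicitly and do the forward direction as a direct computation; the paper folds that direction into the ``precisely when'' of Theorem F, whose only-if part is exactly the computation you wrote. Your invocation of the uniqueness clause of Theorem D to identify $f$ as the minimum polynomial of $\bb$, and of Exercise \ref{ex8.5} for the dimension count, makes explicit two small steps the paper leaves implicit.
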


The elements of the Galois group thus permute those roots of the minimum
polynomial that are contained in $F(\aa)$.

There are slick proofs of the Extension theorem; ours is not going to
be one of them. But it does make things nice and concrete. The
elements of $F(\aa)$ are polynomials in $\aa$, so the simplest way to
define $\ss$ is
\begin{equation}
  \label{eq:3}
\ss:a_m\aa^m+\cdots+a_1\aa+a_0
\mapsto 
\tau(a_m)\,\bb^m+\cdots+\tau(a_1)\,\bb+\tau(a_0).
\end{equation}
The complication is that the same element will have many such
polynomial expressions; for example $\ov{\ww}\in\Q(\ww)$ can be
written both as $\ww^2$ and $-1-\ww$ (see Exercise
\ref{galois_groups_exercise50} above) making it unclear if
(\ref{eq:3}) is well-defined. The solution is that $\bb$ is a root of $\tau^*f$,
the ``$K[x]$ version'' of $f$.
 
\begin{proofext}
For the ``only if'' part let $f=\sum a_ix^i$ with $f(\aa)=0$. Then $\sum
a_i\aa^i=0\in E_1$ and $\ss(0)=0\in E_2$ gives:
$$
\ss\biggl(\sum a_i\aa^i\biggr)=0\Rightarrow \sum \ss(a_i)
\ss(\aa)^i=0\Rightarrow 
\sum \tau(a_i)\,\bb^i=0
\Rightarrow 
\tau^*f(\bb)=0.
$$
(Compare this argument with the one that shows the roots of a
polynomial with real coefficients occur in complex conjugate pairs).

For the ``if'' part, we need to build an isomorphism $F(\aa)\rightarrow
K(\bb)$ with the desired properties.
Define $\ss$ by the formula (\ref{eq:3}); in particular
$\ss(a)=\tau(a)$ for all $a\in F$ and $\ss(\aa)=\bb$. 

\begin{description}
\item[(i).]\emph{$\ss$ is well-defined and 1-1}: Let
$$
\sum a_i\aa^i=\sum b_i\aa^i,
$$
be two expressions for some element of $F(\aa)$. Then $\sum
(a_i-b_i)\aa^i=0$ and so
$\aa$ is a root of the polynomial $g=\sum (a_i-b_i)x^i\in F[x]$. As $f$ is the minimum
polynomial of $\aa$ over $F$ it is a factor of $g$, so that
$g=fh$, hence $\tau^*(g)=\tau^*(fh)=\tau^*(f)\tau^*(h)$ and
$\tau^*(f)$ is a factor of $\tau^*(g)$. As $\bb$ is a root of $\tau^*(f)$ it
is a root of $\tau^*(g)$:
$$
\tau^*(g)(\bb)=0\Leftrightarrow \sum \tau(a_i-b_i)\,\bb^i=0\Leftrightarrow 
\sum \tau(a_i)\,\bb^i= \sum \tau(b_i)\,\bb^i \Leftrightarrow 
\ss\biggl(\sum a_i\aa^i\biggr)=\ss\biggl(\sum b_i\aa^i\biggr).
$$
The conclusion is that $\sum a_i\aa^i=\sum b_i\aa^i$ in $F(\aa)$ if
and only if $\ss(\sum a_i\aa^i)=\ss(\sum b_i\aa^i)$ in $K(\bb)$, hence
$\ss$ is both well-defined ($\Rightarrow$) and 1-1 ($\Leftarrow$).

\item[(ii).] \emph{$\ss$ is a homomorphism}: Let 
$$
\ll=\sum a_i\aa^i\text{ and }\mu=\sum b_i\aa^i,
$$
be two elements of $F(\aa)$. Then
\begin{equation*}
\begin{split}
\ss(\ll+\mu)=\ss\biggl(\sum(a_i+b_i)
\aa^i\biggr)&=\sum\tau(a_i+b_i)\,\bb^i\\
&=\sum\tau(a_i)\,\bb^i
+\sum\tau(b_i)\,\bb^i=\ss(\ll)
+\ss(\mu).
\end{split}
\end{equation*}
Similarly,
\begin{equation*}
\begin{split}
\ss(\ll\mu)=\ss\biggl(
\sum_{k}\biggl(\sum_{i+j=k}a_ib_j \biggr)\aa^k\biggr)&=
\sum_{k}\tau\biggl(\sum_{i+j=k}a_ib_j\biggr)\,\bb^k
=\sum_{k}\biggl(\sum_{i+j=k}\tau(a_i)\tau(b_j)\biggr)\,\bb^k\\
&=\biggl(\sum\tau(a_i)\,\bb^i\biggr)\biggl(\sum\tau(b_j)\,\bb^j\biggr)
=\ss(\ll)\ss(\mu).
\end{split}
\end{equation*}

\item[(ii).] \emph{$\ss$ is onto}: $\ss(F(\aa))$
is contained in $K(\bb)$ by (\ref{eq:3}).
On the other hand, any $b\in K$
is the image $b=\tau(a)$ of some $a\in F$, as $\tau$ is onto, and
$\bb=\ss(\aa)$ by definition. Thus both $\bb$ and $K$ are in 
$\ss(F(\aa))$,
hence $K(\bb)\subseteq \ss(F(\aa))$.
\qed
\end{description}
\end{proofext}

\paragraph{\hspace*{-0.3cm}}
To compute the Galois group of the extension $\Q\subset\Q(\aa)$, where
$\aa=\sqrt[3]{2}$, any automorphism is completely determined by where
it sends $\aa$. And we are
free to send $\aa$ to those roots of its minimum polynomial over $\Q$ that
are also contained in $\Q(\aa)$. The minimum polynomial is $x^3-2$, which
has roots $\aa,\aa\ww$ and $\aa\ww^2$ where
$$
\ww=-\frac{1}{2}+\frac{\sqrt[3]{2}}{2}\imag.
$$ 
But the roots $\aa\ww$ and $\aa\ww^2$ are not contained in $\Q(\aa)$ as this
field contains only real numbers -- whereas $\aa\ww$ and $\aa\ww^2$
are clearly non-real. Thus the only 
possible image for $\aa$ under an automorphism is $\aa$ itself, and
$\gal(\Q(\aa)/\Q)$ is 
the trivial group $\{\id\}$.

\paragraph{\hspace*{-0.3cm}}
Returning to the example immediately before the 
Extension theorem, any automorphism of $\Q(\ww)$ that fixes $\Q$ pointwise is determined
by where it sends $\ww$, and this must be to a root of the minimum polynomial over $\Q$
of $\ww$. As this polynomial is $1+x+x^2$ with roots $\ww$ and $\ww^2$, we 
have automorphisms that sends $\ww$ to itself or sends $\ww$ to $\ww^2=\ov{\ww}$, ie:
$$
\gal(\Q(\ww)/\Q)=\{\id,\ss:z\mapsto\ov{z}\}.
$$
In particular the figure below left is an automorphism but below right
is not:
$$
\begin{pspicture}(0,0)(13,3.5)
\rput(3,0.95){
\uput[0]{270}(-.1,2){\pstriangle[fillstyle=solid,fillcolor=lightgray](1,0)(2,1.73)}
\rput(-.1,0){\psbezier[linecolor=red]{<->}(-.4,0.1)(-1,.5)(-1,1.5)(-.4,1.9)}
\psbezier[linecolor=red]{->}(2.1,0.8)(3.1,0)(3.1,2)(2.1,1.2)
\rput(2,1){$1$}
\rput(-0.2,-0.2){$\ww^2$}
\rput(-0.20,2.2){$\ww$}
}
\rput(8,0.95){
\uput[0]{270}(-.1,2){\pstriangle[fillstyle=solid,fillcolor=lightgray](1,0)(2,1.73)}
\rput{-120}(0,2){\rput(-.1,0){\psbezier[linecolor=red]{<->}(-.4,0.1)(-1,.5)(-1,1.5)(-.4,1.9)}}
\rput{-120}(-.2,1.9){\psbezier[linecolor=red]{->}(2.1,0.8)(3.1,0)(3.1,2)(2.1,1.2)}
\rput(2,1){$1$}
\rput(-0.2,-0.2){$\ww^2$}
\rput(-0.20,2.2){$\ww$}
}
\end{pspicture}
$$

\paragraph{\hspace*{-0.3cm}}
The ``only if'' part of the Extension Theorem is worth stating separately:

\begin{corollary}
Let $F\subseteq E$ be an extension and 
$g\in F[x]$ having root $a\in
E$. Then for any $\ss\in\gal(E/F)$, 
the image $\ss(a)$ is also a root of $g$.
\end{corollary}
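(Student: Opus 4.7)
The plan is to give a direct, essentially one-line argument that bypasses the full machinery of the Extension Theorem, since the conclusion follows immediately from two facts already established: an element of $\gal(E/F)$ is a ring homomorphism, and it fixes $F$ pointwise.

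First I would write $g=\sum_{i=0}^{n}a_ix^i$ with each $a_i\in F$, and record the hypothesis $g(a)=\sum a_ia^i=0$ in $E$. Next I would apply $\ss$ to both sides and use that $\ss$ is a ring homomorphism (so it commutes with finite sums and products) together with $\ss(0)=0$ to get $\sum \ss(a_i)\,\ss(a)^i=0$. Because $\ss\in\gal(E/F)$ fixes $F$ pointwise we have $\ss(a_i)=a_i$ for every $i$, and therefore the displayed equation becomes $\sum a_i\,\ss(a)^i=g(\ss(a))=0$, which is exactly the statement that $\ss(a)$ is a root of $g$.

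There is essentially no obstacle here: the argument is a tidy computation that mirrors the calculation used at the start of the Extension Theorem's proof (the ``only if'' direction), but stripped of the bookkeeping for $\tau$ and $\tau^\ast$ since $\tau$ is now the identity on $F$. If anything, the only point worth a sentence of emphasis is the invocation of the definition of $\gal(E/F)$ to justify $\ss(a_i)=a_i$ for $a_i\in F$; the homomorphism properties $\ss(u+v)=\ss(u)+\ss(v)$ and $\ss(uv)=\ss(u)\ss(v)$ are part of the definition of an automorphism. So the write-up will be short, on the order of three or four lines of mathematics, and will serve mostly as a self-contained restatement of the ``only if'' half of the Extension Theorem in the special case relevant to the Galois group.
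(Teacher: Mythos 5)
Your proof is correct and matches the paper's intended argument: the paper states this corollary as a separate record of the ``only if'' direction of the Extension Theorem, whose proof is exactly the direct computation you carry out (with $\tau=\id$ and $\tau^*$ dropped). Your observation that the computation works verbatim for any $g\in F[x]$ with $g(a)=0$, not just the minimum polynomial, is correct and is what justifies the more general phrasing of the corollary.
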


An immediate and important consequence is:

\begin{corollary}
If $F\subseteq E$ is a finite extension then the Galois group $\gal(E/F)$ is finite.
\end{corollary}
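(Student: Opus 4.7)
The plan is to exploit the earlier observation that an automorphism in $\gal(E/F)$ is completely pinned down by its action on a set of generators of $E$ over $F$, together with the previous corollary which forces each such image to be a root of a specific polynomial.

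First I would reduce to the case of finitely many generators. Since $F\subseteq E$ is finite, $E$ is a finite-dimensional $F$-vector space; pick a basis $\{\bb_1,\ldots,\bb_n\}$ for $E$ over $F$. Then $E=F(\bb_1,\ldots,\bb_n)$, and moreover each $\bb_i$ is algebraic over $F$ (by Proposition \ref{finite.givesalgebraic}, since $F\subseteq F(\bb_i)\subseteq E$ is a tower of finite extensions by the Tower Law). Let $f_i\in F[x]$ be the minimum polynomial of $\bb_i$ over $F$.

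Next I would invoke the key structural fact, emphasised twice in the section: any $\ss\in\gal(E/F)$ is uniquely determined by the values $\ss(\bb_1),\ldots,\ss(\bb_n)$, because $\ss$ fixes $F$ pointwise and is a ring homomorphism, so its effect on any $F$-polynomial expression in the generators is forced. Then, by the Corollary to the Extension Theorem (the ``only if'' version), the image $\ss(\bb_i)$ must be a root of $f_i$ lying in $E$. Since $f_i$ has at most $\deg f_i$ roots in $E$ (Theorem \ref{degree.number.of.roots}, as fields are integral domains), there are only finitely many choices for each $\ss(\bb_i)$.

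Finally I would assemble the count. The map $\ss\mapsto(\ss(\bb_1),\ldots,\ss(\bb_n))$ is an injection from $\gal(E/F)$ into the finite set of $n$-tuples of roots in $E$ of $f_1,\ldots,f_n$, so
\[
|\gal(E/F)|\leq \prod_{i=1}^n \deg f_i<\infty.
\]
There is really no obstacle here: injectivity of the restriction map is the only thing requiring any thought, and it has essentially been recorded already in the discussion preceding the Extension Theorem. The argument is a one-paragraph corollary of the previous corollary.
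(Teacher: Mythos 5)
Your argument is correct and follows essentially the same route as the paper's own proof: pick a basis, observe that each basis element is algebraic over $F$ via Proposition \ref{finite.givesalgebraic}, note that any $\ss$ is determined by its values on the basis, and constrain each such value to lie among the finitely many roots in $E$ of the corresponding minimum polynomial using the Corollary to the Extension Theorem. You have spelled out the injection $\ss\mapsto(\ss(\bb_1),\ldots,\ss(\bb_n))$ and the finiteness count slightly more explicitly than the paper, but the content is identical.
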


\begin{proof}
If $\{\aa_1,\aa_2,\ldots,\aa_k\}$ is a basis for $E$ over $F$, then 
$E=F(\aa_1,\aa_2,\ldots,\aa_k)$, with $\aa_i$ algebraic over $F$
(by Proposition \ref{finite.givesalgebraic})
having minimum polynomial $f_i\in F[x]$. If $\ss\in\gal(E/F)$ 
then $\ss$ is completely determined by the finitely many $\ss(\aa_i)$, which in turn
must be one of the finitely many roots of $f_i$.
\qed
\end{proof}

\paragraph{\hspace*{-0.3cm}}
Let $p$ be a prime and
$$
\ww=\cos\frac{2\pi}{p}+\imag\sin\frac{2\pi}{p},
$$
be a root of $1$.

By Corollary \ref{galois_groups:Extension_corollary},
$\ss\in\gal(\Q(\ww)/\Q)$ 
precisely when it sends $\ww$ to a root, contained in $\Q(\ww)$, of
its minimum polynomial 
over $\Q$. The
minimum polynomial is 
$$
\Phi_p=1+x+x^2+\cdots+x^{p-1},
$$
(Exercise \ref{ex_lect3.2}) with roots 
$\ww,\ww^2,\ldots,\ww^{p-1}$. All these roots are contained in $\Q(\ww)$, and so we
are free to send $\ww$ to any one of them. The Galois group thus has order $p-1$, with 
elements 
$$
\{\ss_1=\id:\ww\mapsto\ww,\ss_2:\ww\mapsto\ww^2,\ldots,\ss_{p-1}:\ww\mapsto\ww^{p-1}\}.
$$
If $\ss(\ww)=\ww^k$ then
$\ss^i(\ww)=\ww^{k^i}$ (keeping $\ww^p=1$ in mind).

We saw in Section \ref{fields3} that the multiplicative group of the finite field $\F_p$ is cyclic:
there is a $k$ with $1<k<p$, such that the powers $k^i$ of $k$ exhaust 
all of the non-zero elements of $\F_p$, ie: the powers $k^i$ run through
$\{1,2,\ldots,p-1\}$ mod $p$ (or $k$ generates $\F_p^*$).

Putting the previous two paragraphs together, let
$\ss\in\gal(\Q(\ww)/\Q)$ be such that 
$\ss(\ww)=\ww^k$ for $k$ a generator of $\F_p^*$. Then the elements
$$
\{\ss(\ww),\ss^2(\ww),\ldots,\ss^{p-1}(\ww)\}=\{\ww,\ww^2,\ldots,\ww^{p-1}\}
$$
and so the powers
$\ss,\ss^2,\ldots,\ss^{p-1}$ exhaust the Galois group.
$\gal(\Q(\ww)/\Q)$ is thus a cyclic group of order $p-1$.

\begin{figure}
  \centering  
\begin{pspicture}(0,0)(4.5,4)
\rput(0,-0.4){
\pspolygon[linecolor=white,fillstyle=solid,fillcolor=lightgray](4.5,2.5)(4,3.5)
(3,4)(1,4)(0,3)(0,2)(1,1)(3,1)(4,1.5)
\psline[linewidth=.2mm](3,1)(4,1.5)(4.5,2.5)(4,3.5)(3,4)
\psline[linewidth=.2mm](1,4)(0,3)(0,2)(1,1)
\rput(4.6,2.5){$1$}
\rput(4.25,3.5){$\ww$}\rput(3.1,4.3){$\ww^2$}
\rput(4.4,1.5){$\ww^{p-1}$}\rput(3.1,.7){$\ww^{p-2}$}
\rput(-.3,3){$\ww^k$}\rput(-.4,2){$\ww^{k+1}$}
\psbezier[linecolor=red,showpoints=false]{->}(3.9,3.45)(3,3)(1.5,3)(.2,3)
\rput(2,3.25){{\red $\ss$}}
}
\end{pspicture}
\caption{The Galois group $\gal(\Q(\ww)/\Q)$ is cyclic for $\ww$ a
  primitive $p$-th root of $1$.}
  \label{fig:galois_groups:cyclotomic}
\end{figure}

\paragraph{\hspace*{-0.3cm}}
The Extension theorem gives the existence of 
automorphisms. We can also say how many there are:

\begin{theorem}\label{galois_groups:number_of_extensions}
Let $\tau:F\rightarrow K$ be an isomorphism and $F\subseteq E_1$ and 
$K\subseteq E_2$ be extensions with $E_1$ a splitting field
of some polynomial $f$ over $F$ and $E_2$ a splitting field of $\tau^*f$ over $K$. 
Assume also that the roots of $\tau^*f$ in $E_2$ are distinct. Then the number of 
extensions of $\tau$
to an isomorphism $\ss:E_1\rightarrow E_2$ is equal to the degree of the extension
$K\subseteq E_2$. 
\end{theorem}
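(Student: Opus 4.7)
The plan is to prove this by induction on the degree $[E_1:F]$, using the Extension Theorem (Theorem F) to peel off one simple extension at a time.

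For the base case $[E_1:F]=1$, we have $E_1=F$, so $f$ already splits in $F$. Applying $\tau^*$ to the factorisation of $f$ in $F[x]$ gives a factorisation of $\tau^*f$ into linear factors in $K[x]$, so $\tau^*f$ splits in $K$. Since $E_2$ is the splitting field of $\tau^*f$ over $K$ (the \emph{smallest} such field), we get $E_2=K$, hence $[E_2:K]=1$; and there is exactly one extension of $\tau$, namely $\tau$ itself.

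For the inductive step, assume $[E_1:F]>1$, so $f$ does not split in $F$, and therefore has an irreducible factor $g\in F[x]$ of degree $d>1$. Pick a root $\aa\in E_1$ of $g$; then $g$ is the minimum polynomial of $\aa$ over $F$ and $[F(\aa):F]=d$ by Theorem D. By Theorem F, the extensions of $\tau$ to an isomorphism $\ss_0:F(\aa)\to K(\bb)\subseteq E_2$ correspond bijectively to roots $\bb\in E_2$ of $\tau^*g$. Since $\tau^*g$ divides $\tau^*f$ and $\tau^*f$ has distinct roots in $E_2$ by hypothesis, $\tau^*g$ has exactly $\deg(\tau^*g)=d$ distinct roots in $E_2$, so there are exactly $d$ such intermediate extensions $\ss_0$.

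Now I would observe that $E_1$ is also a splitting field of $f$ over $F(\aa)$ (it contains all roots of $f$ and is generated by them, a fortiori over the larger field $F(\aa)$), and similarly $E_2$ is a splitting field of $\ss_0^*f=\tau^*f$ over $K(\bb)$; the distinct-roots hypothesis is inherited. Since $[E_1:F(\aa)]=[E_1:F]/d<[E_1:F]$, the inductive hypothesis applies to $\ss_0:F(\aa)\to K(\bb)$, giving exactly $[E_2:K(\bb)]$ extensions of each such $\ss_0$ to an isomorphism $E_1\to E_2$. Conversely every extension $\ss:E_1\to E_2$ of $\tau$ restricts to some $\ss_0$ on $F(\aa)$, so the total number of extensions is
\[
d\cdot[E_2:K(\bb)] \;=\; [K(\bb):K]\cdot [E_2:K(\bb)] \;=\; [E_2:K]
\]
by the Tower Law, completing the induction. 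The main point to watch is the bookkeeping in the inductive step, in particular verifying that $E_1,E_2$ really are splitting fields over the larger base fields $F(\aa),K(\bb)$ and that the distinct-roots hypothesis carries through; the rest is then a clean application of Theorem F plus the Tower Law.
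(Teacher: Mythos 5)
Your proof is correct and takes essentially the same route as the paper's: peel off a simple extension $F\subseteq F(\aa)\subseteq E_1$ via the Extension Theorem (Theorem F), use the distinct-roots hypothesis to count the $d=[K(\bb):K]$ intermediate extensions $\ss_0$, observe that $E_1,E_2$ remain splitting fields over $F(\aa),K(\bb)$, and finish with induction and the Tower Law. The only cosmetic difference is that you induct on $[E_1:F]$ where the paper inducts on $[E_2:K]$; your choice makes the base case marginally more transparent, since $[E_1:F]=1$ directly forces $f$ to split in $F$, hence $\tau^*f$ splits in $K$ and $E_2=K$, rather than having to argue in the other direction.
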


\begin{proof}
\parshape=5 0pt\hsize 0pt\hsize 0pt.8\hsize 0pt.8\hsize 0pt.8\hsize 
Let $\aa$ be a root of $f$ and
$F\subseteq F(\aa)\subseteq E_1$. By the Extension Theorem, $\tau$ extends to an 
isomorphism $\ss:F(\aa)\rightarrow K(\bb)$ if and only if $\bb$ is a root in $E_2$
of $\tau^*(p)$, where $p$ is the minimum polynomial of $\aa$ over
$F$. In this case the minimum polynomial $q$ of $\bb$ over $K$ divides
$\tau^*p$; moreover, $\deg\tau^*p\leq \deg p=[F(\aa):F]=[K(\bb):K]=\deg
q$. Thus $\tau^*p=q$ \emph{is\/} the minimum polynomial of $\bb$ over
$K$. 
\vadjust{\hfill\smash{\lower 72pt
\llap{
\begin{pspicture}(0,0)(2,3)
\rput(.2,.2){$F$}
\rput(1.8,.2){$K$}
\rput(.2,1.8){$F(\aa)$}
\rput(1.8,1.8){$K(\bb)$}
\rput(.2,2.9){$E_1$}
\rput(1.8,2.9){$E_2$}
\psline[linewidth=.1mm]{->}(.4,.2)(1.6,.2)
\psline[linewidth=.1mm]{->}(.7,1.8)(1.3,1.8)
\psline[linewidth=.1mm,linecolor=red]{->}(.4,2.9)(1.6,2.9)
\psline[linewidth=.1mm]{->}(.2,.4)(.2,1.6)
\psline[linewidth=.1mm]{->}(1.8,.4)(1.8,1.6)
\psline[linewidth=.1mm]{->}(.2,2.05)(.2,2.7)
\psline[linewidth=.1mm]{->}(1.8,2.05)(1.8,2.7)
\rput(1,.4){$\tau$}
\rput(1,3.1){\red ?}
\rput(1,2){$\ss$}
\end{pspicture}
}}}\ignorespaces

\parshape=2 0pt.8\hsize 0pt.8\hsize 
As $\aa$ is a root of $f$ we have $f=ph$ in $F[x]$, 
so $\tau^*f=(\tau^*p)(\tau^*h)$ in $K[x]$. As the roots of
$\tau^*f$ are distinct, those of $\tau^*p$ must be too. 

\parshape=2 0pt.8\hsize 0pt.8\hsize 
The number of possible $\ss$ then, which is equal to the number of 
\emph{distinct\/} roots of $\tau^*p$, must in fact be equal to the degree of $\tau^*p$.
This in turn equals the degree $[K(\bb):K]>1$. 

\parshape=3 0pt.8\hsize 0pt.8\hsize 0pt\hsize
We now proceed by induction on the degree $[E_2:K]$. If $[E_2:K]=1$
then $E_2=K$. An isomorphism $\ss:E_1\rightarrow E_2$ extending $\tau$
gives $[E_1:F]=1$, hence $E_1=F$. There can then be only one such
$\ss$, namely $\tau$ itself.  
By the tower law,
$[E_2:K]=[E_2:K(\bb)][K(\bb):K]$ where $[E_2:K(\bb)]<[E_2:K]$ since $[K(\bb):K]>1$.
By induction, any isomorphism $\ss:F(\aa)\rightarrow K(\bb)$ will thus
have 
$$
[E_2:K(\bb)]=\frac{[E_2:K]}{[K(\bb):K]},
$$
extensions to an isomorphism $E_1\rightarrow E_2$. Starting
from the bottom of the diagram, $\tau$ extends to $[K(\bb):K]$ possible $\ss$'s, and extending
each in turn gives,
$$
[K(\bb):K]\frac{[E_2:K]}{[K(\bb):K]}=[E_2:K],
$$
extensions in total.
\qed
\end{proof}

The condition that the roots of $\tau^*f$ are distinct
is not essential to the theory, but makes the
accounting easier:
we can relate the
number of automorphisms to the degrees of extensions by passing through the midway house
of the roots of polynomials. 

\paragraph{\hspace*{-0.3cm}}
Theorem D gives a connection between minimum polynomials and the degrees of 
field extensions, while 
Theorem \ref{galois_groups:number_of_extensions}
connects the degrees of extensions with the number of automorphisms of a field.
Bolting these together:

\begin{corollaryG}
Let $f$ be a polynomial over $F$ having distinct roots and let $E$ be its splitting field
over $F$. Then
\begin{equation}
  \label{eq:4}
|\gal(E/F)|=[E:F].  
\end{equation}
\end{corollaryG}

The polynomial $f$ is over the \emph{field\/} $F$, or is contained in
the {\em ring\/} $F[x]$, 
with $E$ a {\em vector space\/} over $F$ and $\gal(E/F)$ its {\em group\/} of 
automorphisms. The formula (\ref{eq:4}) thus contains the main objects
of undergraduate algebra.

\begin{proof}
By Theorem \ref{galois_groups:number_of_extensions} there are 
$[E:F]$ extensions of the identity automorphism $F\rightarrow F$ to an
automorphism of $E$.
Conversely any automorphism of $E$ fixing $F$ pointwise is an extension 
of the identity automorphism on $F$, so we obtain the whole Galois
group this way.
\qed
\end{proof}

\paragraph{\hspace*{-0.3cm}}
That $E$ be a splitting field is important in 
Corollary G.
Consider the extension $\Q\subseteq \Q(\sqrt[3]{2})$, where
$\Q(\sqrt[3]{2})$ is \emph{not\/} the splitting field over $\Q$ of
$x^3-2$, or indeed any polynomial. 
$\ss$ is an element of the  
Galois group $\gal(\Q(\sqrt[3]{2})/\Q)$ precisely when it sends $\sqrt[3]{2}$ to 
a root, contained in $\Q(\sqrt[3]{2})$, of its minimum polynomial over $\Q$. These
roots are $\sqrt[3]{2}$ itself, with the other two complex, whereas
$\Q(\sqrt[3]{2})$ is completely contained in $\R$. The only
possibility for $\ss$ is
that it sends $\sqrt[3]{2}$ to itself, ie: $\ss=\id$.

The Galois group thus has order $1$, but the degree of the extension is $3$.

\paragraph{\hspace*{-0.3cm}}
The following proposition returns to the kind of examples we saw in
Section \ref{lect1}:

\begin{proposition}
\label{galois_groups:section0_examples}
Let $E$ be the splitting field over $F$ of a polynomial 
with distinct roots. Suppose also that $E=F(\aa_1,\ldots,\aa_m)$ for
some $\aa_1,\ldots,\aa_m\in E$ such that
\begin{equation}
  \label{eq:5}
[E:F]=\prod_i [F(\aa_i):F].  
\end{equation}
Then there is a $\ss\in\gal(E/F)$ with $\ss(\aa_i)=\bb_i$ if and only if $\bb_i$ is a root
of the minimum polynomial of $\aa_i$ over $F$.
\end{proposition}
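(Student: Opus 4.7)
The ``only if'' direction is immediate from the Corollary to Theorem F stated just after the Extension Theorem: any $\ss \in \gal(E/F)$ fixes the coefficients of $p_i := \,$ the minimum polynomial of $\aa_i$ over $F$, so $\ss(\aa_i)$ must be another root of $p_i$.

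For the ``if'' direction, my plan is to iterate Theorem F. Writing $K_i = F(\aa_1,\ldots,\aa_i)$, so that $K_0=F$ and $K_m=E$, I will build a chain of isomorphisms $\tau_0 = \id_F,\ \tau_1:K_1\to F(\bb_1),\ \tau_2:K_2\to F(\bb_1,\bb_2),\ \ldots,\ \tau_m:E\to F(\bb_1,\ldots,\bb_m)$, each $\tau_i$ extending $\tau_{i-1}$ and satisfying $\tau_i(\aa_i)=\bb_i$; the final map $\tau_m$ will be the desired element of $\gal(E/F)$.

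The crucial preparatory step --- and the only real use of the degree hypothesis (\ref{eq:5}) --- is to show that the minimum polynomial $q_i$ of $\aa_i$ over $K_{i-1}$ \emph{equals} its minimum polynomial $p_i$ over $F$. Since $p_i\in K_{i-1}[x]$ has $\aa_i$ as a root, $q_i \dv p_i$, giving $[K_i:K_{i-1}] = \deg q_i \leq \deg p_i = [F(\aa_i):F]$ for every $i$. The Tower Law applied to $F\subseteq K_1\subseteq\cdots\subseteq K_m = E$ yields $[E:F]=\prod_i[K_i:K_{i-1}]$; comparison with (\ref{eq:5}) forces equality in every factor, so $q_i=p_i$. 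With this in hand, the inductive step becomes a direct application of Theorem F: assuming $\tau_{i-1}$ is constructed and fixes $F$ pointwise, $\tau_{i-1}^*(p_i)=p_i$ (the coefficients of $p_i$ already lie in $F$), and by hypothesis $\bb_i$ is a root of $p_i = q_i$; Theorem F then delivers the extension $\tau_i$ with $\tau_i(\aa_i)=\bb_i$.

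The one remaining wrinkle --- and where the splitting-field assumption gets used --- is to confirm that $\tau_m$ is genuinely an automorphism of $E$ rather than an isomorphism onto some proper overfield. Each $\bb_i$ lies in $E$ (this is implicit in demanding ``$\ss\in\gal(E/F)$ with $\ss(\aa_i)=\bb_i$'', and is guaranteed by $E$ being a splitting field: all roots of $p_i$ lie in the normal closure, which is $E$ itself), so $F(\bb_1,\ldots,\bb_m)\subseteq E$. But $\tau_m$ is an isomorphism, so $[F(\bb_1,\ldots,\bb_m):F]=[E:F]$, and hence $F(\bb_1,\ldots,\bb_m)=E$. I expect the bulk of any writing-up effort to be in verifying this degree-matching step cleanly, since the heart of the argument --- the forcing of $q_i = p_i$ and the iterated Theorem F --- is essentially automatic once the setup is in place.
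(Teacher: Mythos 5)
Your proof is correct, but it takes a genuinely different route from the paper's. The paper argues by pure counting: the map $\ss\mapsto(\ss(\aa_1),\ldots,\ss(\aa_m))$ from $\gal(E/F)$ into tuples of roots of the minimum polynomials is injective, Corollary G gives $|\gal(E/F)|=[E:F]=\prod_i\deg p_i$, and since there are at most $\prod_i\deg p_i$ possible tuples, the map must be a bijection --- so every tuple of roots is hit, and (as a bonus) every root of each $p_i$ is seen to lie in $E$. You instead build the automorphism explicitly by iterating Theorem F up the chain $F\subseteq K_1\subseteq\cdots\subseteq K_m=E$, after first using the degree hypothesis (\ref{eq:5}) together with the Tower Law to pin down the minimum polynomial of $\aa_i$ over $K_{i-1}$ as being exactly $p_i$. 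Your route is more constructive, avoids Corollary G entirely, and makes visible the mechanism (the forced coincidence $q_i=p_i$) that the counting argument hides; the paper's route is shorter but existential.

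The one place where your write-up leans on something not yet available is the parenthetical claim that each $\bb_i$ lies in $E$: you justify it by ``all roots of $p_i$ lie in the normal closure, which is $E$ itself,'' but that is precisely Proposition \ref{proposition:galois_extensions}, which the paper only proves later via the Galois correspondence. The paper's counting proof gets $\bb_i\in E$ for free; yours needs it as input to conclude $F(\bb_1,\ldots,\bb_m)\subseteq E$. The cleanest patch that stays within what is available here: note that $\tau_m$ fixes $F$ pointwise, so if $f\in F[x]$ is the polynomial whose splitting field is $E$, then $\tau_m$ sends each root of $f$ to another root of $f$; working in any common ambient field, the roots of $f$ all lie in $E$, hence $\tau_m(E)\subseteq E$, and comparison of degrees forces $\tau_m(E)=E$. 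Then $\tau_m\in\gal(E/F)$ and $\bb_i=\tau_m(\aa_i)\in E$ follows automatically, with no forward reference.
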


\begin{proof}
Any $\ss$ in the Galois group must send each $\aa_i$ to a root
of the minimum polynomial $f_i$ of $\aa_i$ over
$F$. 
Conversely, $\ss$ is determined by where it sends the
$\aa_i$'s, and there are at most $\deg(f_i)$ possibilities for these
images, namely the $\deg(f_i)$ roots of $f_i$. As
$$
|\gal(E/F)|=[E:F]=\prod_i [F(\aa_i):F]=\prod_i \deg(f_i),
$$
all these possibilities must arise. For any $\bb_i$ a root
of $f_i$ there must then be a $\ss\in\gal(E/F)$ with $\ss(\aa_i)=\bb_i$. 
\qed
\end{proof}

\paragraph{\hspace*{-0.3cm}}
In Section \ref{lect1} we computed, in an ad-hoc way, the automorphisms of $\Q(\aa,\ww)$ where 
$$
\aa=\sqrt[3]{2}\in\R\text{ and
}\ww=-\frac{1}{2}+\frac{\sqrt{3}}{2}\imag.
$$
The minimum polynomial of $\aa$ over $\Q$ is $x^3-2$ with roots
$\aa,\aa\ww,\aa\ww^2$ and the minimum polynomial of $\ww$ over $\Q$ -- 
and over $\Q(\ww)$ -- is 
$1+x+x^2$ with roots $\ww,\ww^2$. By the Tower law:
$$
[\Q(\aa,\ww):\Q]=[\Q(\aa,\ww):\Q(\aa)][\Q(\aa):\Q]=[\Q(\ww):\Q][\Q(\aa):\Q].
$$
By Proposition \ref{galois_groups:section0_examples} we can send
$\aa$ to any of $\aa,\aa\ww,\aa\ww^2$ 
and $\ww$ to any of $\ww,\ww^2$, and get an automorphism. 
Following this through with the vertices of the triangle gives three
automorphisms with $\ww$ mapped to itself -- the top three in Figure
\ref{fig:galoiscorrespondence1:schematic} -- and another three with
$\ww$ mapped to $\ww^2$ -- as in the bottom three.

\begin{figure}
  \centering
\begin{pspicture}(0,0)(14,7)
\rput(0,4){
\rput(-4.8,0){
\rput(7,2.5){\rput{-90}(0,0){
\pstriangle[fillstyle=solid,fillcolor=lightgray](1,0)(2,1.73)}
}
\rput{0}(6.5,0.5){\psbezier[linecolor=red]{->}(2.1,0.8)(3.1,0)(3.1,2)(2.1,1.2)}
\rput{120}(9.05,0.95){\psbezier[linecolor=red]{->}(2.1,0.8)(3.1,0)(3.1,2)(2.1,1.2)}
\rput{-120}(7.25,3){\psbezier[linecolor=red]{->}(2.1,0.8)(3.1,0)(3.1,2)(2.1,1.2)}
\rput(9.2,.6){$\aa\mapsto\aa$}
\rput(9.2,.3){$\ww\mapsto\ww$}
}
\rput(0,0){
\rput(7,2.5){\rput{-90}(0,0){
\pstriangle[fillstyle=solid,fillcolor=lightgray](1,0)(2,1.73)}
}
\rput{-120}(6.9,2.3){\psbezier[linecolor=red]{<-}(-.4,0.1)(-1,.5)(-1,1.5)(-.4,1.9)}
\rput{0}(7.2,0.5){\psbezier[linecolor=red]{<-}(-.4,0.1)(-1,.5)(-1,1.5)(-.4,1.9)}
\rput{120}(8.6,1.65){\psbezier[linecolor=red]{<-}(-.4,0.1)(-1,.5)(-1,1.5)(-.4,1.9)}
\rput(9.2,.6){$\aa\mapsto\aa\ww$}
\rput(9.2,.3){$\ww\mapsto\ww$}
}
\rput(4.3,0){
\rput(7,2.5){\rput{-90}(0,0){
\pstriangle[fillstyle=solid,fillcolor=lightgray](1,0)(2,1.73)}
}
\rput{-120}(6.9,2.3){\psbezier[linecolor=red]{->}(-.4,0.1)(-1,.5)(-1,1.5)(-.4,1.9)}
\rput{0}(7.2,0.5){\psbezier[linecolor=red]{->}(-.4,0.1)(-1,.5)(-1,1.5)(-.4,1.9)}
\rput{120}(8.6,1.65){\psbezier[linecolor=red]{->}(-.4,0.1)(-1,.5)(-1,1.5)(-.4,1.9)}
\rput(9.2,.6){$\aa\mapsto\aa\ww^2$}
\rput(9.2,.3){$\ww\mapsto\ww$}
}
}
\rput(0,0.5){
\rput(.75,0){
\rput(-5,0){
\rput(6.5,0.5){
\rput{270}(-.1,2){\pstriangle[fillstyle=solid,fillcolor=lightgray](1,0)(2,1.73)}
\rput(0.1,0){\psbezier[linecolor=red]{<->}(-.4,0.1)(-1,.5)(-1,1.5)(-.4,1.9)}
\rput(-.3,0){\psbezier[linecolor=red]{->}(2.1,0.8)(3.1,0)(3.1,2)(2.1,1.2)}
}
\rput(8,.6){$\aa\mapsto\aa$}
\rput(8,.3){$\ww\mapsto\ww^2$}
}
\rput(9,8.3){
\rput{-120}(0,0){\rput(6.5,0.5){
\rput{270}(-.1,2){\pstriangle[fillstyle=solid,fillcolor=lightgray](1,0)(2,1.73)}
\rput(0.1,0){\psbezier[linecolor=red]{<->}(-.4,0.1)(-1,.5)(-1,1.5)(-.4,1.9)}
\rput(-.3,0){\psbezier[linecolor=red]{->}(2.1,0.8)(3.1,0)(3.1,2)(2.1,1.2)}
}}
}
\rput(8,.6){$\aa\mapsto\aa\ww$}
\rput(8,.3){$\ww\mapsto\ww^2$}
\rput(-1,0){
}
\rput(16,-3.8){
\rput{120}(0,0){\rput(6.5,0.5){
\rput{270}(-.1,2){\pstriangle[fillstyle=solid,fillcolor=lightgray](1,0)(2,1.73)}
\rput(0.1,0){\psbezier[linecolor=red]{<->}(-.4,0.1)(-1,.5)(-1,1.5)(-.4,1.9)}
\rput(-.3,0){\psbezier[linecolor=red]{->}(2.1,0.8)(3.1,0)(3.1,2)(2.1,1.2)}
}}
}
\rput(4,0){
\rput(8.95,.6){$\aa\mapsto\aa\ww^2$}
\rput(8.95,.3){$\ww\mapsto\ww^2$}
}
\rput(3,0){
}
}
}
\end{pspicture}    
  \caption{the elements of $\gal(\Q(\aa,\ww)/\Q)$ where
    $\aa=\sqrt[3]{2}$ and  $\bb=-\frac{1}{2}+\frac{\sqrt{3}}{2}\imag$.}
  \label{fig:galoiscorrespondence1:schematic}
\end{figure}

\begin{vexercise}
\label{galois_groups_exercise100}
Let $\aa=\sqrt[5]{2}$ and $\ww=\cos(2\pi/5)+\imag\sin(2\pi/5)$, so that
$\aa^5=2$ and $\ww^5=1$.  
Let $\bb=\aa+\ww$ and eliminate radicals by considering
$(\bb-\ww)^5=2$ to  
find a polynomial of degree $20$ having $\bb$ as a root. Show that
this polynomial is irreducible 
over $\Q$ and hence that 
$$
[\Q(\aa+\ww):\Q]=[\Q(\aa):\Q][\Q(\ww):\Q].
$$
Show that $\Q(\aa+\ww)=\Q(\aa,\ww)$.
\end{vexercise}

\paragraph{\hspace*{-0.3cm}}
For $\aa=\sqrt[5]{2}$ and $\ww$ given by the expression below, 
the
extension $\Q\subset \Q(\aa,\ww)$ 
satisfies (\ref{eq:5}) by Exercise \ref{galois_groups_exercise100}.
An automorphism is thus free to send
$\aa$ to any root of 
$x^5-2$ and $\ww$ to any root of 
$1+x+x^2+x^3+x^4$. This gives twenty elements of the Galois group in
total;
in particular there is an automorphism sending $\aa$ to itself and $\ww$ to $\ww^3$:
$$
\begin{pspicture}(14,3)
\rput(2.5,.2){
\pspolygon[fillstyle=solid,fillcolor=lightgray](0,.45)(1.53,0)(2.5,1.3)(1.53,2.61)(0,2.15)
\psbezier[linecolor=red]{->}(2.85,1.1)(3.85,0.3)(3.85,2.3)(2.85,1.5)
\psline[linecolor=red]{->}(1.45,2.5)(0.05,.5)
\psline[linecolor=red]{->}(1.45,.1)(.05,2.05)
\pscurve[linecolor=red]{->}(.15,.5)(1,.9)(1.5,.1)
\pscurve[linecolor=red]{->}(.15,2.1)(.95,1.73)(1.55,2.5)
\rput(2.7,1.3){$\alpha$}\rput(1.5,2.8){$\alpha\ww$}
\rput(-.4,.6){$\alpha\ww^3$}
\rput(-.4,2.15){$\alpha\ww^2$}
\rput(1.5,-.2){$\alpha\ww^4$}
}
\rput(10,2){$\alpha=\sqrt[5]{2}$}
\rput(10,1){${\displaystyle \ww=\frac{\sqrt{5}-1}{4}+\frac{\sqrt{2}\sqrt{5+\sqrt{5}}}{4}\imag}$}
\end{pspicture}
$$

\paragraph{\hspace*{-0.3cm}}
We can get closer
to the spirit of Section \ref{lect1} by defining:

\begin{definition}[Galois group of a polynomial]
The Galois group over $F$ of the polynomial $f\in F[x]$ is
the group $\gal(E/F)$ where $E$ is the splitting field of $f$ over $F$.  
\end{definition}

\begin{proposition}\label{galoisgroups.subgroupsymmetricgroup}
The Galois group of a polynomial of degree $d$ is isomorphic to a subgroup 
of the symmetric group $S_{\kern-.3mm d}$.
\end{proposition}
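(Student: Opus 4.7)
The plan is to realise any $\sigma$ in the Galois group as a permutation of the roots of $f$, and then show that this assignment is an injective group homomorphism into $S_{\kern-.3mm d}$.

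First I would set up the data. Let $E$ be the splitting field of $f$ over $F$, so $f = c\prod_{i=1}^{d}(x-\alpha_i)$ in $E[x]$ for some constant $c \in F$, and let $R = \{\alpha_1, \ldots, \alpha_k\}$ be the set of \emph{distinct} roots of $f$ in $E$ (with $k \leq d$). By the definition of splitting field, $E = F(\alpha_1, \ldots, \alpha_k)$. Now for any $\sigma \in \gal(E/F)$, the Corollary to the Extension Theorem applies: $\sigma$ fixes $F$ pointwise, so it fixes every coefficient of $f$, and therefore sends each root of $f$ to another root of $f$. Thus $\sigma$ restricts to a map $R \to R$, and since $\sigma$ is a bijection on $E$ it is in particular a bijection on the finite set $R$. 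This defines a function
\[
\Phi \colon \gal(E/F) \longrightarrow S_{\kern-.3mm R},
\qquad \sigma \longmapsto \sigma|_R.
\]

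Next I would verify that $\Phi$ is a group homomorphism. This is essentially automatic: the composition $\sigma\tau$ in $\gal(E/F)$ acts on each $\alpha_i$ by first applying $\tau$ and then $\sigma$, so $(\sigma\tau)|_R = \sigma|_R \circ \tau|_R$, which is exactly the product in $S_{\kern-.3mm R}$.

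The key step is injectivity. This is where I would invoke the observation made earlier in this section: if $E = F(\alpha_1,\ldots,\alpha_k)$ then any element of $\gal(E/F)$ is completely determined by its effect on the generators $\alpha_1,\ldots,\alpha_k$. Hence if $\Phi(\sigma) = \Phi(\tau)$, meaning $\sigma(\alpha_i) = \tau(\alpha_i)$ for all $i$, then $\sigma$ and $\tau$ agree on a generating set of $E$ over $F$ and on $F$ itself, so $\sigma = \tau$. By the First Isomorphism Theorem for groups, $\gal(E/F) \cong \Phi(\gal(E/F))$, which is a subgroup of $S_{\kern-.3mm R}$.

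Finally, to land inside $S_{\kern-.3mm d}$ rather than $S_{\kern-.3mm k}$, I would note that any injection $R \hookrightarrow \{1,2,\ldots,d\}$ (e.g.\ list the $k$ distinct roots and then the $d-k$ phantom labels) induces an injective homomorphism $S_{\kern-.3mm R} \hookrightarrow S_{\kern-.3mm d}$ by extending permutations to act as the identity on the extra labels. Composing with $\Phi$ exhibits $\gal(E/F)$ as isomorphic to a subgroup of $S_{\kern-.3mm d}$, as required. The only subtlety worth flagging is this last embedding when $f$ has repeated roots so that $k < d$; with distinct roots (the case most relevant in view of Corollary G) the Galois group is directly a subgroup of $S_{\kern-.3mm d}$.
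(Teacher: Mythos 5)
Your proof is correct and follows essentially the same route as the paper: restrict automorphisms to the set of $k$ distinct roots (a permutation because $\ss$ maps roots of $f$ to roots of $f$ and is a bijection), observe the restriction map is a homomorphism under composition, deduce injectivity from the fact that an automorphism is determined by its action on the generating set of roots, and finally embed $S_{\kern-.3mm k}$ into $S_{\kern-.3mm d}$. The only cosmetic difference is your explicit appeal to the First Isomorphism Theorem and slightly more careful notation around $R$.
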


\begin{proof}
Let $\{\aa_1,\ldots,\aa_d\}$ be the roots of $f$ and write
$\{\aa_1,\ldots,\aa_d\}=\{\bb_1,\ldots,\bb_k\}$ where the
$\bb$'s are distinct (and $k\leq d$).  
An element $\ss\in\gal(E/F)$, for
$E=F(\aa_1,\ldots,$ $\aa_d)=F(\bb_1,\ldots,\bb_k)$, is determined by where it sends the $\bb_i$'s,
and each $\ss(\bb_i)$ must be a root of (any) polynomial over
$F$ having $\bb_i$ as a root. But $f$ is such a polynomial, hence the
effect of $\ss$ on the $\bb_i$ is to permute them
among themselves ($\ss$ is a bijection). Define a map $\gal(E/F)\rightarrow S_{\kern-.3mm k}$ that
sends $\ss$ to the permutation of the $\bb_i$ that it realizes. As
the group laws in both the Galois group and the symmetric group are
composition, this map is a homomorphism, and is injective as each
$\ss$ is determined by its effect on the roots. Thus the Galois group
is isomorphic to a subgroup of $S_{\kern-.3mm k}$, which in turn is isomorphic to a
subgroup of $S_{\kern-.3mm d}$ by taking those permutations of $\{1,\ldots,d\}$
that permute only the first $k$ numbers.
\qed
\end{proof}

\begin{figure}
  \centering
\begin{pspicture}(0,0)(14,6)
\rput(3.5,3){\BoxedEPSF{galois12.10b.eps scaled 900}}
\rput(3.1,0.75){$\{\id\}$}
\rput*(0.5,2.5){$\{\id,(\aa,\bb)\}$}
\rput*(2.25,2.5){$\{\id,(\aa,\gamma)\}$}
\rput*(4,2.5){$\{\id,(\bb,\gamma)\}$}
\rput*(5.6,3.7){$\{\id,(\aa,\bb,\gamma),(\aa,\gamma,\bb)\}$}
\rput(4.3,5.2){$S_{\kern-.3mm 3}$}
\rput(10.5,3){\BoxedEPSF{galois12.10b.eps scaled 900}}
\rput(10,0.75){$x(x-1)(x-2)$}
\rput*(7.5,2.5){$x(x^2-2)$}
\rput*(9.2,2.5){$x(x^2-2)$}
\rput*(10.9,2.5){$x(x^2-2)$}
\rput*(13,3.7){$x^3-3x+1$}
\rput(11.3,5.3){$x^3-2$}
\end{pspicture}
\caption{The possible Galois groups over $\Q$ of
  $(x-\aa)(x-\bb)(x-\gamma)$: the subgroup lattice of the group of permutations of
  $\{\aa,\bb,\gamma\}$ (\emph{aka\/} the symmetric group $S_{\kern-.3mm 3}$)
  \emph{(left)} and example polynomials having Galois group these
  subgroups \emph{(right)}.}
  \label{fig:groups2:subgroup_lattice_S_3}
\end{figure}
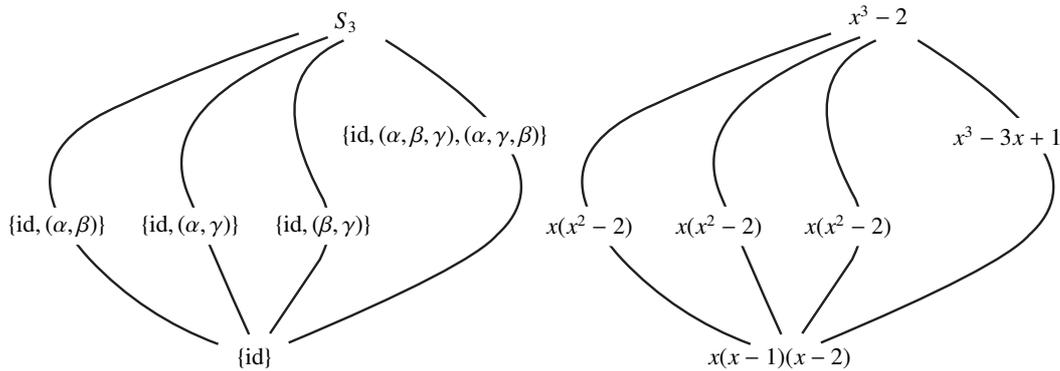

\paragraph{\hspace*{-0.3cm}}
\label{groups2:galois_group_quadratic}
Let $f=(x-\aa)(x-\bb)$ be a quadratic polynomial in $\Q[x]$ with
distinct roots
$\aa\not=\bb\in\C$. Then $f$ has splitting field $\Q(\aa)$ over $\Q$,
since $\aa+\bb$ and $\aa\bb$ are rational numbers. If $\aa\in\Q$
(hence $\bb\in\Q$) then the Galois group of $f$ over $\Q$ is the
trivial group $\{\id\}$. Otherwise both $\aa,\bb\not\in\Q$ and $f$, being irreducible
over $\Q$, is the minimum polynomial of $\aa$ over $\Q$. There is an
element of the Galois group sending $\aa$ to $\bb$, and this must be
the permutation $(\aa,\bb)$, as it is the only element of $S_{\kern-.3mm 2}$ that
does the job. The Galois group is thus
$\{\id,(\aa,\bb)\}$ when $\aa\not\in\Q$.  

\paragraph{\hspace*{-0.3cm}}
Similarly if $f=(x-\aa)(x-\bb)(x-\gamma)$ is a cubic in $\Q[x]$ with
distinct roots $\aa,\bb,\gamma\in\C$. By Proposition
\ref{galoisgroups.subgroupsymmetricgroup}, the Galois group of $f$ is a
subgroup of the symmetric group $S_{\kern-.3mm 3}$, the subgroup lattice of which
is shown in Figure \ref{fig:groups2:subgroup_lattice_S_3}. (You can
come up with this picture either by brute force, or by taking the
symmetry group of the equilateral triangle in Figure
\ref{fig:groups1:subgroup_lattices}, labelling the vertices of the
triangle $\aa,\bb,\gamma$, and taking the permutations of these
effected by the symmetries). 
We can find
polynomials having each of these subgroups as Galois group.

If $\aa,\bb,\gamma\in\Q$ then $f$ has splitting field $\Q$,
and the Galois group is $\{\id\}$. If $\aa,\bb\in\Q$ then, as
$\aa+\bb+\gamma\in\Q$, we get $\gamma\in\Q$ too. The next case then is
$\aa\in\Q$ and $\bb,\gamma\not\in\Q$, so that $(x-\bb)(x-\gamma)$ is a
rational polynomial. As in
\ref{groups2:galois_group_quadratic}, the splitting field of $f$ is
$\Q(\bb)$ and the Galois group is $\{\id,(\bb,\gamma)\}$. The other
two subgroups of order two in Figure
\ref{fig:groups2:subgroup_lattice_S_3} come about in a similar way.

That leaves the case $\aa,\bb,\gamma\not\in\Q$, and where the
key player is the \emph{discriminant\/}:
$$
D=(\aa-\bb)^2(\aa-\gamma)^2(\bb-\gamma)^2
$$
or in fact, its square root. The polynomial $f$ is irreducible over
$\Q$, hence the minimum polynomial over $\Q$ of $\aa$. As the roots
$\aa,\bb,\gamma$ are distinct there are distinct elements
of the Galois group sending $\aa$ to each of $\aa,\bb$ and $\gamma$,
and so the Galois group has order $3$ or $6$. 

Suppose that $\kern-2pt\sqrt{D}\in\Q$. Then $\kern-2pt\sqrt{D}$, like all rational
numbers, is fixed by the elements of the Galois group. The permutation
$(\aa,\bb)$ however sends $\kern-2pt\sqrt{D}\mapsto-\kern-2pt\sqrt{D}$, and so do $(\aa,\gamma)$
and $(\bb,\gamma)$. None of these can therefore be in the Galois
group, which is thus $\{\id,(\aa,\bb,\gamma),(\aa,\gamma,\bb)\}$. 

We illustrate the final case $\kern-2pt\sqrt{D}\not\in\Q$ by example. Suppose that
$\aa\in\R\setminus\Q$ and $\bb,\gamma\in\C\setminus\R$ -- in which case
$\bb,\gamma$ are complex conjugates. Then complex conjugation is a
non-trivial element of the Galois group (see Exercise
\ref{exercise:groups2:conjugation}) having 
effect the permutation $(\bb,\gamma)$. The Galois group must then 
be all of $S_3$. (Incidentally, this and the previous paragraph show that if
$\kern-2pt\sqrt{D}\in\Q$ then $\aa,\bb,\gamma\in\R$.)

\paragraph{\hspace*{-0.3cm}}
Finding a rational polynomial of degree $d$ that has Galois group a
given subgroup of $S_{\kern-.3mm d}$ is possible for small values of $d$ like the
cases $d=2,3$ above. For general $d$ it is an open problem --
called the \emph{Inverse Galois problem\/}.

\subsection*{Further Exercises for Section \thesection}

\begin{vexercise}\label{ex_lect9.1}
Show that the following Galois groups have the given orders:
\begin{enumerate}
\item $|\gal(\Q(\kern-2pt\sqrt{2})/\Q)|=2$.
\item $|\gal(\Q(\sqrt[3]{2})/\Q)|=1$.
\item $|\gal(\Q(-\frac{1}{2}+\frac{\sqrt{3}}{2}\imag)/\Q)|=2$.
\item
$|\gal(\Q(\sqrt[3]{2},-\frac{1}{2}+\frac{\sqrt{3}}{2}\imag)/\Q)|=6$.
\end{enumerate}
\end{vexercise}

\begin{vexercise}\label{ex_lect9.2}
Find the orders of the Galois groups $\gal(L/\Q)$ where $L$ is the
splitting field of the polynomial:
$$
1. \,\, x-2\qquad
2. \,\,x^2-2\qquad
3. \,\,x^5-2\qquad
$$
\end{vexercise}

\begin{vexercise}\label{ex_lect9.21}
Find the orders of the Galois groups $\gal(L/\Q)$ where $L$ is the
splitting field of the polynomial:
$$
1. \,\, 1+x+x^2+x^3+x^4\qquad
2. \,\, 1+x^2+x^4\qquad
$$
(\emph{hint} for the second one: $(x^2-1)(1+x^2+x^4)=x^6-1$).
\end{vexercise}

\begin{vexercise}\label{ex_lect9.3}
Let $p>2$ be a prime number. Show that
\begin{enumerate}
\item ${\ds |\gal(\Q\biggl(\cos\frac{2\pi}{p}+\imag\sin\frac{2\pi}{p}\biggr)/\Q)|=p-1}$.
\item $|\gal(L/\Q)|=p(p-1)$, where $L$ is the splitting field of the
polynomial $x^p-2$. Compare the answer when $p=3$ and $5$ to Section \ref{lect1}.
\end{enumerate}
\end{vexercise}


\section{Vector Spaces II: Solving Equations}
\label{linear.algebra2}

This short section contains some auxiliary technical results on the
solutions of homogeneous linear equations that are needed for the
proof of the Galois correspondence in Section \ref{galois.correspondence}.

\paragraph{\hspace*{-0.3cm}}
Let $V$ be a $n$-dimensional vector space over the field $F$
with fixed basis $\{\aa_1,\aa_2,\ldots,\aa_n\}$.
A {\em homogenous linear equation\/} over $F$ is an equation of the form,
$$
a_1x_1+a_2x_2+\cdots+ a_nx_n=0,
$$
with the $a_i$ in $F$. A vector $u=\sum_{i=1}^{n} t_i\aa_i\in V$ is a solution 
when
$$
a_1t_1+a_2t_2+\cdots+ a_nt_n=0.
$$
A system of homogeneous linear equations,
\begin{equation*}
\begin{split}
a_{11}x_{1}+a_{12}x_2+\cdots+ a_{1n}x_n&=0,\\
a_{21}x_1+a_{22}x_2+\cdots+ a_{2n}x_n&=0,\\
&\vdots\\
a_{k1}x_1+a_{k2}x_2+\cdots+ a_{kn}x_n&=0,
\end{split}
\end{equation*}
is {\em independent\/} over $F$ when the vectors,
$$
v_1=\sum a_{1j}\aa_j,v_2=\sum a_{2j}\aa_j,\ldots,v_k=\sum a_{kj}\aa_j,
$$
are independent. In other words, if $A$ is the matrix of
coefficients of the system of equations, then the rows of $A$
are independent.

Here is the key property of independent systems of equations: 

\begin{proposition}
\label{vectorspaces2:independent_systems}
Let $S$
be an independent system of equations over $F$
and let $S'\subset S$ be a proper
subset of the equations. Then the space of solutions in $V$ to $S$ is a proper
subspace of the space of solutions in $V$ to $S'$. 
\end{proposition}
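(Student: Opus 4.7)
The plan is to establish the inclusion first (which is trivial) and then prove strictness by a dimension count via rank-nullity, exploiting the fact that any subset of an independent set is independent.

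First I would observe that the inclusion of solution spaces is immediate in the opposite direction: if $u$ satisfies every equation in $S$, then it certainly satisfies every equation in the smaller set $S'$, so the solution set $W(S) \subseteq W(S')$. So the real content is showing $W(S) \neq W(S')$.

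For the strictness, my plan is to compute both dimensions. Consider the linear map $L \colon V \to F^{|S|}$ whose $i$-th component sends $u = \sum_j t_j \aa_j$ to $a_{i1}t_1 + \cdots + a_{in}t_n$. By definition $W(S) = \ker L$. The key observation is that the independence of the system $S$ (i.e. independence of the coefficient vectors $v_i = \sum_j a_{ij}\aa_j$ in $V$) is equivalent to $L$ being surjective onto $F^{|S|}$. Granting this, rank-nullity gives $\dim W(S) = n - |S|$. The analogous map $L'$ for $S'$ is surjective onto $F^{|S'|}$ because $S' \subset S$ is an independent system as well (any subset of an independent set is independent), so $\dim W(S') = n - |S'|$. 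Since $S'$ is a \emph{proper} subset of $S$, we have $|S'| < |S|$, and therefore $\dim W(S') > \dim W(S)$, forcing the containment to be proper.

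The main thing to verify carefully is the equivalence between independence of the rows of the coefficient matrix and surjectivity of $L$. This is standard: $L$ fails to be surjective precisely when there is a nonzero $(c_1,\ldots,c_{|S|}) \in F^{|S|}$ orthogonal to the image, i.e. with $\sum_i c_i(a_{i1}t_1 + \cdots + a_{in}t_n) = 0$ for all choices of $t_j$, which by equating coefficients of each $t_j$ is the same as $\sum_i c_i v_i = 0$ in $V$ -- a nontrivial linear dependence among the $v_i$. So independence of the system is exactly surjectivity of $L$, and the proof goes through. No subtle obstacle arises; this is essentially bookkeeping with rank-nullity, but stating it gives the control over intermediate solution spaces that will be needed in the Galois correspondence.
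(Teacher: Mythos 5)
Your argument is correct. The paper in fact leaves this proposition as an exercise (the very next line is ``Prove Proposition~\ref{vectorspaces2:independent_systems}''), so there is no official proof to compare against; your rank--nullity route is the standard and most natural one, and every step is sound. You correctly reduce strictness of the inclusion $W(S)\subseteq W(S')$ to the dimension count $\dim W(S)=n-|S| < n-|S'|=\dim W(S')$, which forces the containment to be proper, and your translation of independence of the coefficient rows into surjectivity of the evaluation map $L$ (via a nonzero functional annihilating the image, equivalently a nontrivial dependence $\sum_i c_i v_i = 0$) is exactly right. The only implicit point is that $|S|\le n$, but this is automatic from the independence hypothesis, so there is no gap.
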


\begin{vexercise}
Prove Proposition \ref{vectorspaces2:independent_systems}.
\end{vexercise}

\begin{vexercise}
Let $F\subseteq E$ be an extension of fields and $B$ a finite set. Let $V_F$ be the 
$F$-vector space with basis $B$, ie: the elements of $V_F$ are the formal sums
$$
\sum \ll_i b_i,
$$
with the $\ll_i\in F$ and the $b_i\in B$. Formal sums are added together 
and multiplied by scalars in the obvious way. Similarly let $V_E$ be the $E$-vector
space with basis $B$, and identify $V_F$ with a subset (it is not a subspace) of $V_E$
in the obvious way.
Now let $S'\subset S$ be independent systems of equations \emph{over $E$\/}.
Show that 
the space of solutions in $V_F$ to $S$ is a proper subspace of the space
of solutions in $V_F$ to $S'$.
\end{vexercise}

\begin{vexercise}\label{vandermonde}
Let $F$ be a field and $\aa_1,\ldots,\aa_{n+1}\in F$ distinct elements.
Show that the matrix
$$
\left(\begin{array}{cccc}
\aa_1^{n}&\cdots&\aa_1&1\\
\vdots&&\vdots&\vdots\\
\aa_{n+1}^{n}&\cdots&\aa_{n+1}&1\\
\end{array}\right)
$$
has non-zero determinant
(\emph{hint\/}: suppose otherwise, and find a polynomial of degree $n$ with $n+1$
distinct roots in $F$, contradicting Theorem \ref{degree.number.of.roots}).
\end{vexercise}

\begin{lemma}
\label{vectorspaces2:polynomials_same}
Let $F$ be a field and $f,g\in F[x]$ polynomials of degree $n$ over $F$. 
Suppose that
there exist distinct $\aa_1,\ldots,\aa_{n+1}\in F$ such that $f(\aa_i)=g(\aa_i)$ for all
$i$. Then $f=g$.
\end{lemma}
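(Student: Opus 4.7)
The plan is to reduce to the zero polynomial by subtracting, and then either quote the bound on roots from Theorem \ref{degree.number.of.roots} or invoke the Vandermonde computation of Exercise \ref{vandermonde}; I will outline both since the exercise directly precedes the lemma and is clearly the intended tool.

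First I would set $h = f - g \in F[x]$. Since $f$ and $g$ both have degree $n$, the difference $h$ has $\deg h \leq n$ (possibly $-\infty$ if $f=g$, which is the conclusion we want). The hypothesis $f(\aa_i)=g(\aa_i)$ gives $h(\aa_i) = 0$ for $i = 1,\ldots,n+1$, so $h$ has at least $n+1$ distinct roots in $F$. By Theorem \ref{degree.number.of.roots}, a non-zero polynomial over the integral domain $F$ has at most $\deg h \leq n$ roots. This forces $h$ to be the zero polynomial, giving $f=g$.

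Alternatively, and more in the spirit of Exercise \ref{vandermonde}, I would write $f = \sum_{j=0}^{n} a_j x^j$ and $g = \sum_{j=0}^{n} b_j x^j$ with $a_j, b_j \in F$, and set $c_j = a_j - b_j$. The assumption $f(\aa_i) = g(\aa_i)$ for $i=1,\ldots,n+1$ translates into the matrix equation
\[
\left(\begin{array}{cccc}
\aa_1^{n}&\cdots&\aa_1&1\\
\vdots&&\vdots&\vdots\\
\aa_{n+1}^{n}&\cdots&\aa_{n+1}&1\\
\end{array}\right)
\left(\begin{array}{c} c_n \\ \vdots \\ c_0 \end{array}\right)
= \left(\begin{array}{c} 0 \\ \vdots \\ 0 \end{array}\right).
\]
Since the $\aa_i$ are distinct, Exercise \ref{vandermonde} says this coefficient matrix has non-zero determinant, hence is invertible over $F$. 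Therefore $c_j = 0$ for all $j$, i.e.\ $a_j = b_j$ for all $j$, so $f = g$.

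There is essentially no obstacle here; the only thing to be careful about is that ``degree $n$'' of $f$ and $g$ does not guarantee $h$ has degree exactly $n$ (the leading terms may cancel), which is precisely why the statement ``$\deg h \leq n$'' is what we need, and this is enough to apply Theorem \ref{degree.number.of.roots} (or to get a square coefficient matrix in the Vandermonde approach, after padding with zero coefficients if necessary).
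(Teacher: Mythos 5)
Your proposal is correct, and your second argument (the Vandermonde system) is essentially verbatim the paper's own proof: write $f=\sum a_ix^i$, $g=\sum b_ix^i$, observe that $y_i=a_i-b_i$ solves the homogeneous system with coefficient matrix the Vandermonde matrix of Exercise \ref{vandermonde}, and conclude from invertibility that all $y_i=0$.

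Your first argument (set $h=f-g$, note $\deg h\le n$ and $h$ has $\ge n+1$ distinct roots, invoke Theorem \ref{degree.number.of.roots}) is a genuinely more direct route and is worth pointing out. In fact the two are logically entangled: the hint to Exercise \ref{vandermonde} tells you to prove non-vanishing of the determinant precisely by producing a degree-$n$ polynomial with $n+1$ roots and contradicting Theorem \ref{degree.number.of.roots}. So the paper's Vandermonde detour does not buy any extra strength for this lemma -- under the hood it rests on the same root-counting fact you use directly. What the Vandermonde packaging does buy is reuse: the same non-singular matrix appears again in the proof of Theorem \ref{theorem:linearalgebra2}, so the paper's author stated it once as an exercise and cites it in both places. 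Your caveat that ``degree $n$'' of $f$ and $g$ does not force $\deg(f-g)=n$ is correctly handled: $\deg h\le n$ is all that is needed, and if $h=0$ that is exactly the conclusion.
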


\begin{proof}
Letting
$f(x)=\sum a_i x^i\mbox{ and }g(x)=\sum b_i x^i$
gives $n+1$ expressions $\sum a_i \aa_j^i=\sum b_i \aa_j^i$, hence
the system of equations
\begin{equation}
  \label{eq:6}
  \sum a_j^i y_i=0,
\end{equation}
where $y_i=a_i-b_i$. The matrix of coefficients of these $n+1$
equations is
$$
\left(\begin{array}{cccc}
\aa_1^{n}&\cdots&\aa_1&1\\
\vdots&&\vdots&\vdots\\
\aa_{n+1}^{n}&\cdots&\aa_{n+1}&1\\
\end{array}\right)
$$
with non-zero determinant by Exercise \ref{vandermonde}. 
The system (\ref{eq:6}) thus has
the unique solution $y_i=0$ for all $i$, so that $f=g$.
\qed
\end{proof}

\paragraph{\hspace*{-0.3cm}}
Here is the main result of the section.

\begin{theorem}
\label{theorem:linearalgebra2}
Let $F\subseteq E=F(\aa)$ be a simple extension of fields with the minimum
polynomial of $\aa$ over $F$ having distinct roots. 
Let $\{\ss_1,\ss_2\ldots,\ss_k\}$ be distinct non-identity  elements of the Galois 
group $\gal(E/F)$. Then 
$$
\ss_1(x)=\ss_2(x)=\cdots=\ss_k(x)=x,
$$
is a system of independent linear equations over $E$.
\end{theorem}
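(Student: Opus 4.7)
The plan is to unpack the $k$ equations in coordinates with respect to the $F$-basis $\{1,\aa,\aa^2,\ldots,\aa^{d-1}\}$ of $E$ supplied by Theorem D (where $d=\deg f$ for $f$ the minimum polynomial of $\aa$ over $F$), and then reduce the asserted independence over $E$ to the non-vanishing of a Vandermonde determinant with distinct entries.

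Writing $x=\sum_{j=0}^{d-1}t_j\aa^j$ and setting $\aa_i:=\ss_i(\aa)$, the equation $\ss_i(x)=x$ rearranges to $\sum_{j=0}^{d-1}(\aa_i^j-\aa^j)\,t_j=0$, so the row of $E$-coefficients belonging to the $i$-th equation is $(\aa_i^j-\aa^j)_{0\leq j\leq d-1}$. Suppose for contradiction there exist $c_1,\ldots,c_k\in E$, not all zero, witnessing $E$-linear dependence of these rows, i.e.
$$\sum_{i=1}^{k}c_i(\aa_i^j-\aa^j)=0\quad\text{for every }j=0,1,\ldots,d-1.$$
Introduce an auxiliary index by setting $\aa_0:=\aa$ and $c_0:=-\sum_{i=1}^{k}c_i$; the displayed identity becomes
$$\sum_{i=0}^{k}c_i\,\aa_i^j=0\quad\text{for every }j=0,1,\ldots,d-1,$$
and the tuple $(c_0,c_1,\ldots,c_k)$ is still not identically zero (if $c_0=0$ then some original $c_i$ with $i\geq 1$ is nonzero by hypothesis).

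Two facts close out the argument. First, $\aa_0,\aa_1,\ldots,\aa_k$ are pairwise distinct: any element of $\gal(E/F)$ is determined by its value on $\aa$ since $E=F(\aa)$, so distinctness of the $\ss_i$'s makes $\aa_1,\ldots,\aa_k$ mutually distinct, and each $\aa_i$ with $i\geq 1$ differs from $\aa_0=\aa$ because $\ss_i\neq\id$. Second, $k+1\leq d$, since every element of $\gal(E/F)$ must send $\aa$ to a root of $f$ in $E$ and so $|\gal(E/F)|\leq d$. Taking the first $k+1$ equations of the displayed system gives a $(k+1)\times(k+1)$ homogeneous system on $(c_0,\ldots,c_k)$ whose matrix is Vandermonde on the distinct values $\aa_0,\aa_1,\ldots,\aa_k$, hence invertible by Exercise \ref{vandermonde}. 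This forces $c_0=c_1=\cdots=c_k=0$, contradicting the assumption. The step that requires the most thought is translating the abstract notion of independence over $E$ into concrete rows of a matrix and then spotting the Vandermonde structure hidden there; the introduction of the extra index $i=0$ with $\aa_0=\aa$ is the small but essential trick that unmasks it.
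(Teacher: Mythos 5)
Your proof is correct, and the underlying mechanism — reducing the independence claim to a Vandermonde determinant built from the distinct images $\ss_i(\aa)$ — is the same one the paper uses. The implementation is genuinely different in a way worth noting. The paper enlarges the $k\times d$ coefficient matrix $A$ of your system to a full $d\times d$ matrix $\widehat{A}$ built from \emph{all} $d$ roots $\aa_1,\ldots,\aa_d$ of the minimum polynomial (not just those realised by your $\ss_i$), shows $\widehat{A}$ has independent columns by observing that a column dependency yields a polynomial of degree $\leq d$ taking the same value at $d+1$ distinct points (Lemma \ref{vectorspaces2:polynomials_same}), and then passes from invertibility of $\widehat{A}$ to independence of the rows of the submatrix $A$. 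This detour implicitly requires working over a splitting field of $f$, since not all of the roots $\aa_1,\ldots,\aa_d$ need lie in $E$; the paper glosses over this. Your $c_0:=-\sum_{i\geq 1} c_i$ substitution avoids the enlargement entirely: you stay with the $k$ rows you actually have, whose entries all live in $E$, convert the would-be row dependency into the clean relation $\sum_{i=0}^{k}c_i\aa_i^j=0$, and invoke the Vandermonde determinant of Exercise \ref{vandermonde} on the $k+1\leq d$ distinct values $\aa_0,\ldots,\aa_k$, all of which are in $E$. What the paper's version buys is a standard and reusable template (columns independent $\Rightarrow$ matrix invertible $\Rightarrow$ any row-submatrix has independent rows); what yours buys is directness — the Vandermonde structure is exposed in the original system rather than in an auxiliary one, it is self-contained within $E$, and it works without assuming the Galois group achieves every root of $f$.
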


\begin{proof}
By Theorem D we have a basis $\{1,\aa,\aa^2,\ldots,\aa^d\}$ for $E$ over $F$ where
the minimum polynomial $f$ of $\aa$ over $f$ has degree $d+1$. Any
$x\in E$ thus has the 
form 
$$
x=x_0+x_1\aa+x_2\aa^2+\cdots+x_d\aa^d,
$$
for some $x_i\in F$. By the Extension Theorem, the elements of the Galois group
send $\aa$ to roots of $f$. Suppose these roots are 
$\{\aa=\aa_0,\aa_1,\ldots,\aa_{d}\}$
where $\ss_i(\aa)=\aa_i$. Then $x$ satisfies $\ss_i(x)=x$ if and only if,
$$
(\aa_0-\aa_i)x_1+(\aa_0^2-\aa_i^2)x_2+\cdots+(\aa_0^d-\aa_i^d)x_d=0.
$$
Thus we have a system of equations $Ax=0$ where the matrix of coefficients $A$ 
is made up of rows from the larger $d\times d$ matrix $\widehat{A}$ given by,
$$
\widehat{A}=
\left(\begin{array}{cccc}
\aa_0-\aa_1&\aa_0^2-\aa_1^2&\cdots&\aa_0^d-\aa_1^d\\
\aa_0-\aa_2 
\vrule width 0mm height 5mm depth 0mm
\vrule width 2mm height 0mm depth 0mm
&\aa_0^2-\aa_2^2
\vrule width 2mm height 0mm depth 0mm
&\cdots
\vrule width 2mm height 0mm depth 0mm
&\aa_0^d-\aa_2^d\\
\vdots&\vdots&&\vdots\\
\aa_0-\aa_d&\aa_0^2-\aa_d^2&\cdots&\aa_0^d-\aa_d^d\\
\end{array}\right)
$$
Let $\widehat{A}b=0$ for some vector $b\in E^n$, so that
$$
b_0\aa_0+b_1\aa_0^2+\cdots+b_d\aa_0^d=b_0\aa_i+b_1\aa_i^2+\cdots+b_d\aa_i^d,
$$
for each $1\leq i\leq d$. Thus if $g=b_0x+b_1x^2+\cdots+b_dx^d$, then we
have $g(\aa_0)=g(\aa_1)=g(\aa_2)=\cdots=g(\aa_d)=a$, say. The degree $d$
polynomial $g-a$ thus agrees with the zero polynomial at $d+1$ distinct values, hence
by Lemma \ref{vectorspaces2:polynomials_same} must be the zero
polynomial, and so all the $b_i$ are 
zero. The columns of $\widehat{A}$ are thus independent, hence so are
the rows, and thus also the rows of $A$.
\qed
\end{proof}


\section{The Fundamental Theorem of Galois Theory}
\label{galois.correspondence}

According to Theorem E, a $z\in\C$ is constructible when there is a
sequence of extensions:
$$
\Q=K_0\subseteq K_1\subseteq K_2\subseteq\cdots\subseteq K_n,
$$
with each $[K_{i+1}:K_i]\leq 2$ and $\Q(z)\subset K_n$.
To show that $z$ can actually {\em be\/} constructed, we need to find
these $K_i$, and so 
we need to understand the fields sandwiched between $\Q$ and $\Q(z)$. 
In this section we prove the theorem that gives us that knowledge.

\paragraph{\hspace*{-0.3cm}}
We will need a picture of the fields sandwiched in an extension, analogous to the 
picture of the subgroups of a group in Section \ref{groups.stuff}. 


\begin{definition}[intermediate fields and their lattice]
Let $F\subseteq E$ be an extension. Then $K$ is an intermediate field
when $K$ is an extension of $F$ and $E$ is an extension of $K$:
ie: $F\subseteq K\subseteq E$. The lattice of intermediate fields is a 
diagram depicting them and the inclusions between them. 
If $F\subseteq K_1\subseteq K_2\subseteq E$ they appear in
the diagram like so:
$$
\begin{pspicture}(0,0)(2,2)
\rput(1,.3){$K_1$}
\rput(1,1.7){$K_2$}
\psline(1,.6)(1,1.4)
\end{pspicture}
$$
At the very base of the diagram is $F$ and at the apex is $E$. Denote the lattice by $\LL(E/F)$.
\end{definition}

\paragraph{\hspace*{-0.3cm}}
\label{galois:correspondence:condition}
From now on we will work in the following situation: 
$F\subseteq E$ is a finite extension such that:
\begin{description}
\item[(\dag)]  Every irreducible polynomial over $F$ that has a root in $E$
  has all its roots in $E$, and these roots are distinct.
\end{description}
We saw in Exercise \ref{ex4.30} that if $F$
has characteristic $0$ then any irreducible polynomial over $F$ has
distinct roots. This is also true if $F$ is a finite field, although we omit the
proof here. 

\begin{galoiscorrespondenceA}
Let $F\subseteq E$ be a finite extension satisfying $(\dag)$
and $G=\gal(E/F)$ its Galois group. Let $\LL(G)$
and $\LL(E/F)$ be the subgroup and intermediate field lattices.
\begin{enumerate}
\item For any subgroup $H$ of $G$, let 
$$
E^H=\{\ll\in E\,|\,\ss(\ll)=\ll\text{ for all }\ss\in H\}.
$$
Then $E^H$ is an intermediate field, called the fixed field of $H$.
\item For any intermediate field $K$, the group $\gal(E/K)$ is a subgroup of $G$.
\item The maps $\Psi:H\mapsto E^H$ 
and $\Phi:K\mapsto \gal(E/K)$ are mutual inverses, hence bijections
$$
\Psi:\LL(G)\rightleftarrows \LL(E/F):\Phi
$$
that reverse order:
$$
H_1 \subset H_2
\stackrel{\Psi}{\longrightarrow} 
E^{H_2}\subset E^{H_1}
\qquad
K_2\subset K_1
\stackrel{\Phi}{\longrightarrow} 
\gal(E/K_1)\subset\gal(E/K_2)
$$
\item The degree of the extension $E^H\subseteq E$ is equal to the order
$|H|$ of the subgroup $H$. Equivalently, the degree of the extension
$F\subseteq E^H$ is equal
to the index $[G:H]$.
\end{enumerate}
\end{galoiscorrespondenceA}

The correspondence in one sentence: turning the lattice of subgroups upside down
gives the lattice of intermediate fields,
and vice-versa. See Figure \ref{fig:galoiscorrespondence1:schematic}.

\begin{figure}
  \centering
\begin{pspicture}(0,0)(14,5)
\rput(0,0){
\rput(3.5,2.5){\BoxedEPSF{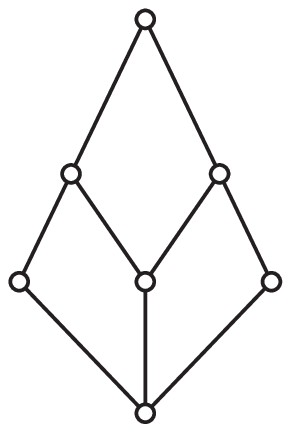 scaled 1000}}
\rput(3.5,4.8){$E$}\rput(3.5,0.15){$F$}
\rput(1.8,2.9){$E^{H_1}=K_1$}\rput(1.3,1.8){$E^{H_2}=K_2$}
\rput(4.7,4){$\LL(E/F)$}\rput(2.3,2.4){$n$}
}
\rput(2,0){
\rput(8.5,2.5){\BoxedEPSF{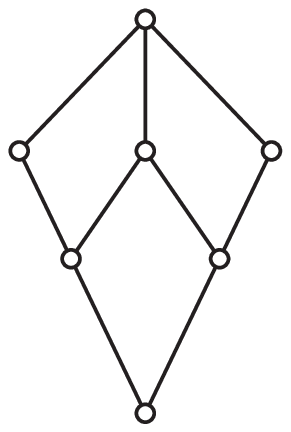 scaled 1000}}
\rput(8.5,4.8){$G=\gal(E/F)$}\rput(8.5,0.15){$\{\id\}$}
\rput(11.15,3.2){$H_2=\gal(E/K_2)$}\rput(10.6,2){$H_1=\gal(E/K_1)$}
\rput(7.2,4){$\LL(G)$}\rput(9.65,2.5){$n$}
}
%
\rput(7,0.5){$[E^{H_1}:E^{H_2}]=n=[H_2:H_1]$}
\rput(0.25,1){
\psline[linewidth=0.9pt]{->}(5,2)(8.5,2)
\psline[linewidth=0.9pt]{<-}(5,1.5)(8.5,1.5)
\rput(6.75,1.75){Galois correspondence}
\rput(6.75,2.25){$\Phi:X\mapsto\gal(E/X)$}
\rput(6.75,1.25){$\Psi:Y\mapsto E^Y$}
}
\end{pspicture}    
  \caption{Schematic of the Galois correspondence.}
  \label{fig:galoiscorrespondence1:schematic}
\end{figure}

The upside down nature of the correspondence may seem puzzling, but it
is just the nature of imposing conditions.
If $H$ is a subgroup,
the fixed field $E^H$ is the set of solutions in $E$ to the system of equations
\begin{equation}
  \label{eq:7}
\ss(x)=x, \text{ for }\ss\in H.  
\end{equation}
The more equations, the greater the number of conditions being imposed on
$x$, hence the smaller the number of solutions. Thus, larger subgroups
$H$ should correspond
to smaller intermediate fields $E^H$ and vice-versa. That the
correspondence is exact -- increasing the size of $H$ decreases the
size of $E^H$ -- 
will follow from Section \ref{linear.algebra2} and the fact that the
equations (\ref{eq:7}) are independent.

\begin{proof}
In the situation described in the Theorem the
extension is of the form 
$F\subseteq F(\aa)$ for some
$\aa\in E$ algebraic over $F$. The minimum polynomial $f$ of $\aa$
over $F$ splits in $E$ by $(\dag)$. On the other hand any
field containing the roots of $f$ contains $F(\aa)=E$. Thus $E$ is the
splitting field of $f$.  

\begin{enumerate}
\item \emph{$E^H$ is an intermediate field:} we
have $E^H\subset E$ by definition, and  
$F\subset E^H$ as every element of
$G$ -- so in particular every element of $H$ -- fixes $F$. If $\ll,\mu\in E^H$
then $\ss(\ll+\mu)=\ss(\ll)+\ss(\mu)=\ll+\mu$, so that $\ll+\mu\in E^H$,
and similarly $\ll\mu,1/\ll\in E^H$. 

\item \emph{$\gal(E/K)$ is a subgroup:} if an
automorphism of $E$ fixes the intermediate field $K$ pointwise, then 
it fixes the field $F$ pointwise, and thus $\gal(E/K)\subset
\gal(E/F)$. If $\ss,\tau$ are automorphisms fixing $K$
then so is $\ss\tau^{-1}$. We thus have a subgroup.

\item \emph{$\Phi$ and $\Psi$ reverse order:} if $\ll$ is fixed by 
  every automorphism in $H_2$, then it is fixed by every  
automorphism in $H_1$, so that $E^{H_2}\subset E^{H_1}$. If $\ss$ fixes every 
element of $K_1$ pointwise then it fixes every element of $K_2$ pointwise,
so that $\gal(E/K_1)\subset\gal(E/K_2)$.

\item \emph{The composition $\Phi\Psi:H\rightarrow E^H\rightarrow \gal(E/E^H)$
    is the identity:} by definition every element of $H$ fixes $E^H$ pointwise, and since 
$\gal(E/E^H)$ consists of {\em all\/} the automorphisms of $E$ that fix
$E^H$ pointwise, we have $H\subset\gal(E/E^H)$. In fact, both 
$H$ and $\gal(E/E^H)$ have the same fixed field, ie: $E^{\gal(E/E^H)}=E^H$.
To see this, any $\ss\in\gal(E/E^H)$ fixes $E^H$ pointwise by definition,
so $E^H\subset E^{\gal(E/E^H)}$. On the other hand
$H\subset \gal(E/E^H)$ and $\Psi$ reverses order, so
$E^{\gal(E/E^H)}\subset E^H$. 

By the results of Section \ref{linear.algebra2}, the elements of the fixed field
$E^{\gal(E/E^H)}$  are obtained by solving the system of linear equations
$\ss(x)=x$ for all $\ss\in\gal(E/E^H)$, and these equations are independent.
In particular, a proper subset of these equations has 
a proper superset of solutions. 
We already have that $H\subset\gal(E/E^H)$. Suppose 
$H$ is a proper subgroup of $\gal(E/E^H)$. 
The fixed field $E^H$ would then properly contain the fixed field $E^{\gal(E/E^H)}$.
As this contradicts the previous paragraph, we have
$H=\gal(E/E^H)$.

\item \emph{The composition $\Psi\Phi:K\rightarrow\gal(E/K)\rightarrow
    E^{\gal(E/K)}$ is the identity:} let $E=K(\beta)$ and suppose
  the minimum polynomial $g$ of $\beta$ over $K$ has degree $d+1$ with
  roots $\{\beta=\beta_0,\ldots,\beta_d\}$. $E$
  thus has basis $\{1,\beta,\ldots,\beta^d\}$ over $K$ and
  $G=\gal(E/K)$ has elements $\{\id=\ss_0,\ldots,\ss_d\}$ by Theorem
  G, labelled so that $\ss_i(\beta)=\beta_i$. An element $x\in E$ has the form
$$
x=x_0+x_1\beta+\cdots+x_d\,\beta^d
$$
with $x\in E^G$ exactly when $\ss_i(x)=x$ for all $i$, i.e. when 
$$
x_1(\bb-\bb_i)+\cdots+x_d(\bb^d-\bb_i^d)=0,
$$
a homogenous system of $d$ equations in $d$ unknowns. The system has
coefficients given by the matrix $\hat{A}$ of Theorem
\ref{theorem:linearalgebra2} (but with $\bb$'s instead of $\aa$'s) and
hence, by the argument given there, has the unique solution
$x_1=\cdots=x_d=0$. Thus $x=x_0\in K$ and so $E^{\gal(E/K)}=K$. 

\item As $E$ is a splitting field we can apply
  Theorem G to get $|\gal(E/E^H)|=[E:E^H]$, 
where $\gal(E/E^H)=H$ gives $|H|=[E:E^H]$.
\qed
\end{enumerate}
\end{proof}

\paragraph{\hspace*{-0.3cm}}
Before an example, a little house-keeping:
the condition $(\dag)$ in \ref{galois:correspondence:condition} can be
replaced by an easier one to verify:

\begin{proposition}
  \label{proposition:galois_extensions}
Let $F\subset E$ be a finite extension such that every irreducible
polynomial over $F$ has distinct roots. Then the following are
equivalent:
\begin{enumerate}
\item Every irreducible polynomial over $F$ that has a root in $E$ has
  all its roots in $E$.
\item $E=F(\aa)$ and the minimum polynomial of $\aa$ over $F$ splits
  in $E$.
\end{enumerate}
\end{proposition}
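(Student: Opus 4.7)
The plan is to prove (1) $\Rightarrow$ (2) by invoking the primitive element theorem, and (2) $\Rightarrow$ (1) by a splitting-field uniqueness / dimension-counting argument that leverages the Extension Theorem (Theorem F) and Theorem \ref{galois_groups:number_of_extensions}. I expect the second direction to be the substantive one.

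For (1) $\Rightarrow$ (2): Since $F\subseteq E$ is a finite extension and irreducible polynomials over $F$ have distinct roots, Theorem \ref{finite.simple} (together with the comment immediately following it, which covers finite fields as well) gives $E = F(\aa)$ for some $\aa\in E$. Let $f$ be the minimum polynomial of $\aa$ over $F$. Then $f$ is irreducible over $F$ and has the root $\aa\in E$, so (1) forces every root of $f$ to lie in $E$, i.e.\ $f$ splits in $E$.

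For (2) $\Rightarrow$ (1): Fix an algebraic closure $\Omega$ of $F$ containing $E$. Let $g\in F[x]$ be irreducible with a root $\bb\in E$; we may take $g$ monic so it is the minimum polynomial of $\bb$ over $F$. Let $\bb'\in\Omega$ be any other root of $g$. By Theorem F there is an $F$-isomorphism $\tau\colon F(\bb)\to F(\bb')$ with $\tau(\bb)=\bb'$ and $\tau|_F=\id$. Because $E=F(\aa)$ contains $\bb$ and $f$ (the minimum polynomial of $\aa$ over $F$) splits in $E$ by hypothesis, $E$ is the splitting field of $f$ over $F(\bb)$. Now let $L\subseteq \Omega$ be the splitting field of $\tau^*f=f$ over $F(\bb')$. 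Since $\tau$ fixes $F$ pointwise, Theorem \ref{galois_groups:number_of_extensions} shows that $\tau$ extends to an isomorphism $\ss\colon E\to L$, giving in particular $[L:F]=[E:F]$.

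The main obstacle is now the conclusion that $\bb'\in E$, and this is where the geometry of $L$ closes the argument. Since $L$ contains every root of $f$, it contains $\aa$, and hence $L\supseteq F(\aa)=E$. Combining $L\supseteq E$ with the equality of finite degrees $[L:F]=[E:F]$ forces $L=E$. Therefore $\bb'\in F(\bb')\subseteq L=E$, so every root of $g$ lies in $E$, which is exactly condition (1). \qed
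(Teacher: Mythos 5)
Your proof is correct; the substantive direction $(2)\Rightarrow(1)$ is handled by a genuinely different method from the paper's. The paper first establishes $E^G=F$ for $G=\gal(E/F)$ (by re-running part~5 of the Galois correspondence proof with $K=F$), then forms the orbit polynomial $g=\prod_{\ss\in G}\bigl(x-\ss(\aa)\bigr)$ over the Galois orbit of a root $\aa$ of the given irreducible $p$; since the coefficients of $g$ are $G$-invariant they lie in $E^G=F$, and irreducibility forces $p\mid g$, putting every root of $p$ among the $\ss(\aa)\in E$. You instead treat one conjugate $\bb'$ of $\bb$ at a time: Theorem~F gives the $F$-isomorphism $F(\bb)\to F(\bb')$; Theorem~\ref{galois_groups:number_of_extensions} (legitimately invoked, since you have verified $E$ is a splitting field of $f$ over $F(\bb)$, $\tau^*f=f$, and $f$ has distinct roots by hypothesis) lifts it to an isomorphism $E\to L$ onto a splitting field $L$ of $f$ over $F(\bb')$; and the inclusion $L\supseteq F(\aa)=E$ together with the degree equality $[L:F]=[E:F]$ then forces $L=E$ and $\bb'\in E$. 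Your route bypasses the fixed-field computation $E^G=F$ and the linear algebra underlying it, at the cost of a fixed algebraic closure and the isomorphism-counting theorem; the paper's is more in the spirit of ``Galois orbits produce $F$-polynomials.'' Both are standard normality arguments for splitting fields. For $(1)\Rightarrow(2)$ you cite Theorem~\ref{finite.simple} to justify $E=F(\aa)$, which the paper's terse proof leaves implicit; that is a fair completion of a step the paper glosses over.
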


\begin{proof}
$(1)\Rightarrow (2)$: the minimum polynomial is
irreducible over $F$ with root $\aa\in F(\aa)=E$, hence splits by (1).

$(2)\Rightarrow (1)$: apply the argument of part 5 of the proof of the
Galois correspondence to $K=F$ to get $E^{G}=F$ for $G=\gal(E/F)$. Suppose that
$p\in F[x]$ is irreducible over $F$ and has a root $\aa\in E$ and let
$\{\aa=\aa_1,\ldots,\aa_n\}$ be the distinct elements of the set
$\{\ss(\aa):\ss\in G\}$. The polynomial $g=\prod (x-\aa_i)$ has roots
permuted by the $\ss\in G$, hence its coefficients are fixed by the
$\ss\in G$, i.e. $g$ is a polynomial over $E^G=F$. Both $p$ and $g$
have factor $x-\aa$, hence their gcd is not $1$. As $p$ is irreducible
it must then divide $g$, hence all it roots lie in $E$. 
\qed
\end{proof}

\paragraph{\hspace*{-0.3cm}}
\label{galois:correspondence:example1}
Now to our first example. In Section \ref{galois.groups} we revisited
the example of Section \ref{lect1}, where for $\aa=\sqrt[3]{2}$ and 
$\ww=\frac{1}{2}+\frac{\sqrt{3}}{2}\imag$ we had
$$
G=\gal(\Q(\aa,\ww)/\Q)=\{\id,\ss,\ss^2,\tau,\ss\tau,\ss^2\tau\},
$$
with $\ss(\aa)=\aa\ww,\ss(\ww)=\ww$ and $\tau(\aa)=\aa,\tau(\ww)=\ww^2$. 

In \ref{vector:spacesI:minimum:polynomial} we showed that
$\Q(\aa,\ww)=\Q(\aa+\ww)$ with the minimum 
polynomial of $\aa+\ww$ over $\Q$ having all its roots in
$\Q(\aa,\ww)$. Condition $(\dag)$ thus holds. The subgroup lattice
$\LL(G)$ is shown on the left in Figure
\ref{fig:galois:correspondence:example1} -- adapted from Figure 
\ref{fig:groups2:subgroup_lattice_S_3}. 
Applying the Galois Correspondence then gives the lattice $\LL(E/F)$ of intermediate
fields on the right of Figure \ref{fig:galois:correspondence:example1}
with $F_4$ the fixed field of $\{\id,\ss,\ss^2\}$ and the others the fixed
fields (in no particular order) of the three order two subgroups. By
part (4)
of the Galois correspondence, each of the extensions $F_i\subset\Q(\aa,\ww)$ has 
degree the order of the corresponding subgroup, so that $\Q(\aa,\ww)$ is a degree
three extension of $F_4$, and a degree two extension of the other intermediate
fields. 

Let $F_1$ be the fixed field of the subgroup $\{\id,\tau\}$; we will
explicitly describe its elements. The Tower law gives basis for 
$\Q(\aa,\ww)$ over $\Q$ the set
$$
\{1,\aa,\aa^2,\ww,\aa\ww,\aa^2\ww\},
$$
so that an $x\in\Q(\aa,\ww)$ has the form,
$$
x=a_0+a_1\aa+a_2\aa^2+a_3\ww+a_4\aa\ww+a_5\aa^2\ww,
$$
with the $a_i\in\Q$. The element $x$ is in $F_1$ if and only if $\tau(x)=x$
where,
\begin{equation*}
\begin{split}
\tau(x)&=a_0+a_1\aa+a_2\aa^2+a_3\ww^2+a_4\aa\ww^2+a_5\aa^2\ww^2\\
&=a_0+a_1\aa+a_2\aa^2+a_3(-1-\ww)+a_4\aa(-1-\ww)+a_5\aa^2(-1-\ww)\\
&=(a_0-a_3)+(a_1-a_4)\aa+(a_2-a_5)\aa^2-a_3\ww-a_4\aa\ww^2-a_5\aa^2\ww.
\end{split}
\end{equation*}
Equate coefficients (we are using a basis) to get:
$$
a_0-a_3=a_0, a_1-a_4=a_1,a_2-a_5=a_2,-a_3=a_3,-a_4=a_4\text{ and }-a_5=a_5.
$$
Thus, $a_3=a_4=a_5=0$ and $a_0,a_1,a_2$ are arbitrary. Hence
$$
x=a_0+a_1\aa+a_2\aa^2
$$ 
so is an element of $\Q(\aa)$. 
This gives $F_1\subseteq \Q(\aa)$. On the other hand, $\tau$ fixes $\Q$
pointwise and fixes $\aa$, hence fixes $\Q(\aa)$ pointwise, giving
$\Q(\aa)\subseteq F_1$ and so $F_1=\Q(\aa)$. 

The rest of the picture is described in Exercise
\ref{galois:correspondence:exercise10}.  

\begin{figure}
  \centering
\begin{pspicture}(0,0)(14,6)
\rput(-0.2,0){
\rput(3.5,3){\BoxedEPSF{galois12.10b.eps scaled 900}}
\rput(3.1,0.75){$\{\id\}$}
\rput*(0.5,2.5){$\{\id,\tau\}$}
\rput*(2.25,2.5){$\{\id,\ss\tau\}$}
\rput*(4,2.5){$\{\id,\ss^2\tau\}$}
\rput*(6.2,3.7){$\{\id,\ss,\ss^2\}$}
\rput(4.3,5.3){$\gal(\Q(\aa,\ww)/\Q)$}
}
\rput(0.2,0){
\rput(10.5,3){\BoxedEPSF{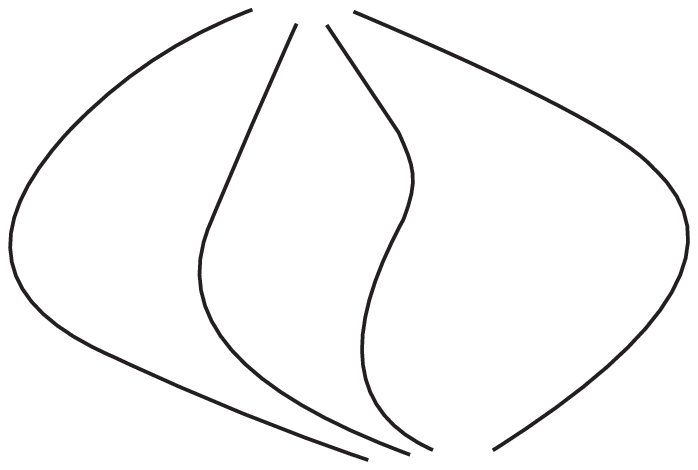 scaled 900}}
\rput(11.3,0.75){$\Q$}
\rput*(7.55,3.5){$F_1=\Q(\aa)$}
\rput*(9.3,3.5){$F_2$}
\rput*(11,3.5){$F_3$}
\rput*(13.3,2.3){$F_4$}
\rput(10,5.3){$\Q(\aa,\ww)$}
}
\end{pspicture}
\caption{The lattice of subgroups of $\gal(\Q(\aa,\ww)/\Q)$ with
  $\aa=\sqrt[3]{2}$ and $\ww=\frac{1}{2}+\frac{\sqrt{3}}{2}\imag$
  \emph{(left)} and the corresponding lattice of intermediate fields
  of the extension $\Q\subseteq\Q(\aa,\ww)$ \emph{(right)}.}
  \label{fig:galois:correspondence:example1}
\end{figure}

\paragraph{\hspace*{-0.3cm}}
Recall that a subgroup $N$ of a group $G$ is normal when $gNg^{-1}=N$
for all $g\in G$. This extra property possessed by normal subgroups
means they correspond to slightly special intermediate fields. 

Let $F\subseteq E$ be an extension with Galois group $\gal(E/F)$. Let
$F\subseteq K\subseteq E$ 
be an intermediate field and $\ss\in\gal(E/F)$. The image of $K$ by 
$\ss$ is another intermediate field, as on the left of Figure
\ref{fig:galois:correspondence:conjugate:subgroups}. Applying the
Galois correspondence gives subgroups $\gal(E/K)$ and $\gal(E/\ss(K))$ 
as on the right. Then:

\begin{proposition}\label{prop14.1}
$\gal(E/\ss(K))=\ss\gal(E/K)\ss^{-1}$
\end{proposition}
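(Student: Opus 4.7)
The plan is to prove the two inclusions $\ss\gal(E/K)\ss^{-1} \subseteq \gal(E/\ss(K))$ and $\gal(E/\ss(K)) \subseteq \ss\gal(E/K)\ss^{-1}$ by direct computation, using only the definitions of $\gal(E/K)$ as the automorphisms of $E$ fixing $K$ pointwise and $\ss(K) = \{\ss(\mu) : \mu \in K\}$.

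For the forward inclusion, I would take a typical $\tau \in \gal(E/K)$ and show $\ss\tau\ss^{-1}$ fixes $\ss(K)$ pointwise. A general element of $\ss(K)$ has the form $\ss(\mu)$ for some $\mu \in K$, and then
$$
\ss\tau\ss^{-1}(\ss(\mu)) = \ss\tau(\mu) = \ss(\mu),
$$
since $\tau$ fixes $\mu \in K$. One also needs the trivial observation that $\ss\tau\ss^{-1}$ lies in $\gal(E/F)$, i.e.\ fixes $F$ pointwise, which is immediate since $\ss,\tau,\ss^{-1}$ each do.

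For the reverse inclusion, I would take $\eta \in \gal(E/\ss(K))$ and show $\ss^{-1}\eta\ss \in \gal(E/K)$, because then $\eta = \ss(\ss^{-1}\eta\ss)\ss^{-1} \in \ss\gal(E/K)\ss^{-1}$. For $\mu \in K$, the element $\ss(\mu)$ lies in $\ss(K)$, so
$$
\ss^{-1}\eta\ss(\mu) = \ss^{-1}(\ss(\mu)) = \mu,
$$
as required. There is no real obstacle here; the proposition is essentially a bookkeeping identity about conjugation by $\ss$, and the only subtlety is making sure both sides are interpreted inside $\gal(E/F)$ — but this is automatic because any element conjugate in $\gal(E/F)$ to something fixing $F$ still fixes $F$.
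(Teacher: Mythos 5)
Your proof is correct and takes essentially the same approach as the paper: the forward inclusion is computed identically, and your direct verification of the reverse inclusion simply unpacks the paper's shortcut of ``replace $\ss$ by $\ss^{-1}$.''
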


\begin{proof}
If $x\in\ss(K)$, then $x=\ss(y)$ for some $y\in K$. If $\tau\in\gal(E/K)$,
then $\ss\tau\ss^{-1}(x)=\ss\tau(y)=\ss(y)=x$,
so that $\ss\tau\ss^{-1}\in\gal(E/\ss(K))$,
giving $\ss\gal(E/K)\ss^{-1}\subseteq\gal(E/\ss(K))$. Replace $\ss$ by
$\ss^{-1}$ to get the reverse inclusion. 
\qed
\end{proof}

\begin{figure}
  \centering
\begin{pspicture}(0,0)(14,3)
\rput(-0.5,0){
\rput(4,1.5){\BoxedEPSF{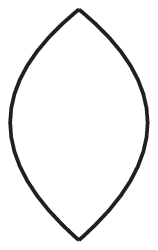 scaled 900}}
\rput*(4,2.5){$E$}
\rput*(3.35,1.5){$K$}
\rput*(4.65,1.5){$\ss(K)$}
\rput*(4,0.5){$F$}
}
\rput(0.5,0){
\rput(10,1.5){\BoxedEPSF{galois12.12.eps scaled 900}}
\rput*(10,2.5){$\gal(E/F)$}
\rput*(9.1,1.5){$\gal(E/K)$}
\rput*(10.9,1.5){$\gal(E/\ss(K))$}
\rput*(10,0.5){$\{\id\}$}
}
\psline[linewidth=0.9pt]{->}(5,1.5)(8.5,1.5)
\rput(6.75,1.75){Galois correspondence}
\rput(6.75,1.25){$X\mapsto\gal(E/X)$}
\end{pspicture}
\caption{}
  \label{fig:galois:correspondence:conjugate:subgroups}
\end{figure}

\begin{galoiscorrespondenceB}
Suppose we have the assumptions of the first part of the Galois
correspondence. If $K$ is an
intermediate field then
$\ss(K)=K$, for all $\ss\in\gal(E/F)$, if and only if $\gal(E/K)$ is a normal
subgroup of $\gal(E/F)$. In this case,
$$
\gal(E/F)/\gal(E/K)\cong \gal(K/F).
$$
\end{galoiscorrespondenceB}

\begin{proof}
If $\ss(K)=K$ for all $\ss$ then by Proposition \ref{prop14.1},
$\ss\gal(E/K)\ss^{-1}=\gal(E/\ss(K))=\gal(E/K)$ for all $\ss$, and so $\gal(E/K)$ is 
normal. Conversely, if $\gal(E/K)$ is normal then Proposition \ref{prop14.1} 
gives $\gal(E/\ss(K))=\gal(E/K)$ for all $\ss$, where $X\mapsto \gal(E/X)$ 
is a 1-1 map by the first part of the Galois correspondence. We thus have $\ss(K)=K$
for all $\ss$.

Define a map $\gal(E/F)\rightarrow\gal(K/F)$ by taking an automorphism $\ss$ of $E$
fixing $F$ pointwise and restricting it to $K$. We get an automorphism 
of $K$ as $\ss(K)=K$. The map is a homomorphism as the operation is composition 
in both groups. A $\ss$ is in the kernel if and only if
it restricts to the identity map on $K$ -- that is, fixes $K$
pointwise -- when restricted, which
happens if and only if $\ss$ is in $\gal(E/K)$. If $\ss$ is an automorphism of $K$
fixing $F$ pointwise then by Theorem F, it can be extended
to an automorphism of $E$ fixing $F$ pointwise. Thus any element of the Galois group
$\gal(K/F)$ can be obtained by restricting an element of $\gal(E/F)$
and the homomorphism is onto. The isomorphism follows by the first
isomorphism theorem. 
\qed
\end{proof}

\paragraph{\hspace*{-0.3cm}}
Here is a simple application:

\begin{proposition}
 Let $F\subseteq E$ be an extension satisfying the conditions of the
 Galois correspondence. If $F\subseteq K\subseteq E$ with $F\subseteq
 K$ an extension of degree two, then any $\ss\in\gal(E/F)$ sends $K$
 to itself.
\end{proposition}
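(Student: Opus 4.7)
The plan is to invoke part 4 of the Galois Correspondence (part 1) together with the fact that subgroups of index two are always normal, and then apply part 2 of the Galois correspondence.

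More concretely, first I would set $G=\gal(E/F)$ and $H=\gal(E/K)$. Part 2 of the first Galois correspondence tells us that $H$ is a subgroup of $G$. Part 4 applied to $H$ says that $[F:E^H]$ equals the index $[G:H]$; but $\Psi\Phi$ is the identity, so $E^H=K$, and thus $[K:F]=[G:H]$. Since $[K:F]=2$ by hypothesis, we conclude that $H$ has index two in $G$.

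The key group-theoretic observation is that any subgroup of index two is automatically normal. Here is the short argument, in case one wishes to include it: if $H$ has index two in $G$, then the two left cosets of $H$ are $H$ and $G\setminus H$, and the same is true of the right cosets; thus for any $g\in G$, $gH=Hg$, i.e. $gHg^{-1}=H$. (This is exactly the content of Exercise \ref{ex11.1}.) So $H=\gal(E/K)$ is normal in $G=\gal(E/F)$.

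Finally, apply the second part of the Galois correspondence: since $\gal(E/K)$ is normal in $\gal(E/F)$, we conclude that $\ss(K)=K$ for every $\ss\in\gal(E/F)$, which is the required conclusion. No step here is really an obstacle; the only subtlety is remembering that the Galois correspondence translates index of subgroups into degree of the corresponding subextension (part 4), which is exactly what makes ``degree two'' match up with ``index two'' and hence with ``normal''.
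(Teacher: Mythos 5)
Your argument is correct and follows essentially the same route as the paper's own (very terse) proof: translate degree two into index two via the Galois correspondence, invoke the fact that index-two subgroups are normal (Exercise~\ref{ex11.1}), and then apply the second part of the Galois correspondence. Your version simply spells out the intermediate steps the paper leaves implicit.
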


Applying the Galois correspondence (part 1), the subgroup $\gal(E/K)$ has index
two in $\gal(E/F)$, hence is normal by Exercise
\ref{ex11.1}. Now apply the Galois correspondence (part 2).

\subsection*{Further Exercises for Section \thesection}

\emph{In all these exercises, you can assume that the condition (\dag)
  of \ref{galois:correspondence:condition} holds.}

\begin{vexercise}\label{exam00_4}
Let $\aa=\sqrt[4]{2}\in\R$ and $\imag\in\C$, and consider the field
$\Q(\aa,\imag)\subset\C$. 
\hspace{1em}\begin{enumerate}
\item
Show that there are automorphisms $\sigma,\tau$ of
$\Q(\aa,\imag)$ such that
$$
\sigma(\imag)=\imag,\sigma(\aa)=\aa\,\imag,\tau(\imag)=-\imag,\mbox{ and }\tau(\aa)=\aa.
$$
Show that
$$
G=\{\id,\ss,\ss^2,\ss^3,\tau,\ss\tau,\ss^2\tau,\ss^3\tau\},
$$
are then {\em distinct\/} automorphisms of $\Q(\aa,i)$.
Show that $\tau\ss=\ss^3\tau$.
\item
Show that $\gal(\Q(\aa,\imag)/\Q)=G$ and that the lattice $\LL(G)$ is as
on the left of Figure \ref{fig:galois:correspondence:exercise20}. 
\item Find the subgroups $H_1,H_2$ and $H_3$ of $G$.
If the corresponding lattice of subfields is as shown on the right,
then express the fields $F_1$ and $F_2$ in the form
$\Q(\beta_1,\ldots,\beta_n)$ for $\beta_1,\ldots,\beta_n\in\C$. 
\end{enumerate}
\end{vexercise}

\begin{figure}
  \centering
\begin{pspicture}(14,6)
\rput(-0.25,0){
\rput(4,3){\BoxedEPSF{galois12.11a.eps scaled 900}}
\rput*(4,5.6){$G$}
\rput*(1.5,3.8){$\{\id,\ss^2,\tau,\ss^2\tau\}$}
\rput*(4,3.8){$\{\id,\ss,\ss^2,\ss^3\}$}
\rput*(6.45,3.8){$H_1$}
\rput*(1.5,2.1){$H_2$}
\rput*(2.7,2.1){$\{\id,\tau\}$}
\rput*(4,2.1){$\{\id,\ss^2\}$}
\rput*(5.2,2.1){$\{\id,\ss^3\tau\}$}
\rput*(6.5,2.1){$H_3$}
\rput*(4,0.4){$\{\id\}$}
}
\rput(0.25,0){
\rput(10,3){\BoxedEPSF{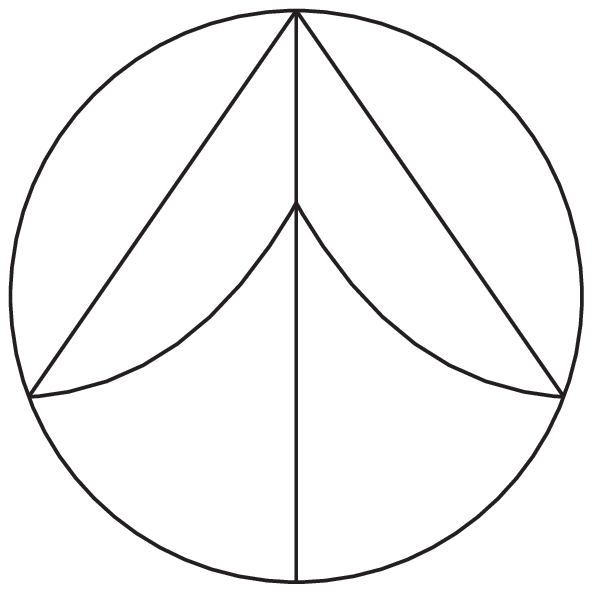 scaled 900}}
\rput*(10,5.6){$\Q(\aa,\imag)$}
\rput*(7.5,2.1){$F_2$}
\rput*(10,2.1){$\Q(\imag)$}
\rput*(12.5,2.1){$\Q(\imag\aa^2)$}
\rput*(7.5,3.8){$\Q(\aa\,\imag)$}
\rput*(8.7,3.8){$\Q(\aa)$}
\rput*(10,3.8){$F_1$}
\rput*(11.2,3.8){$\Q((1-\imag)\aa)$}
\rput*(12.85,3.8){$\Q((1+\imag)\aa)$}
\rput*(10,0.4){$\Q$}
}
%
%
\end{pspicture}
  \caption{Exercise \ref{exam00_4}: the lattice of subgroups of $\gal(\Q(\aa,\imag)/\Q)$ with
  $\aa=\sqrt[4]{2}$
  \emph{(left)} and the corresponding lattice of intermediate fields
  of the extension $\Q\subset\Q(\aa,\imag)$ \emph{(right)}.}
  \label{fig:galois:correspondence:exercise20}
\end{figure}

\begin{vexercise}\label{exam01_4}
\label{galois:correspondence:exercise20}
Let $\ww=\cos\frac{2\pi}{7}+\imag\sin\frac{2\pi}{7}\in\C$.
\begin{description}
\item[\hspace*{5mm}1.]
\parshape=2 0pt.8\hsize 0pt.8\hsize
Show that $\Q(\ww)$ is the splitting field of the polynomial
$$1+x+x^2+x^3+x^4+x^5+x^6.$$ 
and deduce that $|\gal(\Q(\ww)/\Q)|=6$.
\vadjust{\hfill\smash{\lower 30pt
\llap{
\begin{pspicture}(0,0)(2,4)
\rput(1,2){\BoxedEPSF{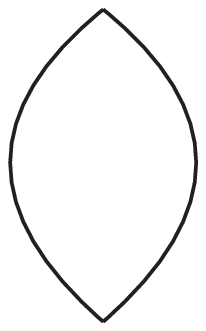 scaled 900}}
\rput*(1,3.3){$\Q(\ww)$}
\rput*(0.35,1.5){$F_1$}
\rput*(1.75,2.35){$F_2$}
\rput*(1,0.7){$\Q$}
\end{pspicture}
}}}\ignorespaces
Let $\ss\in\gal(\Q(\ww)/\Q)$ be such that $\ss(\ww)=\ww^3$.
Show that,
$$
\gal(\Q(\ww)/\Q)=\{\id,\ss,\ss^2,\ss^3,\ss^4,\ss^5\}.
$$
\item[\hspace*{5mm}2.]
\parshape=3 0pt\hsize 0pt\hsize 0pt\hsize
Using the Galois correspondence, show that 
the lattice of intermediate fields is as shown on the right,
where $F_1$ is a degree 2 extension of $\Q$ and $F_2$ a degree 3
extension. Find complex numbers $\beta_1,\ldots,\beta_n$ such that
$F_2=\Q(\beta_1,\ldots,\beta_n)$. 
\end{description}
\end{vexercise}

\begin{vexercise}
\label{galois:correspondence:exercise10}
Complete the lattice of intermediate fields from the example in
\ref{galois:correspondence:example1}: 
\begin{figure}[h]
  \centering
\begin{pspicture}(0,0)(14,5)
\rput(-3.6,-0.5){
\rput(10.5,3){\BoxedEPSF{galois12.10c.eps scaled 900}}
\rput(11.3,0.75){$\Q$}
\rput*(7.45,3.5){$\Q(\aa)$}
\rput*(9.,3.5){$\Q(\aa+\aa\ww)$}
\rput*(11.2,3.5){$\Q(\aa^2+\aa^2\ww)$}
\rput*(13.3,2.3){$\Q(\ww)$}
\rput(10,5.3){$\Q(\aa,\ww)$}
}
\end{pspicture}
\caption{The rest of the lattice of intermediate fields for the
  example in \ref{galois:correspondence:example1}}
  \label{fig:}
\end{figure}
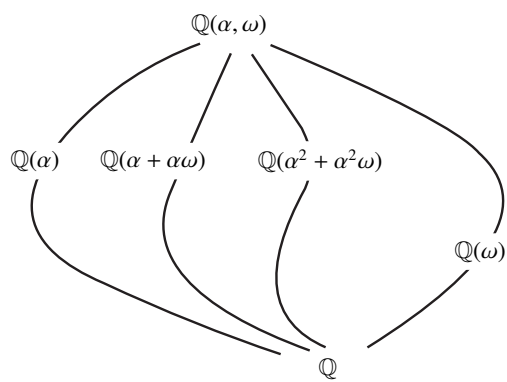
\end{vexercise}

\begin{vexercise}
\label{galois:correspondence:exercise30}
Let $\aa=\sqrt[6]{2}$ and $\ww=\frac{1}{2}+\frac{\sqrt{3}}{2}\imag$
and consider the field extension $\Q\subset \Q(\aa,\ww)$.
\begin{enumerate}
\item Find a basis for $\Q(\aa,\ww)$ over $\Q$ and show that $|\gal(\Q(\aa,\ww)/\Q)|=24$. 
\item Let $\ss,\tau\in\gal(\Q(\aa,\ww)/\Q)$ be such that 
$\tau(\aa)=\aa,\tau(\ww)=\ww^5$ and
$\ss(\aa)=\aa\ww,\ss(\ww)=\ww$.
Show that 
$$
H=\{\id,\ss,\ss^2,\ss^3,\ss^4,\ss^5,\tau,\tau\ss,\tau\ss^2,\tau\ss^3,\tau\ss^4,\tau\ss^5\},
$$
are then distinct elements in $\gal(\Q(\aa,\ww)/\Q)$.
\item Part of the subgroup lattice $\LL(G)$ is shown
on the left of Figure
\ref{fig:galois:correspondence:exercise35}. Fill in the corresponding
part of the lattice of intermediate 
fields on the right.
\end{enumerate}
\end{vexercise}

\begin{vexercise}\label{ex_lect10.2}
Let $\ww=\cos\frac{2\pi}{5}+\imag\sin\frac{2\pi}{5}$.
\begin{description}
\item[\hspace*{7mm}1.]
\parshape=2 0pt.8\hsize 0pt.8\hsize
Show that $\Q(\ww)$ is the splitting field of the polynomial
$1+x+x^2+x^3+x^4$ and deduce that $|\gal(\Q(\ww)/\Q)|=4$.
\vadjust{\hfill\smash{\lower 40pt
\llap{
\begin{pspicture}(0,0)(2,3)
\rput(1,1.5){\BoxedEPSF{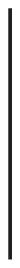 scaled 900}}
\rput*(1,2.75){$\Q(\ww)$}
\rput*(1,1.5){$F$}
\rput*(1,0.25){$\Q$}
\end{pspicture}
}}}\ignorespaces
\item[\hspace*{7mm}2.] Let $\ss\in\gal(\Q(\ww)/\Q)$ be such that $\ss(\ww)=\ww^2$.
Show that
$$
\gal(\Q(\ww)/\Q)=\{\id,\ss,\ss^2,\ss^3\}.
$$
Find the subgroup lattice $\LL(G)$ for
$G=\gal(\Q(\ww)/\Q)$.
\item[\hspace*{7mm}3.]
\parshape=2 0pt\hsize 0pt\hsize
Using the Galois correspondence, deduce that 
the lattice of intermediate fields is as shown on the right.
Find a complex number $\beta$ such that
$F=\Q(\beta)$. 
\end{description}
\end{vexercise}

\begin{vexercise}
\label{galois:correspondence:exercise50}
Consider the polynomial $f(x)=(x^2-2)(x^2-5)\in\Q[x]$. 
\begin{enumerate}
\item Show that $\Q(\kern-2pt\sqrt{2},\kern-2pt\sqrt{5})$ is the splitting field of $f$
over $\Q$ and that the Galois group $\gal(\Q(\kern-2pt\sqrt{2},$ $\kern-2pt\sqrt{5})/\Q)$ has order
four. (You can assume that if $a,b,c\in\Q$ satisfy
$a\kern-2pt\sqrt{2}+b\kern-2pt\sqrt{5}+c=0$ then $a=b=c=0$.)
\item Show that there are automorphisms $\ss,\tau$ of
$\Q(\kern-2pt\sqrt{2},\kern-2pt\sqrt{5})$ defined by
$\ss(\kern-2pt\sqrt{2})=-\kern-2pt\sqrt{2},\ss(\kern-2pt\sqrt{5})=\kern-2pt\sqrt{5}$ and
$\tau(\kern-2pt\sqrt{2})=\kern-2pt\sqrt{2},\tau(\kern-2pt\sqrt{5})=-\kern-2pt\sqrt{5}$. 
List the elements of the
Galois group $\gal(\Q(\kern-2pt\sqrt{2},\kern-2pt\sqrt{5})/\Q)$.
\item Complete the subgroup lattice on the left of Figure
  \ref{fig:galois:correspondence:exercise40} 
by listing the elements of $H$,
and use your answer to write the field $F$ in the form $\Q(\theta)$ for
some $\theta\in\C$.
\end{enumerate}
\end{vexercise}

\begin{figure}
  \centering
\begin{pspicture}(0,0)(14,6.5)
\rput(-0.5,0){
\rput(3.5,3.25){\BoxedEPSF{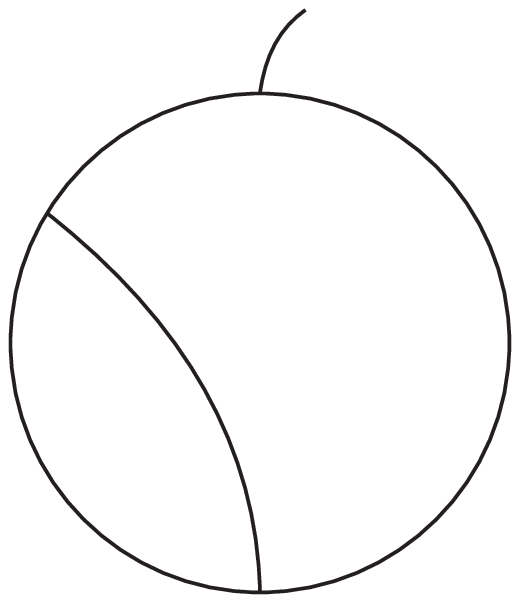 scaled 900}}
\rput*(3.5,5.1){$H$}
\rput*(1.5,4){$\{\id,\tau,\ss^3,\ss^3\tau\}$}
\rput*(5.8,3){$\{\id,\ss^2,\ss^4\}$}
\rput*(3.2,2){$\{\id,\ss^3\}$}
\rput*(1.5,2){$\{\id,\tau\}$}
\rput*(3.5,0.6){$\{\id\}$}
}
\rput(1,0){
\rput(10.5,3.25){\BoxedEPSF{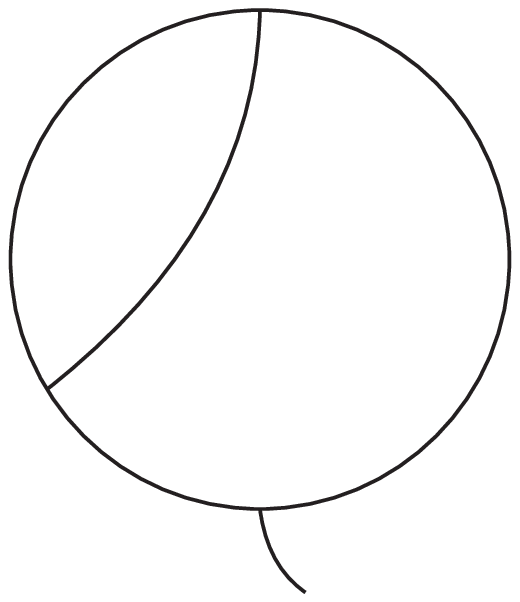 scaled 900}}
\rput*(10.5,5.9){$\Q(\aa,\ww)$}
\rput*(10.5,1.4){$F_1$}
\rput*(8.5,2.5){$F_2$}
\rput*(12.8,3.5){$F_3$}
\rput*(8.4,4.5){$F_4$}
\rput*(10.2,4.5){$F_5$}
}
\rput(0.25,3){
\psline[linewidth=0.9pt]{->}(5,2)(8.5,2)
\psline[linewidth=0.9pt]{<-}(5,1.5)(8.5,1.5)
\rput(6.75,1.75){Galois correspondence}
\rput(6.75,1.25){$X\mapsto\gal(E/X)$}
\rput(6.75,2.25){$Y\mapsto E^Y$}
}
\end{pspicture}
  \caption{Exercise \ref{galois:correspondence:exercise30}:
    $\aa=\sqrt[6]{2}$ and $\ww=\frac{1}{2}+\frac{\sqrt{3}}{2}\imag$} 
  \label{fig:galois:correspondence:exercise35}
\end{figure}


\section{Applications of the Galois Correspondence}
\label{galois.corresapps}

\subsection{Constructing polygons}

If $p$ is a prime number, then a regular $p$-gon can be constructed {\em only if\/}
$p$ is a Fermat prime of the form 
$$
2^{2^t}+1.
$$
This negative result was proved in Section \ref{ruler.compass2}, and required only the
degrees of extensions. We didn't need
any symmetries of fields.

Galois theory proper -- the interplay between fields and their Galois
groups -- allows us to prove positive results:

\begin{figure}
  \centering
\begin{pspicture}(0,0)(14,5)
\rput(-1,0){
\rput(3.5,2.5){\BoxedEPSF{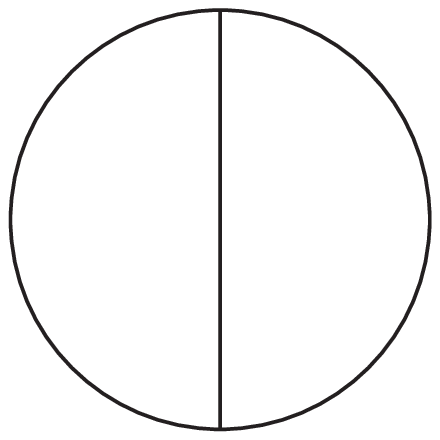 scaled 900}}
\rput*(3.5,4.4){$\gal(\Q(\kern-2pt\sqrt{2},\kern-2pt\sqrt{5})/\Q)$}
\rput*(1.6,2.5){$\{\id,\ss\}$}
\rput*(3.5,2.5){$\{\id,\tau\}$}
\rput*(5.4,2.5){$H$}
\rput*(3.5,0.6){$\{\id\}$}
}
\rput(1,0){
\rput(10.5,2.5){\BoxedEPSF{galois12.15.eps scaled 900}}
\rput*(10.5,4.4){$\Q(\kern-2pt\sqrt{2},\kern-2pt\sqrt{5})$}
\rput*(8.6,2.5){$\Q(\kern-2pt\sqrt{2})$}
\rput*(10.5,2.5){$\Q(\kern-2pt\sqrt{5})$}
\rput*(12.4,2.5){$F$}
\rput*(10.5,0.6){$\Q$}
}
\rput(0.25,0.75){
\psline[linewidth=0.9pt]{->}(5,2)(8.5,2)
\psline[linewidth=0.9pt]{<-}(5,1.5)(8.5,1.5)
\rput(6.75,1.75){Galois correspondence}
\rput(6.75,1.25){$X\mapsto\gal(E/X)$}
\rput(6.75,2.25){$Y\mapsto E^Y$}
}
\end{pspicture}
  \caption{Exercise \ref{galois:correspondence:exercise50}: subgroup
    and intermediate field lattice for the extension $\Q\subset\Q(\kern-2pt\sqrt{2},\kern-2pt\sqrt{5})$.} 
  \label{fig:galois:correspondence:exercise40}
\end{figure}

\begin{theorem}
\label{theorem:fermat_primes}
If $p=2^{2^t}+1$ is a Fermat prime then a regular $p$-gon can be constructed.
\end{theorem}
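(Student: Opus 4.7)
The plan is to reduce the problem to Theorem E by producing a suitable tower of degree-two extensions, and to construct that tower by applying the Galois correspondence to a chain of subgroups of the Galois group.

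First I would set $z = \cos(2\pi/p) + \imag\sin(2\pi/p)$. By Exercise \ref{ex7.40}, constructing a regular $p$-gon reduces to constructing the complex number $z$. By Exercise \ref{ex_lect3.2}, the minimum polynomial of $z$ over $\Q$ is the $p$-th cyclotomic polynomial $\Phi_p$, so $[\Q(z):\Q] = p - 1 = 2^{2^t}$. The field $\Q(z)$ contains all the roots $z, z^2, \ldots, z^{p-1}, 1$ of $x^p - 1$, so $\Q(z)$ is the splitting field over $\Q$ of $x^p - 1$ (equivalently, of $\Phi_p$). Since $\Q$ has characteristic zero, irreducible polynomials over $\Q$ have distinct roots (Exercise \ref{ex4.30}), and so by Proposition \ref{proposition:galois_extensions} the extension $\Q \subseteq \Q(z)$ satisfies condition $(\dag)$ of \ref{galois:correspondence:condition}, making it eligible for the Galois correspondence.

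Next I would use the computation from Section \ref{galois.groups} (see the discussion around Figure \ref{fig:galois_groups:cyclotomic}) that $G = \gal(\Q(z)/\Q)$ is cyclic of order $p - 1 = 2^{2^t}$. Writing $n = 2^t$, Exercise \ref{cyclicgroup.subgroups} gives that the cyclic group $G$ has a subgroup of every order dividing $|G|$; in particular, for each $0 \le k \le n$ there is a (unique) subgroup $H_k \le G$ with $|H_k| = 2^k$, and these can be chosen to nest:
$$
\{\id\} = H_0 \subset H_1 \subset H_2 \subset \cdots \subset H_n = G,
$$
with $[H_{k+1} : H_k] = 2$ at each stage.

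Now I would apply the Galois correspondence. Setting $K_i = \Q(z)^{H_{n-i}}$, part (3) of Part 1 of the correspondence reverses inclusions, so
$$
\Q = \Q(z)^{H_n} = K_0 \subseteq K_1 \subseteq \cdots \subseteq K_n = \Q(z)^{H_0} = \Q(z).
$$
By part (4), $[\Q(z) : K_i] = |H_{n-i}| = 2^{n-i}$, hence by the Tower Law
$$
[K_i : K_{i-1}] = \frac{[\Q(z) : K_{i-1}]}{[\Q(z) : K_i]} = \frac{2^{n-i+1}}{2^{n-i}} = 2.
$$
This is exactly the tower required by Theorem E (with $\Q(z) = K_n$), so $z$ is constructible, and hence so is the regular $p$-gon.

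The main obstacle is really just assembling the pieces correctly: verifying that $\Q \subseteq \Q(z)$ meets the hypotheses of the Galois correspondence, and recognizing that a cyclic $2$-group automatically supplies the chain of index-two subgroups needed to feed into Theorem E. Once the Galois group is known to be cyclic of $2$-power order, the degree-two tower is forced on us by the correspondence.
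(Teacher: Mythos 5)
Your proposal is correct and follows essentially the same route as the paper's proof: identify $\Q(z)$ as the splitting field of $\Phi_p$ with cyclic Galois group of order $2^{2^t}$, take the chain of subgroups of $2$-power order, apply the Galois correspondence to get a tower of degree-two extensions, and conclude via Theorem E.
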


\begin{proof}
By Theorem E we need a tower of fields,
$$
\Q\subset K_1\subset\cdots\subset K_n=\Q(\zeta),
$$
where $\zeta=\cos(2\pi/p)+\imag\sin(2\pi/p)$ and $[K_i:K_{i-1}]=2$. 
We will get the tower by analysing the Galois group $\gal(\Q(\zeta)/\Q)$ and applying the
Galois correspondence.
As $\Q(\zeta)$ is the splitting field over $\Q$ of the  
$p$-th cyclotomic polynomial 
$$
\Phi_p(x)=x^{p-1}+x^{p-2}+\cdots+x+1,
$$ 
we have by Theorem G:
$$
|\gal(\Q(\zeta)/\Q)|=[\Q(\zeta):\Q]=\deg\Phi_p=p-1=2^{2^t}=2^n.
$$
The roots of $\Phi$ are the powers $\zeta^k$, and these all lie in
$\Q(\zeta)$. We can thus apply the Galois correspondence by
Proposition \ref{proposition:galois_extensions}.
In Section \ref{galois.groups} we
showed that $\gal(\Q(\zeta)/\Q)$ is a cyclic group, and so by Exercise
\ref{cyclicgroup.subgroups}, 
there is a chain of subgroups 
$$
\{\id\}=H_0\subset H_1\subset\cdots\subset H_n=\gal(\Q(\zeta)/\Q),
$$
where the subgroup $H_i$ has order $2^i$. 
Explicitly, if
$\gal(\Q(\zeta)/\Q)=\{g,g^2,\ldots,g^{2^{n-1}},g^{2^n}=\id\}$ then
$$
\{\id\}
\subset
\{h_1,h_1^2=\id\}
\subset
\{h_2,h_2^2,h_2^3,h_2^4=\id\}
\subset
\cdots
\subset
\{h_{n-1},h_{n-1}^2,\ldots,h_{n-1}^{2^{n-1}}=\id\}
\subset
\gal(\Q(\zeta)/\Q)
$$
where $h_i=g^{2^{n-i}}$ and $H_i$ is the subgroup generated by $h_i$
The Galois correspondence thus gives a chain of fields,
$$
\Q=K_0\subset K_1\subset\cdots\subset K_n=\Q(\zeta),
$$
where $K_{n-i}$ is the fixed field $E^{H_i}$ of the subgroup $H_i$. We
have $2^i=[G:H_{n-i}]=[K_i:\Q]$, so by the tower law
$$
2^i=[K_i:\Q]=[K_i:K_{i-1}][K_{i-1}:\Q]=[K_i:K_{i-1}]2^{i-1}
$$
and hence $[K_i:K_{i-1}]=2$ as desired.
\qed
\end{proof}

Theorem \ref{theorem:fermat_primes} and \ref{constructions2:pgons} then give:

\begin{corollary}
  If $p$ is a prime then a $p$-gon can be constructed if and only if
  $p=2^{2^t}+1$ is a Fermat prime. 
\end{corollary}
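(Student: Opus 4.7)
The Corollary is an immediate synthesis of two results already in place, one for each direction of the biconditional, so the plan is simply to assemble them with a brief check that together they cover both implications cleanly.

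For the ``only if'' direction, I would appeal to the discussion in \ref{constructions2:pgons}. There it was shown via Corollary \ref{corollary:construcitble_necessary} that if the $p$-gon is constructible then $[\Q(\zeta):\Q]$ must be a power of $2$, where $\zeta=\cos(2\pi/p)+\imag\sin(2\pi/p)$. Since the $p$-th cyclotomic polynomial $\Phi_p$ is the minimum polynomial of $\zeta$ over $\Q$ by Exercise \ref{ex_lect3.2}, this degree is $p-1$, forcing $p-1=2^n$ for some $n$. The elementary factorisation argument given in \ref{constructions2:pgons} then shows that $n$ itself must be a power of $2$, so $p=2^{2^t}+1$ is a Fermat prime.

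For the ``if'' direction, I would cite Theorem \ref{theorem:fermat_primes}, which produces exactly the tower of quadratic extensions required by Theorem E to realise $\zeta$, and hence the $p$-gon (via Exercise \ref{ex7.40}), as constructible.

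There is no real obstacle here beyond bookkeeping: the two propositions are already logical converses on the nose, and the connecting equivalence between constructibility of the $p$-gon and constructibility of $\zeta$ (Exercise \ref{ex7.40}) is used in the same way in both directions. I would keep the write-up to a single short paragraph that states the two implications and points to the earlier results, without repeating any of the arguments.
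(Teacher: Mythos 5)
Your proposal matches the paper's approach exactly: the paper itself presents this corollary as an immediate consequence of Theorem \ref{theorem:fermat_primes} (the ``if'' direction) together with the discussion in \ref{constructions2:pgons} (the ``only if'' direction), which is precisely the synthesis you describe.
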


\begin{corollary}
If $n=2^kp_1p_2\ldots p_m$ with the $p_i$ Fermat primes, then a regular $n$-gon can be 
constructed.
\end{corollary}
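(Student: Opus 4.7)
The plan is to assemble the result from three ingredients already in hand: the previous corollary (a regular $p$-gon can be constructed for every Fermat prime $p$), Exercise \ref{ex7.50} (if an $n$-gon and an $m$-gon are both constructible and $\gcd(n,m)=1$, then so is an $nm$-gon), and Exercise \ref{ex7.80} (an $n$-gon begets a $2^k n$-gon). For the hypothesis to mesh cleanly with Exercise \ref{ex7.50}, I would read $p_1,\ldots,p_m$ as \emph{distinct} Fermat primes, so that $n=2^k p_1\cdots p_m$ is genuinely the prime factorisation.

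First I would construct a regular $N$-gon for $N=p_1p_2\cdots p_m$ by induction on $m$. The base case $m=1$ is the previous corollary. For the inductive step, assume a regular $p_1\cdots p_{m-1}$-gon has been constructed; since $p_m$ is a prime distinct from each of $p_1,\ldots,p_{m-1}$, it is coprime to their product, and Exercise \ref{ex7.50} applied to this $p_1\cdots p_{m-1}$-gon and the $p_m$-gon produced by the previous corollary yields the desired $p_1\cdots p_m$-gon. If $m=0$, simply set $N=1$ and skip this step.

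To absorb the factor $2^k$, I would then apply Exercise \ref{ex7.80} to the $N$-gon just constructed, yielding a $2^k N$-gon, which is exactly the $n$-gon demanded. In the degenerate subcase $m=0$ with $k\geq 2$, I would instead start from a trivially constructible square (drawn from two perpendicular axes together with a unit length) and apply Exercise \ref{ex7.80}; the cases $k=0,1$ are either vacuous or a line segment.

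There is no serious obstacle here, because all the algebraic substance has already been spent in proving the previous corollary, where the cyclicity of $\gal(\Q(\zeta)/\Q)$ and the Galois correspondence were used to extract the tower of quadratic extensions required by Theorem E. The present corollary is then a purely combinatorial consequence: the set of integers $n$ for which a regular $n$-gon is constructible is closed under multiplication by $2$ and under coprime products, and contains every Fermat prime, so it contains every integer of the stated form.
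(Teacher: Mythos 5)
Your proof is correct and uses essentially the same ingredients as the paper's (the Fermat-prime corollary, Exercise \ref{ex7.50} for coprime products, and repeated bisection via Exercise \ref{ex7.80} for the factor $2^k$); the paper merely phrases the $2^k$-gon as a standalone construction by bisection and then merges it with the odd part via Exercise \ref{ex7.50}, whereas you assemble the odd part first and double afterwards. Your treatment of the degenerate cases $m=0$ and small $k$ is a welcome extra bit of care but does not change the substance.
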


\begin{proof}
A $2^k$-gon can be constructed by repeatedly bisecting angles, and thus
an $n$-gon, where $n$ has the form given, by Exercise \ref{ex7.50}.
\hfill$\Box$
\end{proof}

A little more Galois Theory, which we omit, gives the following
complete answer to what $n$-gons can be constructed:

\begin{theorem}
  An $n$-gon can be constructed if and only if $n=2^kp_1p_2\ldots p_m$ with the $p_i$ Fermat primes.
\end{theorem}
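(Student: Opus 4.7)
The plan is to establish the ``only if'' direction, since the ``if'' direction has just been handled by the previous corollary. Suppose an $n$-gon can be constructed. By Exercise \ref{ex7.40} this is equivalent to the constructibility of
$$
\zeta_n = \cos\frac{2\pi}{n} + \imag\sin\frac{2\pi}{n},
$$
and by Corollary \ref{corollary:construcitble_necessary} this forces $[\Q(\zeta_n):\Q]$ to be a power of $2$.

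Next, I would identify this degree with Euler's totient function $\varphi(n)$. The minimum polynomial of $\zeta_n$ over $\Q$ is the $n$-th cyclotomic polynomial $\Phi_n(x)$ (the monic polynomial whose roots are exactly the primitive $n$-th roots of unity), and its degree is $\varphi(n)$. The irreducibility of $\Phi_n$ over $\Q$ for general $n$ is the ``little more Galois theory'' referred to just above the theorem statement; Exercise \ref{ex_lect3.2} proves this only for $n=p$ prime, so I would quote the general case as the one technical ingredient to be supplied externally. Granting this, $\varphi(n) = [\Q(\zeta_n):\Q]$ must be a power of $2$.

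Then a short number-theoretic calculation finishes the job. Write $n = 2^k p_1^{a_1}\cdots p_m^{a_m}$ with the $p_i$ distinct odd primes. Multiplicativity of $\varphi$ gives
$$
\varphi(n) \;=\; \varphi(2^k)\,\prod_{i=1}^m (p_i-1)\,p_i^{a_i-1},
$$
where $\varphi(2^k)=2^{k-1}$ for $k\geq 1$ and $\varphi(2^0)=1$. For this product to be a power of $2$, each factor $p_i^{a_i-1}$ (a power of the odd prime $p_i$) must equal $1$, forcing $a_i = 1$; and each $p_i - 1$ must be a power of $2$, i.e.\ $p_i = 2^{t_i}+1$.

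Finally, the argument in \ref{constructions2:pgons} shows that if $t_i$ had an odd divisor $>1$ then $2^{t_i}+1$ would admit a non-trivial factorisation, contradicting primality of $p_i$. Hence each $t_i$ is itself a power of $2$, so each $p_i$ is a Fermat prime, and $n = 2^k p_1\cdots p_m$ has exactly the required form. The main obstacle in this proof is the one step we would have to import: establishing $[\Q(\zeta_n):\Q] = \varphi(n)$, i.e.\ the irreducibility of $\Phi_n$ over $\Q$ for composite $n$; the rest is just Corollary \ref{corollary:construcitble_necessary}, the totient formula, and the Fermat-prime observation already made in the text.
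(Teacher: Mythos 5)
The paper itself never proves this theorem: it says only ``A little more Galois Theory, which we omit, gives the following complete answer,'' so there is no author's proof to compare yours against. Your argument for the ``only if'' direction is the standard Gauss--Wantzel chain and it is correct. From Corollary \ref{corollary:construcitble_necessary} you get that $[\Q(\zeta_n):\Q]$ must be a power of $2$; identifying this degree with $\varphi(n)$ is equivalent to the irreducibility of $\Phi_n$ over $\Q$ for composite $n$, which the notes establish only for $n=p$ prime in Exercise \ref{ex_lect3.2}; you correctly single this out as the imported ingredient rather than quietly assuming it, and it is almost certainly what the author means by ``a little more Galois theory.'' The rest is sound: multiplicativity of $\varphi$ forces $a_i=1$ for each odd prime factor (so the $p_i$ come out distinct, which is needed for the ``if'' direction via Exercise \ref{ex7.50}) and forces each $p_i-1$ to be a power of $2$, and the argument in \ref{constructions2:pgons} (that $2^t+1$ can only be prime when $t$ is itself a power of $2$) pins $p_i$ down as a Fermat prime. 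You are also right that the ``if'' direction is exactly the preceding corollary, so there is nothing to add there.

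One small remark worth making explicit if you write this up: to apply Corollary \ref{corollary:construcitble_necessary} directly to $\varphi(n)$ you do not strictly need the full irreducibility of $\Phi_n$ for every composite $n$; it would be enough to know $[\Q(\zeta_{p^a}):\Q]=\varphi(p^a)=p^{a-1}(p-1)$ for odd prime powers $p^a$ with $a\geq 2$ (together with the prime case already in the notes), since constructing an $n$-gon lets you construct a $d$-gon for any $d\mid n$ and hence a $p^a$-gon. This is a slightly lighter import than full irreducibility of $\Phi_n$, though it still goes beyond what the notes prove, so your framing is fair.
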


\paragraph{\hspace*{-0.3cm}}
The angle $\pi/n$ can be constructed precisely when the angle $2\pi/n$ can be constructed
which in turns happens precisely when the regular $n$-gon can be constructed. Thus, the list
of submultiples of $\pi$ that are constructible runs as,
$$
\frac{\pi}{2},\frac{\pi}{3},\frac{\pi}{4},\frac{\pi}{5},\frac{\pi}{6},\frac{\pi}{8},
\frac{\pi}{10},\frac{\pi}{12},\frac{\pi}{15},\ldots
$$

\begin{vexercise}
Give direct proofs of the non-constructability of the angles,
$$
\frac{\pi}{7},\frac{\pi}{9},\frac{\pi}{11}\text{ and }\frac{\pi}{13}.
$$
\end{vexercise}

\subsection{The Fundamental Theorem of Algebra}

We saw this in Section \ref{lect3}. We now prove it using the Galois
correspondence, starting with two observations:

\begin{description}
\item[(i).] \emph{There are no extensions
of $\R$ of odd degree $>1$}. Any polynomial in $\R[x]$ has roots that are
either real or occur in complex conjugate pairs, hence a real polynomial with
odd degree $>1$ has a real root and is reducible over $\R$. Thus, the minimum polynomial 
over $\R$ of any $\aa\not\in\R$ must have even degree so that the
degree $[\R(\aa):\R]$ is even. If $\R\subset L$ is an extension,
then for $\aa\in L\setminus\R$, we have
$$
[L:\R]=[L:\R(\aa)][\R(\aa):\R],
$$
is also even.
\item[(ii).] \emph{There is no extension of $\C$ of degree two}. For if $\C\subset L$ 
with $[L:\C]=2$ then an $\aa\in L\setminus\C$ gives the intermediate
$\C\subset \C(\aa)\subset L$ with $[\C(\aa):\C]=1$ or $2$ by the Tower
law. 
If this degree equals $1$ then $\aa\in\C$; thus $[\C(\aa):\C]=2$, and hence $L=\C(\aa)$. 
If $f$ is the minimum polynomial of $\aa$ over $\C$ then $f=x^2+bx+c$ for $b,c\in\C$
with $\aa$ one of the two roots
$$
\frac{-b\pm\sqrt{b^2-4c}}{2}.
$$
But these are both in $\C$, contradicting the choice of $\aa$. 
\end{description}

\begin{fundthmalg}
Any non-constant $f\in\C[x]$ has a root in $\C$.
\end{fundthmalg}

\begin{proof}
The proof toggles back and forth between intermediate fields and subgroups of Galois groups
using the Galois correspondence. All the fields and groups appear in
Figure \ref{fig:galois:correspondence:applications10}. 
If $f=pq$ is reducible over $\R$, 
then replace $f$ in what follows by $p$. Thus we may assume that $f$ is
irreducible over $\R$ and let $E$ 
be the splitting field over $\R$, not of $f$,  but of $(x^2+1)f$. 
We have 
$\R$ and $\pm\imag$ are in $E$, hence $\C$ is too, giving the series of
extensions $\R\subset\C\subseteq E$.  

Since $G=\gal(E/\R)$ is a finite group, we can factor from its order all
the powers of $2$, writing $|G|=2^km$, where $m\geq 1$ is odd. 
Sylow's Theorem then gives 
a subgroup $H$ of $G$ of order $2^k$, and the Galois correspondence
gives the intermediate field $F=E^H$ with the extension $F\subset E$ 
of degree $2^k$. As $[E:\R]=[E:F][F:\R]$ with $[E:\R]=|G|=2^km$, 
we have that $F$ is a degree $m$ extension of $\R$. 
As $m$ is odd and no such extensions exist if $m>1$, we must have
$m=1$, so that $|G|=2^k$.

Using the Galois correspondence in the reverse direction,
the subgroup $\gal(E/\C)$ has order dividing $|G|=2^k$, hence order
$2^s$ for some $0\leq s\leq k$. If $s>0$ then 
there is a non-trivial subgroup $K$ of $\gal(E/\C)$ 
of order $2^{s-1}$, 
with $2^{s-1}[E^H:\C]=[E:\C]=|\gal(E/\C)|=2^s$. 
Thus, $E^H$ is a degree $2$ extension of
$\C$, a contradiction to the second observation above. 
We thus have $s=0$,
hence $|\gal(E/\C)|=1$. We now have two
fields, $E$ and $\C$, that map via the 1-1 map $X\mapsto\gal(E/X)$ to
the trivial group.
The conclusion is that
$E=\C$. As $E$ is the splitting field of the polynomial $(x^2+1)f$, we get that $f$ has
a root (indeed {\em all\/} its roots) in $\C$.
\qed
\end{proof}

\begin{figure}
  \centering
\begin{pspicture}(0,0)(14,6)
\rput(-1.5,-0.5){
\rput(3.5,3.25){\BoxedEPSF{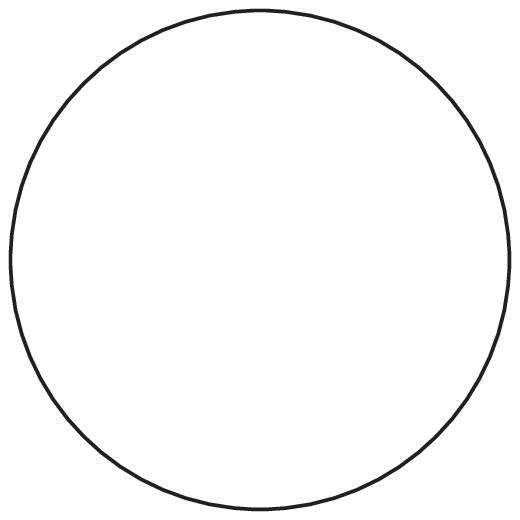 scaled 900}}
\rput*(3.5,5.5){$E$}
\rput*(1.4,4){$E^K$}
\rput*(5.8,3){$F=E^H$}
\rput*(1.5,2){$\C$}
\rput*(3.5,1){$\R$}
\rput(5.15,4.5){$2^k$}
\rput(2.5,5){$2^{s-1}$}
}
\rput(1.5,-0.5){
\rput(10.5,3.25){\BoxedEPSF{galois13.10.eps scaled 900}}
\rput*(10.5,5.5){$G=\gal(E/\R)$}
\rput*(10.5,1){$\{\id\}$}
\rput*(8.5,2){$K$}
\rput*(12.8,3){$H$}
\rput*(8.4,4){$\gal(E/\C)$}
\rput(12.15,2){$2^k$}
\rput(9.55,1.5){$2^{s-1}$}
}
\rput(0.25,1.25){
\psline[linewidth=0.9pt]{->}(5,2)(8.5,2)
\psline[linewidth=0.9pt]{<-}(5,1.5)(8.5,1.5)
\rput(6.75,1.75){Galois correspondence}
\rput(6.75,2.25){$X\mapsto\gal(E/X)$}
\rput(6.75,1.25){$Y\mapsto E^Y$}
}
\end{pspicture}
  \caption{Using the Galois correspondence to prove the Fundamental
    Theorem of Algebra.} 
  \label{fig:galois:correspondence:applications10}
\end{figure}


\section{(Not) Solving Equations}
\label{solving.equations}

We can finally return to the theme of Section \ref{lect1}: finding algebraic
expressions for the roots of polynomials.

\paragraph{\hspace*{-0.3cm}}
The formulae for the roots of quadratics, cubics and quartics express the roots in terms
of the coefficients, the four field operations $+,-,\times,\div$ and
$n$-th roots
$\sqrt{},\sqrt[3]{},\sqrt[4]{}$.
These roots thus lie
in an extension of $\Q$
obtained by adjoining certain $n$-th roots.

\begin{definition}[radical extension of $\Q$]
An extension $\Q\subset E$ is radical when
there is a sequence of simple extensions,
$$
\Q\subset \Q(\aa_1)\subset \Q(\aa_1,\aa_2)\subset \cdots\subset
\Q(\aa_1,\aa_2,\ldots,\aa_k)=E,
$$
with some power $\aa_i^{m_i}$ of $\aa_i$ contained in
$\Q(\aa_1,\aa_2,\ldots,\aa_{i-1})$ for each $i$.   
\end{definition}

Each extension in the sequence is thus obtained by adjoining to the
previous field in the sequence, the 
$m_i$-th root of some element.

A simple example:
$$
\Q\subset\Q(\sqrt{2})\subset\Q(\sqrt{2},\sqrt[3]{5})
\subset\Q\biggl(\sqrt{2}, \sqrt[3]{5},\sqrt{\sqrt{2}-7\sqrt[3]{5}}\biggr).
$$
By repeatedly applying Theorem D, the elements of a radical extension
are seen to
have expressions in terms of rational numbers, $+,-,\times,\div$ 
and $\sqrt[n]{}$ for various $n$.

\begin{definition}[polynomial solvable by radicals]
A polynomial 
$f\in\Q[x]$ is solvable by radicals when its splitting field over $\Q$
is contained in some radical extension. 
\end{definition}

Notice that we are dealing with a fixed specific polynomial, and not an arbitrary one.
The radical extension containing the splitting field will depend on the polynomial. 

\paragraph{\hspace*{-0.3cm}}
Any quadratic polynomial $ax^2+bx+c$ is solvable by radicals, with splitting field
in the radical extension 
$$
\Q\subseteq \Q(\sqrt{b^2-4ac}).
$$
Similarly, the formulae for the roots of cubics and quartics give for any specific such 
polynomial, radical extensions containing their splitting fields. 

\paragraph{\hspace*{-0.3cm}}
Recalling the definition of soluble group given in Section \ref{groups.stuff}:

\begin{theoremH}
A polynomial $f\in\Q[x]$ is solvable by radicals if and only if its Galois group
over $\Q$ is soluble.
\end{theoremH}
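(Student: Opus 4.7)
The plan is to prove each direction by translating between a radical tower of fields and a subnormal series of the Galois group via the Galois correspondence, with the recurring technical issue in both directions being the careful handling of roots of unity: adjoining an $n$-th root becomes ``cyclic'' over the base only when a primitive $n$-th root of unity is already present. Two small lemmas will do almost all the work. First, if $\zeta_m$ is a primitive $m$-th root of unity, then $\gal(F(\zeta_m)/F)$ is abelian, since any automorphism has the form $\zeta_m\mapsto\zeta_m^k$ and these compose by multiplication of exponents mod $m$. Second, if $F$ already contains $\zeta_m$ and $a\in F$, then $\gal(F(\sqrt[m]{a})/F)$ is cyclic, because any automorphism sends $\sqrt[m]{a}$ to $\zeta_m^j\sqrt[m]{a}$ and composition adds the exponents.

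For the forward direction, suppose $f$ has splitting field $E$ contained in a radical extension $R=\Q(\alpha_1,\ldots,\alpha_k)$. First I would enlarge $R$ to a field $L$ that is simultaneously a radical extension of $\Q$ and the splitting field over $\Q$ of some polynomial; the standard way is to adjoin, in order, a primitive $m_i$-th root of unity for each step and all conjugates of each $\alpha_i$ under the accumulating Galois action, checking by induction that the resulting tower remains radical. Condition $(\dag)$ then holds for $\Q\subseteq L$ and the Galois correspondence applies. Inserting the root-of-unity steps into the tower one at a time, the two lemmas above say every consecutive extension in the enlarged tower has abelian (indeed cyclic) Galois group; via the Galois correspondence this produces a chain $\{\id\}\lhd H_1\lhd \cdots\lhd H_n=\gal(L/\Q)$ with abelian quotients, so $\gal(L/\Q)$ is soluble. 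Since $E$ is the splitting field of $f$ over $\Q$, part (2) of the Galois correspondence gives $\gal(L/E)\lhd \gal(L/\Q)$ with quotient $\gal(E/\Q)$; and a quotient of a soluble group is soluble (just image the abelian-quotient chain under the projection), which yields the result.

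For the converse, let $G=\gal(E/\Q)$ be soluble. Refining the given chain using the fact that a finite abelian group admits a composition series with cyclic factors of prime order, I may assume $\{\id\}=H_0\lhd H_1\lhd\cdots\lhd H_n=G$ with each $H_i/H_{i-1}$ cyclic of prime order $p_i$. Let $m=\prod p_i$ and set $L=E(\zeta_m)$; the extension $\Q\subseteq\Q(\zeta_m)$ is radical (one adjoins a root of $x^m-1$), and by Exercise~\ref{subgroups.solublegroups2} the group $\gal(L/\Q(\zeta_m))$ is soluble with a refined composition series pulled across from that of $G$. The Galois correspondence then supplies a chain $\Q(\zeta_m)=K_n\subseteq K_{n-1}\subseteq\cdots\subseteq K_0=L$ with each $K_i\subseteq K_{i-1}$ cyclic of order $p_i\mid m$, hence with $\zeta_{p_i}\in K_i$. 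The core Kummer-type lemma I would then prove is: if $K_i$ contains $\zeta_{p_i}$ and $K_i\subseteq K_{i-1}$ is cyclic of order $p_i$, then $K_{i-1}=K_i(\sqrt[p_i]{a})$ for some $a\in K_i$. Granted this, each step of the chain is radical, so $L$ (and hence $E$) lies in a radical extension of $\Q$.

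The hardest part will be the Kummer step. The natural approach is a Lagrange resolvent: fix a generator $\sigma$ of $\gal(K_{i-1}/K_i)$ and, for $y\in K_{i-1}$, form
\[
\beta=\sum_{j=0}^{p_i-1}\zeta_{p_i}^{-j}\,\sigma^j(y),
\]
so that $\sigma(\beta)=\zeta_{p_i}\beta$, whence $\beta^{p_i}$ is $\sigma$-fixed and therefore lies in $K_i$ while $\beta\notin K_i$; then $K_{i-1}=K_i(\beta)$ by a degree count. The delicate point is producing a $y\in K_{i-1}$ for which $\beta\neq 0$, which is precisely where the independence of distinct automorphisms established in Theorem~\ref{theorem:linearalgebra2} is needed: the equations $\id(x)=\sigma(x)=\cdots=\sigma^{p_i-1}(x)$ being independent forces $\beta$ to be a nontrivial $K_i$-linear combination of the $\sigma^j$, so some $y$ makes it nonzero. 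A secondary technical nuisance, in the forward direction, is verifying that enlarging $R$ to a Galois extension preserves the radical property — one must observe that if $\alpha^m\in F$ then every conjugate $\sigma(\alpha)$ satisfies $\sigma(\alpha)^m=\sigma(\alpha^m)$, which sits in the enlarged field below, so that $\sigma(\alpha)$ is itself a radical element over the accumulated field. Once these two points are handled, the rest is bookkeeping.
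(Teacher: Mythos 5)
The paper does not actually prove Theorem H; it explicitly says ``The proof, which we omit, uses the full power of the Galois correspondence, with the sequence of extensions in a radical extension corresponding to the sequence of subgroups \dots in a soluble group.'' Your proposal fills in exactly the standard argument along those lines and is essentially correct, but two points would need tightening before it could stand as a proof. In the converse direction you assert that $\gal(L/\Q(\zeta_m))$ inherits a refined composition series from $G$ by Exercise~\ref{subgroups.solublegroups2}, but that exercise applies to \emph{subgroups} of $G=\gal(E/\Q)$, and you have not yet explained why $\gal(L/\Q(\zeta_m))$ is (isomorphic to) one: you need to observe that, since $L=E(\zeta_m)$ and $E$ is normal over $\Q$, restriction to $E$ gives a well-defined homomorphism $\gal(L/\Q(\zeta_m))\to\gal(E/\Q)$ whose kernel is trivial (an automorphism fixing both $E$ and $\Q(\zeta_m)$ fixes $L$), and only then is the exercise applicable. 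Secondly, in the Kummer step your appeal to Theorem~\ref{theorem:linearalgebra2} is not quite what is needed: that theorem asserts the independence of the system $\ss_i(x)=x$ over $E$, whereas what you actually require is that no nontrivial $E$-linear combination $\sum_j c_j\ss^j$ vanishes identically as a map $E\to E$ (Dedekind's linear independence of characters), which is what guarantees some $y$ makes the resolvent $\beta(y)=\sum_j\zeta^{-j}\ss^j(y)$ nonzero. That statement is most directly extracted from Exercise~\ref{vandermonde} (the Vandermonde determinant), which is also the engine inside the proof of Theorem~\ref{theorem:linearalgebra2}; a full write-up should isolate this lemma rather than cite the theorem as stated. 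Aside from these, the architecture — normal-and-radical closure plus root-of-unity adjunction to make each layer cyclic or abelian in the forward direction, refinement to prime-order steps plus the Lagrange resolvent in the converse — is the right proof and matches the plan the paper gestures at.
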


The proof, which we omit,  uses the full power of the Galois correspondence, with the sequence of 
extensions in a radical extension corresponding to the sequence of
subgroups 
$$
\{1\}=H_0\lhd H_1\lhd \cdots \lhd H_{n-1}\lhd H_n=G,
$$
in a soluble group.

\paragraph{\hspace*{-0.3cm}}
As a small reality check of Theorem H, we saw in Section \ref{galois.groups} that the
Galois group over $\Q$ of a quadratic polynomial is either the trivial
group $\{\id\}$ or the (Abelian) permutation group $\{\id,(\aa,\bb)\}$
where $\aa,\bb\in\C$ are the roots. 
Abelian groups are soluble -- see \ref{groups1:abelian_are_soluble} --
and this syncs with quadratics 
being solvable by radicals via the quadratic formula.

Similarly, the possible Galois groups of cubic
polynomials are shown in Figure
\ref{fig:groups2:subgroup_lattice_S_3}. Apart from $S_{\kern-.3mm 3}$,
these are also Abelian. But $S_{\kern-.3mm 3}$ is the symmetry group
of an equilateral triangle lying in the plane -- soluble by
\ref{groups1:dihedral_are_soluble}. 

\paragraph{\hspace*{-0.3cm}}
Somewhat out of chronological order, we have:

\begin{theorem}[Abels-Fubini]
The polynomial $f=x^5-4x+2$ is not solvable by radicals.
\end{theorem}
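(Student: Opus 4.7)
The plan is to show, via Theorem H, that the Galois group $G$ of $f=x^5-4x+2$ over $\Q$ is isomorphic to the full symmetric group $S_{\kern-.3mm 5}$, which is not soluble. To reach $S_{\kern-.3mm 5}$ I will check three facts about $f$ and then appeal to a standard fact about generators of the symmetric group.

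First, I would verify that $f$ is irreducible over $\Q$ by Eisenstein's criterion with $p=2$: all non-leading coefficients $0,0,0,-4,2$ are divisible by $2$, the leading coefficient $1$ is not, and the constant term $2$ is not divisible by $4$. Consequently, if $\aa\in\C$ is any root of $f$ then $f$ is the minimum polynomial of $\aa$ over $\Q$, so $[\Q(\aa):\Q]=5$. Since $\Q(\aa)$ is contained in the splitting field $E$ of $f$ over $\Q$, the tower law gives $5\mid [E:\Q]=|G|$. By Cauchy's theorem (Section \ref{groups.stuff}) $G$ therefore contains an element of order $5$; viewed inside $S_{\kern-.3mm 5}$ via Proposition \ref{galoisgroups.subgroupsymmetricgroup}, the only elements of order $5$ are $5$-cycles, so $G$ contains a $5$-cycle.

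Second, I would use elementary real analysis to count the real roots of $f$. The derivative $f'(x)=5x^4-4$ vanishes exactly at $x=\pm(4/5)^{1/4}$, so $f$ has exactly one local maximum and one local minimum. Evaluating $f$ at, say, $x=-2,0,1,2$ gives values with appropriate sign changes (e.g.\ $f(-2)=-22$, $f(0)=2$, $f(1)=-1$, $f(2)=26$), so $f$ has exactly three real roots and hence exactly two non-real complex roots, which are complex conjugates. By Exercise \ref{exercise:groups2:conjugation}, complex conjugation restricts to a non-trivial element $\tau$ of $G$ which fixes the three real roots and swaps the two non-real ones; under the embedding $G\hookrightarrow S_{\kern-.3mm 5}$, $\tau$ is a transposition.

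Third, I would invoke the standard fact that any subgroup of $S_{\kern-.3mm p}$ (with $p$ prime) that contains a $p$-cycle and a transposition is all of $S_{\kern-.3mm p}$: replacing the $5$-cycle $\ss$ by a suitable power we may assume $\tau=(a,b)$ and $\ss=(a,b,c,d,e)$, and then conjugates $\ss^{i}\tau\ss^{-i}$ supply the transpositions $(a,b),(b,c),(c,d),(d,e)$, which are known to generate $S_{\kern-.3mm 5}$ (this is the reduction done in Section \ref{groups.stuff}). Thus $G\cong S_{\kern-.3mm 5}$.

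Finally, I would conclude non-solvability. The alternating group $A_5$ is listed in Table \ref{simple_groups1} as a non-Abelian simple group, so $A_5$ is not soluble, and by Exercise \ref{subgroups.solublegroups2} any group containing a non-soluble subgroup is itself non-soluble; hence $S_{\kern-.3mm 5}$ is not soluble. By Theorem H, $f=x^5-4x+2$ is not solvable by radicals. The only non-routine step is establishing that exactly three of the roots are real; the irreducibility and the generation of $S_{\kern-.3mm 5}$ from a $5$-cycle and a transposition are essentially bookkeeping once that is in place.
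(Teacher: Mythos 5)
Your proof follows the same route as the paper's: Eisenstein gives irreducibility, Cauchy gives a $5$-cycle, a derivative/sign-change count shows exactly three real roots so complex conjugation supplies a transposition, and then the $5$-cycle and transposition generate $S_{\kern-.3mm 5}$, which is insoluble because it contains $A_5$. You are in fact slightly more careful than the paper at one point: the paper appeals to Section \ref{groups.stuff}, where it was shown that $S_{\kern-.3mm n}$ is generated by the \emph{specific} elements $(1,2)$ and $(1,2,\ldots,n)$, and then asserts without comment that ``a $n$-cycle and a transposition'' generate $S_{\kern-.3mm n}$ — which is false for arbitrary $n$ (e.g.\ $(1,2,3,4)$ and $(1,3)$ generate only a dihedral group of order $8$ in $S_{\kern-.3mm 4}$) — whereas your replacement-by-a-power argument explicitly uses the primality of $5$ to reduce to adjacent transpositions and so correctly justifies the step.
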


The roots of $x^5-4x+2$ are algebraic numbers, yet there is no
algebraic expression for them. 

\begin{proof}
We show that the Galois group of $f$ over $\Q$ is insoluble. Indeed, we show that
the Galois group is the symmetric group $S_{\kern-.3mm 5}$, which contains the
non-Abelian, finite simple group $A_5$. 
Thus $S_{\kern-.3mm 5}$ contains an insoluble subgroup, hence is insoluble, as any
subgroup of a soluble group is soluble by Exercises \ref{subgroups.solublegroups1}
and \ref{subgroups.solublegroups2}.

If $E$ is the splitting field over $\Q$ of $f$,
then
$$
E=\Q(\aa_1,\aa_2,\aa_3,\aa_4,\aa_5),
$$
where the $\aa_i\in\C$ are the roots of $f$ and the Galois group 
is $\gal(E/\Q)$, itself a subgroup of the group of permutations of
$\{\aa_1,\ldots,\aa_5\}$ -- which is $\cong S_{\kern-.3mm 5}$. 

\begin{figure}
  \centering
\begin{pspicture}(14,10)
\rput(1,0.5){
\rput(5,4.5){\BoxedEPSF{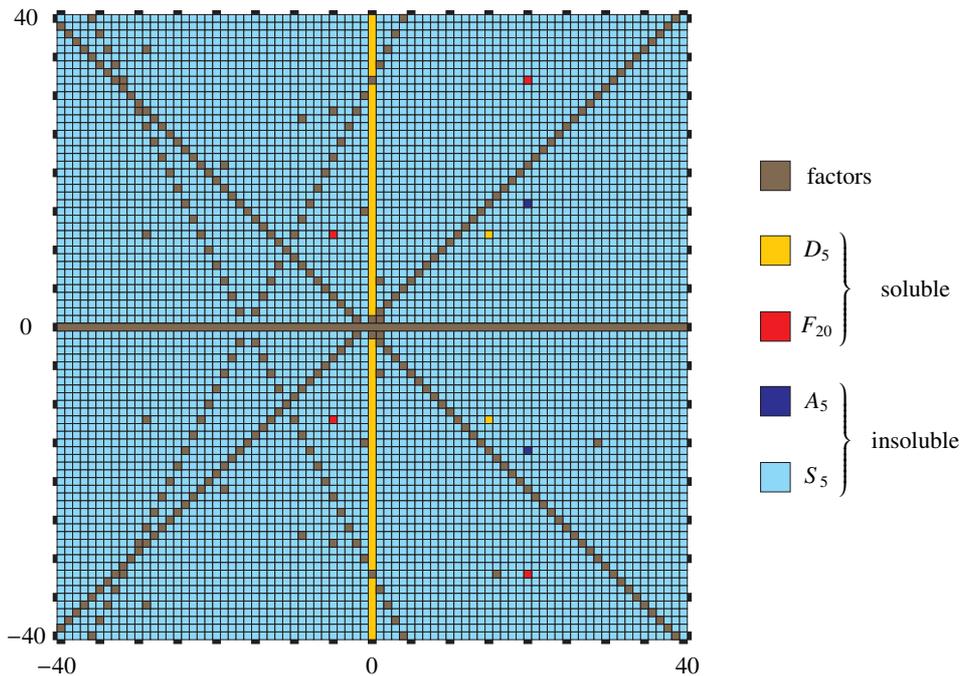 scaled 500}}
\rput(10.2,2.5){$S_5$}
\rput(10.2,3.5){$A_5$}
\rput(10.2,5.5){$D_5$}
\rput(10.2,4.5){$F_{20}$}
\rput(10.5,6.5){factors}
\rput(10.5,3){$\left.\begin{array}{c}
\vrule width 0 mm height 15 mm depth 0 pt\end{array}\right\}$}
\rput(11.5,3){insoluble}
\rput(10.5,5){$\left.\begin{array}{c}
\vrule width 0 mm height 15 mm depth 0 pt\end{array}\right\}$}
\rput(11.5,5){soluble}
\rput(-.2,4.5){$0$}\rput(-.2,.4){$-40$}\rput(-.2,8.6){$40$}
\rput(4.35,0){$0$}\rput(.2,0){$-40$}\rput(8.5,0){$40$}
}
\end{pspicture}
  \caption{The Galois groups of the quintic polynomials $x^5+ax+b$ for
    $-40\leq a,b\leq 40$ (re-drawn from the \emph{Mathematica\/} poster,
``Solving the Quintic'').}
  \label{fig:solving:quintic}
\end{figure}

\parshape=3 0pt\hsize 0pt\hsize 0pt.6\hsize
The polynomial $f$ is irreducible over $\Q$ by Eisenstein, hence is
the minimum polynomial of $\aa_1$ over $\Q$. The extension $\Q\subset \Q(\aa_1)$
thus has degree five, and the Tower law gives 
\vadjust{\hfill\smash{\lower 125pt
\llap{
\begin{pspicture}(0,0)(4.5,3.5)
\rput(0,0.4){
\rput(0,0){
\rput{-90}(2,1.5){\BoxedEPSF{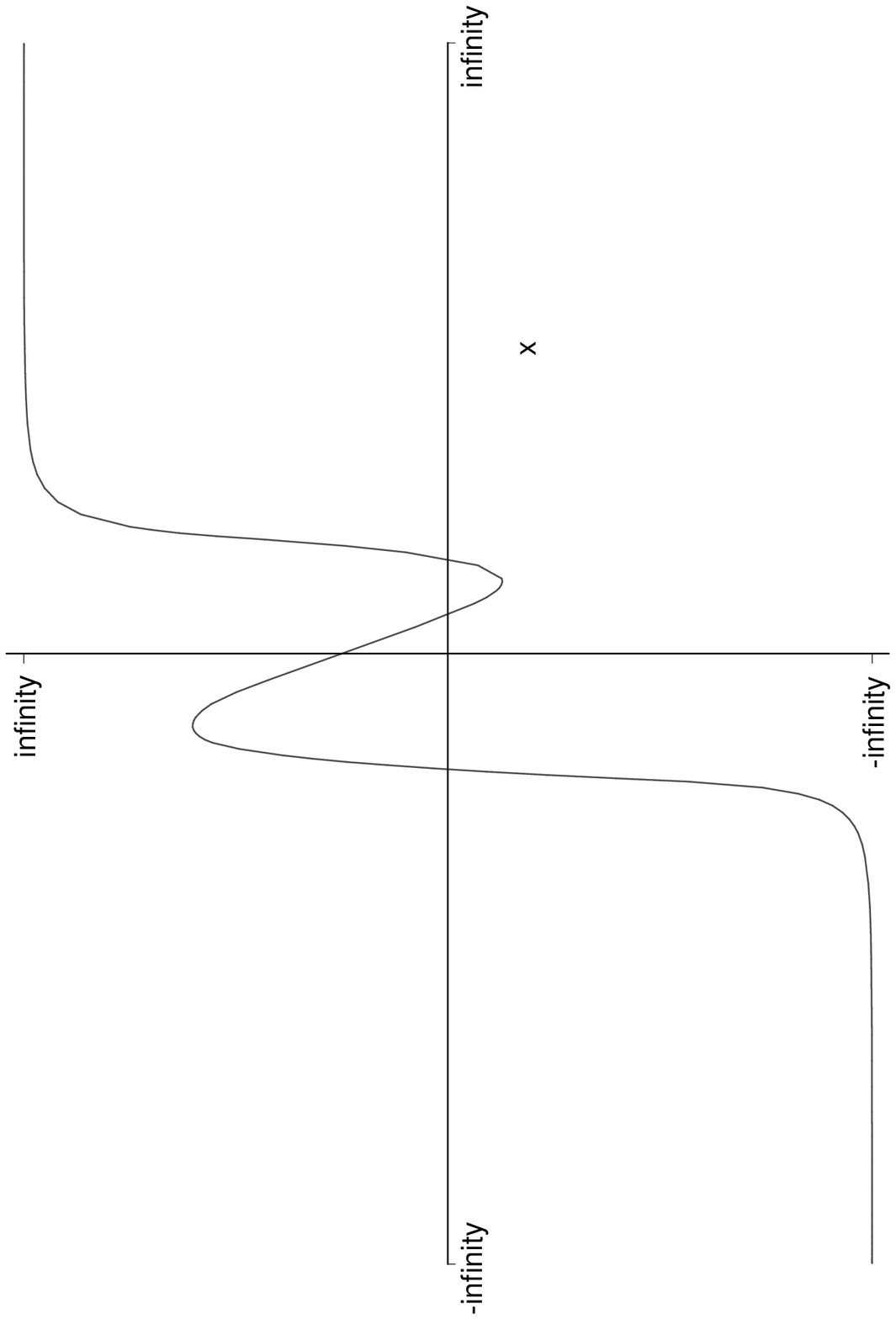 scaled 250}}
}
\psframe*[linecolor=white](1.3,3)(1.9,3.4)
\psframe*[linecolor=white](1.8,-0.3)(2.2,.2)
\psframe*[linecolor=white](3,1)(3.3,1.3)
\rput*(2,3.1){$\infty$}
\rput*(1.9,-0.1){$-\infty$}
\rput*(-.5,1.5){$-\infty$}
\rput*(4.2,1.5){$\infty$}
}
\end{pspicture}
}}}\ignorespaces
$$
[E:\Q]=[E:\Q(\aa_1)][\Q(\aa_1):\Q].
$$
\parshape=6 0pt.6\hsize 0pt.6\hsize 0pt.6\hsize 0pt.6\hsize 0pt.6\hsize 0pt.6\hsize
The degree of the extension $\Q\subset E$ is therefore divisible by the degree
of the extension $\Q\subset\Q(\aa_1)$, ie: divisible by five. 
Moreover, by Theorem G, the group $\gal(E/\Q)$ has order the degree 
$[E:\Q]$, and so the group has order divisible by five. By Cauchy's Theorem, 
the Galois group contains an element $\ss$ of order
$5$, and a subgroup
$$
\{\id,\ss,\ss^2,\ss^3,\ss^4\},
$$
\parshape=2 0pt\hsize 0pt\hsize 
where the permutation $\ss$ is a $5$-cycle 
$\ss=(a,b,c,d,e)$ when
considered as a permutation of the roots. 
The graph
of $f$ on the right shows that three of the roots are real, and the other two
are thus complex conjugates. By Exercise
\ref{exercise:groups2:conjugation}, complex conjugation is 
an element of the Galois group having effect the permutation
$$
\tau=(b_1,b_2),
$$
where $b_1,b_2$ are the two complex roots. But in
Section \ref{groups.stuff} we saw that $S_{\kern-.3mm n}$ is generated by a $n$-cycle and a
transposition, hence the Galois group is $S_{\kern-.3mm 5}$ as claimed.
\qed
\end{proof}

\paragraph{\hspace*{-0.3cm}}
There is nothing particularly special about the polynomial $x^5-4x+2$;
among the polynomials having degree $\geq 5$, those that are not
solvable by radicals are \emph{generic\/}.
We illustrate what we mean with some experimental evidence: consider the quintic polynomials
$$
x^5+ax+b,
$$
for $a,b\in\Z$ with $-40\leq a,b\leq 40$. 

Figure \ref{fig:solving:quintic}
(which is re-drawn from the Mathematica poster,
``Solving the Quintic'')
shows the $(a,b)$ plane for $a$ and $b$ in this range. 
The vertical line through $(0,0)$ corresponds to $f$ with Galois group
the soluble dihedral
group $D_{10}$ of order $10$. The horizontal line through $(0,0)$ and the two sets
of crossing diagonal lines correspond to reducible $f$, as do a few other isolated points.
The (insoluble) alternating group $A_5$ arises in a few sporadic
places, as does another soluble
subgroup of $S_5$. The vast majority of $f$ however, forming the light background, have
Galois group the symmetric group $S_5$, and so have roots that are {\em algebraic\/}, but
cannot be expressed {\em algebraically}.







\end{document}